\newtheorem{thm}{Theorem}[section]
\newtheorem{prop}[thm]{Proposition}
\newtheorem{lem}[thm]{Lemma}
\newtheorem{cor}[thm]{Corollary}
\theoremstyle{definition}
\newtheorem{defn}[thm]{Definition}
\theoremstyle{remark}
\newtheorem{remk}[thm]{Remark}
\newtheorem{remks}[thm]{Remarks}
\newtheorem{exm}[thm]{Example}
\newtheorem{exms}[thm]{Examples}
\newtheorem{notat}[thm]{Notation}
\numberwithin{equation}{section}
\newcommand{\thmref}{Theorem~\ref}
\newcommand{\propref}{Proposition~\ref}
\newcommand{\corref}{Corollary~\ref}
\newcommand{\defref}{Definition~\ref}
\newcommand{\lemref}{Lemma~\ref}
\newcommand{\sF}{{\mathcal F}}
\newcommand{\sH}{{\mathcal H}}
\newcommand{\sK}{{\mathcal K}}
\newcommand{\sL}{{\mathcal L}}
\newcommand{\sM}{{\mathcal M}}
\newcommand{\sO}{{\mathcal O}}
\newcommand{\sP}{{\mathcal P}}
\newcommand{\sR}{{\mathcal R}}
\newcommand{\sZ}{{\mathcal Z}}
\newcommand{\A}{{\mathbb A}}
\newcommand{\C}{{\mathbb C}}
\newcommand{\D}{{\mathbb D}}
\newcommand{\G}{{\mathbb G}}
\renewcommand{\H}{{\mathbb H}}
\renewcommand{\P}{{\mathbb P}}
\newcommand{\Q}{{\mathbb Q}}
\newcommand{\Z}{{\mathbb Z}}
\newcommand{\fm}{{\mathfrak m}}
\newcommand{\fn}{{\mathfrak n}}
\newcommand{\fp}{{\mathfrak p}}
\newcommand{\fq}{{\mathfrak q}}
\newcommand{\Alb}{{\rm Alb}}
\newcommand{\CH}{{\rm CH}}
\newcommand{\surj}{\twoheadrightarrow}
\newcommand{\inj}{\hookrightarrow}
\newcommand{\red}{{\rm red}}
\newcommand{\Pic}{{\rm Pic}}
\newcommand{\Spec}{{\rm Spec \,}}
\newcommand{\divf}{{\rm div}}
\newcommand{\id}{{\operatorname{id}}}
\newcommand{\Sch}{{\operatorname{\mathbf{Sch}}}}
\newcommand{\<}{\langle}
\renewcommand{\>}{\rangle}
\newcommand{\Sm}{{\mathbf{Sm}}}
\newcommand{\can}{{\operatorname{\rm can}}}
\newcommand{\cyc}{{\operatorname{\rm cyc}}}
\newcommand{\et}{{\text{\'et}}}
\newcommand{\ds}{{/\kern-3pt/}}
\newcommand{\un}{\underline}
\newcommand{\ov}{\overline}
\newcommand{\dgn}{{\operatorname{degn}}}
\renewcommand{\dim}{\text{\rm dim}}
\newcommand{\tuborg}{\left\{\begin{array}{ll}}
\newcommand{\sluttuborg}{\end{array}\right.}
\newcommand{\zar}{{\rm zar}}
\newcommand{\nis}{{\rm nis}}
\newcommand{\wt}{\widetilde}
\newcommand{\prolim}{\textnormal{ `` $ \underset{n}{\varprojlim}$" } }
\newcounter{elno}
\newcounter{elno-abc}   
\newcounter{elno-abc-prime}
\begin{document}

\title{K-theory and 0-cycles on schemes}
\author{Rahul Gupta, Amalendu Krishna}
\address{School of Mathematics, Tata Institute of Fundamental Research,  
1 Homi Bhabha Road, Colaba, Mumbai, India}
\email{amal@math.tifr.res.in}
\email{rahul@math.tifr.res.in}


\keywords{Algebraic $K$-theory, algebraic cycles, singular varieties}        

\subjclass[2010]{Primary 14C25; Secondary 19E08, 19E15}

\maketitle

\begin{quote}\emph{Abstract.}  We prove Bloch's formula for 0-cycles on
affine schemes over algebraically closed fields.
We prove this formula also for projective schemes over algebraically closed 
fields which are regular in codimension one. 
Several applications, including 
Bloch's formula for 0-cycles with modulus, are derived.
\end{quote}
\setcounter{tocdepth}{1}
\tableofcontents

\section{Introduction}\label{sec:Intro}
The principal aim of this paper is to study the Chow group of 0-cycles on
singular schemes and the Chow group of 0-cycles with modulus on smooth
schemes. We prove Bloch's formula for these groups and show that the 
canonical cycle class map from them to the appropriate $K$-theory and relative
$K$-theory groups have torsion kernels of bounded exponents. 
These results directly extend the analogous classical results about the
0-cycle groups on smooth schemes to the setting of 0-cycles on singular schemes
and 0-cycles with modulus on smooth
schemes. We derive several outstanding consequences of these results.
This section provides the background of these problems, a summary of
main results and applications, their statements and outline of proofs.

\subsection{Bloch's formula}\label{sec:Background}
The Bloch-Quillen formula in the theory of
algebraic cycles provides a description for the Chow group of 0-cycles on a 
smooth quasi-projective scheme over a field in terms of the Zariski
cohomology of the Quillen $K$-theory sheaves. 
This yields a direct connection
between the Chow groups and algebraic $K$-theory of smooth quasi-projective
schemes. This formula for curves is classical and
the case of surfaces was derived by Bloch \cite{Bloch-0}. The general
case of the formula for all smooth quasi-projective schemes was established
by Quillen \cite{Quillen}.  Kato \cite{Kato} showed that
this formula also holds if one replaces the Quillen $K$-theory 
sheaves by the Milnor $K$-theory sheaves and the Zariski cohomology by
the Nisnevich cohomology. In conclusion, for a smooth 
quasi-projective scheme $X$ of dimension $d \ge 0$ over a field,
one knows that
\begin{equation}\label{eqn:sm}
\CH^d(X) \simeq  H^d_{\zar}(X, \sK^M_{d,X}) \simeq H^d_{\nis}(X, \sK^M_{d,X})
\simeq H^d_{\zar}(X, \sK_{d,X}).
\end{equation}

However, no complete generalization of this formula has been found for
singular schemes despite the fact that there is a 
well established theory of 0-cycles on such schemes after the work of 
Levine and Weibel \cite{LW} in 1980's. 
This formula was shown for singular curves by Levine and Weibel \cite{LW}.
Collino \cite{Collino} proved the Bloch-Quillen formula for a scheme
which is almost non-singular (meaning that it has only one singular point).
For a quasi-projective surface whose singular locus is affine, 
a Bloch-Quillen formula of the type ~\eqref{eqn:sm}
was proven by Pedrini and Weibel \cite{PW}. 
Levine \cite{Levine-1} proved this formula for all singular surfaces over
algebraically closed fields (see \cite{BS-*} for details). 
These are the only cases of singular schemes for which any of the isomorphisms 
in ~\eqref{eqn:sm} is presently known.

On the contrary, Levine and Srinivas showed \cite[\S~3.2]{Srinivas-1} that the 
formula of Quillen for the Chow group of 0-cycles can not be generalized
to singular schemes, even with the rational coefficients.
They showed that if $X$ is the boundary of the 4-simplex in $\A^4_{\C}$, given
by the equation $f(x, y, z,w) = xyzw(1-x-y-z-w) = 0$, then
$\CH^3(X)_{\Q} \cong K^M_3(\C)_{\Q}$, whereas $H^3_{\zar}(X, \sK_{3,X})_{\Q}
\cong K_3(\C)_{\Q}$. 

\vskip .2cm

This example shows that Quillen's generalization of Bloch's formula 
can not be extended to higher dimensional affine schemes.
However, it is an open question at present if Kato's generalization of Bloch's formula extends to singular schemes.
Our first main result answers this question as follows.
For any Noetherian scheme $X$, let us denote its Nisnevich (resp. Zariski)
site by $X_{\nis}$ (resp. $X_{\zar}$). For a separated and reduced
scheme $X$ of finite type over field, we let $\CH^{LW}_0(X)$ denote
the Levine-Weibel Chow group of 0-cycles.   
Let $\sK^M_{i, X}$ denote the Nisnevich (or Zariski) sheaves of Milnor
$K$-groups on $X$ (see \S~\ref{sec:Prelim}).

\begin{thm}\label{thm:Thm-1}
Let $k$ be an infinite perfect field and let $X$ be a reduced quasi-projective  
scheme of pure dimension $d \ge 0$ over $k$. 
Then there is a canonical surjective map
\begin{equation}\label{eqn:Main-0}
\rho_X: \CH^{LW}_0(X) \surj H^d_{\nis}(X, \sK^M_{d,X}).
\end{equation}

This is an isomorphism if $k$ is algebraically closed and either
of the following holds.
\begin{enumerate} 
\item 
$X$ is affine.
\item
$X$ is projective and regular in codimension one.
\end{enumerate}
\end{thm}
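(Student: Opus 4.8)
The plan is to build the map $\rho_X$, prove it is surjective over any infinite perfect field, and then prove injectivity in the two favourable cases, where essentially all of the difficulty lies.

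\emph{Construction of $\rho_X$.} For a smooth closed point $x \in X_{\sm}$ the scheme is regular near $x$, so purity for the Milnor $K$-sheaf on the regular locus yields a local cycle class in $H^d_x(X, \sK^M_{d,X})$, whose image in $H^d_{\nis}(X, \sK^M_{d,X})$ I take to be $\rho_X([x])$; extending $\Z$-linearly defines $\rho_X$ on the group $\sZ_0(X_{\sm})$ of $0$-cycles supported on the smooth locus. To descend $\rho_X$ to $\CH^{LW}_0(X)$ I would check that the Levine--Weibel relations die: for a Cartier curve $C \inj X$ meeting $X_{\sing}$ properly and a rational function $f$ on $C$ with $\divf(f)$ supported on $X_{\sm}$, the cycle $\divf(f)$ is the image of $f$ under the boundary map of the Cousin complex of $\sK^M_{d,X}$ along $C$, hence bounds in top cohomology.

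\emph{Surjectivity.} I would run the coniveau spectral sequence for $H^*_{\nis}(X, \sK^M_{d,X})$. Since $d$ is the top dimension, the spectral sequence degenerates in total degree $d$ (using purity on the regular locus) to identify $H^d_{\nis}(X, \sK^M_{d,X})$ with the cokernel of the last Cousin differential $\bigoplus_{x \in X^{(d-1)}} K^M_1(k(x)) \to \bigoplus_{x \in X^{(d)}} H^d_x(X, \sK^M_{d,X})$. Over a perfect field generic smoothness makes $X_{\sm}$ dense and the local term $H^d_x$ equal to $K^M_0(k(x)) = \Z$ at every smooth closed point; a moving lemma over the infinite field $k$ then expresses any generator as a combination of classes of smooth points (absorbing the finitely many singular points by rational functions on curves through them), and such classes are $\rho_X$-images by construction. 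Note that this step never uses algebraic closedness, consistent with the surjectivity holding over all infinite perfect $k$.

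\emph{Injectivity.} This is where $k = \bar k$ and the affine / projective-regular-in-codimension-one hypotheses enter. The idea is to produce a left inverse $\psi$ to $\rho_X$ from the Cousin presentation of the previous step: over $k = \bar k$ every closed point has residue field $k$, so $\psi$ should send a closed point to its class $[x] \in \CH^{LW}_0(X)$, and the content is to show that every Cousin relation $\partial(Z,f)$ --- coming from a rational function $f$ on an \emph{arbitrary} integral curve $Z \subset X$ --- already lies in the group of Levine--Weibel relations, which only permits curves meeting $X_{\sing}$ properly. I would organise this around a localization comparison with the smooth locus $U = X_{\sm}$, on which Kato's isomorphism $\CH^d(U) \simeq H^d_{\nis}(U, \sK^M_{d,U})$ of \eqref{eqn:sm} holds, controlling the error terms along $X_{\sing}$: in the projective case regularity in codimension one forces $\codim_X X_{\sing} \ge 2$, so general linear sections and curves avoid the singular locus except in a controlled way, whereas in the affine case I would instead cut $X$ by general hypersurfaces through the prescribed points, using the infinitude and algebraic closedness of $k$ to force proper intersection with $X_{\sing}$ and to realise the given divisor on a genuine Cartier curve.

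The step I expect to be hardest is precisely this moving lemma for rational equivalences in the presence of singularities: given $(Z,f)$ one must deform $Z$ to a Cartier curve $C$ that is a local complete intersection along $X_{\sm}$, meets $X_{\sing}$ properly, and still carries (up to Levine--Weibel equivalence) the divisor $\divf(f)$. This is the exact point at which the cohomological relation group and the Levine--Weibel relation group are reconciled, and the geometry needed to carry it out is genuinely different in the affine and projective settings, which is why the two cases appear separately in the statement.
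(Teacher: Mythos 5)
Your construction of $\rho_X$ and your surjectivity argument follow the paper's own route (Cousin presentation of the top cohomology, purity at regular points, a Kato--Saito type surjectivity statement), but your descent to $\CH^{LW}_0(X)$ already hides a gap. For a Cartier curve $C$ and $f \in \sO^{\times}_{C, C\cap X_{\rm sing}}$, the Cousin boundary of the class of $f$ has components at \emph{all} closed points of $C$: at points of $C \cap X_{\rm reg}$ these reproduce $\divf(f)$, but at the points $y \in C \cap X_{\rm sing}$ they land in $H^{d}_{y}(X, \sK^M_{d,X})$, a group which is not $\Z$ because purity fails there. Your conclusion that ``$\divf(f)$ bounds in top cohomology'' requires these singular components to vanish, and that vanishing is precisely the content of the paper's key Proposition~\ref{prop:Key}; its proof is an induction along a flag $Y_{d-1}\subset\cdots\subset Y_0=X$ of hypersurface sections, and it is applicable only because, by a lemma of Levine, $\sR^{LW}_0(X)$ is generated by functions on Cartier curves sitting inside such flags. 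Nothing in your sketch addresses this vanishing, and it is the heart of the construction.

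The more serious problem is the injectivity step, where your approach would fail. The Cousin presentation has generators $\amalg_{x \in X^{(d)}} H^d_x(X,\sK^M_{d,X})$ running over \emph{all} closed points, and relations $\amalg_{x \in X^{(d-1)}} H^{d-1}_x(X,\sK^M_{d,X})$; at points lying in $X_{\rm sing}$ (singular closed points, and codimension $d-1$ points such as generic points of curves contained in $X_{\rm sing}$) these local cohomology groups are \emph{not} Milnor $K$-groups of residue fields. Hence your proposed left inverse $\psi$ is not even defined on the generators, and the relation subgroup is not ``divisors of rational functions on arbitrary curves,'' so there is no moving lemma to prove: the objects you want to compare are not cycle-theoretic in the first place. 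Moreover, injectivity here is not a question of geometric position; it encodes torsion phenomena. The paper instead composes $\rho_X$ with the natural map to $K_0(X)$ (Lemma~\ref{lem:CCM-K*}) and proves injectivity of the cycle class map $\lambda_X\colon \CH^{LW}_0(X)\to K_0(X)$: in the affine case this is Krishna's solution of Murthy's conjecture, and in the projective case one first reduces to $X$ normal via the normalization (Lemma~\ref{lem:normalization}, using $\codim_X X_{\rm sing}\ge 2$), then combines Levine's rational injectivity with the Roitman torsion theorem of Krishna--Srinivas through an albanese/Poincar\'e-bundle retraction (Theorem~\ref{thm:K-inj-proj}). These torsion inputs, which are exactly where $k=\ov{k}$ enters in an essential way, cannot be produced by the Bertini and moving arguments you propose.
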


In our proof of the isomorphism, 
the assumption that $k$ is algebraically closed plays
a crucial role. Without this assumption, we can prove
\thmref{thm:Thm-1} for affine surfaces and for a modified version of
the Chow group (see \S~\ref{sec:Chow-sing}).

\begin{thm}\label{thm:Thm-2}
Let $X$ be a reduced affine surface over a perfect field $k$. 
Then there is a canonical isomorphism
\[
\rho_X: \CH_0(X) \xrightarrow{\cong} H^2_{\nis}(X, \sK^M_{2,X}).
\]
\end{thm}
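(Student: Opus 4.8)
The plan is to reduce the statement to \thmref{thm:Thm-1} by first passing to an infinite base field, then establishing surjectivity from the cycle class map of \thmref{thm:Thm-1}, and finally proving injectivity by a moving lemma on the affine surface combined with a matching of the defining relations along the singular locus.

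\emph{Reduction to infinite $k$.} If $k$ is finite I would base change along the two pro-$\ell$ towers $k_{\ell_i} = \bigcup_n k_{\ell_i^n}$ for a pair of distinct primes $\ell_1, \ell_2$; since $k$ is perfect each $k_{\ell_i}$ is an infinite perfect field, and all the finite subextensions are separable. Both functors $X \mapsto \CH_0(X)$ and $X \mapsto H^2_{\nis}(X, \sK^M_{2,X})$ commute with the resulting filtered colimit of base fields (the transition maps $X_{k_{\ell_i^{n+1}}} \to X_{k_{\ell_i^n}}$ are affine), so $\rho_{X_{k_{\ell_i}}}$ is an isomorphism by the infinite-field case. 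Using finite flat pullback and pushforward along $X_{k_{\ell_i^n}} \to X$, with $\pi_* \pi^* = \ell_i^n$, a class in $\ker \rho_X$ (respectively a class of $H^2$ missing from the image) dies over some finite $k_{\ell_i^n}$ and is therefore $\ell_i$-power torsion; as this holds for two coprime primes, both $\ker \rho_X$ and $\cok \rho_X$ vanish. Henceforth I assume $k$ infinite and perfect.

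\emph{Surjectivity and the presentation of the target.} Over an infinite perfect field, \thmref{thm:Thm-1} supplies a canonical surjection onto $H^2_{\nis}(X, \sK^M_{2,X})$, and through the natural comparison between the Levine--Weibel group and the modified group $\CH_0(X)$ (through which the cycle class map factors) surjectivity of $\rho_X$ follows. For injectivity I would make the target explicit: since $\dim X = 2$ the Nisnevich cohomological dimension is $2$, so $H^2$ is the top cohomology, and the niveau spectral sequence for $\sK^M_{2,X}$ (as in the proof of \thmref{thm:Thm-1}) presents it as
\[
\bigoplus_{x \in X_{(1)}} K^M_1(k(x)) \xrightarrow{\;\del\;} \bigoplus_{x \in X_{(0)}} \Z \surj H^2_{\nis}(X, \sK^M_{2,X}).
\]
On the regular locus $X_{\mathrm{reg}}$ this complex is exact by Kerz's Gersten resolution for the improved Milnor $K$-sheaf, so the entire problem is concentrated along the finite set $X_{\sing}$ of singular (closed) points of the affine surface.

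\emph{Injectivity.} Let a $0$-cycle lie in the kernel of $Z_0(X) \to H^2_{\nis}(X, \sK^M_{2,X})$. By the display it is a boundary $\del(\sum_i f_i)$ for finitely many rational functions $f_i$ on integral curves $C_i \subset X$ of dimension one. Exploiting that $X$ is affine, I would apply a Bertini-type moving lemma to replace the $C_i$ by reduced curves that are Cartier (locally principal) and in good position relative to $X_{\sing}$ and to the support of the cycle, transporting the units $f_i$ by the usual deformation argument. Passing to the normalizations $C_i^N$ and invoking Weil reciprocity then converts each $\divf(f_i)$ into an honest rational equivalence; the modified rational equivalence defining $\CH_0(X)$ is tailored precisely so that the points lying over $X_{\sing}$ are recorded correctly, which is what forces the Gersten boundary relation to coincide with rational equivalence by Cartier curves. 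Feeding these equivalences back shows the cycle is trivial in $\CH_0(X)$, giving injectivity of $\rho_X$.

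\emph{Main obstacle.} I expect the difficult step to be the last one: arranging the moving lemma so that the codimension-one curves carrying the Gersten relations can be taken Cartier and transverse to $X_{\sing}$ while keeping track of the units $f_i$, and then matching the boundary contribution over the singular points with the rational equivalence in the modified Chow group. Affineness is essential here, since it provides enough global functions for the moving lemma and makes $X_{\sing}$ a finite set of closed points; it is exactly this control over the behaviour along $X_{\sing}$, via normalization of curves and reciprocity, that substitutes for the algebraically closed hypothesis required in the projective case of \thmref{thm:Thm-1}.
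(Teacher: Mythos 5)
Your injectivity step contains the fatal gap, and it occurs at the very first move. The presentation
\[
\bigoplus_{x \in X_{(1)}} K^M_1(k(x)) \xrightarrow{\;\del\;} \bigoplus_{x \in X_{(0)}} \Z \surj H^2_{\nis}(X, \sK^M_{2,X})
\]
is not available for singular $X$. What the Cousin complex actually gives (\lemref{lem:Cousin-ex}) is a presentation whose terms are the support-cohomology groups $H^1_x(X,\sK^M_{2,X})$ and $H^2_x(X,\sK^M_{2,X})$, and the identification of these with $K^M_{2-q}(k(x))$ (see ~\eqref{eqn:Kato-map} and ~\eqref{eqn:Kerz-iso}) is valid only at points where $\sO_{X,x}$ is regular. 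A reduced affine surface can perfectly well have one-dimensional singular locus (e.g.\ $\Spec\, k[x,y,z]/(xy)$), so your assertion that affineness makes $X_{\rm sing}$ a finite set of closed points is false; there are codimension-one points at which $H^1_x(X,\sK^M_{2,X})$ is not $k(x)^{\times}$, and a class in the kernel of $\sZ_0(X) \to H^2_{\nis}(X,\sK^M_{2,X})$ is therefore not a boundary of ``rational functions on curves'' in any usable sense. Even granting a workable presentation, converting a Cousin boundary into a (modified) rational equivalence by ``moving, normalization and Weil reciprocity'' is exactly the direction that is not known how to do: the paper's key comparison (\propref{prop:Key}) goes the opposite way only -- rational equivalences die in cohomology -- and this is precisely why the Euler-class-group apparatus of \S~\ref{sec:ECG} exists.

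The paper's actual injectivity proof is $K$-theoretic and entirely different: by \lemref{lem:CCM-K*} one has $\kappa_X \circ \rho_X = \lambda_X$, where $\kappa_X: H^2_{\nis}(X,\sK_{2,X}) \to K_0(X)$ is injective for affine surfaces by \cite[Lemma~2.1]{Krishna-1} (and $H^2_{\nis}(X,\sK^M_{2,X}) \to H^2_{\nis}(X,\sK_{2,X})$ is an isomorphism by \cite[Proposition~14]{kerz10}), while $\lambda_X: \CH_0(X) \to K_0(X)$ is injective by \propref{prop:Sm-weak-ECG} and \thmref{thm:RR-ECG} -- the surface case exploiting $(d-1)! = 1$ -- for infinite $k$, with finite $k$ handled by the pro-$\ell$ transfer trick of \thmref{thm:RR-0-cycle}. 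Your proposal has no substitute for these inputs. Note also that the same injectivity of $\kappa_X$ is what makes $\rho_X$ well defined on the modified group $\CH_0(X)$: \thmref{thm:Thm-1} only kills $\sR^{LW}_0(X)$, and your appeal to ``the natural comparison'' does not kill the larger group $\sR_0(X)$ of relations coming from good curves that are finite over $X$ but not embedded. (Your pro-$\ell$ reduction of the kernel to infinite fields is fine and parallels the paper's own argument; the cokernel half of that reduction would need norm maps on Milnor $K$-cohomology compatible with $\rho$, but it can be avoided altogether since surjectivity over any perfect field follows directly from \cite[Theorem~2.5]{Kato-Saito}.)
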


\vskip .2cm

\subsection{Bloch's formula for Chow group with 
modulus}\label{sec:modulus}
Just as Bloch's higher Chow groups are the motivic cohomology which
describe algebraic $K$-theory of smooth schemes, the higher Chow groups
with modulus \cite{BSaito} are supposed to describe the relative
algebraic $K$-theory of the pair $(X,D)$, where $X$ is a 
smooth scheme and $D \subset X$ is an effective Cartier divisor. 
One of the primary goals of the program of connecting cycles with modulus
and relative $K$-theory is the proof of
Bloch-Quillen-Kato type formula for the Chow groups with modulus. 
As an application of 
Theorems~\ref{thm:Thm-1} and ~\ref{thm:Thm-2}, we solve this problem
for 0-cycles with modulus as follows.

\begin{thm}\label{thm:Thm-3}
Let $k$ be an algebraically closed field and let 
$X$ be a smooth quasi-projective 
scheme of dimension $d \ge 1$ over $k$. Let $D \subset X$ be an effective
Cartier divisor. Then there is a canonical isomorphism
\begin{equation}\label{eqn:Main-1}
\rho_{X|D}: \CH_0(X|D) \xrightarrow{\cong} H^d_{\nis}(X, \sK^M_{d,(X,D)})
\end{equation}
in the following cases.
\begin{enumerate} 
\item 
$X$ is affine.
\item
$X$ is projective and $D$ is integral.
\end{enumerate}
\end{thm}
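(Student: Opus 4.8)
The plan is to deduce both cases from \thmref{thm:Thm-1} by converting the problem of $0$-cycles with modulus on $(X,D)$ into a problem of Levine--Weibel $0$-cycles on the \emph{double}
\[
S_X = X \amalg_D X,
\]
obtained by gluing two copies of $X$ along $D$. This scheme is reduced of pure dimension $d$, it is affine (resp.\ projective) whenever $X$ is, and its singular locus is exactly $D$. Writing $\iota\colon X \inj S_X$ for one of the two closed immersions and $\nabla\colon S_X \to X$ for the fold map, so that $\nabla\circ\iota = \id_X$, the first step, following the comparison between $0$-cycles with modulus and Levine--Weibel $0$-cycles on the double, is to record the compatible splittings
\[
\CH^{LW}_0(S_X) \cong \CH_0(X)\oplus \CH_0(X|D), \qquad H^d_{\nis}(S_X,\sK^M_{d,S_X}) \cong H^d_{\nis}(X,\sK^M_{d,X})\oplus H^d_{\nis}(X,\sK^M_{d,(X,D)}),
\]
in which the first summand is cut out by $\iota_*$ and $\nabla_*$ and the complementary summand is the modulus group; on the cohomology side this rests on the fact that $\sK^M_{d,(X,D)}$ is the kernel of the restriction $\sK^M_{d,X}\to \sK^M_{d,D}$, together with the Mayer--Vietoris (conductor) sequence attached to the normalization $X\amalg X \to S_X$.

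Next I would verify that the surjection $\rho_{S_X}$ of \thmref{thm:Thm-1} respects both decompositions, so that under the identifications above it reads $\rho_X\oplus \rho_{X|D}$; this is a functoriality check for $\rho$ along the proper maps $\iota$ and $\nabla$. Since $X$ is smooth, $\rho_X$ is the classical isomorphism of \eqref{eqn:sm}, so proving \eqref{eqn:Main-1} becomes equivalent to proving that $\rho_{S_X}$ is an isomorphism. In case (1), $X$ affine makes $S_X$ affine, and \thmref{thm:Thm-1}(1) --- which needs no regularity hypothesis --- applies directly to give that $\rho_{S_X}$, hence $\rho_{X|D}$, is an isomorphism.

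Case (2) is the real difficulty and is where integrality of $D$ is used. Although $S_X$ is projective, it is \emph{not} regular in codimension one: it acquires a node along all of $D$, which has codimension one in $S_X$, so \thmref{thm:Thm-1}(2) cannot be invoked as a black box. The main obstacle is thus to establish injectivity of $\rho_{S_X}$ for this single non-R1 scheme. I would do this through the normalization $\pi\colon X\amalg X \to S_X$, whose conductor is the reduced divisor $D$, giving an abstract blow-up square with $D\amalg D$ lying over $X\amalg X$ and $D$ lying over $S_X$. Comparing the Mayer--Vietoris sequences for $\CH^{LW}_0$ and for $H^d_{\nis}(-,\sK^M_d)$ attached to this square with the classical isomorphism on the smooth projective normalization $X\amalg X$, the injectivity of $\rho_{S_X}$ reduces to controlling the terms supported on $D$; here the hypothesis that $D$ be integral is what renders its single codimension-one contribution tractable and lets the argument behind \thmref{thm:Thm-1}(2) be pushed through for $S_X$. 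With $\rho_{S_X}$ shown to be an isomorphism, the same holds for the summand $\rho_{X|D}$, which proves case (2).
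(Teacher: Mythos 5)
Your setup --- passing to the double $S_X = X\amalg_D X$, splitting both $\CH^{LW}_0(S_X)$ and $H^d_{\nis}(S_X,\sK^M_{d,S_X})$ into a ``smooth'' summand and a ``modulus'' summand, checking that $\rho_{S_X}$ respects the splittings, and settling case (1) by applying \thmref{thm:Thm-1}(1) to the affine scheme $S_X$ --- is exactly the paper's route (\thmref{thm:BS-main}, \lemref{lem:Milnor-fiber}, \propref{prop:BQM-M}). One small caution: the decomposition of \thmref{thm:BS-main} is stated for the modified group $\CH_0(S_X)$, not for $\CH^{LW}_0(S_X)$; identifying the two is \lemref{lem:LW=BK}, which is available in your cases precisely because $D$ integral implies $D$ reduced. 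You also correctly spot that $S_X$ is never regular in codimension one, so \thmref{thm:Thm-1}(2) cannot be quoted for it as a black box.

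But your treatment of case (2) --- which is the entire difficulty --- has a genuine gap. There is no Mayer--Vietoris sequence for $\CH^{LW}_0$ or for $H^d_{\nis}(-,\sK^M_{d})$ attached to the conductor square of $\pi\colon X\amalg X\to S_X$ that could ``reduce injectivity of $\rho_{S_X}$ to terms supported on $D$''. Worse, $\pi^*\colon \CH^{LW}_0(S_X)\to \CH_0(X)\oplus\CH_0(X)$ is very far from injective: on the modulus summand it is the canonical map $\CH_0(X|D)\to\CH_0(X)$, whose kernel is exactly the object the theorem is about, so no comparison with the smooth normalization alone can conclude. What the paper actually does (\thmref{thm:BQD}) is prove injectivity of the cycle class map $\lambda_{S_X}\colon \CH^{LW}_0(S_X)\to K_0(S_X)$: rationally this is Levine's theorem; on torsion it invokes the Roitman-type theorem of \cite{Krishna-3}, which says (for $D$ reduced) that the albanese $A^d(S_X)$ is a semi-abelian variety computing $\CH^{LW}_0(S_X)_{\rm tor}$; and then --- this is where integrality of $D$ enters, and the mechanism your sketch is missing --- one shows the torus part of $A^d(S_X)$ is trivial by an explicit lattice computation: the torus is dual to the lattice $\Lambda(S_X)={\rm Ker}\bigl(\Z[D]\oplus\Z[D]\to NS(S^N_X)\oplus{\rm Div}(S_X)\bigr)$, and this kernel vanishes because an integral effective divisor $D$ is not numerically trivial (a smooth connected curve through one point of $D$ and one point off $D$, produced by the Bertini theorem of Altman--Kleiman, meets $D$ positively). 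Hence $A^d(S_X)\to A^d(S^N_X)$ is an isomorphism, torsion injects into $K_0(S_X)$, and the theorem follows. Your proposal names the right hypothesis but supplies no argument at the one place the paper needs one.
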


If $d \le 2$, then part (2) of the above theorem holds without any condition
on $D$ and this was shown in \cite{BK}.
If $k$ is not necessarily algebraically closed, we can prove the following.

\begin{thm}\label{thm:Thm-4}
Let $X$ be a smooth affine surface over a perfect field $k$ and
let $D \subset X$ be an effective Cartier divisor. 
Then there is a canonical isomorphism
\[
\rho_{X|D}: \CH_0(X|D) \xrightarrow{\cong} H^2_{\nis}(X, \sK^M_{2,(X,D)}).
\]
\end{thm}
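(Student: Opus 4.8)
The plan is to reduce \thmref{thm:Thm-4} to \thmref{thm:Thm-2} by means of the double construction. Given the pair $(X,D)$, let $S_X := X \amalg_D X$ be the double of $X$ along $D$, obtained by gluing two copies $X_+$ and $X_-$ of $X$ along the closed subscheme $D$; on coordinate rings this is the fibre product $\cO(X)\times_{\cO(D)}\cO(X)$. Since $X$ is a smooth affine surface and $D$ is an effective Cartier divisor, hence pure of dimension one, $S_X$ is a reduced affine surface over $k$ whose singular locus is exactly $D$, where the two components meet. In particular $S_X$ is a reduced affine surface to which \thmref{thm:Thm-2} applies, so the map
\[
\rho_{S_X}\colon \CH_0(S_X) \xrightarrow{\cong} H^2_{\nis}(S_X,\sK^M_{2,S_X})
\]
is an isomorphism.

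The two closed immersions $\iota_{\pm}\colon X \hookrightarrow S_X$ and the fold map $\nabla\colon S_X \to X$ (with $\nabla\circ\iota_{\pm}=\id$) furnish a splitting on $0$-cycles. First I would establish the cycle-side decomposition
\[
\CH_0(S_X)\ \cong\ \CH_0(X)\ \oplus\ \CH_0(X|D),
\]
in which $\CH_0(X)$ is the summand split off by $\nabla_*$ and $\iota_{+*}$, while the complementary summand $\ker(\nabla_*)$ is canonically identified with $\CH_0(X|D)$. This identification is the technical heart: it requires a moving lemma on $S_X$ allowing one to represent every class by $0$-cycles supported on the smooth locus $S_X\setminus D = (X\setminus D)_+ \amalg (X\setminus D)_-$, followed by a comparison of the Levine--Weibel rational equivalence on $S_X$ with the modulus condition defining $\CH_0(X|D)$ --- the point being that a rational function on a curve in $S_X$ meeting both components translates, component by component, into a rational function on a curve in $X$ satisfying the prescribed congruence along $D$.

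Next I would establish the parallel cohomological decomposition
\[
H^2_{\nis}(S_X,\sK^M_{2,S_X})\ \cong\ H^2_{\nis}(X,\sK^M_{2,X})\ \oplus\ H^2_{\nis}(X,\sK^M_{2,(X,D)}),
\]
arising from the conductor (Mayer--Vietoris) square for $D\to X_{\pm}\to S_X$ together with the defining short exact sequence of the relative sheaf $\sK^M_{2,(X,D)}$ on $X$; here $\nabla$ again splits off the first summand, and the relative Milnor $K$-sheaf is by construction the kernel controlling the second. Since the cycle class map is natural for proper pushforward and flat pullback, $\rho$ is compatible with both splittings, so under these identifications $\rho_{S_X}$ becomes the block map $\rho_X \oplus \rho_{X|D}$.

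Finally, $\rho_X$ is an isomorphism (by \thmref{thm:Thm-2}, or the classical smooth case \eqref{eqn:sm}), and $\rho_{S_X}$ is an isomorphism as noted above; since $\rho_{S_X}=\rho_X\oplus\rho_{X|D}$ respects the direct-sum decompositions, it follows formally that $\rho_{X|D}$ is an isomorphism, proving the theorem. The main obstacle is the cycle-side step: proving the splitting $\CH_0(S_X)\cong\CH_0(X)\oplus\CH_0(X|D)$ and verifying that it matches the cohomological splitting under $\rho_{S_X}$, which is precisely where the moving lemma on the singular surface $S_X$ and the exact translation between Levine--Weibel and modulus rational equivalence must be carried out.
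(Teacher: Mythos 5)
Your proposal follows essentially the same route as the paper: there, \thmref{thm:Thm-4} is deduced (as \thmref{thm:Final-surface}(4)) from Bloch's formula for the reduced affine surface $S_X = X\amalg_D X$ (\thmref{thm:Thm-2}) together with the split exact sequence $0 \to \CH_0(X|D) \to \CH_0(S_X) \to \CH_0(X) \to 0$ and its cohomological counterpart, concluding by exactly the comparison of splittings you describe. The one practical difference is that the cycle-side splitting you call the ``technical heart'' is not reproved in the paper but quoted as \thmref{thm:BS-main} (from \cite{BK}), while the matching cohomological splitting comes from the sheaf sequence ~\eqref{eqn:MS*-0} together with \lemref{lem:Milnor-fiber}, whose kernel-supported-on-$D$ argument identifies $H^2_{\nis}(S_X,\sK^M_{2,(S_X,X_-)})$ with $H^2_{\nis}(X,\sK^M_{2,(X,D)})$.
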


\vskip .2cm

\subsection{The cycle class map}\label{sec:CCM*}
For a Noetherian scheme $X$, 
there is a cycle class map $\lambda_X: \CH^{LW}_0(X) \to K_0(X)$
(see \cite[Proposition~2.1]{LW}). Let $F^dK_0(X)$ be its image.
If $X$ is a smooth scheme and $D \subset X$ is an effective Cartier
divisor, there is a cycle class map $\lambda_{X|D}: \CH_0(X|D) \to K_0(X,D)$
(see \cite{BK} and \S~\ref{sec:CCM-M}).
As an application of the Chern class maps,  
Grothendieck \cite[\S~4.3]{Grothendieck} proved that
for a smooth quasi-projective scheme $X$ of dimension $d\geq 1$ over a field, 
the kernel of the cycle class 
map $\lambda_X$ is a torsion group of exponent $(d-1)!$.

The second principal aim of this text is to generalize this result to
0-cycles on singular schemes and 0-cycles with modulus on smooth schemes
as follows. When $k = \ov{k}$, part (1) of the following theorem
gives an independent proof of
an old unpublished result of Levine (see \cite[Corollary~5.4]{Levine-5}) 
for affine schemes. 
When $k$ is not algebraically closed,this result is completely new.
Recall that an affine algebra over $k$ means a finite type $k$-algebra.

\begin{thm}\label{thm:Thm-6}
Let $A$ be a geometrically reduced affine algebra of dimension $d \ge 1$ over a field $k$. Assume that either $k$ is algebraically
closed or $(d-1)! \in k^{\times}$. Let $X = \Spec(A)$ and let $D \subset X$ be an
effective Cartier divisor. Then the following hold.  
\begin{enumerate}

\item If $k$ is an infinite field, then the kernel of the cycle class
map $\lambda_A:\CH^{LW}_0(A) \to K_0(A)$ is a torsion group of exponent
$(d-1)!$.
\item 
For arbitrary fields, the kernel of the cycle class
map $\lambda_A:\CH_0(A) \to K_0(A)$ is a torsion group of exponent
$(d-1)!$, where as before $\CH_0(A)$ is a modified version of the Chow group. 
\item If $k$ is perfect and $X$ is smooth, then the kernel of the  cycle class map 
$\lambda_{X|D}: \CH_0(X|D) \to K_0(X,D)$ 
is a torsion group of exponent
$(d-1)!$.
\end{enumerate}
\end{thm}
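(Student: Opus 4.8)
The plan is to construct, in each of the three settings, a top Chern class homomorphism running backwards from $K$-theory to the relevant Chow group of $0$-cycles, and to prove that its composite with the cycle class map is multiplication by $\pm(d-1)!$; the exponent bound on the kernel is then immediate. The underlying input is Grothendieck's local computation at a regular point: if $x\in X$ is a closed point whose local ring $\cO_{X,x}$ is regular of dimension $d$, then the Koszul resolution of its residue field gives the standard values $c_i([\cO_x])=0$ for $0<i<d$ and
\[
c_d([\cO_x]) \;=\; (-1)^{d-1}(d-1)!\,[x].
\]
Since $\lambda_X([x])=[\cO_x]$, and likewise for the modified and relative cycle class maps, once a Chern class $c_d$ valued in the Chow group and restricting to the classical one on $X_{\sm}$ is available, the composite $c_d\circ\lambda$ equals $(-1)^{d-1}(d-1)!\cdot\id$; hence any class killed by $\lambda$ is killed by $(d-1)!$.

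First I would construct such a $c_d$ for part (1). As $X=\Spec(A)$ is affine, every vector bundle is globally generated; after adding trivial summands to reach rank $\ge d$, one may represent $c_d(E)$ by the degeneracy locus of a generic homomorphism $\cO_X^{\,r-d+1}\to E$ (Thom--Porteous), a cycle of codimension $d$ on the $d$-dimensional $X$. When $k$ is infinite, a Bertini-type general position argument shows that for a generic choice this locus is finite, reduced, and contained in the regular locus $X_{\sm}$, hence defines an element of $\CH^{LW}_0(X)$; this is exactly where the infinitude of $k$ is used. I would then verify additivity on short exact sequences and independence of the generic choice modulo Levine--Weibel rational equivalence, obtaining $c_d\colon K_0(A)\to\CH^{LW}_0(A)$ which restricts to the classical Chern class over $X_{\sm}$, and combine it with the displayed local identity to conclude.

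For part (2), over an arbitrary and possibly finite field the Bertini step may fail, and this is precisely the difficulty the modified Chow group $\CH_0(A)$ of \S\ref{sec:Chow-sing} is built to overcome: its rational equivalence is relaxed just enough that the degeneracy-locus construction still yields well-defined classes, so the same retraction argument applies. The hypothesis $(d-1)!\in k^{\times}$ enters here, to clear the denominators that appear when the top Chern class is realized over a non-closed field through a Riemann--Roch computation, ensuring that the resulting $0$-cycle class is defined integrally. For part (3) I would reduce the modulus problem to the singular one by passing to the double $Y=X\cup_D X$, the reduced affine scheme of dimension $d$ obtained by gluing two copies of $X$ along $D$; since $\CH_0(X|D)$ and $K_0(X,D)$ are retracts of the corresponding groups on $Y$ compatibly with the cycle class maps, the exponent bound for $Y$ furnished by parts (1) and (2) descends at once to $\lambda_{X|D}$.

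The main obstacle is not the local Chern-class formula, which is classical, but the global construction of $c_d$ as a homomorphism into the rigid Levine--Weibel (respectively modulus) Chow group rather than into sheaf cohomology. One must simultaneously arrange the degeneracy loci to lie in $X_{\sm}$ and show that the indeterminacy in this choice is annihilated by the very restrictive equivalence defining $\CH^{LW}_0(X)$, which permits only rational functions along Cartier curves meeting $X_{\sm}$ in the prescribed manner. Establishing this moving lemma is the technical heart of the argument: over finite fields it forces the passage to the modified group of \S\ref{sec:Chow-sing}, in the relative case it is absorbed into the doubling construction, and it is the source of the hypothesis $(d-1)!\in k^{\times}$ required to keep the construction integral over non-closed fields.
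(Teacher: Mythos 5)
Your strategy---constructing a top Chern class map backwards from $K$-theory and showing $c_d\circ\lambda_A=\pm(d-1)!\cdot\id$---is genuinely different from the paper's proof, which never builds Chern classes at all: it works with the Bhatwadekar--Sridharan Euler class groups, introduces a ``smooth'' variant $E^s_0(A)$ generated by regular ideals, proves $E^s_0(A)\cong E_0(A)$ via the Murthy--Swan Bertini theorems and the Segre exact sequence of van der Kallen and Das--Zinna, bounds the kernel of $E^s_0(A)\to K_0(A)$ using Murthy's theorem, the Das--Mandal identity $(J)=(d-1)!\,(I)$, and cancellation theorems, and only then maps $E^s_0(A)$ onto $\CH^{LW}_0(A)$. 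Your route is essentially Levine's old unpublished argument, which the paper cites and explicitly sets out to replace with an independent proof. The problem is that your sketch conceals exactly the step where that argument is hard, and the gaps are genuine.

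First, $c_d$ is not additive, so the homomorphism $c_d\colon K_0(A)\to\CH^{LW}_0(A)$ whose ``additivity on short exact sequences'' you plan to verify does not exist as stated: by the Whitney formula, $c_d(\alpha+\beta)=c_d(\alpha)+c_d(\beta)+\sum_{0<i<d}c_i(\alpha)c_{d-i}(\beta)$, and the cross terms can only be discarded on the subgroup of classes whose lower Chern classes vanish (morally $F^dK_0(A)$). To even formulate this on a singular $X$ you need lower Chern classes valued in a suitable theory, products, a Whitney formula, and the vanishing of $c_i$ for $0<i<d$ on $F^dK_0(A)$---that is, Riemann--Roch machinery for singular varieties. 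This is the content of Levine's entire manuscript, not a routine verification, and it is available only over algebraically closed fields; note also that the local identity $c_d([\sO_x])=(-1)^{d-1}(d-1)!\,[x]$ is meaningless until that global homomorphism has been constructed, so the two steps cannot be separated. Second, your argument never actually uses the hypothesis ``$k=\ov{k}$ or $(d-1)!\in k^{\times}$''; if your sketch were complete it would prove the theorem for every field. That should be a red flag: in the paper this hypothesis enters essentially, through Suslin's cancellation theorem when $k=\ov{k}$ and through Bass cancellation together with Bhatwadekar--Sridharan's Euler classes of stably free modules when $(d-1)!\in k^{\times}$; the assertion that $(d-1)!\in k^{\times}$ ``clears denominators'' is not an identifiable mechanism anywhere in your construction. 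Third, for part (2) over finite fields the modified Chow group does not by itself rescue general position---Bertini fails over finite fields no matter which equivalence relation is imposed afterwards---and the paper instead deduces the finite-field case by a transfer/descent argument over two pro-$\ell$ extensions of coprime degree using \cite[Proposition~6.1]{BK}. Your part (3), reducing to the double $S_X=X\amalg_D X$ and using Milnor excision $K_0(S_X,X_-)\cong K_0(X,D)$, does coincide with the paper's argument, but it rests on the unestablished parts (1) and (2).
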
 

\vskip .3cm
 
We now give several applications of the above results.

\subsection{Motivic cohomology with modulus}

Let $X$ be a smooth quasi-projective scheme of pure dimension $d$ over a field $k$ and let $D \subset X$ be an effective Cartier divisor. Let $H^i_{\sM}(X|D, \Z(r))$ denote the motivic cohomology of the pair $(X,D)$.
This is defined as the Nisnevich hypercohomology of the presheaf of
cycle complexes with modulus. 
There exists a canonical map $\CH^r(X|D, 2r-i) \to H^i_{\sM}(X|D, \Z(r))$
(see \S~\ref{sec:MCM}). 
It is known that this natural map is an isomorphism if $D= \emptyset$. 
For an arbitrary effective divisor $D$, one does not know much about the
nature of this map. As an application of Bloch's formula for 0-cycles with 
modulus, we show the following.

\begin{thm} \label{thm:CM-MCM-1}
Let $k$ be an algebraically closed field.  Let $X$ be a smooth quasi-projective scheme of pure dimension $d\geq 1$ over $k$ and let $D \subset X$ be an effective Cartier divisor such that $D_{{\rm red}}$ is a simple normal crossing divisor. 
 Then the map 
$\can_{\sM}: \CH^d(X|D) \to H^{2d}_{\sM}(X|D, \Z(d))$ of ~\eqref{eqn:CM-MCM} is surjective in the following cases. 
\begin{enumerate}
\item $d\leq 2$. 
\item $X$ is affine.
\item ${\rm char}(k) = 0$ and $X$ is projective.  
\item ${\rm char}(k) > 0$,  $X$ is projective and $D$ is reduced.
\end{enumerate}
Moreover, it is an isomorphism if in addition, $D$ is connected and smooth. 
\end{thm}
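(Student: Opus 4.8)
The plan is to reduce \thmref{thm:CM-MCM-1} to the $0$-cycle version of Bloch's formula already established in \thmref{thm:Thm-3}, by comparing the cycle-theoretic group $\CH^d(X|D)$ with its motivic incarnation $H^{2d}_{\sM}(X|D, \Z(d))$ through the Nisnevich-sheafified Milnor $K$-theory. The key observation is that there is a commutative triangle relating the canonical map $\can_{\sM}$, the Bloch-formula isomorphism $\rho_{X|D}\colon \CH_0(X|D) \xrightarrow{\cong} H^d_{\nis}(X, \sK^M_{d,(X,D)})$ from \thmref{thm:Thm-3}, and a comparison map from the top motivic cohomology of the pair to this same Nisnevich cohomology group. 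Concretely, both $\CH^d(X|D) = \CH_0(X|D)$ (in top codimension) and $H^{2d}_{\sM}(X|D,\Z(d))$ should map to $H^d_{\nis}(X, \sK^M_{d,(X,D)})$, and once the triangle is shown to commute, surjectivity of $\can_{\sM}$ follows provided the comparison map $H^{2d}_{\sM}(X|D,\Z(d)) \to H^d_{\nis}(X,\sK^M_{d,(X,D)})$ is injective.

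First I would recall the construction of $\can_{\sM}$ and the definition of motivic cohomology with modulus as Nisnevich hypercohomology of the presheaf of cycle complexes with modulus. The essential input is a niveau/coniveau spectral sequence or, equivalently, a Gersten-type resolution of the relative Milnor $K$-sheaf $\sK^M_{d,(X,D)}$. For the absolute case this is the classical identification of $\CH^d(X)$ with $H^d_{\zar}(X,\sK^M_{d,X})$ via the Gersten complex, whose top cohomology is exactly $Z^d(X)$ modulo the relations coming from $K^M$-data on curves. I would set up the modulus analogue: the relevant complex in weight $d$ computing $H^{2d}_{\sM}(X|D,\Z(d))$ degenerates in the top degree to the free group $Z_0(X\setminus D)$ on closed points away from $D$, modulo relations indexed by curves meeting $D$ properly and equipped with modulus conditions. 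The map to $H^d_{\nis}(X,\sK^M_{d,(X,D)})$ then arises from the edge homomorphism of the associated spectral sequence.

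The main technical step, and the part I expect to be the principal obstacle, is controlling the difference between the \emph{cycle-theoretic} relations (modulus conditions on rational functions along curves, as in the definition of $\CH_0(X|D)$) and the \emph{motivic} relations coming from the full cycle complex with modulus; these agree rationally and in low degrees but the surjectivity of $\can_{\sM}$ requires showing that every motivic class is already represented by an honest $0$-cycle with modulus. This is precisely where the hypotheses on $D$ and on $X$ enter: in the affine case (2) one can invoke a Bertini-type moving lemma to present any motivic class by a cycle supported away from $D_{\red}$, while in the projective cases (3) and (4) the additional hypotheses on $\Char(k)$ and on $D$ (reduced, when $\Char(k)>0$) are needed to run the moving argument and to guarantee that the comparison map $\rho_{X|D}$ of \thmref{thm:Thm-3} applies. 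Case (1), $d \le 2$, is handled separately by the surface results quoted from \cite{BK}.

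Finally, for the ``moreover'' clause asserting that $\can_{\sM}$ is an \emph{isomorphism} when $D$ is in addition connected and smooth, I would upgrade surjectivity to bijectivity by exhibiting injectivity of the comparison map $H^{2d}_{\sM}(X|D,\Z(d)) \to H^d_{\nis}(X, \sK^M_{d,(X,D)})$. When $D$ is smooth and connected, the higher cohomology sheaves of the modulus motivic complex vanish in the relevant range, so that the coniveau spectral sequence degenerates and the edge map becomes an isomorphism; combined with $\rho_{X|D}$ this identifies $\CH^d(X|D)$ with $H^{2d}_{\sM}(X|D,\Z(d))$ and forces $\can_{\sM}$ to be inverse to $\rho_{X|D}^{-1}$ composed with this identification. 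The smoothness and connectedness of $D$ are what make the Gersten resolution of $\sK^M_{d,(X,D)}$ exact, which is the crux of the injectivity.
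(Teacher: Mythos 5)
Your overall shape --- compare $\can_{\sM}$ with the Bloch--Quillen map $\rho_{X|D}$ of \thmref{thm:Thm-3} through Milnor $K$-cohomology --- matches the paper, but the two technical devices you rely on do not exist, and the input that actually makes the argument work is absent from your proposal. The paper's proof rests on the theorem of R{\"u}lling and Saito (\cite[Theorem~3.8]{RS}), which identifies $H^{2d}_{\sM}(X|D,\Z(d))$ with $H^d_{\nis}(X,\sK^M_{d,X|D})$, where $\sK^M_{d,X|D}$ is the R{\"u}lling--Saito relative sheaf of ~\eqref{eqn:def-RKMKThy} --- a \emph{different} sheaf from the Kato--Saito sheaf $\sK^M_{d,(X,D)}$ appearing in \thmref{thm:Thm-3}. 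You propose instead to compute $H^{2d}_{\sM}(X|D,\Z(d))$ by a Gersten/coniveau degeneration whose top term is the group of $0$-cycles off $D$ modulo modulus relations, and to prove surjectivity by a moving lemma presenting every motivic class by an honest $0$-cycle. Neither tool is available: Gersten-type resolutions fail in exactly this relative/singular setting (this failure is the organizing difficulty of the whole paper, see \secref{sec:CCM}), and moving lemmas for cycles with modulus are not known in this generality; asserting that the top degree of the motivic complex ``degenerates'' to $\sZ_0(X\setminus D)$ modulo modulus relations is tantamount to assuming the theorem. Moreover, in the paper the case hypotheses (1)--(4) enter not through any moving argument but through \lemref{lem:LW=BK} (when is $\CH^{LW}_0(S_X)\to\CH_0(S_X)$ an isomorphism), which feeds into the surjectivity of $\rho_{X|D}$ in \propref{prop:BQM-M}.

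There is also a directional flaw in your comparison triangle. The two relative sheaves are related by an inclusion $\sK^M_{d,(X,D)}\inj \sK^M_{d,X|D}$ (\lemref{lem:RS-RMKthy}) whose cokernel is supported on $D$, so the induced map on top Nisnevich cohomology is a \emph{surjection} $H^d_{\nis}(X,\sK^M_{d,(X,D)})\surj H^d_{\nis}(X,\sK^M_{d,X|D})$ which need not be injective; consequently there is no natural map from $H^{2d}_{\sM}(X|D,\Z(d))$ into $H^d_{\nis}(X,\sK^M_{d,(X,D)})$ of the kind your argument requires, and the injectivity you would need is essentially equivalent to the isomorphism statement you are trying to prove. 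The paper instead chases the square ~\eqref{eqn:CM-MCM*-0}: the composite $\rho_{\sM,X|D}\circ\can_{\sM}$ equals the surjection above composed with the surjective map $\rho_{X|D}$, and since $\rho_{\sM,X|D}$ is the R{\"u}lling--Saito isomorphism, $\can_{\sM}$ is surjective --- no injectivity claim is needed for cases (1)--(4). For the ``moreover'' clause the paper proves no vanishing of higher cohomology sheaves of the motivic complex; it simply notes that when $D$ is connected and smooth (hence integral), \thmref{thm:Thm-3} makes the top arrow of that square an isomorphism and \lemref{lem:RS-RMKthy} makes the right arrow one, because the sheaf inclusion is then an isomorphism.
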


If $k$ is a finite field and $D_{{\rm red}}$ is a simple normal crossing divisor, then R{\"u}lling and Saito \cite{RS} proved that the map $\can_{\sM}$ induces an isomorphism of the pro-abelian groups  
$\can_{\sM} :\textnormal{ `` $ \underset{n}{\varprojlim}$" } 
\CH^d(X|nD) \to \textnormal{ `` $ \underset{n}{\varprojlim}$" } 
H^{2d}_{\sM}(X|nD, \Z(d))$.
Using the results of R{\"u}lling and Saito, and \thmref{thm:Thm-3} as new ingredient, 
we can prove this isomorphism for affine schemes
and for quasi-projective surfaces over algebraically closed fields. More precisely, we have

\begin{thm} \label{thm:CM-MCM-2}
Let $k$ be an algebraically closed field. Let $X$ be a smooth quasi-projective scheme of pure dimension $d\geq 1$ over $k$ and let $D \subset X$ be a simple normal crossing divisor.
Then the map of pro-abelian groups $\can_{\sM} :\prolim \CH^d(X|n D) \to \prolim  H^{2d}_{\sM}(X|n D, \Z(d))$
is an isomorphism in the following cases.
\begin{enumerate}
\item $X$ is affine. 
\item $X$ is a quasi-projective scheme and $d=2$. 
\end{enumerate}
\end{thm}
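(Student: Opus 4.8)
The strategy is to factor the pro-map $\can_{\sM} \colon \prolim \CH^d(X|nD) \to \prolim H^{2d}_{\sM}(X|nD, \Z(d))$ through the pro-system of the Nisnevich cohomology groups with modulus coefficients, and then combine the two key inputs advertised in the excerpt: the isomorphism of pro-abelian groups of R{\"u}lling--Saito \cite{RS} (which is stated for finite fields, but whose relevant half--a comparison between the motivic cohomology pro-system and the Milnor $K$-sheaf-cohomology pro-system--I expect to be available over any algebraically closed field after base change, since it is built from the structure of the relative de Rham--Witt / cycle complexes rather than from arithmetic of the base) and \thmref{thm:Thm-3}, which gives the individual isomorphisms $\rho_{X|nD}\colon \CH_0(X|nD) \xrightarrow{\cong} H^d_{\nis}(X, \sK^M_{d,(X,D)})$ precisely in the affine case and the projective-with-integral-divisor case. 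The first step is therefore to assemble, for each fixed $n$, the square relating $\can_{\sM}$, $\rho_{X|nD}$, and the analogous canonical comparison map $H^d_{\nis}(X,\sK^M_{d,(X,nD)}) \to H^{2d}_{\sM}(X|nD,\Z(d))$, and to check that these squares are compatible with the transition maps as $n$ varies, so that they fit into a commutative diagram of pro-abelian groups.

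For case (1), where $X$ is affine, \thmref{thm:Thm-3}(1) gives that each $\rho_{X|nD}$ is an isomorphism. Hence $\prolim \rho_{X|nD}$ is an isomorphism of pro-abelian groups, and it suffices to show that the comparison $\prolim H^d_{\nis}(X,\sK^M_{d,(X,nD)}) \to \prolim H^{2d}_{\sM}(X|nD,\Z(d))$ is an isomorphism of pro-systems. This last statement is exactly the content of the R{\"u}lling--Saito comparison reduced to the level of the top Milnor $K$-sheaf: I would argue that their pro-isomorphism, which compares the full motivic pro-complex with the relative Milnor $K$-theory pro-sheaf, specializes in top cohomological degree to the desired statement, and that the hypothesis of a finite base field is not used in this purely sheaf-theoretic/motivic comparison (as opposed to their eventual application). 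Chaining the two pro-isomorphisms then gives case (1).

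For case (2), where $d=2$ and $X$ is a quasi-projective surface, the individual $\rho_{X|nD}$ are supplied either by \thmref{thm:Thm-3} in the projective case or, more robustly in dimension two without a projectivity or integrality hypothesis, by \thmref{thm:Thm-4} after passing to the affine setting; since $d=2$, part (1) of \thmref{thm:CM-MCM-1} already gives surjectivity of $\can_{\sM}$ at each finite level, so the remaining work is injectivity at the pro-level. Here I would use that in dimension two the surface case is where the Chow-group techniques of the paper are strongest, and combine the surjectivity of each $\can_{\sM}$ with the Milnor $K$-sheaf comparison to upgrade the finite-level surjections to a pro-isomorphism, again invoking the R{\"u}lling--Saito pro-comparison for the target side.

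\textbf{The main obstacle.} The genuinely delicate point will be justifying that the R{\"u}lling--Saito pro-isomorphism--stated over finite fields--transfers to algebraically closed fields, and that it can be read off at the level of the top Milnor $K$-sheaf cohomology rather than only for the full motivic complex. The danger is that their argument uses class-field-theoretic or arithmetic duality input specific to finite fields; if so, one must isolate the sheaf-level comparison $\sK^M_{d,(X,nD)} \simeq \sH^d(\Z(d)_{X|nD})$ (as pro-objects) and verify it by a direct motivic argument--most plausibly by reducing to the known comparison for the empty divisor together with a localization/Gersten-type analysis of the modulus filtration--independently of the base field. Ensuring the transition maps in $n$ match up strictly (rather than only up to a shift of indices, which is harmless for pro-objects but must be accounted for) is the second place where care is required.
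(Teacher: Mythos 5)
Your overall architecture --- factor $\can_{\sM}$ through the pro-system $\prolim H^d_{\nis}(X,\sK^M_{d,(X,nD)})$, use \thmref{thm:Thm-3} for the affine case and the surface case of Bloch's formula with modulus for the Chow-theoretic side, and an R{\"u}lling--Saito comparison for the motivic side --- is the same as the paper's. But the step you yourself flag as ``the main obstacle'' is a genuine gap, and your proposed route around it points in the wrong direction. The paper never transfers the finite-field pro-isomorphism of R{\"u}lling--Saito to algebraically closed fields (your worry is well founded: that theorem does rest on arithmetic input). What it uses instead are two separate, field-independent facts which your proposal conflates into one: (i) the \emph{finite-level} comparison of \cite[Theorem~3.8]{RS}, namely $H^{2d}_{\sM}(X|nD,\Z(d)) \xrightarrow{\cong} H^d_{\nis}(X,\sK^M_{d,X|nD})$, where $\sK^M_{d,X|nD}$ is the R{\"u}lling--Saito relative Milnor sheaf of ~\eqref{eqn:def-RKMKThy} --- this holds for each fixed $n$ with no finiteness hypothesis on the base field; and (ii) the fact that the Kato--Saito sheaf $\sK^M_{d,(X,nD)}$ appearing in \thmref{thm:Thm-3} is a \emph{different} sheaf, included in $\sK^M_{d,X|nD}$ but not equal to it at any finite level in general; the two systems interleave,
\[
\sK^M_{d,(X,mD)} \inj \sK^M_{d,X|mD} \inj \sK^M_{d,(X,(m-1)D)},
\]
by the local description of \cite[Proposition~2.8]{RS}, hence agree as pro-sheaves (\lemref{lem:RS-RMKthy}), which is exactly what makes the right-hand sides of the level-wise squares match up in the pro-category. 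Without distinguishing these two relative Milnor sheaves and without this elementary interleaving, your chain of pro-isomorphisms cannot be closed; no base change from finite fields and no Gersten-type motivic argument is needed, and none is supplied in your proposal.

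A secondary problem is your treatment of case (2): a quasi-projective surface cannot be ``passed to the affine setting,'' and \thmref{thm:Thm-3}(2) only covers projective $X$ with $D$ integral. The paper instead invokes \cite[Theorem~1.8]{BK}, which gives $\CH_0(X|nD) \xrightarrow{\cong} H^2_{\nis}(X,\sK^M_{2,(X,nD)})$ for \emph{every} quasi-projective surface and every effective divisor over an algebraically closed field, so the top arrows are isomorphisms level-wise just as in the affine case. Your alternative --- upgrading level-wise surjectivity of $\can_{\sM}$ (from \thmref{thm:CM-MCM-1}) to a pro-isomorphism --- does not work as stated: surjectivity at each finite level gives no control on the kernels, and pro-injectivity is precisely what has to be proved.
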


\subsection{The strong Bloch-Srinivas conjecture}\label{sec:BSC}
Let $X$ be a reduced affine or projective scheme of dimension $d \ge 1$ over an
algebraically closed field $k$. Assume that $X$ is regular in codimension
one and there exists a resolution of singularities $\pi: \wt{X} \to X$.
Let $E_0 \subset \wt{X}$ be the reduced exceptional divisor. It is known
that there exists a surjective 
pull-back map $\pi^*: \CH^{LW}_0(X) \to \CH_0(\wt{X})$.
It is not hard to see that this map has a factorization
$\CH^{LW}_0(X) \xrightarrow{\pi^*_n} \CH_0(\wt{X}|nE_0) \surj \CH_0(\wt{X})$ 
for every $n \ge 1$.
As an application of our proof of \thmref{thm:Thm-1}, we can prove the following
result about the maps $\pi^*_n$.

\begin{thm}\label{thm:Thm-5}
The map $\pi^*_n: \CH^{LW}_0(X) \to \CH_0(\wt{X}|nE_0)$
is an isomorphism for all $n \gg 0$.
If ${\rm char}(k) > 0$, then $\pi^*_n$ is an isomorphism for every $n \ge 1$.
\end{thm}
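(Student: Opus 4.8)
The plan is to convert the statement into a comparison of Milnor $K$-cohomology groups and then feed in Bloch's formula from \thmref{thm:Thm-1}. Write $S = X_{\sing}$ and $U = X\setminus S$; since $X$ is regular in codimension one, $\codim_X S \ge 2$. As $\pi$ is an isomorphism over $U$ and $E_0 = \pi^{-1}(S)_{\red}$, the restriction $\pi$ identifies $\wt{X}\setminus E_0 \xrightarrow{\cong} U$, so every closed point $\tilde{x}$ of $\wt{X}\setminus E_0$ is $\pi^*_n$ of the corresponding regular closed point $x=\pi(\tilde x)\in U$. Because $\CH_0(\wt{X}|nE_0)$ is generated by $0$-cycles supported on $\wt{X}\setminus E_0$, the map $\pi^*_n$ is surjective. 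Moreover $\pi^*_n$ fits into a commutative square
\[
\begin{CD}
\CH^{LW}_0(X) @>{\pi^*_n}>> \CH_0(\wt{X}|nE_0) \\
@V{\rho_X}VV @VV{\rho_{\wt{X}|nE_0}}V \\
H^d_{\nis}(X, \sK^M_{d,X}) @>{\pi^*}>> H^d_{\nis}(\wt{X}, \sK^M_{d,(\wt{X},nE_0)}),
\end{CD}
\]
whose vertical maps are the cycle class maps and whose bottom map is induced by pullback of Nisnevich Milnor $K$-sheaves. By \thmref{thm:Thm-1} the left vertical map $\rho_X$ is an isomorphism, the affine case and the projective regular-in-codimension-one case both being covered. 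Everything therefore reduces to showing that the bottom map $\pi^*$ is an isomorphism for $n\gg 0$, and for all $n\ge 1$ when $\Char(k)>0$.

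The key step is this cohomological comparison, which I would establish at the level of complexes: the natural map $\sK^M_{d,X}\to R\pi_*\sK^M_{d,(\wt{X},nE_0)}$ is an isomorphism in the derived category of Nisnevich sheaves on $X$, for $n\gg 0$ (resp. for all $n$ in positive characteristic). This is a local question on $X$. Over $U$ it is trivial, since $\pi$ is an isomorphism there and the modulus is empty, so the cone of this map is supported on $S$, a closed subset of codimension $\ge 2$. The substance is then a computation at the singular points, comparing $\sK^M_{d,\sO_{X,x}}$ with the cohomology of the relative Milnor $K$-sheaf along the fibre $\pi^{-1}(x)$ twisted by the modulus $nE_0$. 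Granting this quasi-isomorphism, applying $R\Gamma(X,-)$ and using $R\Gamma(\wt{X},-)\cong R\Gamma(X,-)\circ R\pi_*$ gives $H^d_{\nis}(\wt{X},\sK^M_{d,(\wt{X},nE_0)})\cong H^d_{\nis}(X,\sK^M_{d,X})$, which is exactly the bottom arrow of the square.

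I expect this local Milnor $K$-theory computation to be the main obstacle, and it is also the origin of the dichotomy in the statement. The failure of the comparison for small $n$ is governed by the ``additive'' part of the relative Milnor $K$-sheaf, that is, by coherent obstruction sheaves built from Kähler differentials twisted by the conductor $\sO_{\wt{X}}(-nE_0)$ together with their higher direct images $R^q\pi_*$. In characteristic zero these contributions are genuinely nonzero and one must take $n$ large enough that the modulus annihilates them, i.e. that the governing pro-system along $\{nE_0\}_n$ has stabilized, using a Serre-type vanishing after twisting down by a high power of the exceptional divisor. In positive characteristic a Frobenius/Artin--Schreier argument forces these coherent obstructions to vanish already for every $n\ge 1$, which yields the stronger conclusion; here one uses the continuity (pro-cdh type) properties of relative Milnor $K$-theory established for the reduced-times-$n$ divisor.

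Finally I would assemble the square. Once $\pi^*$ is known to be an isomorphism, the composite $\rho_{\wt{X}|nE_0}\circ\pi^*_n = \pi^*\circ\rho_X$ is an isomorphism, so $\pi^*_n$ is injective; combined with the surjectivity established above, $\pi^*_n$ is an isomorphism for $n\gg 0$, and for all $n\ge 1$ when $\Char(k)>0$. As a byproduct the right vertical map $\rho_{\wt{X}|nE_0}$ is then forced to be an isomorphism as well, so Bloch's formula with modulus holds on $(\wt{X},nE_0)$ in this range even though \thmref{thm:Thm-3} does not apply directly to the nonreduced divisor $nE_0$ on a possibly non-affine resolution.
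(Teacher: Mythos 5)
Your reduction has two gaps, and together they constitute the entire difficulty of the theorem. First, your commutative square is not available: its right vertical arrow presupposes that the forget-support map $\sZ_0(\wt{X},E_0)\to H^d_{\nis}(\wt{X},\sK^M_{d,(\wt{X},nE_0)})$ factors through rational equivalence with modulus, i.e.\ that a Bloch--Quillen map $\rho_{\wt{X}|nE_0}$ exists on $\CH_0(\wt{X}|nE_0)$. The known factorization results (\propref{prop:BQM-M}, \thmref{thm:Thm-3}) require $X$ affine, or $d\le 2$, or $X$ projective with ${\rm char}(k)=0$ or $D$ reduced; here $\wt{X}$ is only quasi-projective (a resolution of an affine $X$ is not affine) and $nE_0$ is non-reduced for $n\ge 2$, so none of these cases applies. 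The paper explicitly warns, right after \thmref{thm:Thm-7}, that Bloch's formula for such pairs $(\wt{X},D)$ does \emph{not} follow from \thmref{thm:Thm-3}; these factorizations are deduced \emph{from} \thmref{thm:Thm-5} by diagram chases, so assuming them here is circular. Without the factorization, the identity $\rho_{\wt{X}|nE_0}\circ\pi^*_n=\pi^*\circ\rho_X$ is meaningless and injectivity of $\pi^*_n$ cannot be extracted from the square. Second, your key input --- that $\sK^M_{d,X}\to R\pi_*\sK^M_{d,(\wt{X},nE_0)}$ is an isomorphism of complexes of Nisnevich sheaves for a \emph{fixed} $n\gg 0$ --- is asserted, not proved, and the sketch offered (coherent obstruction sheaves from K\"ahler differentials, Serre vanishing, a Frobenius/Artin--Schreier argument) corresponds to no available result. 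What is known is strictly a pro-statement: by \cite{KST}, $\prolim K(X,nS)\to \prolim K(\wt{X},nE)$ is a pro-isomorphism; descent for a blow-up genuinely fails at any fixed level, and there is no sheaf-level analogue for Milnor $K$-theory.

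The paper's route avoids both problems by staying entirely on the $K$-theory side. Using the cycle class maps $\lambda_{X|nS}$ and $\lambda_{\wt{X}|nE_0}$ (which \emph{do} exist, by \lemref{lem:Reg-1} and \cite[Theorem~12.4]{BK}), the isomorphism $\CH^{LW}_0(X)\cong F_0K_0(X)\cong F_0K_0(X,nS)$ (from \cite[Corollary~7.6]{Krishna-2} in the affine case and \thmref{thm:K-inj-proj} in the projective case), the KST pro-isomorphism, and the bound $E_0\le E\le mE_0$, one gets a pro-isomorphism from the \emph{constant} pro-group $\CH^{LW}_0(X)$ onto $\prolim F_0K_0(\wt{X},nE_0)$ with levelwise surjective maps; constancy of the source is what converts the pro-isomorphism into a levelwise isomorphism for $n\gg 0$. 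This mechanism works only for the single group in top degree, never for complexes of sheaves, which is exactly why your fixed-$n$ derived-category claim overshoots what can be true. Finally, your characteristic-$p$ heuristic is wrong in spirit: the relative obstructions do not vanish for $n\ge 1$; rather, by \cite[Corollary~5.4]{Weibel} the kernel of $F_0K_0(\wt{X},nE_0)\surj F_0K_0(\wt{X},E_0)$ is $p$-primary torsion (of bounded exponent), and the paper kills the resulting bounded $p$-torsion kernel of $\CH^{LW}_0(X)\to F_0K_0(\wt{X},E_0)$ using \cite[Theorem~1.1]{Krishna-2} in the affine case and the Roitman torsion theorem of \cite{KS} via the albanese diagram in the projective case. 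Torsion theorems on the Chow side, not Frobenius vanishing, are what give the statement for all $n\ge 1$.
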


The first part of Theorem~\ref{thm:Thm-5} was conjectured  
(in a different but equivalent form) by 
Bloch and Srinivas  \cite{Srinivas-2} for normal surfaces. Its proof was given
in \cite{KS}. This conjecture allows us to estimate
the kernel of the map $\CH^{LW}_0(X) \surj \CH_0(\wt{X})$. 

\subsection{Question of Kerz and Saito}\label{sec:KSQ}
Let $X$ be a smooth projective scheme of dimension $d \ge 1$ over a perfect
field $k$ of positive characteristic 
and let $U \subset X$ be an open subset 
whose complement is supported on a divisor. Then a question posed by
Kerz and Saito \cite[Question~V]{KeS} asks if there is an
isomorphism
\[
{\underset{D}\varprojlim} \ \CH_0(X|D) \xrightarrow{\cong} 
{\underset{D}\varprojlim} \ H^d_{\nis}(X, \sK^M_{d,(X,D)}),
\]
where the limits are taken over effective divisors on $X$ with
support outside $U$.

This question was answered positively by Kerz and Saito if $k$ is a
finite field, using an earlier result of Kato and Saito \cite{Kato-Saito}.
As an application of \thmref{thm:Thm-5}, we prove the following stronger
version of this question whenever $k$ is algebraically closed and
$X \setminus U$ can be contracted
to a smaller dimensional scheme without changing $U$.

\begin{thm}\label{thm:Thm-7}
Let $Y$ be a reduced projective scheme of pure dimension $d \ge 1$ over an
algebraically closed field $k$ of positive characteristic. 
Assume that $Y$ is regular in codimension
one and there exists a resolution of singularities $\pi: X \to Y$.
Let $E_0 \subset X$ be the reduced exceptional divisor. Then for any
effective divisor $D \subset X$ with support $E_0$, there is a commutative
diagram
\begin{equation}\label{eqn:Thm-7-0}
\xymatrix@C.8pc{
\CH_0(X|D) \ar[r]^-{\rho_{X|D}} \ar[d] & H^d_{\nis}(X, \sK^M_{d,(X,D)}) 
\ar[d] \\
\CH_0(X|E_0) \ar[r]^-{\rho_{X|E_0}}  & H^d_{\nis}(X, \sK^M_{d,(X,E_0)})}
\end{equation}
in which all arrows are isomorphisms.
\end{thm}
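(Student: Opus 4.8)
The plan is to transport the isomorphism of Theorem~\ref{thm:Thm-1} for the singular scheme $Y$ to the smooth scheme $X$ along the resolution $\pi$, using Theorem~\ref{thm:Thm-5} on the cycle side. Since $Y$ is reduced, projective and regular in codimension one over the algebraically closed field $k$, Theorem~\ref{thm:Thm-1}(2) gives that $\rho_Y\colon \CH^{LW}_0(Y) \to H^d_{\nis}(Y,\sK^M_{d,Y})$ is an isomorphism. I emphasize that Theorem~\ref{thm:Thm-3}(2) cannot be applied directly to $(X,D)$ or $(X,E_0)$, because the reduced exceptional divisor $E_0$, and hence $D$, is in general reducible rather than integral; this is exactly why the argument is routed through $Y$. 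For every effective divisor $D'$ on $X$ with $\supp D' = E_0$, pullback along $\pi$ produces compatible maps $\pi^{*}_{D'}\colon \CH^{LW}_0(Y) \to \CH_0(X|D')$ on $0$-cycles and $\pi^{*}_{\mathrm{coh},D'}\colon H^d_{\nis}(Y,\sK^M_{d,Y}) \to H^d_{\nis}(X,\sK^M_{d,(X,D')})$ on cohomology, and by functoriality of the cycle class construction they sit in commutative squares with $\rho_Y$ and $\rho_{X|D'}$. Thus, once I know that both vertical pullbacks are isomorphisms for $D' \in \{D, E_0\}$, the relation $\rho_{X|D'} = \pi^{*}_{\mathrm{coh},D'}\circ \rho_Y \circ (\pi^{*}_{D'})^{-1}$ forces $\rho_{X|D}$, $\rho_{X|E_0}$ and then both vertical arrows of \eqref{eqn:Thm-7-0} to be isomorphisms.

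First I would treat the cycle side. In positive characteristic Theorem~\ref{thm:Thm-5} gives that $\pi^{*}_{n}\colon \CH^{LW}_0(Y) \to \CH_0(X|nE_0)$ is an isomorphism for every $n \ge 1$. For a general $D'$ with $\supp D' = E_0$ pick $n$ with $E_0 \le D' \le nE_0$, and consider the lowering-of-modulus maps
\[
\CH_0(X|nE_0) \xrightarrow{a} \CH_0(X|D') \xrightarrow{b} \CH_0(X|E_0),
\]
which are surjective because all three groups are generated by the same $0$-cycles on $X \setminus E_0$ and only the admissible rational equivalences differ. Since $a\circ \pi^{*}_{n} = \pi^{*}_{D'}$ and $b\circ \pi^{*}_{D'} = \pi^{*}_{1}$, the composite $b\circ a$ equals $\pi^{*}_{1}\circ(\pi^{*}_{n})^{-1}$ and is therefore an isomorphism; combined with the surjectivity of $a$ this forces $a$, and then $b$ and $\pi^{*}_{D'}$, to be isomorphisms. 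In particular, taking $D' = D$ and $D' = E_0$ shows that $\pi^{*}_{D}$ and $\pi^{*}_{E_0}$ are isomorphisms and that the left vertical map $\CH_0(X|D) \to \CH_0(X|E_0)$ of \eqref{eqn:Thm-7-0} is an isomorphism.

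The crux of the proof, and the step I expect to be the main obstacle, is the cohomological pullback: I must show that $\pi^{*}_{\mathrm{coh},D'}$ is an isomorphism for $D' \in \{D,E_0\}$. Note that the cycle-theoretic input above only yields, via the commuting squares and the general surjectivity of the maps $\rho$, that $\pi^{*}_{\mathrm{coh},D'}$ is surjective, so an independent cohomological comparison is genuinely needed. My approach is to compute both sides through the niveau (Gersten--Cousin) complexes of the relative Milnor $K$-sheaves. Because $Y$ is regular in codimension one, the top two terms of the complex for $\sK^M_{d,Y}$ involve only regular points, so that $H^d_{\nis}(Y,\sK^M_{d,Y})$ is essentially the cokernel of $\bigoplus_{y \in Y^{(d-1)}} K^M_1(k(y)) \to \bigoplus_{y\in Y^{(d)}} \Z$ supported on the regular locus. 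On the other side, $\pi$ restricts to an isomorphism $X \setminus E_0 \xrightarrow{\cong} Y \setminus \Sigma$ onto the regular locus, where $\Sigma = \pi(E_0)$ has codimension $\ge 2$; hence the codimension-$d$ and codimension-$(d-1)$ points of $X$ away from $E_0$ match those of $Y$ away from $\Sigma$, while the defining modulus condition of $\sK^M_{d,(X,D')}$ is precisely what restricts the boundary relations coming from curves meeting $E_0$ to those already present on $Y$. The delicate point is to verify that imposing the modulus $D'$, for any $D'$ supported on $E_0$, neither destroys nor introduces classes relative to $Y$; this is where the positive-characteristic hypothesis and the finer structure of the relative $K$-sheaf enter, in parallel with the cycle-theoretic vanishing underlying Theorem~\ref{thm:Thm-5}. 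Granting these cohomological isomorphisms, the commuting squares of the first paragraph promote $\rho_Y$ to isomorphisms $\rho_{X|D}$ and $\rho_{X|E_0}$, and the right vertical arrow of \eqref{eqn:Thm-7-0} is $\pi^{*}_{\mathrm{coh},E_0}\circ(\pi^{*}_{\mathrm{coh},D})^{-1}$, hence also an isomorphism, which completes the square.
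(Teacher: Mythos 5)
Your reduction on the cycle side is sound and matches the paper: in characteristic $p$, Theorem~\ref{thm:Thm-5} gives $\CH^{LW}_0(Y)\xrightarrow{\cong}\CH_0(X|nE_0)$ for all $n\ge 1$, and the sandwich $E_0\le D\le nE_0$ together with surjectivity of the modulus-lowering maps forces $\CH_0(X|nE_0)\to\CH_0(X|D)\to\CH_0(X|E_0)$ to be isomorphisms. Your observation that Theorem~\ref{thm:Thm-3}(2) is inapplicable (since $E_0$ need not be integral) is also correct. The problem is the step you yourself flag as the crux: you never prove that the cohomological pullback $\pi^{*}_{\mathrm{coh},D'}\colon H^d_{\nis}(Y,\sK^M_{d,Y})\to H^d_{\nis}(X,\sK^M_{d,(X,D')})$ is injective. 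Your sketch --- matching Cousin complexes of $Y$ and $X$ away from the exceptional locus and arguing that the modulus condition ``neither destroys nor introduces classes'' --- is precisely the hard content of the theorem, and it is left as an assertion (``Granting these cohomological isomorphisms\dots''). A direct Gersten--Cousin comparison across a resolution is not available here: the relative sheaf $\sK^M_{d,(X,D')}$ has no Gersten resolution on the non-smooth pair, and the boundary relations supported over $E_0$ are exactly what one cannot control by hand. So, as written, the proof has a genuine gap at its central point.

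The paper closes this gap by a different mechanism that avoids any direct comparison of the two cohomology groups. It squeezes $H^d_{\nis}(X,\sK^M_{d,(X,nE_0)})$ between the Chow group and relative $K$-theory: the composite
\[
\CH^{LW}_0(Y)\xrightarrow{\ \pi^{*}\ }\CH_0(X|nE_0)\xrightarrow{\ \rho_{X|nE_0}\ }H^d_{\nis}(X,\sK^M_{d,(X,nE_0)})\xrightarrow{\ \theta_{X|nE_0}\ }F_0K_0(X,nE_0)\to F_0K_0(X,E_0),
\]
where $\theta$ is the Thomason--Trobaugh edge map, equals the map $\CH^{LW}_0(Y)\to F_0K_0(X,E_0)$, which was shown to be an \emph{isomorphism} in the proof of Theorem~\ref{thm:Thm-5} (this is where positive characteristic enters, via Weibel's $p$-primary torsion result, the Kerz--Strunk--Tamme pro-descent theorem, and the Roitman torsion theorem of \cite{KS}). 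This forces $\rho_{X|nE_0}$ to be injective, while surjectivity of $\sZ_0(X|nE_0)\to H^d_{\nis}(X,\sK^M_{d,(X,nE_0)})$ is Kato--Saito \cite[Theorem~2.5]{Kato-Saito}; hence $\rho_{X|nE_0}$ is an isomorphism, and the sandwich $E_0\subset D\subset nE_0$ on both rows then yields the full diagram ~\eqref{eqn:Thm-7-0}, including the factorization of $\rho_{X|D}$ through $\CH_0(X|D)$ (which, note, is itself part of the claim for non-reduced $D$ and is obtained by a diagram chase, not by functoriality as you assume). If you want to complete your argument, replace your Cousin-complex comparison by this $K$-theoretic squeeze; injectivity is the only missing ingredient, and $K$-theory is what supplies it.
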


We warn the reader that this result does not follow from \thmref{thm:Thm-3}.
Rather, it gives new cases of Bloch's formula for Chow groups with modulus.

\subsection{Schlichting's theorem}\label{sec:Sch}
In \cite{Sch}, Schlichting gave a necessary and sufficient condition for 
vector bundles of top rank on affine schemes to admit nowhere vanishing 
sections. By combining \thmref{thm:Thm-1} with \cite[Theorem~1.2]{Krishna-2},
we recover Schlichting's theorem over algebraically closed fields.

\begin{cor}\label{cor:Sch-thm}
Let $A$ be a reduced affine algebra of pure dimension $d \ge 1$ 
over an algebraically closed field and let $X = \Spec(A)$.
Let $P$ be a projective $A$-module of rank $d$. Then $P$ admits an
Euler class $e(P) \in H^d_{\zar}(X, \sK^M_{d,X})$.
Furthermore, $P$  splits off 
a free summand of positive rank if and only if $e(P)$ dies in
$H^d_{\nis}(X, \sK^M_{d,X})$.
\end{cor}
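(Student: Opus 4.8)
The plan is to combine the splitting criterion of \cite[Theorem~1.2]{Krishna-2} with the Bloch's formula isomorphism of \thmref{thm:Thm-1}. Since $k$ is algebraically closed it is in particular infinite and perfect, and $X$ is reduced, affine, of pure dimension $d$; hence case~(1) of \thmref{thm:Thm-1} applies and the map $\rho_X\colon \CH^{LW}_0(X) \xrightarrow{\cong} H^d_{\nis}(X,\sK^M_{d,X})$ is an isomorphism. The role played by \cite[Theorem~1.2]{Krishna-2} is twofold: it furnishes the Euler class $e(P) \in H^d_{\zar}(X,\sK^M_{d,X})$, and it shows that the obstruction to $P$ splitting off a free summand of positive rank is the vanishing of the associated Levine--Weibel $0$-cycle class $\wt{e}(P) \in \CH^{LW}_0(X)$.

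Concretely, I would first record the change-of-topology map $\theta\colon H^d_{\zar}(X,\sK^M_{d,X}) \to H^d_{\nis}(X,\sK^M_{d,X})$ induced by the canonical morphism of sites $X_{\nis} \to X_{\zar}$, through which the phrase ``$e(P)$ dies in $H^d_{\nis}$'' is to be read, namely as $\theta(e(P)) = 0$. The heart of the matter is the compatibility of the two incarnations of the Euler class: tracing both constructions through the relevant resolutions of the sheaves $\sK^M_{d,X}$, one must check that $\theta(e(P)) = \rho_X(\wt{e}(P))$ in $H^d_{\nis}(X,\sK^M_{d,X})$. Granting this, the corollary is immediate. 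Indeed, since $\rho_X$ is an isomorphism, $\theta(e(P)) = 0$ holds if and only if $\wt{e}(P) = 0$ in $\CH^{LW}_0(X)$, and by \cite[Theorem~1.2]{Krishna-2} the latter holds if and only if $P$ splits off a free summand of positive rank.

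The main obstacle is precisely this compatibility: matching the cohomological Euler class living in the coarser Zariski cohomology $H^d_{\zar}$ with the cycle-theoretic Euler class $\wt{e}(P)$ in $\CH^{LW}_0(X)$ after refining to the Nisnevich topology. I expect this to reduce to checking that the class cut out by a generic section of $P$ agrees, under $\theta$ and $\rho_X$, with $\wt{e}(P)$, the essential input being that both are supported on the regular locus of $X$, where the Zariski and Nisnevich theories coincide. Once this matching is in place no further work is needed, and the interplay of $H^d_{\zar}$ with $H^d_{\nis}$ in the statement simply records that the Euler class is visible already in the Zariski topology, whereas its vanishing---equivalently the splitting of $P$---is detected only after passing to the Nisnevich topology, where \thmref{thm:Thm-1} identifies the target with the Chow group that governs splittings.
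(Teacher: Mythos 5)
Your overall strategy---combine \thmref{thm:Thm-1} with \cite[Theorem~1.2]{Krishna-2}---is the same as the paper's, but there is a genuine gap in how you invoke the latter. \cite[Theorem~1.2]{Krishna-2} does \emph{not} give a splitting criterion in terms of a class in $\CH^{LW}_0(X)$; it says that $P$ splits off a free summand of positive rank if and only if the top Chern class $c_d(P)$ vanishes in $K_0(X)$. Your proposal silently replaces this $K_0$-theoretic criterion by a Chow-theoretic one (``the obstruction is the vanishing of $\wt{e}(P) \in \CH^{LW}_0(X)$''), and that replacement is precisely the hard part. The implication $\wt{e}(P)=0 \Rightarrow c_d(P)=0$ is harmless, but the converse---which you need for the direction ``$P$ splits $\Rightarrow e(P)$ dies in $H^d_{\nis}(X,\sK^M_{d,X})$''---requires the cycle class map $\lambda_X\colon \CH^{LW}_0(X) \to K_0(X)$ to be injective. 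This is not formal: it needs \cite[Remark~3.6]{Murthy}, which places $c_d(P)$ in $F^dK_0(X)$ so that it corresponds to a $0$-cycle class at all, and then \cite[Theorem~1.3]{Krishna-2} (Murthy's conjecture), which gives $\CH^{LW}_0(X) \xrightarrow{\cong} F^dK_0(X)$. Neither appears in your argument. Note also that you cannot extract this injectivity from the ingredients you do use: by \lemref{lem:CCM-K*} one has $\lambda_X = \kappa_X\circ\rho_X$, so with $\rho_X$ an isomorphism (\thmref{thm:Thm-1}) the injectivity of $\lambda_X$ is equivalent to that of $\kappa_X$, which is not known independently; in the paper the logic runs the other way, with injectivity of $\lambda_X$ serving as an \emph{input} to the proof of \thmref{thm:Thm-1}.

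For comparison, the paper's proof is the chain: $P$ splits iff $c_d(P)=0$ in $K_0(X)$ by \cite[Theorem~1.2]{Krishna-2}; $c_d(P)\in F^dK_0(X)$ by \cite[Remark~3.6]{Murthy}; $\CH^{LW}_0(X)\xrightarrow{\cong} F^dK_0(X)$ by \cite[Theorem~1.3]{Krishna-2}; and $\CH^{LW}_0(X)\xrightarrow{\cong} H^d_{\nis}(X,\sK^M_{d,X})$ by \thmref{thm:Thm-1}. The compatibility you flag as the ``heart of the matter,'' namely $\theta(e(P)) = \rho_X(\wt{e}(P))$, is in fact a secondary issue: the Zariski Euler class is in effect defined by applying $\rho^{\zar}_X$ to the $0$-cycle of a generic section supported in $X_{\rm reg}$, so this compatibility holds essentially by construction. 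The substantive missing ingredient in your proposal is the Murthy--Krishna identification of $\CH^{LW}_0(X)$ with $F^dK_0(X)$, without which the equivalence between splitting and vanishing of the Chow-theoretic Euler class is unproved.
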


Indeed, \cite[Theorem~1.2]{Krishna-2} says that $P$  splits off 
a free summand of positive rank if and only if $c_d(P)=0$ in $K_0(X)$.
On the other hand, $c_d(P)$ lies in  $F^d K_0(X)$ by \cite[Remark~3.6]{Murthy}
and $\CH_0^{LW}(X) \xrightarrow{\cong} F^d K_0(X)$ by \cite[Theorem~1.3]{Krishna-2}. 
\corref{cor:Sch-thm} therefore follows from  \thmref{thm:Thm-1}.

\subsection{Euler class group and Chow group}\label{sec:ECCGrp}
The Euler and weak Euler class groups
of a commutative Noetherian ring $A$ were introduced by
Bhatwadekar and R. Sridharan \cite{BS-3} in order to study the question of
existence of nowhere vanishing sections of projective modules of rank
$= \dim(A)$. If $A$ is a smooth affine algebra over an infinite perfect
field, it was conjectured by Bhatwadekar and R. Sridharan 
(see \cite[Remark~3.13]{BS-4}) that the weak Euler class group $E_0(A)$
coincides with the Chow group of 0-cycles $\CH_0(A)$.
This was proven by Bhatwadekar (unpublished) if $\dim(A) \le 2$ and
by Asok and Fasel \cite{AF} in general. As part of the proof of 
\thmref{thm:Thm-2}, we establish the following partial generalization of 
these results to non-smooth algebras.

\begin{thm}\label{thm:ECG-*}
Let $A$ be a 2-dimensional geometrically reduced affine algebra over an infinite
field. Then there is an isomorphism 
\[
E_0(A) \xrightarrow{\cong} \CH^{LW}_0(A).
\]
\end{thm}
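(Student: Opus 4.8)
The plan is to prove the isomorphism $E_0(A) \xrightarrow{\cong} \CH^{LW}_0(A)$ by constructing maps in both directions and showing they are mutually inverse. The weak Euler class group $E_0(A)$ is generated by classes of smooth (in particular, local complete intersection) maximal ideals $\fm$ of height $2$ supported in the smooth locus of $X = \Spec(A)$, subject to relations coming from surjections $A^2 \surj I$ where $I$ is an ideal of height $2$. The Levine-Weibel Chow group $\CH^{LW}_0(A)$ is generated by closed points lying in the regular locus of $X$, modulo rational equivalence defined using Cartier curves. Since both groups are generated by essentially the same cycles (reduced closed points in the smooth locus, which for a surface coincides with the regular locus away from the finitely many singular points), the natural map on generators is clear; the content is in matching the two families of relations.

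The first step is to define a well-defined homomorphism $\theta: E_0(A) \to \CH^{LW}_0(A)$. A generator of $E_0(A)$ is a class $(\fm)$ of a complete intersection ideal, which maps to the $0$-cycle $[\Spec(A/\fm)]$; more generally, an ideal $I$ of height $2$ with $I/I^2$ free of rank $2$ gives a cycle $\sum n_P [P]$ counting points with multiplicity. I would check that the defining relations of $E_0(A)$ — namely that $(I)$ with $I$ globally generated by two elements is trivial — map to $0$ in $\CH^{LW}_0(A)$. The key geometric input is that a surjection $A^2 \surj I$ gives a complete intersection curve through the support of $I$, which is precisely the kind of Cartier curve appearing in the Levine-Weibel rational equivalence; one extracts from the two generators a rational function on such a curve whose divisor realizes the desired relation. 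This uses the Bhatwadekar-Sridharan description of relations in $E_0(A)$ via the subtraction principle and moving lemmas for the generators.

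The second step is to construct the inverse $\psi: \CH^{LW}_0(A) \to E_0(A)$ sending a regular closed point $P$ (a smooth complete intersection point on a surface) to its weak Euler class, and to verify that rational equivalence in the Levine-Weibel sense is killed. Here I would take a Cartier curve $C \subset X$ with a rational function $f$ and show that the difference of the two fibers $\divf(f)_0 - \divf(f)_\infty$ represents the zero class in $E_0(A)$; this is where one invokes the addition and subtraction principles for Euler class groups together with the fact that on a surface the relevant curves are set-theoretic complete intersections in the smooth locus. Once both maps are defined, I would check $\theta \circ \psi = \id$ and $\psi \circ \theta = \id$ on generators, which is immediate since both send a regular point to its own class.

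The hard part will be the compatibility of relations, specifically showing that the Levine-Weibel rational equivalences (defined via Cartier curves that are allowed to pass through the singular locus with prescribed behavior) correspond exactly to the Bhatwadekar-Sridharan relations (phrased via globally generated height-two ideals and the moving techniques for complete intersections). The subtlety is that the Cartier curves in $\CH^{LW}_0(A)$ need not be smooth or complete intersections globally, whereas $E_0$ relations are cleanest for complete intersection ideals; reconciling these requires a moving lemma to replace an arbitrary Cartier curve and rational function by a complete intersection presentation in the smooth locus without changing the associated $0$-cycle class. The restriction to $\dim(A) = 2$ and geometrically reduced $A$ is what makes this moving argument tractable, since then a general curve through finitely many smooth points is itself smooth there and the exceptional behavior at the finitely many singular points can be controlled. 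I expect this moving-and-matching of the two relation systems to be the technical crux of the proof.
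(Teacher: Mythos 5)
Your starting premise is factually wrong, and it conceals the main difficulty. In the Bhatwadekar--Sridharan definition, $E_0(A)$ is generated by \emph{all} $\fm$-primary ideals $\fn$ of height $2$ admitting a surjection $(A/\fn)^2 \surj \fn/\fn^2$, including ideals supported at \emph{singular} points of $\Spec(A)$; such generators need not even be local complete intersection ideals, so they have no evident class in $\sZ_0(A)$, in $\CH^{LW}_0(A)$, or in $K_0(A)$. Hence the ``natural map on generators'' with which you begin is not defined, and your claim that both groups are generated by essentially the same cycles fails. The paper devotes all of Section~\ref{sec:ECG} to exactly this obstacle: it introduces a modified group $E^s_0(A)$ built only from \emph{regular} ideals, and proves $E^s_0(A) \xrightarrow{\cong} E_0(A)$ (\propref{prop:Sm-weak-ECG}). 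That comparison is not formal: it uses the Segre exact sequence $WS_{d+1}(A) \xrightarrow{\phi_A} E(A) \xrightarrow{\psi_A} E_0(A) \to 0$ of van der Kallen and Das--Zinna, the Murthy--Swan Bertini theorems (\lemref{lem:Sm-ECG}), and \lemref{lem:Ker-psi}. Only after this reduction does one get the easy surjection $\wt{cyc}_A: E^s_0(A) \surj \CH^{LW}_0(A)$, which kills regular ideals $J$ with $\mu(J)=2$ because such $J$ are complete intersections and therefore die in $\CH^{LW}_0(A)$ by Levine--Weibel.

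The second gap is the injectivity direction. Your plan --- to show that for a Cartier curve $C$ and $f \in k(C)^{\times}$ the cycle $\divf(f)$ vanishes in $E_0(A)$, using addition/subtraction principles and the assertion that ``on a surface the relevant curves are set-theoretic complete intersections in the smooth locus'' --- rests on a claim that is unjustified and false in general: a Cartier curve on a reduced affine surface may pass through the singular locus, and even curves inside the smooth locus need not be (set-theoretic) complete intersections; nothing about $\dim(A)=2$ forces this. The paper does not match the two relation systems at all. Instead, injectivity is extracted from $K$-theory: by \thmref{thm:RR-ECG}, the kernel of the cycle class map $cyc_A: E^s_0(A) \to K_0(A)$ is torsion of exponent $(d-1)!$, proved via Mandal's lemma, the Das--Mandal identity $(J) = (d-1)!\,(I)$ in $E_0(A)$, Murthy's theorem producing a stably free module of rank $d$ mapping onto $J$, and Suslin/Bass cancellation combined with Bhatwadekar--Sridharan's computation of weak Euler classes. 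The reason dimension $2$ is special is purely numerical: $(d-1)! = 1$, so this kernel is zero, and since $cyc_A$ factors as $E^s_0(A) \xrightarrow{\wt{cyc}_A} \CH^{LW}_0(A) \surj \CH_0(A) \to K_0(A)$, all intermediate surjections are forced to be isomorphisms. It is not that ``a general curve through finitely many smooth points is itself smooth there,'' as you suggest; without the detour through $E^s_0(A)$ and $K_0(A)$, your two constructions cannot even get started.
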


\vskip .3cm

\subsection{Outline of proofs}\label{sec:outline}
We now briefly outline the strategy of our proofs.
Bloch's formula for 0-cycles on singular schemes has three main
steps: the construction of the Bloch-Quillen map,
showing its surjectivity and, its injectivity. 

The construction of the Bloch-Quillen map from
the Chow group to the cohomology of the Milnor $K$-theory sheaves 
is a major obstacle. In the smooth case, this directly
follows from the Gersten complex. But this method breaks down in the singular 
case. Another possibility is to construct this map for surfaces and
then reduce the general case to the surface case. But this too breaks down
due to the lack of a push-forward map on the top cohomology of the Milnor 
$K$-theory sheaf.

Instead, what we observe here is that in any dimension, the Cousin complex still
gives a presentation of the desired cohomology group.
The heart of the proof then is to compare some part of the
Gersten complex with Cousin complex of Milnor $K$-theory
sheaves to kill the rational equivalence on the 
group of 0-cycles. The key role here is played by our
Proposition~\ref{prop:Key}.

We complete the proof of
\thmref{thm:Thm-1} in \S~\ref{sec:Thm-1-prf}. The main ingredients for
the surjectivity 
are some results of Kato and Saito \cite{Kato-Saito}.
The injectivity is shown using the Roitman
torsion theorems of \cite{Krishna-2} and \cite{KS}. Apart from these, 
we also need to use a technique of Levine \cite{Levine-2} to study the
relation between the $K$-theory of a normal projective scheme and 
its albanese variety.

In \S~\ref{sec:BF-mod}, we prove Bloch's formula for the Chow group with
modulus. In order to do so, we generalize \thmref{thm:Thm-1}
to certain kind of projective schemes which are not regular in codimension
one. The crucial ingredient here is the Roitman torsion theorem of
\cite{Krishna-3}. We derive Bloch's formula in the modulus setting
using this and a decomposition theorem for the Chow group of 0-cycles
from \cite{BK}. In this section, 
we also prove Theorems~\ref{thm:CM-MCM-1} and ~\ref{thm:CM-MCM-2}. 
The main idea here is to use a theorem of 
R{\"u}lling and Saito \cite{RS} which compares motivic cohomology with modulus 
with the cohomology of a relative Milnor $K$-group. 
The theorems then follow from our Bloch's formula with modulus 
(\thmref{thm:Thm-3}) by comparing the relative Milnor $K$-groups of 
R{\"u}lling-Saito  and Kato-Saito.

The strong Bloch-Srinivas conjecture is proven in \S~\ref{sec:BSC-prf}
using \thmref{thm:Thm-1}, the recent pro-descent theorem of 
Kerz, Strunk and Tamme \cite{KST} and some results on the $K$-theory
in positive characteristic from \cite{Krishna-2}. A question of 
Kerz-Saito is answered in a special case
as an application of our proof of the strong 
version of the Bloch-Srinivas conjecture.

In \S~\ref{sec:ECG}, we prove \thmref{thm:Thm-6}.
The proof is delicate and goes into several steps. 
We use the theory of Euler class groups of commutative Noetherian
rings. For singular rings, these groups are difficult to study directly.
To circumvent this, we introduce a modified version of the
Euler class group.
We then show that this modified version coincides with the
classical one for singular affine schemes. It uses a hard
result of Van der Kallen \cite{vdK-1}, a theorem of
Das-Zinna \cite{DZ} and the Bertini theorem of Murthy and Swan.
 
Once this isomorphism is achieved, we use 
the Bertini theorems of Murthy and Swan (once again),
the cancellation theorem of Suslin and some results of 
Bhatwadekar-R. Sridharan on the Euler classes of projective modules to 
complete the proof of \thmref{thm:Thm-6}.
Using these Euler class groups, we derive 
Bloch's formula for affine surfaces over arbitrary perfect fields in 
\S~\ref{sec:gen-field}.

\subsection{Notations} 
The following notations will be followed in this text.
The word {\sl scheme} will mean a separated Noetherian
scheme of finite Krull dimension and the word {\sl ring} will mean a 
commutative Noetherian ring.
For a scheme $X$, the normalization of $X_{\rm red}$ will be denoted by $X^N$.
We shall denote the
Nisnevich (resp. Zariski) site of $X$ by $X_{\nis}$ (resp. $X_{\zar}$).
For a point $x \in X$, we shall denote the scheme $\Spec(\sO_{X,x})$
by $X_x$. We let $X^o_x = X_x \setminus \{x\}$ and $\eta_x = \Spec(k(x))$.
For a closed subscheme $Z \subset X$, we shall let $|Z|$ denote the 
support of $Z$.

Throughout this text, we
shall fix a perfect field $k$ and let $\Sch_k$ denote the 
category of separated schemes of finite type over $k$. 
We shall let $\Sm_k$ denote the category
of those schemes in $\Sch_k$ which are smooth over $k$. 
For $X, Y \in \Sch_k$, we shall denote $X \times_{\Spec(k)} Y$ simply
by $X \times Y$.

For abelian groups $A$ and $B$, we shall write $A \otimes_{\Z} B$ in short as
$A \otimes B$. For a prime $p$, we shall let 
$A\{p\}$ denote the $p$-primary torsion subgroup of $A$.

\section{Review of 0-cycles  and Milnor 
$K$-sheaves}\label{sec:Prelim}
In this section, we recall the definitions of various 0-cycle groups
and relations between them. We also recall the definition of the
Milnor $K$-theory sheaves which is one of our main objects of study
in order to prove the Bloch-Quillen type formula for Chow groups 
on singular varieties and Chow groups with modulus.
We also prove some other preliminary results that will be used in the
proofs of the main results.

\subsection{Levine-Weibel Chow group of singular schemes}
\label{sec:Chow-sing}
We recall the definition of the cohomological Chow group of 0-cycles for
singular schemes from \cite{BK} and \cite{LW}. 
Let $X$ be a reduced quasi-projective
scheme of dimension $d \ge 1$ over $k$. Let $X_{\rm sing}$ and $X_{\rm reg}$
respectively denote the loci of the singular and the regular points of $X$.
Given a nowhere dense closed subset $Y \subset X$ such that 
$X_{\rm sing} \subseteq Y$, we let $\sZ_0(X,Y)$ denote
the free abelian group on the closed points of $X \setminus Y$.
We write $\sZ_0(X, X_{\rm sing})$ in short as $\sZ_0(X)$.

\begin{defn}\label{defn:0-cycle-S-1}
Let $C$ be a pure dimension one reduced scheme in $\Sch_k$. 
We shall say that a pair $(C, Z)$ is \emph{a good curve
relative to $X$} if there exists a finite morphism $\nu\colon C \to X$
and a closed proper subset $Z \subsetneq C$ such that the following hold.
\begin{enumerate}
\item
No component of $C$ is contained in $Z$.
\item
$\nu^{-1}(X_{\rm sing}) \cup C_{\rm sing}\subseteq Z$.
\item
$\nu$ is local complete intersection at every 
point $x \in C$ such that $\nu(x) \in X_{\rm sing}$. 
\end{enumerate}
\end{defn}

Let $(C, Z)$ be a good curve relative to $X$ and let 
$\{\eta_1, \cdots , \eta_r\}$ be the set of generic points of $C$. 
Let $\sO_{C,Z}$ denote the semilocal ring of $C$ at 
$S = Z \cup \{\eta_1, \cdots , \eta_r\}$.
Let $k(C)$ denote the ring of total
quotients of $C$ and write $\sO_{C,Z}^\times$ for the group of units in 
$\sO_{C,Z}$. Notice that $\sO_{C,Z}$ coincides with $k(C)$ 
if $|Z| = \emptyset$. 
As $C$ is Cohen-Macaulay, $\sO_{C,Z}^\times$  is the subgroup of $k(C)^\times$ 
consisting of those $f$ which are regular and invertible in the local rings 
$\sO_{C,x}$ for every $x\in Z$. 

Given any $f \in \sO^{\times}_{C, Z} \inj k(C)^{\times}$, we denote by  
${\rm div}_C(f)$ (or ${\rm div}(f)$ in short) 
the divisor of zeros and poles of $f$ on $C$, which is defined as follows. If 
$C_1,\ldots, C_r$ are the irreducible components of $C$, 
and $f_i$ is the factor of $f$ in $k(C_i)$, we set 
${\rm div}(f)$ to be the $0$-cycle $\sum_{i=1}^r {\rm div}(f_i)$, where 
${\rm div}(f_i)$ is the usual 
divisor of a rational function on an integral curve in the sense of 
\cite{Fulton}. As $f$ is an invertible 
regular function on $C$ along $Z$, ${\rm div}(f)\in \sZ_0(C,Z)$.

By definition, given any good curve $(C,Z)$ relative to $X$, we have a 
push-forward map $\sZ_0(C,Z)\xrightarrow{\nu_{*}} \sZ_0(X)$.
We shall write $\sR_0(C, Z, X)$ for the subgroup
of $\sZ_0(X)$ generated by the set 
$\{\nu_*({\rm div}(f))| f \in \sO^{\times}_{C, Z}\}$. 
Let $\sR_0(X)$ denote the subgroup of $\sZ_0(X)$ generated by 
the image of the map $\sR_0(C, Z, X) \to \sZ_0(X)$, where
$(C, Z)$ runs through all good curves relative to $X$.
We let $\CH_0(X) = \frac{\sZ_0(X)}{\sR_0(X)}$.

If we let $\sR^{LW}_0(X)$ denote the subgroup of $\sZ_0(X)$ generated
by the divisors of rational functions on good curves as above, where
we further assume that the map $\nu: C \to X$ is a closed immersion,
then the resulting quotient group ${\sZ_0(X)}/{\sR^{LW}_0(X)}$ is
denoted by $\CH^{LW}_0(X)$. Such curves on $X$ are called the 
{\sl Cartier curves}. There is a canonical surjection
$\CH^{LW}_0(X) \surj \CH_0(X)$. The Chow group $\CH^{LW}_0(X)$ was
discovered by Levine and Weibel \cite{LW} in an attempt to describe the
Grothendieck group of a singular scheme in terms of algebraic cycles.
The modified version $\CH_0(X)$ was introduced in \cite{BK}.

We shall use the following moving lemma type result from
\cite[Lemma~1.3, Corollary~1.4]{ESV} in the proof of Theorems~\ref{thm:Thm-1}
and ~\ref{thm:Thm-5}.

\begin{lem}\label{lem:MLS}
Let $X$ be a reduced quasi-projective scheme over an infinite perfect 
field $k$. Let $Y$ be a nowhere dense closed subscheme of $X$ containing 
$X_{\textnormal{sing}}$ such that the codimension of $Y$ in $X$ is at least two. 
Let $R_0^{LW}(X,Y) \subset R_0^{LW}(X)$ denote the subgroup 
generated by ${\rm div}_C(f)$ where $C$ is an integral curve such that 
$C \cap Y = \emptyset$ and $f\in k(C)^{\times}$. Then
the map 
\[
\frac{\sZ_0(X,Y)}{R_0^{LW}(X,Y)} \to \frac{\sZ_0(X)}{\sR^{LW}_0(X)}
\]
is an isomorphism.
\end{lem}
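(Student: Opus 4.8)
The plan is to realize the displayed arrow as the map induced by the inclusion $\sZ_0(X,Y) \subseteq \sZ_0(X)$ of free groups on closed points, which is legitimate because $X_{\rm sing} \subseteq Y$ forces $X \setminus Y \subseteq X_{\rm reg}$, and then to prove separately that it is surjective and injective. Well-definedness reduces to the inclusion $R_0^{LW}(X,Y) \subseteq \sR^{LW}_0(X)$, which holds because a curve $C$ with $C \cap Y = \emptyset$ lies in $X_{\rm reg}$ (as $X_{\rm sing}\subseteq Y$) and so, being an integral closed subcurve of the regular locus, defines a Levine--Weibel relation. The single engine behind both directions is a Bertini-type general position statement over the infinite perfect field $k$: through any prescribed finite set of closed points of $X_{\rm reg}$ there passes an integral curve $C \subseteq X$, cut out by $\dim X - 1$ general hyperplane sections, which lies in $X_{\rm reg}$ away from a controlled finite set and meets $Y$ in a prescribed finite set. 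The hypotheses that $k$ is infinite and perfect are exactly what make the general member of such a linear system integral and generically smooth.

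For surjectivity it suffices to move each generator $[x]$ with $x \in Y \cap X_{\rm reg}$ off $Y$. Using the engine I would choose $C \ni x$ a complete intersection curve with $C \cap Y = \{x\}$; since $x \in X_{\rm reg}$ and $X_{\rm sing} \subseteq Y$, this forces $C \subseteq X_{\rm reg}$, so $(C,\emptyset)$ is a good Cartier curve. Picking a hyperplane $H$ through $x$ and a general hyperplane $H'$ avoiding $x$, the rational function $f = (H/H')|_C \in k(C)^{\times}$ satisfies $\divf_C(f) = (x + \beta) - \beta'$, where $\beta = (H \cap C) - x$ and $\beta'$ are supported on $C \setminus \{x\} \subseteq X_{\rm reg} \setminus Y$. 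Hence $[x] = [\beta' - \beta]$ in $\sZ_0(X)/\sR^{LW}_0(X)$ with $\beta' - \beta \in \sZ_0(X,Y)$. This pencil trick works uniformly in the affine and projective cases and respects the degree constraint in the projective case.

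The injectivity is the genuine obstacle. Given $\alpha \in \sZ_0(X,Y)$ with $\alpha = \sum_j \divf_{C_j}(f_j) \in \sR^{LW}_0(X)$, the individual divisors $\divf_{C_j}(f_j)$ need not avoid $Y$, so one cannot argue generator by generator inside the quotient; instead one must deform the whole defining datum off $Y$. Reducing to a single integral Cartier curve $C$, the plan is to contain $C$ in a general complete intersection surface $S \subseteq X$ cut by $\dim X - 2$ general hyperplanes through $C$. Because $\codim_X Y \ge 2$, the intersection $S \cap Y$ is then finite, and $S$ is normal and smooth away from a finite set by Bertini. On the surface $S$ there is genuine room: using a pencil in a suitable very ample linear system on $S$ one moves $C$ to a general member $C'$ disjoint from $S \cap Y$, carries $f$ along the pencil to a function $f'$ on $C'$, and identifies the difference $\divf_C(f) - \divf_{C'}(f')$ with a sum of divisors of functions on curves in $S$ disjoint from $Y$, i.e. with an element of $R_0^{LW}(X,Y)$. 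Summing over $j$ rewrites $\alpha$ as an element of $R_0^{LW}(X,Y)$.

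The hard part is precisely this last step: organizing the surface-level homotopy so that all correction terms land in $R_0^{LW}(X,Y)$, while preserving throughout the deformation the defining conditions of a good Cartier curve (that the moving curves stay integral, lie in $X_{\rm reg}$ after removing $Y$, and that the functions remain in the correct semilocal unit groups). Controlling the bad locus of $S$ and the incidence of the moving curves with the finite set $S \cap Y$ is where the Bertini theorems over the infinite perfect field $k$ are invoked repeatedly; this is the technical heart of the argument and is what reproduces \cite[Lemma~1.3, Corollary~1.4]{ESV}.
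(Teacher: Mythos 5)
The paper itself gives no proof of this lemma: it is imported verbatim from \cite{ESV} (Lemma~1.3 and Corollary~1.4), so your attempt has to be judged against the actual content of that argument rather than against anything in the text. The easy parts of your proposal are correct and match the standard line: well-definedness follows since a curve disjoint from $Y$ lies in $X_{\rm reg}$, and your surjectivity argument (a general complete-intersection curve $C$ through $x$ with $C\cap Y=\{x\}$, which is forced into $X_{\rm reg}$ because $X_{\rm sing}\subseteq Y$, followed by the pencil $f=(H/H')|_C$) is a complete and correct moving argument; the hypothesis $\codim_X Y\ge 2$ enters exactly where you use it, to make $C\cap Y=\{x\}$ achievable by Bertini over the infinite field $k$.

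The injectivity direction, however, contains a genuine gap, and it sits precisely at the content of the lemma. Two concrete problems. First, the "reduction to a single integral Cartier curve $C$" is unjustified: Levine--Weibel relations can be combined into a single \emph{reduced}, generally reducible, Cartier curve carrying one function, but not into an integral one, and you yourself observe two sentences earlier that one cannot argue term by term because the individual $\divf_{C_j}(f_j)$ need not avoid $Y$. Second, and more seriously, "carries $f$ along the pencil to a function $f'$ on $C'$" is not an operation: rational functions do not transport along a pencil of curves, and the subsequent step --- identifying $\divf_C(f)-\divf_{C'}(f')$ as a sum of divisors of functions on curves in $S$ disjoint from $Y$ --- is not a bookkeeping of correction terms but is exactly the statement of the lemma, now posed on the surface $S$ relative to the finite set $S\cap Y$. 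So the plan reduces the lemma to itself in dimension two plus an unspecified transport mechanism, and your closing admission that this "is the technical heart of the argument and is what reproduces \cite{ESV}" concedes the point: what is missing is the actual mechanism (in \cite{ESV} this is done by modifying the curve and the function together, using residual general complete-intersection curves and functions congruent to $1$ along the relevant loci, not by transporting $f$). As written, the proposal proves surjectivity but not injectivity.
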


\subsection{Milnor $K$-theory sheaves}\label{sec:Milnor-sheaf}
Let $A$ be a ring. Let $T(A^{\times})$ denote the 
$\mathbb{Z}$-tensor algebra over the group of units in $A$.  
Recall that the Milnor $K$-group $K^M_i(A)$ of $A$ is the $i$-th
graded piece of the quotient of $T(A^{\times})$ by the homogeneous ideal 
generated by $a \otimes (1-a) \in A^{\times}\otimes A^{\times}$
with $a, 1-a \in A^{\times}$. 
Given an ideal $I \subset A$, we let $K^M_i(A,I) = 
{\rm Ker}(K^M_i(A) \to K^M_i(A/I))$. For $a_1, \ldots, a_i \in A^{\times}$,
we let $\{a_1, \ldots, a_i\}$ denote the image of
$a_1 \otimes \cdots\otimes a_i$ in $K^M_i(A)$.
We shall frequently use the following description of
$K^M_i(A,I)$ for local rings from \cite[Lemma~1.3.1]{Kato-Saito}.

\begin{lem}$($\cite[Lemma~1.3.1]{Kato-Saito}$)$\label{lem:Milnor-local}
Let $A$ be a finite product of local rings and let $I \subset A$ be an
ideal. Then $K^M_i(A,I)$ coincides with the subgroup of $K^M_i(A)$ generated
by elements of the form $\{a_1, \ldots,a_i\}$ such that
$a_j \in {\rm Ker}(A^{\times} \to (A/I)^{\times})$ for some $j$.
\end{lem}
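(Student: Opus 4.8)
The plan is to establish the nontrivial inclusion $K^M_i(A,I) \subseteq L$, where $L$ denotes the subgroup of $K^M_i(A)$ generated by the symbols $\{a_1,\dots,a_i\}$ with $a_j \in {\rm Ker}(A^\times \to (A/I)^\times)$ for some $j$; the reverse inclusion $L \subseteq K^M_i(A,I)$ is immediate, since such a symbol maps in $K^M_i(A/I)$ to a symbol with $1$ in its $j$-th slot, which vanishes by multilinearity. First I would record the structural facts coming from the hypothesis. Writing $A = \prod_s A_s$ as a finite product of local rings, we have $A/I = \prod_s A_s/I_s$ again of this form, and $A^\times = \prod_s A_s^\times \surj \prod_s (A_s/I_s)^\times = (A/I)^\times$ is surjective because units lift along the quotient of a local ring. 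Hence the natural map $\pi\colon K^M_i(A) \surj K^M_i(A/I)$ is surjective, as $K^M_i$ is generated by symbols. I would also note that $L_* = \bigoplus_i L_i$ is a graded two-sided ideal of $K^M_*(A)$, since multiplying a distinguished symbol by any symbol preserves the entry lying in ${\rm Ker}(A^\times \to (A/I)^\times)$. Thus $\bar K_* := K^M_*(A)/L_*$ is a graded ring and $\pi$ descends to a surjection $\bar\pi\colon \bar K_i \surj K^M_i(A/I)$.

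The heart of the argument is to produce an inverse to $\bar\pi$. In degree one, $K^M_1(A) = A^\times$ and $L_1 = {\rm Ker}(A^\times \to (A/I)^\times)$, so $\bar K_1 = A^\times/L_1 \cong (A/I)^\times$ by the surjectivity above. Taking the inverse isomorphism as a degree-one map $(A/I)^\times \to \bar K_1$ and invoking the universal property of the tensor algebra $T((A/I)^\times)$, I obtain a graded ring homomorphism $\Phi\colon T((A/I)^\times) \to \bar K_*$, explicitly $\Phi(\bar a_1 \otimes \cdots \otimes \bar a_i) = [\{a_1,\dots,a_i\}]$ for any lifts $a_j \in A^\times$ of $\bar a_j$. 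Independence of the chosen lifts is automatic here, because it is built into the degree-one map: changing a lift multiplies the symbol by an element of $L$.

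To descend $\Phi$ to $K^M_i(A/I)$ it remains to check that $\Phi$ kills the Steinberg relations, i.e. $\Phi(\bar a \otimes (1-\bar a)) = 0$ whenever $\bar a, 1-\bar a \in (A/I)^\times$. This is the one place where the product-of-local-rings hypothesis is essential: writing $\bar a = (\bar a_s)_s$, each $\bar a_s$ and $1 - \bar a_s$ is a unit in $A_s/I_s$, so for any componentwise lift $a_s \in A_s$ both $a_s$ and $1 - a_s$ lie outside the maximal ideal of $A_s$ and are therefore units. Hence $a = (a_s)_s$ and $1 - a$ are units in $A$, the defining Steinberg relation gives $\{a, 1-a\} = 0$ in $K^M_2(A)$, and so $\Phi(\bar a \otimes (1-\bar a)) = [\{a,1-a\}] = 0$. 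Therefore $\Phi$ factors through a homomorphism $\phi\colon K^M_i(A/I) \to \bar K_i$. Evaluating on generating symbols then shows $\bar\pi \circ \phi = {\rm id}$ and $\phi \circ \bar\pi = {\rm id}$, so $\bar\pi$ is an isomorphism, whence ${\rm Ker}(\pi) = L_i$, which is precisely the claim $K^M_i(A,I) = L$.

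I expect the main obstacle to be the Steinberg step. Over a general semilocal ring one cannot always lift $\bar a$ to a unit $a$ with $1 - a$ also a unit, so the argument genuinely relies on the decomposition into a product of local rings in order to lift the Steinberg relation componentwise; this is also the reason the statement is formulated for products of local rings rather than for arbitrary semilocal rings. The remaining ingredients—that $L_*$ is an ideal, the universal-property construction of $\Phi$, and the verification that the two composites are the identity on generators—are routine once the Steinberg lifting is secured.
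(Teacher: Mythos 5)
The paper itself gives no argument for this lemma---it is quoted directly from Kato--Saito \cite[Lemma~1.3.1]{Kato-Saito}---so your proposal has to stand on its own. Its architecture is the natural one and most of it is correct: the easy inclusion, the surjectivity of $K^M_i(A)\to K^M_i(A/I)$, the fact that $L_*$ is a graded two-sided ideal, the identification $K^M_1(A)/L_1\cong (A/I)^\times$, the construction of $\Phi$ from the universal property of the tensor algebra, and the verification that the two composites are the identity on generators are all fine. You also correctly isolated the crux: lifting Steinberg elements.

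But exactly at that crux there is a genuine gap. Your claim that ``for any componentwise lift $a_s\in A_s$ both $a_s$ and $1-a_s$ lie outside the maximal ideal of $A_s$'' is valid only when the component $I_s$ of $I=\prod_s I_s$ is a \emph{proper} ideal of $A_s$, so that $I_s\subseteq \fm_s$ and ``unit mod $I_s$'' is equivalent to ``unit''. The statement, however, allows arbitrary ideals, and if $I_s=A_s$ for some $s$ then $A_s/I_s$ is the zero ring, every element of which is a unit; the hypotheses on $\bar a_s$ and $1-\bar a_s$ then carry no information, and a lift $a_s$ with $a_s$ and $1-a_s$ both units exists only if the residue field of $A_s$ has at least three elements. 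This is not a reparable blemish in your write-up, because in that degenerate case the lemma as stated is actually false. Take $A=\F_2\times\F_3$ and $I=\F_2\times 0$, so $A/I\cong\F_3$. No $a\in A^\times$ has $1-a\in A^\times$ (the $\F_2$-coordinate obstructs), so there are no Steinberg relations and $K^M_2(A)=A^\times\otimes A^\times\cong\Z/2$, generated by $\{u,u\}$ with $u=(1,2)$; since $K^M_2(\F_3)=0$, the relative group $K^M_2(A,I)$ is all of $\Z/2$. On the other hand $A^\times\to(A/I)^\times$ is injective, so $L_2$ is generated by symbols having an entry equal to $1$ and hence vanishes; concretely, your $\Phi$ cannot kill $\bar 2\otimes(1-\bar 2)=\bar 2\otimes\bar 2$, whose image is the nonzero class $\{u,u\}$. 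The fix is to add the hypothesis that $I$ lies in the Jacobson radical of $A$ (equivalently $I_s\neq A_s$ for every $s$); then every lift of a unit of $A/I$ is a unit of $A$, you may take the second lift to be $1-a$, and your argument goes through verbatim. This restricted form is what the present paper actually uses (local rings and stalk-level ideals of closed subschemes through the point in question, as in Lemma~\ref{lem:Milnor-fiber} and Lemma~\ref{lem:RS-RMKthy}), and it is the form in which the lemma should be stated and proved.
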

 
We shall need the following local result later in the proof of
\thmref{thm:Thm-3}. Let 
\begin{equation}\label{eqn:Cart-rings}
\xymatrix@C1pc{
R \ar[r]^-{\psi_1} \ar[d]_{\psi_2} & A_1 \ar@{->>}[d]^{\phi_1} \\
A_2 \ar@{->>}[r]^-{\phi_2} & B}
\end{equation}
be a Cartesian square of rings.

\begin{lem}\label{lem:Milnor-fiber}
Associated to the Milnor square ~\eqref{eqn:Cart-rings}, the 
restriction map ${\rm Ker}(K^M_q(R) \to K^M_q(A_2)) \to
{\rm Ker}(K^M_q(A_1) \to K^M_q(B))$ is surjective if $A_1$ and $A_2$ are local 
rings. 
\end{lem}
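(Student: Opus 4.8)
The plan is to identify the two kernels appearing in the statement with relative Milnor $K$-groups and then to lift an explicit generating set of the target group to the source. Set $I = {\rm Ker}(\psi_2)$ and $J = {\rm Ker}(\phi_1)$. The Cartesian condition means $R = A_1 \times_B A_2$; since $\phi_1$ is surjective, so is $\psi_2$, and $A_1/J \cong B$. Consequently ${\rm Ker}(K^M_q(R) \to K^M_q(A_2)) = K^M_q(R, I)$ and ${\rm Ker}(K^M_q(A_1) \to K^M_q(B)) = K^M_q(A_1, J)$, and the map in question is the one induced by $\psi_1$ (well defined, since $\psi_1(I) \subseteq J$). It therefore suffices to show that $\psi_1$ hits every generator of $K^M_q(A_1, J)$.

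First I would apply \lemref{lem:Milnor-local} to the local ring $A_1$: the group $K^M_q(A_1, J)$ is generated by symbols $\{a_1, \dots, a_q\}$ with all $a_i \in A_1^{\times}$ and at least one index $j$ for which $\phi_1(a_j) = 1$ in $B^{\times}$. So I only need to lift each such generator.

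The key step is the lifting, and it is here that the locality of $A_2$ is used. Since $\phi_2$ is surjective, ${\rm Ker}(\phi_2)$ is a proper ideal of the local ring $A_2$ (we may assume $B \neq 0$, the other case being trivial), hence ${\rm Ker}(\phi_2) \subseteq \fm_{A_2}$; therefore any element of $A_2$ whose image in $B$ is a unit lies outside $\fm_{A_2}$ and is itself a unit of $A_2$. Thus for each $i$ I choose a lift $c_i \in A_2$ of $\phi_1(a_i) \in B^{\times}$; then automatically $c_i \in A_2^{\times}$, and the pair $\wt{a_i} := (a_i, c_i)$ lies in $R^{\times}$ because its two components have the same image $\phi_1(a_i) = \phi_2(c_i)$ in $B$. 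For the distinguished index $j$ I instead take $\wt{a_j} := (a_j, 1)$, which is a legitimate element of $R^{\times}$ precisely because $\phi_1(a_j) = 1 = \phi_2(1)$.

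Finally I would verify the two required properties of the lifted symbol $\{\wt{a_1}, \dots, \wt{a_q}\} \in K^M_q(R)$. Under $\psi_1$ it maps to $\{a_1, \dots, a_q\}$, the chosen generator. Under $\psi_2$ its $j$-th entry becomes $\psi_2(\wt{a_j}) = 1$, so the image is a Milnor symbol containing a $1$ and hence vanishes in $K^M_q(A_2)$; thus $\{\wt{a_1}, \dots, \wt{a_q}\}$ lies in $K^M_q(R, I)$ and maps onto the generator, giving surjectivity. The only genuinely delicate point is the unit-lifting: a lift to $A_2$ of a unit of $B$ need not be a unit if $A_2$ is not local, so the hypothesis that $A_2$ is local is indispensable, as is the choice of the trivial lift $1$ at the distinguished coordinate, which is what forces the symbol to die in $K^M_q(A_2)$.
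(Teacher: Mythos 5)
Your proof is correct, and its skeleton matches the paper's: both arguments identify the two kernels with relative Milnor $K$-groups, invoke \lemref{lem:Milnor-local} for the local ring $A_1$ to present the target by symbols having one entry congruent to $1$ modulo ${\rm Ker}(\phi_1)$, and then lift such symbols entry by entry to $R$. The difference is in how the lift is produced and certified, and here your route is genuinely more elementary. The paper first settles $q \le 1$ by citing \cite{Milnor}, lifts the ordinary entries using the surjectivity of $R^{\times} \to A_1^{\times}$, lifts the distinguished entry through the resulting isomorphism $(1+I_2)^{\times} \cong (1+J_1)^{\times}$ of the $q=1$ case (where $I_2 = {\rm Ker}(\psi_2)$, $J_1 = {\rm Ker}(\phi_1)$), and then needs a \emph{second} application of \lemref{lem:Milnor-local} --- this time to $R$, after observing that $R$ is local --- to conclude that the lifted symbol lies in $K^M_q(R, I_2)$. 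You instead build the lifts explicitly as pairs: $(a_i, c_i)$ with $c_i \in A_2^{\times}$ a lift of $\phi_1(a_i)$, which is exactly where the locality of $A_2$ enters (it is also the hidden content of the paper's surjectivity of $R^{\times} \to A_1^{\times}$), and the trivial lift $(a_j, 1)$ at the distinguished slot; membership in the kernel is then immediate because the $\psi_2$-image of your symbol has an entry equal to $1$ and hence vanishes. This makes your argument uniform in $q$, avoids the citation to Milnor's book, and avoids proving that $R$ is local or re-invoking \lemref{lem:Milnor-local} for it; what it gives up is the extra information the paper's route records along the way, namely that the map is actually an isomorphism in degrees $q \le 1$. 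Your explicit attention to the degenerate case $B = 0$ (where ${\rm Ker}(\phi_2)$ is not proper) is also a point the paper passes over silently.
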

\begin{proof}
We let $J_i = {\rm Ker}(\phi_i)$ and $I_i = {\rm Ker}(\psi_i)$ for $i =1,2$.
We need to show that the map of relative Milnor $K$-groups
$K^M_q(R, I_2) \to K^M_q(A_1, J_1)$ is surjective.
If $q \le 1$, then it follows from \cite[Theorem~6.2, Lemma~4.1]{Milnor}
that this map is actually an isomorphism. So we assume $q \ge 2$.

It is easy to check that $R$ is a local ring. It follows from
\lemref{lem:Milnor-local} that $K^M_q(A_1, J_1)$ is generated by the 
Milnor symbols $\{b_1, \ldots, b_q\}$ such that $b_j \in (1 + J_1)^{\times}$
for some $1 \le j \le q$. A similar presentation holds for $K^M_q(R, I_2)$.
We choose  such a symbol $\{b_1, \ldots, b_q\} \in K^M_q(A_1, J_1)$.
Suppose that $b_j \in (1 + J_1)^{\times}$ for some $1 \le j \le q$.
Since the map $R^{\times} \to A^{\times}_1$ is surjective, we can find
$b'_i \in R^{\times}$ such that $\psi_1(b'_i) = b_i$ for $1 \le i \neq j \le q$.
Furthermore, we have isomorphism
$(1 + I_2)^{\times} = K^M_1(R, I_2) \xrightarrow{\cong} K^M_1(A_1, J_1)
= (1 + J_1)^{\times}$ by $q =1$ case. So we can choose $b'_j \in
(1 + I_2)^{\times}$ such that $\psi^*_1(b'_j) = b_j$.
It is now immediate that $\{b'_1, \ldots, b'_q\} \in K^M_q(R, I_2)$
and $\psi^*_1(\{b'_1, \ldots, b'_q\}) = \{b_1, \ldots, b_q\}$.
This finishes the proof.
\end{proof}

\begin{defn}\label{defn:Milnor-sheaf*}
For a scheme $X$ and closed immersion $\iota: Y \inj X$,
we let $\sK^M_{i, (X,Y)}$ denote the Zariski (resp. Nisnevich) sheaf on
$X_{\zar}$ (resp. $X_{\nis}$) associated to the presheaf
$U \mapsto {\rm Ker}(K^M_i(\Gamma(\sO_U)) \to 
K^M_i(\Gamma(\sO_{Y \times_X U})))$.
\end{defn}

Since the Zariski or the Nisnevich cohomology of the
push-forward sheaf $\iota_*(\sK^M_{i,Y})$ coincides with that of
$\sK^M_{i,Y}$, we shall not distinguish between these two sheaves in the
sequel.
It follows immediately from the above definition that there is a short exact
sequence of Zariski (or Nisnevich) sheaves
\begin{equation}\label{eqn:MS*-0}
0 \to \sK^M_{i, (X,Y)} \to \sK^M_{i,X} \to \sK^M_{i,Y} \to 0.
\end{equation}

\subsection{Gersten and Cousin complexes}\label{sec:GCC}
Let $k$ be a perfect field. Let $X$ be an equi-dimensional scheme
of dimension $d$  
which is a localization of a reduced quasi-projective scheme
over $k$. For any point $x \in X$, let $K^M_i(x) = K^M_i (k(x))$.
Let $X^{(q)}$ be the set of codimension $q$ points on $X$. 
For any $x \in X^{(q)}$ and $y \in X^{(q+1)}$, let $Z = \ov{\{x\}}$.
We let $\partial^M_{x,y}: K^M_i(x) \to K^M_{i-1}(y)$ be the map
\begin{equation}\label{eqn:B-map-M}
\partial^M_{x,y} = \left\{
\begin{array}{ll}
0 & \mbox{if $y \notin Z$} \\
\sum_{z|y} N_{{k(z)}/{k(y)}} \circ \partial_z & \mbox{otherwise,}
\end{array}
\right.
\end{equation}
where $z$ runs through the closed points in $Z^N$ over $y$ and
$\partial_z: K^M_i(k(x)) = K^M_i(k(Z^N)) \to K^M_{i-1}(k(z))$ is the
classical boundary map on the quotient field of a dvr defined in \cite{BT}.

Recall from \cite[Proposition~1]{Kato} (see also \cite[Lemma~3.3]{Rost}
for a generalization)
that there is a Gersten complex of Zariski sheaves 
\begin{equation}\label{eqn:Kato-res}
0 \to \sK^M_{i, X} \xrightarrow{\epsilon}
{\underset{x \in  X^{(0)}}\amalg} (i_x)_* (K^M_i(x)) \to  
{\underset{x \in  X^{(1)}}\amalg}  (i_x)_* (K^M_{i-1}(x)) \to \cdots
\end{equation}
\[
\hspace*{5cm}
\cdots \to {\underset{x \in  X^{(d-1)}}\amalg}  (i_x)_* (K^M_{i-d+1}(x)) 
\xrightarrow{\partial^M}
{\underset{x \in  X^{(d)}}\amalg}  (i_x)_* (K^M_{i-d}(x)),
\] 
where $\epsilon$ is the usual restriction map to the
generic points. The other boundary maps consist of the 
sums of homomorphisms
$\partial^M_{x,y}$ for $x \in X^{q}, y \in X^{q+1}$.

For a Zariski sheaf $\sF$ on $X$ and a point $x \in X$
(not necessarily closed), recall that $H^q_x(X_{\zar}, \sF)$
is defined as the colimit ${\underset{U}\varinjlim} \
H^q_{\ov{\{x\}} \cap U}(U, \sF|_U)$, where the limit is over all
open neighborhoods of $x$ in $X$. The Nisnevich cohomology 
$H^q_x(X_{\nis}, \sF)$ is defined in an analogous way.

Recall also that for any Zariski sheaf of abelian groups $\sF$ on $X$, the 
filtration by codimension of support (coniveau filtration) of the Zariski
cohomology with support gives rise to the Cousin complex of
Zariski cohomology sheaves
\begin{eqnarray}\label{eqn:Cousin-res}
\nonumber C^{\bullet}_{\sF}: {\underset{x \in  X^{(0)}}\amalg} (i_x)_* H^{q}_{x}(X, \sF) &\to& 
{\underset{x \in  X^{(1)}}\amalg} (i_x)_* H^{q+1}_{x}(X, \sF) \to  \cdots \\
 &  &\cdots \to {\underset{x \in  X^{(d-1)}}\amalg}  (i_x)_* 
H^{q+d-1}_{x}(X, \sF) \xrightarrow{\partial^S} 
{\underset{x \in  X^{(d)}}\amalg}  (i_x)_* H^{q+d}_{x}(X, \sF)
\end{eqnarray}
with a map $\epsilon: \sH^q(\sF) \to \nonumber C^{\bullet}_{\sF}$, 
where $\sH^q(\sF)$ is the Zariski sheaf on $X$ associated to the presheaf
$U \mapsto H^q_{\zar}(U, \sF)$. Let
$f_S: {\underset{x \in  X^{(d)}}\amalg}  H^{d}_{x}(X, \sF)
\to H^d_{\zar}(X, \sF)$ denote the sum of the
`forget support' maps $f_{S,x}: H^{d}_{x}(X, \sF) \to
H^d_{\zar}(X, \sF)$.

For $x \in X^{(q)}$ and $y \in X^{(q+1)}$, let
$\partial^S_{x,y}: H^{n}_{x}(X, \sF) \to H^{n+1}_{y}(X, \sF)$
be the composite map
\begin{equation}\label{eqn:defn-boundary-Cousin}
H^{n}_{x}(X, \sF) \inj {\underset{z \in  X^{(q)}}\amalg} 
H^{n}_{z}(X, \sF) \xrightarrow{\partial^S}
{\underset{w \in  X^{(q+1)}}\amalg} H^{n+1}_{w}(X, \sF) 
\surj H^{n+1}_{y}(X, \sF).
\end{equation}

\begin{lem}\label{lem:Cousin-ex}
The Cousin complex induces an exact sequence of Zariski cohomology 
\begin{equation}\label{eqn:Cousin-ex-0}
{\underset{x \in  X^{(d-1)}}\amalg} 
H^{d-1}_{x}(X, \sF) \xrightarrow{\partial^S} 
{\underset{x \in  X^{(d)}}\amalg}  H^{d}_{x}(X, \sF)
\xrightarrow{f_S} H^d_{\zar}(X, \sF) \to 0.
\end{equation}
\end{lem}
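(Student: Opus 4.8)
The plan is to recognize the sequence \eqref{eqn:Cousin-ex-0} as the bottom edge of the coniveau (Cousin) spectral sequence and to reduce its exactness to Grothendieck's vanishing theorem. The filtration of $X$ by codimension of support gives an exact couple whose spectral sequence has $E_1^{p,q} = \amalg_{x \in X^{(p)}} H^{p+q}_x(X,\sF)$ and abuts to $H^{p+q}_{\zar}(X,\sF)$, the filtration on the abutment being the coniveau filtration $F^\bullet H^n$ (classes supported in codimension $\ge \bullet$). By construction the $d_1$-differentials are the maps $\partial^S$ assembling the Cousin complex $C^\bullet_\sF$, and the edge map is the inclusion $E_\infty^{d,0}=F^d H^d_{\zar}(X,\sF) \hookrightarrow H^d_{\zar}(X,\sF)$ induced by the forget-support maps $f_{S,x}$. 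Hence it suffices to prove: (i) $E_\infty^{d,0}=H^d_{\zar}(X,\sF)$, realized by $f_S$, and (ii) $E_\infty^{d,0}=\cok(\partial^S)$.

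The one genuinely nontrivial input I would isolate is a local vanishing statement: for $x \in X^{(p)}$ and $n > p$ one has $H^n_x(X,\sF)=0$. Since $H^n_x(X,\sF)$ is the local cohomology $H^n_{\{x\}}(X_x,\sF)$ of the $p$-dimensional local scheme $X_x=\Spec(\sO_{X,x})$ with support at its closed point, the localization sequence
\[
H^{n-1}(X^o_x,\sF) \to H^n_{\{x\}}(X_x,\sF) \to H^n(X_x,\sF)
\]
reduces this to Grothendieck's theorem that a Noetherian space of dimension $m$ has vanishing sheaf cohomology above degree $m$: applied to $X_x$ (of dimension $p$) and to the punctured scheme $X^o_x$ (of dimension $p-1$), both outer terms vanish as soon as $n>p$. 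This is the only place where $\dim X = d$ is really used, and it is the crux; the remainder is spectral-sequence bookkeeping.

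Granting the vanishing, I would argue as follows. For every $p<d$ the term $E_1^{p,d-p}=\amalg_{x\in X^{(p)}}H^d_x(X,\sF)$ vanishes, so the graded pieces $E_\infty^{p,d-p}=F^p H^d/F^{p+1}H^d$ of $H^d_{\zar}(X,\sF)$ are zero for $p<d$; as $F^{d+1}H^d=0$ because $X^{(d+1)}=\emptyset$, the coniveau filtration collapses to $F^d H^d = H^d_{\zar}(X,\sF)$, giving (i) and the surjectivity of $f_S$. For (ii), the terms $E_1^{d-r,\,r-1}=\amalg_{x\in X^{(d-r)}}H^{d-1}_x(X,\sF)$ vanish for every $r\ge 2$ by the same lemma (here $n=d-1>d-r$), so no differential $d_r$ with $r\ge2$ can reach $E_r^{d,0}$; since the outgoing $d_1$ also vanishes (because $E_1^{d+1,0}=0$), one obtains $E_2^{d,0}=E_\infty^{d,0}$, while directly $E_2^{d,0}=\cok(\partial^S)$. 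Comparing the two descriptions of $E_\infty^{d,0}$ yields precisely the exactness of \eqref{eqn:Cousin-ex-0}.

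The main obstacle is the bookkeeping at the corner term $E^{d,0}$: one must verify that it already stabilizes at the $E_2$-page and that the edge map is literally $f_S$, which I would handle through the vanishing of the neighboring $E_1$-terms above rather than by appealing to abstract convergence. Should one prefer to avoid the spectral sequence altogether, the same two assertions can be obtained by a descending induction on codimension using the localization sequences attached to $Z^d \subset Z^{d-1} \subset X$, again fed by the identical Grothendieck vanishing; I would keep this as a fallback.
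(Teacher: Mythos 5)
Your proposal is correct and follows essentially the same route as the paper: the coniveau spectral sequence $E_1^{p,q} = \amalg_{x \in X^{(p)}} H^{p+q}_{x}(X,\sF) \Rightarrow H^{p+q}_{\zar}(X,\sF)$, with the degeneration at the corner term forced by the local vanishing $H^{n}_{x}(X,\sF)=0$ for $n>p$, $x \in X^{(p)}$, which the paper also obtains from excision ($H^{i}_{x}(X,\sF)\cong H^{i}_{x}(X_x,\sF)$) together with the dimension bounds $\dim X_x = p$ and $\dim(X_x\setminus\{x\})=p-1$. Your write-up merely makes explicit the spectral-sequence bookkeeping and the appeal to Grothendieck vanishing that the paper leaves implicit.
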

\begin{proof}
The complex ~\eqref{eqn:Cousin-res} gives rise to a spectral sequence
$E^{p,q}_1 = {\underset{x \in  X^{(p)}}\amalg} H^{p+q}_{x}(X, \sF)
\Rightarrow H^{p+q}_{\zar}(X, \sF)$. 
The lemma is now an easy consequence of the fact that
$H^{i}_{x}(X, \sF) \cong H^{i}_{x}(X_x, \sF)$
for every $x \in X^{(q)}$ and $i \ge 0$ by excision, and 
$\dim(X_x \setminus \{x\}) = q-1$.
\end{proof}

The remaining part of this section is not essential for the proof of \thmref{thm:Thm-1}. The reader
who is only interested in the proof of this theorem can therefore directly move to \S~\ref{sec:CCM}.

\subsection{Higher Chow groups with modulus}\label{sec:HCGM}
For $n \ge 1$, let $\square^n$ denote the scheme 
$\A^n_k \cong (\P^1_k \setminus \{\infty\})^n$.
Let $(y_1, \cdots , y_n)$ denote the coordinate of a point on $\square^n$.
We shall denote the scheme $(\P^1_k)^n$ by $\ov{\square}^n$. 
For $1 \le i \le n$, let
$F^{\infty}_{n,i}$ denote the closed subscheme of $\ov{\square}^n$ given by the 
equation $\{y_i = \infty\}$. We shall denote the divisor 
$\stackrel{n}{\underset{i =1}\sum} F^\infty_{n,i}$ by $F^\infty_n$.

Let $X$ be a smooth quasi-projective scheme of dimension $d \ge 0$ over 
$k$ and let $D \subset X$ be an effective Cartier divisor.
For $r \in \Z$ and $n \ge 0$, let $\un{z}_r(X|D,n)$ be the free abelian group 
on integral closed subschemes $V$ of $X \times \square^n$ of dimension 
$r+n$ satisfying the following conditions.

\noindent $(1)$ (Face condition) For each face $F$ of $\square^n$, 
$V$ intersects  $X \times F$ properly:
\[
\dim_k(V\cap (X\times F)) \le r+\dim_k (F), {\rm and} 
\] 

\noindent $(2)$ (Modulus condition) $V$ is a cycle with modulus $D$ relative
to $F^{\infty}_n$:
\[
\nu^*(D \times \ov{\square}^n) \le \nu^*(X \times F^\infty_n),
\]
where $\ov{V}$ is the closure of $V$ in $X \times \ov{\square}^n$
and $\nu: \ov{V}^N \to \ov{V} \to X \times \ov{\square}^n$ is the
composite map from the normalization of $\ov{V}$. 
We let ${\un{z}_r(X|D, n)_{\dgn}}$ denote the subgroup of
$\un{z}_r(X|D, n)$ generated by cycles which are pull-back of
some cycles under various projections $X \times \square^n \to X \times
\square^{m}$ with $m < n$.

\begin{defn}\label{defn:CGM-df}
The {\em cycle complex with modulus} $(z_r(X|D, \bullet), d)$ of $X$ in 
dimension $r$ and with modulus $D$ is the non-degenerate complex associated to 
the cubical abelian group $\un{n} \mapsto \un{z}_{r}(X|D, n)$, i.e.,
\[
z_r(X|D, n): = \frac{\un{z}_r(X|D, n)}
{\un{z}_r(X|D, n)_{\dgn}}.
\]

The homology $\CH_r(X|D, n): = H_n (z_r(X|D, \bullet))$
is called a \emph{higher Chow group} of $X$ with modulus $D$.
Sometimes, we also write it as the Chow group of the {\sl modulus pair} $(X,D)$.
If $X$ has pure dimension $d$, we write 
$\CH^r(X|D, n) = \CH_{d-r}(X|D, n)$. We shall often write $\CH^r(X|D,0)$
as $\CH^r(X|D)$. We refer to \cite{KPark} for further details on this
definition. The reader should note that $\CH_r(X|D, n)$ coincides with
the usual higher Chow group of Bloch $\CH_r(X, n)$ if $D = \emptyset$.
\end{defn}

\subsection{Motivic cohomology with modulus} \label{sec:MCM}

Let $X$ be a smooth quasi-projective scheme of pure dimension $d \ge 0$ over 
$k$ and let $D \subset X$ be an effective Cartier divisor.
For an {\et}ale  map $V \to X$, we let $D_V$ denote the pull-back of $D$ to $V$. 
Then the presheaves $z^r(-|D, n)$ on the site $X_{\et}$ defined by $(V\to X) \mapsto z^r(V | D_V, n)$ 
are sheaves for the {\et}ale topology and therefore for the Nisnevich topology. 

If $X$ is smooth and $D$ is an effective Cartier divisor on $X$, then the $r$-th motivic complex of the pair $(X,D)$ is defined to be the complex of the Nisnevich sheaves on $X$:
\[
\Z(r)_{X|D} = z^r( - | D, 2r- \bullet).
\]
and the motivic cohomology of the pair $(X,D)$ is defined to be:
\[
H^i_{\sM}(X|D, \Z(r)) := \H^i_{\nis} (X, \Z(r)_{X|D}).
\]

Note that, for $0\leq r$ and $0\leq i \leq 2r$,  there exists a natural map 
\begin{equation}\label{eqn:CM-MCM}
\CH^{r}(X|D, 2r-i) \to H^i_{\sM}(X|D, \Z(r)). 
\end{equation}
Indeed, considering $z^r(X|D, 2r- \bullet)$ as a complex of  constant Nisnevich sheaves on $X$, we have 
a natural map $z^r(X|D, 2r -  \bullet) \to \Z(r)_{X|D, \nis}$. Taking the hypercohomology of these complexes, we get $ \CH^{r}(X|D, 2r-i) = \H^i_{\nis}(z^r(X|D, 2r -  \bullet))  \to H^i_{\sM}(X|D, \Z(r))$.

\subsection{The double and its Chow group}
\label{sec:double}
Let $X$ be a smooth quasi-projective scheme of dimension $d$ over $k$
and let $D \subset X$ be an effective Cartier divisor. Recall from 
\cite[\S~2.1]{BK} that the double of $X$ along $D$ is a quasi-projective
scheme $S(X,D) = X \amalg_D X$ so that
\begin{equation}\label{eqn:rel-et-2}
\begin{array}{c}
\xymatrix@R=1pc{
D \ar[r]^-{\iota} \ar[d]_{\iota} & X \ar[d]^{\iota_+} \\
X \ar[r]_-{\iota_-} & S(X,D)}
\end{array}
\end{equation}
is a co-Cartesian square in $\Sch_k$.
In particular, the identity map of $X$ induces a finite map
$\nabla:S(X,D) \to X$ such that $\nabla \circ \iota_\pm = {\rm Id}_X$
and $\pi = \iota_+ \amalg \iota_-: X \amalg X \to S(X,D)$ 
is the normalization map.
We let $X_\pm = \iota_\pm(X) \subset S(X,D)$ denote the two irreducible
components of $S(X,D)$.  We shall often write $S(X,D)$ as $S_X$
when the divisor $D$ is understood. $S_X$ is a reduced quasi-projective
scheme whose singular locus is $D_{\rm red} \subset S_X$. 
It is projective whenever
$X$ is so. It follows from \cite[Lemma~2.2]{Krishna-3} that  
~\eqref{eqn:rel-et-2}
is also a Cartesian square.

It is clear that the map
$\sZ_0(S_X,D)\xrightarrow{(\iota^*_{+}, \iota^*_{-})} \sZ_0(X_+,D) \oplus 
\sZ_0(X_-,D)$ is an isomorphism. 
Notice also that there are push-forward inclusion maps
${p_{\pm}}_* \colon \sZ_0(X,D) \to \sZ_0(S_X,D)$ such that 
$\iota^*_{+} \circ {p_{+}}_* = {\rm Id}$
and $\iota^*_{+} \circ {p_{-}}_* = 0$.
The fundamental result that connects the 0-cycles with modulus on $X$ and
0-cycles on $S_X$ is the following.

\begin{thm}$($\cite[Theorem~1.9]{BK}$)$\label{thm:BS-main}
Let $X$ be a smooth quasi-projective scheme over $k$
and let $D \subset X$ be an effective Cartier divisor. Then there
is a split short exact sequence
\[
0 \to \CH_0(X|D) \xrightarrow{ {p_{+}}_*} \CH_0(S_X) \xrightarrow{\iota^*_-}
\CH_0(X) \to 0.
\]
\end{thm}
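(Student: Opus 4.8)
The plan is to descend to Chow groups the split short exact sequence that is already present on cycles. Using the decomposition $\sZ_0(S_X,D)\cong\sZ_0(X_+,D)\oplus\sZ_0(X_-,D)$ from \S\ref{sec:double}, the map ${p_+}_*$ is the inclusion of the first summand, $\iota_-^*$ the projection onto the second, and ${p_-}_*$ a one-sided inverse of $\iota_-^*$; thus on $\sZ_0$ the sequence is split exact and $\iota_-^*\circ{p_+}_*=0$. Consequently the theorem reduces to four assertions: that each of $\iota_-^*,{p_+}_*,{p_-}_*$ carries rational equivalence to rational equivalence, and that ${p_+}_*$ remains injective after passing to the quotients. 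I would organise the proof so that the modulus condition on $X$ is the only geometric ingredient, encoded through the double construction.

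\emph{Descent of $\iota_-^*$ and ${p_-}_*$.} For a good curve $\nu\colon C\to S_X$ and $f\in\sO_{C,Z}^\times$, restricting to the component $X_-$ (i.e. keeping the part of $\divf(f)$ lying over $X_-\cong X$) exhibits $\iota_-^*(\nu_*\divf(f))$ as $\divf$ of the restriction of $f$ to the good curves of $C$ mapping to $X_-$; as $X$ is smooth these are good curves on $X$, so $\iota_-^*(\sR_0(S_X))\subseteq\sR_0(X)$. In the other direction, for a good curve $\nu_0\colon C_0\to X$ and $f_0\in k(C_0)^\times$ the composite $\iota_-\circ\nu_0\colon C_0\to X_-\subset S_X$ is again a good curve on $S_X$: since $D$ is Cartier the inclusion $\iota_-$ is l.c.i., hence $\iota_-\circ\nu_0$ is l.c.i.\ over $D_\red=(S_X)_\sing$, and we enlarge $Z$ to contain $\nu_0^{-1}(D)$. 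Representing the classes and relations defining $\CH_0(X)$ by cycles and rational functions whose divisors avoid $D$ (possible on smooth $X$), the function $f_0$ is automatically a unit along $C_0\cap D$, so ${p_-}_*(\nu_{0*}\divf(f_0))\in\sR_0(S_X)$. Thus ${p_-}_*$ descends and provides a section of $\iota_-^*$, giving at once the splitting and the surjectivity of $\iota_-^*$.

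\emph{Descent of ${p_+}_*$, and exactness in the middle.} Here the modulus condition enters. Given a modulus rational equivalence on $X$, represented by a curve $(C,Z)$ with $\nu\colon C\to X$ and a function $f$ satisfying the modulus condition $f\equiv 1$ along $D$, I would form the double $S(C,E)$ of $C$ along $E=\nu^{-1}(D)$; by functoriality of the double it maps finitely to $S(X,D)=S_X$, carrying the two copies of $C$ to $X_+$ and $X_-$. A rational function on $S(C,E)$ is a pair of functions on the two copies agreeing along the conductor $E$, and the pair $(f,1)$ is regular and invertible along the singular locus $E$ \emph{exactly because $f$ satisfies the modulus condition}; its divisor is $\divf(f)$ on the $X_+$-copy and $0$ on the $X_-$-copy, whence its push-forward is ${p_+}_*(\nu_*\divf(f))$. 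This shows ${p_+}_*(\sR_0(X|D))\subseteq\sR_0(S_X)$, and the equivalence ``modulus condition on $X$ $\Longleftrightarrow$ gluing to the constant $1$ on the double'' is the conceptual core of the statement. Granting the three descent statements, exactness at $\CH_0(S_X)$ is then formal: if $[\alpha]\in\ker(\iota_-^*)$, write $\alpha={p_+}_*(\beta)+{p_-}_*(\gamma)$ on cycles; then $[\gamma]=\iota_-^*[\alpha]=0$ in $\CH_0(X)$, so ${p_-}_*(\gamma)\in\sR_0(S_X)$ and $[\alpha]={p_+}_*([\beta])\in\im({p_+}_*)$.

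\emph{Injectivity of ${p_+}_*$ (the main obstacle).} The one substantive point is exactness on the left: if $z={p_+}_*(\beta)=\sum_i\nu_{i*}\divf(f_i)$ lies in $\sR_0(S_X)$, one must manufacture an honest modulus equivalence on $X$ trivialising $\beta$. The difficulty is that $\iota_-^*(z)=\sum_i\divf(f_i^-)=0$ only as a \emph{collective} cancellation of the $X_-$-components, not term by term, so one cannot simply normalise each $f_i$ to be $1$ on its $X_-$-copy. My strategy would be a moving lemma asserting that such a relation on $S_X$ can be represented by good curves that are themselves doubles $S(C_{0,i},C_{0,i}\cap D)$ of curves on $X$ meeting $D$ properly, so that $f_i=(f_i^+,f_i^-)$ with $f_i^+\equiv f_i^-$ along $D$; one then uses the vanishing of the total $X_-$-divisor, which is realised by the functions $f_i^-$ on curves on $X$, to modify the gluing data (trading this trivial rational equivalence on the $X_-$-side for a correction) until every $f_i^-$ may be taken to be $1$ along $D$, forcing each $f_i^+$ to satisfy the modulus condition. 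This reduction to doubled curves and the bookkeeping needed to absorb the $X_-$-components into modulus equivalences — where the Cartier hypothesis on $D$ and the explicit structure of the conductor are essential — is the delicate heart of the argument; everything else follows formally from the split exactness on cycles.
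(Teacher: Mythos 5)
Your proposal gets the easy half of the theorem right: the cycle-level decomposition, the descent of $\iota_-^*$, and the descent of ${p_+}_*$ via the doubled curve $S(C,\nu^*D)$ with the function $(f,1)$ (this is indeed the mechanism by which the modulus condition enters, and it matches the argument in \cite{BK}, from which the present paper quotes the theorem without proof). But it contains a genuine error, namely the splitting. The map ${p_-}_*$ does \emph{not} descend to rational equivalence, so it can serve neither as the section of $\iota_-^*$ nor in your formal proof of exactness in the middle. Your justification --- ``since $D$ is Cartier the inclusion $\iota_-$ is l.c.i.'' --- is false: if $I=(t)$ is the ideal of $D$, the ideal of $X_-$ in $A\times_{A/I}A$ is generated by $(t,0)$, which is a zerodivisor (it is killed by $(0,t)$), and a curve $C_0\subset X_-$ meeting $D$ is not a local complete intersection in $S_X$ at points of $D$. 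Concretely, for $X=\A^2$, $D=\{x=0\}$ one has $S_X\cong \Spec k[y,u,v]/(uv)$ with $X_-=\{u=0\}$, and the ideal $(u,y-c)$ of the line $\{u=y-c=0\}$ has height one at the point $u=v=y-c=0$ but cannot be generated by fewer than two elements there, so it is not generated by a regular sequence; such curves are therefore not Cartier (or good) curves on $S_X$ and produce no relations. Moreover no repair is possible, because the conclusion itself is false: on cycles ${p_+}_*=\nabla^*-{p_-}_*$, and $\nabla^*$ (pullback along the finite flat fold map $\nabla:S_X\to X$) does descend, so if ${p_-}_*$ descended then ${p_+}_*:\CH_0(X|D)\to\CH_0(S_X)$ would factor through $\CH_0(X)$, and the injectivity of ${p_+}_*$ asserted by the theorem would force $\CH_0(X|D)\to\CH_0(X)$ to be injective. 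This already fails for $X=\A^1$, $D=2\{0\}$, where $\CH_0(\A^1|2\{0\})\cong\Z\oplus k$ (via $\sum_i n_i[a_i]\mapsto(\sum_i n_i,\sum_i n_i/a_i)$) surjects onto $\CH_0(\A^1)=\Z$ with kernel $k$. The splitting that actually works, and the one behind \cite{BK}, is $\nabla^*$ itself: it doubles a cycle, $[x]\mapsto[x_+]+[x_-]$, and doubles a relation $(C_0,f_0)$ into the good curve $S(C_0,C_0\cap D)$ (which, unlike $\iota_-(C_0)$, \emph{is} l.c.i.\ along $D$) with the function $(f_0,f_0)$. Your middle-exactness argument is then repaired by writing $\alpha={p_+}_*([\beta_+-\beta_-])+\nabla^*([\beta_-])$.

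The second problem is the one you flag yourself: injectivity of ${p_+}_*$ is only sketched, and the pivotal step of the sketch --- a moving lemma representing any element of $\sR_0(S_X)$ by good curves that are doubles $S(C_0,C_0\cap D)$ of curves on $X$ --- is unsubstantiated and does not follow from any standard moving lemma (Lemma~\ref{lem:MLS}, for instance, requires the bad locus to have codimension at least two, while $D$ is a divisor). A good curve on $S_X$ through $D$ is a union $C_+\cup C_-$ of curves lying in the two copies of $X$, glued along points over $D$, and in general $C_+$ and $C_-$ are genuinely different curves; arranging for them to coincide is precisely where all the difficulty sits, and the cancellation of the $X_-$-components of the relation happens only globally, as you note. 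This left exactness is the technical core of \cite{BK} (it is also the reason the statement involves the modified Chow group $\CH_0(S_X)$ built from finite good curves rather than $\CH^{LW}_0(S_X)$), and the bookkeeping you describe does not recover it. In summary: what your proposal proves is the formal part; the splitting you propose is wrong and must be replaced by $\nabla^*$, and the essential assertion of the theorem, the injectivity of ${p_+}_*$, remains unproved.
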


\subsection{R{\"u}lling-Saito relative Milnor $K$-theory}

Let $k$ be a field. Let $X$ be a smooth scheme over $k$ and let $D$ be an effective Cartier divisor on $X$. Then R{\"u}lling and Saito \cite{RS} defined a variant of relative Milnor $K$-groups of the pair $(X,D)$ as follows.

For a smooth scheme $X$, we let $\hat{\sK}^{M}_{r, X}$ denote the kernel of the map of Zariski sheaves ${\underset{x \in  X^{(0)}}\amalg} (i_x)_* (K^M_i(x)) \to  
{\underset{x \in  X^{(1)}}\amalg}  (i_x)_* (K^M_{i-1}(x))$. Note that, Kerz \cite{kerz10} gave a description of these sheaves. Moreover, there exists a natural surjective map 
$\sK^M_{r, X} \surj \hat{\sK}^M_{r, X}$ which is an isomorphism at the generic points of $X$. By \cite{Kerz09}, it also follows that this map is an isomorphism if $k$ is an infinite field.

Let $j: V\hookrightarrow X$ denote the complement $X\setminus D$.  Then the Zariski sheaf $\sK^M_{r, X|D}$ is defined to be the image of the natural map 
\begin{equation} \label{eqn:def-RKMKThy}
 {\rm Ker}(\sO_X^{\times} \to \sO_D^{\times}) \otimes_{\Z} j_* \hat{\sK}^M_{r-1, V} \to j_* \hat{\sK}^M_{r,V},\
 a \otimes \{b_1, \dots, b_{r-1}\} \mapsto \{a, b_1, \dots, b_{r-1}\}. 
\end{equation}

In particular, $\sK^M_{r, X|D} = 0 $ for $r \leq 0$ and $\sK^M_{1, X|D} =  {\rm Ker}(\sO_X^{\times} \to \sO_D^{\times})$. Observe that by definition $\sK^M_{r, X|D}$ is 
the subsheaf of the Zariski sheaf ${\underset{x \in  X^{(0)}}\amalg} (i_x)_* (K^M_i(x))$.
We let $\sK^M_{r, X|D, \nis}$ denote the Nisnevich sheaf 
associated to the presheaf on the small Nisnevich site of $X$
\begin{equation} \label{eqn:RS-RMKthy-0}
X_{\nis} \to \textnormal{(abelian groups)},\  (u:  U \to X) \mapsto H^0(U, \sK^M_{r, U| u^*D}).
\end{equation}

\begin{lem}\label{lem:RS-RMKthy}
Let $k$ be an infinite filed and let $(X, D)$ be as above. Then there exists an injective map of sheaves $\sK^M_{r, (X,D), \nis} \inj \sK^M_{r, X|D, \nis}$ and it is an isomorphism if $D$ is smooth.
 Moreover, 
if the support of $D$ has a simple normal crossing, then the map induces an isomorphism of pro-sheaves $\prolim \sK^M_{r, (X,n D), \nis} \to \prolim \sK^M_{r, X| n D, \nis}$.
\end{lem}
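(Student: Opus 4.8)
The plan is to realize both $\sK^M_{r,(X,D),\nis}$ and $\sK^M_{r,X|D,\nis}$ as subsheaves of one ambient sheaf and to reduce every assertion to a comparison of stalks. Since $k$ is infinite, the canonical surjection $\sK^M_{r,X}\surj\hat{\sK}^M_{r,X}$ is an isomorphism, so I shall identify the two and view $\sK^M_{r,X}=\hat{\sK}^M_{r,X}$ as a subsheaf of ${\amalg}_{x\in X^{(0)}}(i_x)_*K^M_r(x)$. First I would construct the map. By \lemref{lem:Milnor-local} the stalk of $\sK^M_{r,(X,D)}$ at a Nisnevich point $A$ is generated by symbols $\{a_1,\dots,a_r\}$ with all $a_i\in A^\times$ and some $a_j\in 1+I$, where $I$ is the local ideal of $D$; such a symbol is visibly of the form $\{a,b_1,\dots,b_{r-1}\}$ with $a\in{\rm Ker}(\sO_X^\times\to\sO_D^\times)$ and the remaining entries units, hence a generator of $\sK^M_{r,X|D}$ by ~\eqref{eqn:def-RKMKThy}. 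This gives a map of presheaves and, after Nisnevich sheafification, a map $\sK^M_{r,(X,D),\nis}\to\sK^M_{r,X|D,\nis}$. For injectivity it suffices to check that $\sK^M_{r,X|D}$ is itself a subsheaf of $\hat{\sK}^M_{r,X}$ and that our map is the restriction of the inclusion $\sK^M_{r,(X,D)}\inj\sK^M_{r,X}$; the former follows from the Leibniz rule for the tame symbol, since for a generator $\{a,b_1,\dots,b_{r-1}\}$ with $a$ a unit that is $\equiv 1$ along $D$ one gets $\partial_y\{a,b_1,\dots,b_{r-1}\}=0$ at every $y\in X^{(1)}$ (at $y\in D$ because $\bar a=1$, and at $y\notin D$ because $a$ and the $b_i$ are already unramified).

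For the isomorphism when $D$ is smooth, I would compute the cokernel $C$ of the injection $\sK^M_{r,(X,D)}\inj\sK^M_{r,X|D}$. Since $\sK^M_{r,(X,D)}={\rm Ker}(\sK^M_{r,X}\to\sK^M_{r,D})$ by ~\eqref{eqn:MS*-0}, $C$ is a subsheaf of $\sK^M_{r,X}/\sK^M_{r,(X,D)}=\sK^M_{r,D}$, which, $D$ being smooth and $k$ infinite, equals $\hat{\sK}^M_{r,D}$ and hence injects into ${\amalg}_{y\in D^{(0)}}(i_y)_*K^M_r(k(y))$. Thus $C$ is detected at the generic points $y$ of $D$, where $\sO_{X,y}$ is a (henselian) discrete valuation ring whose uniformizer $t$ is a local equation of $D$. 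There the stalk of $\sK^M_{r,(X,D)}$ is ${\rm Ker}(\partial)\cap{\rm Ker}(s^t)$, with $\partial$ the residue and $s^t$ the specialization along $t$; the stalk of $\sK^M_{r,X|D}$ is generated by symbols $\{a,\beta\}$ with $a\in 1+t\sO_{X,y}$, and since $\bar a=1$ both $\partial\{a,\beta\}$ and $s^t\{a,\beta\}$ vanish. Hence the $\sK^M_{r,X|D}$-stalk is contained in ${\rm Ker}(\partial)\cap{\rm Ker}(s^t)=\sK^M_{r,(X,D),y}$, while the reverse containment holds by \lemref{lem:Milnor-local}. So $C_y=0$ for all $y\in D^{(0)}$, whence $C=0$.

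For the pro-statement I would first observe that, away from the crossing locus $Z={\rm Sing}(D_{\rm red})$, the divisor is smooth, so the previous paragraph gives $C_n=0$ there; consequently each cokernel $C_n=\sK^M_{r,X|nD}/\sK^M_{r,(X,nD)}\inj\sK^M_{r,nD}$ is supported on $Z$, which has codimension $\ge 2$. To prove that $\{C_n\}_n$ is pro-zero it is equivalent to show that for each $n$ there is $m\ge n$ with $\sK^M_{r,X|mD}\subseteq\sK^M_{r,(X,nD)}$ as subsheaves of $\hat{\sK}^M_{r,X}$, and by the support statement this need only be checked at the Nisnevich stalks $A$ of points of $Z$. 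There $A$ is a regular henselian local ring, $D_{\rm red}=V(t_1\cdots t_s)$ for part of a regular system of parameters, and $I_{nD}=(h^n)$ with $h=\prod_i t_i^{e_i}$. Using the Gersten/Bass--Tate presentation of $\hat{\sK}^M_{r-1}$ of the complement of a simple normal crossing divisor, every generator of $\sK^M_{r,X|mD}(A)$ reduces to the form $\{a,g_1,\dots,g_{r-1}\}$ with $a\in 1+h^mA$ and each $g_l$ either a unit of $A$ or one of the $t_i$; the all-unit ones already lie in $K^M_r(A,h^mA)\subseteq K^M_r(A,h^nA)$, so the content is to absorb the entries equal to some $t_i$.

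The step I expect to be the main obstacle is exactly this last one: showing that $\{a,t_i,\dots\}$ lies in $\sK^M_{r,(X,nD)}$ once $m\gg n$. At a single generic point this is the smooth computation above (the residue and specialization along $t_i$ vanish because $\bar a=1$), but at a crossing the naive Steinberg manipulation only places the symbol in $\sK^M_{r,(X,nD)}$ after multiplication by an integer built from the multiplicities $e_i$ and from $m$, so individual symbols are not controlled at a fixed level. The strategy is to pass to the pro-limit, where these integer defects are absorbed: using that $A$ is henselian and $a\equiv 1$ modulo a high power of $h$, I would factor $a$ through the individual branches, reduce each $\{a,t_i,\dots\}$ to the smooth discrete-valuation case along $D_i$ by induction on the number $s$ of branches, and invoke the continuity of relative Milnor $K$-theory together with R{\"u}lling--Saito's pro-computations \cite{RS} to conclude that the cokernel pro-system is pro-zero. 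This yields the required pro-isomorphism $\prolim\sK^M_{r,(X,nD),\nis}\to\prolim\sK^M_{r,X|nD,\nis}$.
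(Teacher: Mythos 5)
Your construction of the comparison map, the injectivity, and your proof of the isomorphism for smooth $D$ are correct; in the smooth case your route is genuinely different from the paper's (you compute both stalks at the generic points of $D$ via the residue $\partial$ and the specialization $s^t$, while the paper quotes the stalk description of $\sK^M_{r,X|D}$ from \cite[Proposition~2.8]{RS} and then applies \lemref{lem:Milnor-local}), and it is self-contained. The genuine gap is in the pro-assertion, in two places. First, your reduction to the crossing locus $Z={\rm Sing}(D_{\rm red})$ is unjustified: away from $Z$ the divisors $nD$ are non-reduced as soon as $n\ge 2$ (or as soon as $D$ carries multiplicities), and your smooth-case identification of the stalk of the relative sheaf with $\ker(\partial)\cap\ker(s^t)$ is valid only for a \emph{reduced} divisor. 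At a smooth point $y$ of $|D|$ with local equation $t$ and $I_{nD}=(t^N)$, the class $\{1+at^N,t\}$ lies in $\sK^M_{r,X|nD,y}$ (take $\beta=\{t\}$) and has vanishing residue and specialization, but that only places it in $\sK^M_{r,(X,D_{\rm red}),y}$; membership in $\sK^M_{r,(X,nD),y}=K^M_r(\sO_{X,y},(t^N))$ is a much stronger statement, and the Steinberg manipulation yields only $N\{1+at^N,t\}=-\{1+at^N,-a\}\in K^M_r(\sO_{X,y},(t^N))$. So the cokernels $C_n$ need not be supported on $Z$, and the ``integer defect'' you attribute to crossings is already present generically along $D$.

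Second, and more seriously, the mechanism you propose to overcome this defect --- ``pass to the pro-limit, where these integer defects are absorbed'' --- is not an argument. An isomorphism of pro-sheaves requires, for each $n$, an honest inclusion $\sK^M_{r,X|mD}\subseteq\sK^M_{r,(X,nD)}$ at some \emph{finite} level $m\ge n$; an obstruction of the shape ``$N\alpha$ lies in the subgroup but $\alpha$ need not'' does not improve as $m$ grows (the relevant multiple grows with $m$), and pro-systems do not absorb such torsion-type defects --- in positive characteristic this is exactly where equality at a fixed level fails. What is needed is a symbol identity showing that each generator $\{1+a\pi_m,t_i,u_3,\dots,u_r\}$ (with $\pi_m$ a local equation of $mD$) lies in the relative sheaf at a lower but finite level. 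This is precisely what \cite[Proposition~2.8]{RS} supplies: from it the paper deduces the interleaving $\sK^M_{r,(X,mD)}\inj\sK^M_{r,X|mD}\inj\sK^M_{r,(X,(m-1)D)}$ for all $m\ge 2$, after which the pro-isomorphism is formal. Your closing appeal to ``continuity of relative Milnor $K$-theory together with R\"ulling--Saito's pro-computations'' is vague exactly at this point and, as stated, assumes the containment that constitutes the actual content of the lemma.
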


\begin{proof} 

By the definition of the Nisnevich sheaves $\sK^M_{r, (X,D), \nis}$
and $\sK^M_{r, X|D, \nis}$  (\defref{defn:Milnor-sheaf*} and \eqref{eqn:RS-RMKthy-0}), it suffices to show that the claims of \lemref{lem:RS-RMKthy}  hold for the Zariski sheaves $\sK^M_{r, (X, D)}$ and $\sK^M_{r, X| D}$.

Since $k$ is infinite, we have $\sK^M_{*, V}  = \hat{\sK}^M_{*, V}$.
For the existence of the injective map, consider the following diagram:

\begin{equation} \label{eqn:RS-RMKthy-1}
\xymatrix@C1pc{
 {\rm Ker}(\sO_X^{\times} \to \sO_D^{\times}) \otimes_{\Z} \sK^M_{r-1,X} \ar[r] \ar[d]^{\id \otimes j^*} & \sK^M_{r,X} \ar[d]^{j^*} \ar@{^{(}->}[rd]\\
  {\rm Ker}(\sO_X^{\times} \to \sO_D^{\times}) \otimes_{\Z} j_* {\sK}^M_{r-1, V} \ar[r]&  j_* {\sK}^M_{r,V} \ar@{^{(}->}[r]& {\underset{x \in  X^{(0)}}\amalg} (i_x)_* (K^M_i(x)).}
\end{equation}

It follows easily that the square and the triangle in ~\eqref{eqn:RS-RMKthy-1} commute. By \lemref{lem:Milnor-fiber} and \cite[Lemma~2.2]{Kerz09}, the image of the top horizontal arrow in ~\eqref{eqn:RS-RMKthy-1} is the sheaf $\sK^M_{r, (X, D)}$ while by definition, the image of the bottom left horizontal arrow is the sheaf 
$\sK^M_{r, X|D}$. Therefore, there exists an inclusion of the Zariski  sheaves $\sK^M_{r, (X,D)} \inj \sK^M_{r, X|D}$.

Now, assume that the support of $D$ has simple normal crossings. Then  \cite[Proposition~2.8]{RS} gives a description of $\sK^M_{r, X|D,x}$ for $x\in D$. It follows from this description that $\sK^M_{r, X|D}$ is a subsheaf of Zariski sheaf $\sK^M_{r, X}$, and for $m \geq 2$, we have 
\[
\sK^M_{r, (X, mD)}  \inj \sK^M_{r, X|m D} \inj \sK^M_{r, (X, (m-1) D)} \inj \sK^M_{r, X}.
\]
Therefore, we have an isomorphism of pro-sheaves $\prolim \sK^M_{r, (X,n D), \nis} \to \prolim \sK^M_{r, X| n D, \nis}$.

Now, let $D$ be smooth. Let $x\in D$ and let $\eta \in X$ be the generic point of the component of $X$   which contains $x$. 
Since $D$ is smooth, it is defined at $x$ by an irreducible element 
 $t \in \sO_{X,x}$. 
Then by \cite[Proposition~2.8]{RS}, $\sK^M_{r, X|D,x}$ is the subgroup of $K^M_r(k(\eta))$ generated by the elements of the form $\{ 1+a t, u_2, \dots, u_r\}$ and $\{1+b, 1+u_1 t, u_3, \dots, u_r\} = - \{1+u_1 t,1+b, u_3, \dots, u_r \}$, where $a\in \sO_{X,x}$ and $1+b, u_i \in \sO_{X,x}^{\times}$. 
Therefore, by \lemref{lem:Milnor-local}, the natural inclusion 
$\sK^M_{r, (X,D)}  \inj \sK^M_{r, X|D}$ is an isomorphism. 
This completes the proof of the lemma. 
\end{proof}

\subsection{Relative algebraic $K$-theory}\label{sec:KTry}
Given a scheme $X$, we let $K(X)$ denote the
Bass-Thomason-Trobaugh non-connective $K$-theory spectrum of the 
biWaldhausen category of perfect complexes on $X$. This coincides with
the $K$-theory spectrum of the exact category of locally free sheaves if
$X$ is regular. We let $K_i(X)$ denote the stable homotopy groups
of the spectrum $K(X)$ for $i \in \Z$. 
Given a map $f: Y \to X$ of schemes, we let $K(X, Y)$ denote the
homotopy fiber of the map of spectra $f^*: K(X) \to K(Y)$.
If $f$ is an open immersion, we write $K(X, Y)$ as $K^{X \setminus Y}(X)$.
We let $K_i(X,Y)$ denote the stable homotopy groups
of the spectrum $K(X,Y)$ for $i \in \Z$ and we denote by $\ov{K}_i(X,Y)$
the image of the natural map $K_i(X, Y) \to K_i(X)$. 
Let $\sK_{i,X}$ denote the Zariski (or Nisnevich) sheaf on $X$
associated to the presheaf $U \mapsto K_i(U)$. The sheaves $\sK_{i, (X,Y)}$
 and $\ov{\sK}_{i,(X,Y)}$
are defined similarly.

For $X = \Spec(A)$ and an ideal $I \subset A$ with $Y = \Spec(A/I)$, 
we shall use the identifications $K(X) \cong K(A)$ and
$K(X,Y) \cong K(A,I)$. The ring structure on $K_*(A)$ 
and the natural map $K^M_1(A) = A^{\times} \to K_1(A)$ define the
maps of presheaves $\sK^M_{i,X} \to \sK_{i,X}$ and 
$\sK^M_{i,(X,Y)} \to \ov{\sK}_{i,(X,Y)}$. Let $f: Y\hookrightarrow X$ be a closed immersion 
such that $\dim(Y) < \dim(X)=d$. Since the kernel of 
the surjective map $ {\sK}_{i,(X,Y)} \to  \ov{\sK}_{i,(X,Y)}$ is supported 
on $Y$, the induced map 
$H_{\nis}^d(X, {\sK}_{i,(X,Y)}) \to H_{\nis}^d(X,  \ov{\sK}_{i,(X,Y)}) $
is an isomorphism. Therefore, there is a natural map 
$H_{\nis}^d(X, {\sK}^M_{i,(X,Y)}) \to H_{\nis}^d(X,  {\sK}_{i,(X,Y)})$.

\section{The Bloch-Quillen map for 0-cycles}\label{sec:CCM}
The Bloch-Quillen-Kato formula for smooth schemes is immediately proven
using the Gersten resolution for the Milnor and Quillen $K$-theory sheaves.
But it is not hard to see that the Gersten complex in its current form can
not give an acyclic resolution for the Milnor or Quillen $K$-theory sheaves
on singular schemes. This poses a great difficulty in proving 
analogues of the Bloch-Quillen-Kato formula for singular schemes.

Due to the lack of the Gersten resolution, the construction of a
Bloch-Quillen-Kato type map  from the Chow group to the cohomology of the Milnor
$K$-theory sheaf becomes the first major obstacle in proving 
the Bloch-Quillen-Kato formula for singular schemes.
The goal of this section is to construct this map.
The idea we use is to look at the Cousin complex instead of the
Gersten complex. It is not hard to see that this complex does give
an expression of the top cohomology of the Milnor
$K$-theory sheaf in terms of the cohomology with supports. The problem then
boils down to unraveling the appropriate boundary maps in the Cousin complex.
In the rest of this section, we show how it is achieved. 

\subsection{The map $\rho_X$ on the group of 0-cycles}
\label{sec:Def-ccm}
Let $k$ be a perfect field and let $X$ be a reduced quasi-projective scheme
of pure dimension $d \ge 0$ over $k$. Let $x \in X_{\rm reg}$ be a closed point.
We have the `forget support' map $f_{S,x}: H^d_{x}(X, \sK^M_{d,X}) \to 
H^d_{\zar}(X, \sK^M_{d,X})$ between the Zariski cohomology groups.
By \cite[Theorem 2]{Kato},
there is, for every pair of integers $n, q \ge 0$ and $x \in X^{(q)}$,
a canonical isomorphism

\begin{equation}\label{eqn:Kato-map}
\rho_x: K^M_{n-q}(k(x)) \xrightarrow{\cong} H^q_{x}(X_{\zar}, \sK^M_{n, X}).
\end{equation}

In particular, for $x \in X^{(d)}$, we have
$\Z \cong K^M_0(k(x)) \xrightarrow[\cong]{\rho_{x}}H^d_{x}(X, \sK^M_{d,X})$.
We let $\rho^{\zar}_X([x])$ denote the image of $1 \in K^M_0(k(x))$ in
$H^d_{\zar}(X, \sK^M_{d,X})$ under the forget support map. Extending this 
linearly, we obtain a map 
$\rho^{\zar}_X: \sZ_0(X) \to H^d_{\zar}(X, \sK^M_{d,X})$, which we shall
call `the Zariski Bloch-Quillen map'. Composing this
with the canonical map $H^d_{\zar}(X, \sK^M_{d,X}) \to H^d_{\nis}(X, \sK^M_{d,X})$,
we obtain our main object of study: {\sl the (Nisnevich) Bloch-Quillen map}  

\begin{equation}\label{eqn:CCM-*}
\rho_X: \sZ_0(X) \to H^d_{\nis}(X, \sK^M_{d,X}).
\end{equation}

Since $x$ is a regular point of $X$, the excision property of the cohomology
with support tells us that the map
$H^d_{x}(X, \sK_{d,X}) \to H^d_{x}(X_{\rm reg}, \sK_{d,X_{\rm reg}})$
is an isomorphism. By Gersten resolution for the Quillen $K$-theory
sheaf $\sK_{d,X_{\rm reg}}$, we 
have an isomorphism 
 $\Z \cong K_0(k(x)) \xrightarrow[\cong]{\rho_x^{\prime}} H^d_{x}(X, \sK_{d,X})$.
As before, this gives a Bloch-Quillen map  
$$\rho_X^{\prime} :  \sZ_0(X) \to H^d_{\nis}(X, \sK_{d,X}).$$

\begin{lem}\label{lem:CCM-*} With the notations as above, the diagram  
\begin{equation}\label{eqn:CCM-*-0}
\xymatrix@C.8pc{
\sZ_0(X) \ar[rr]^-{\rho_X} \ar[dr]_{\rho_X^{\prime}} & & 
H^d_{\nis}(X, \sK^M_{d,X}) \ar[dl] \\
&  H^d_{\nis}(X, \sK_{d,X}) &}
\end{equation}
is commutative, where the arrow going down on the right is induced by the
canonical map from the Milnor to Quillen $K$-theory sheaves.
\end{lem}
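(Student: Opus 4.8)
The plan is to verify the commutativity of the triangle by checking it on each free generator $[x]$ of $\sZ_0(X)$, where $x \in X_{\rm reg}$ is a closed point. Since both $\rho_X$ and $\rho'_X$ are defined by the same recipe—namely, taking the canonical generator $1 \in K^M_0(k(x)) = \Z$ (resp. $1 \in K_0(k(x)) = \Z$), pushing it into local cohomology with support at $x$ via the Kato isomorphism $\rho_x$ (resp. the Gersten-resolution isomorphism $\rho'_x$), and then applying the forget-support map into global Nisnevich cohomology—the whole problem reduces to a local computation at the single regular point $x$. By linearity, it suffices to treat one such $[x]$.

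The key step is then to compare, at a fixed regular closed point $x$, the two horizontal instances of local cohomology. First I would form the square relating the local-cohomology groups of the Milnor sheaf and the Quillen sheaf: the map of sheaves $\sK^M_{d,X} \to \sK_{d,X}$ induces a map $H^d_x(X, \sK^M_{d,X}) \to H^d_x(X, \sK_{d,X})$ compatible with the forget-support maps into $H^d_{\zar}(X, \sK^M_{d,X})$ and $H^d_{\zar}(X, \sK_{d,X})$; this compatibility is the naturality of the coniveau/Cousin filtration with respect to a morphism of sheaves, so that half of the diagram commutes for free. What remains is to show that the left square
\begin{equation*}
\xymatrix@C1.2pc{
K^M_0(k(x)) \ar[r]^-{\rho_x} \ar[d]_{=} & H^d_x(X, \sK^M_{d,X}) \ar[d] \\
K_0(k(x)) \ar[r]^-{\rho'_x} & H^d_x(X, \sK_{d,X})}
\end{equation*}
commutes, where the left vertical arrow is the identification $\Z = K^M_0(k(x)) = K_0(k(x)) = \Z$ and the right vertical arrow is induced by $\sK^M_{d,X} \to \sK_{d,X}$.

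To check this square I would use excision to restrict to a regular neighborhood, replacing $X$ by $X_{\rm reg}$ near $x$, since both $H^d_x(X, \sK^M_{d,X}) \cong H^d_x(X_{\rm reg}, \sK^M_{d,X_{\rm reg}})$ and $H^d_x(X, \sK_{d,X}) \cong H^d_x(X_{\rm reg}, \sK_{d,X_{\rm reg}})$ by excision (as $x \in X_{\rm reg}$). On the smooth locus both sheaves admit Gersten resolutions, and the map $\sK^M_{d,X_{\rm reg}} \to \sK_{d,X_{\rm reg}}$ is induced termwise by the canonical maps $K^M_*(k(y)) \to K_*(k(y))$ on residue fields, which are compatible with the respective boundary maps (as recorded in the commented-out \lemref{lem:boundary}, and which is standard for equicharacteristic dvrs). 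Tracing the generator $1 \in K^M_0(k(x))$ through the Gersten complex, its class in $H^d_x$ is the image of $\{t_1, \dots, t_d\}$-type symbols under iterated boundary maps starting from a regular system of parameters at $x$; the corresponding element in Quillen $K$-theory is its image under $K^M_* \to K_*$, which is exactly the generator $1 \in K_0(k(x))$ under the Quillen-theoretic Gersten resolution. Thus the two isomorphisms $\rho_x$ and $\rho'_x$ are intertwined by the field-level map, and the square commutes.

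The main obstacle I anticipate is ensuring that the two local-cohomology isomorphisms—Kato's $\rho_x$ for the Milnor sheaf and the Gersten-resolution isomorphism $\rho'_x$ for the Quillen sheaf—are genuinely normalized by the \emph{same} choice of generator, so that no spurious unit or sign is introduced in passing from $K^M_0(k(x))$ to $K_0(k(x))$. Concretely, this amounts to the compatibility of the Milnor and Quillen boundary maps and norm maps under $K^M_* \to K_*$; the former is \lemref{lem:boundary} and the latter is the compatibility of norm with transfer, both of which are standard in equicharacteristic but must be invoked carefully since $\rho_x$ is built from iterated residues. Once these compatibilities are in hand, the verification on generators is purely formal and the lemma follows.
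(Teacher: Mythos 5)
Your overall skeleton matches the paper's: both proofs reduce the triangle to the commutativity of a local square comparing $\rho_x$ and $\rho'_x$ at a regular closed point (the paper's ~\eqref{eqn:CCM-*-1}), and both ultimately rest on the compatibility of the Milnor and Quillen boundary maps. But your mechanism for that local square --- excise to $X_{\rm reg}$ and compare the two Gersten \emph{resolutions} termwise --- has a genuine gap: the Gersten resolution of the Milnor $K$-sheaf $\sK^M_{d,X_{\rm reg}}$ is Kerz's theorem \cite{Kerz09}, which requires $k$ to be \emph{infinite}. The lemma, however, lives in the setting of \S~\ref{sec:Def-ccm}, where $k$ is only assumed perfect, and it is applied (through \lemref{lem:CCM-K*}) over finite fields, e.g.\ in the proof of \thmref{thm:Final-surface}(3). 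Over a finite field the Gersten complex of the (naive) Milnor $K$-sheaf on a regular local ring is not known to be exact --- this is exactly why the paper invokes Kerz's resolution only for infinite fields in \S~\ref{sec:Boundary-comp}, why the sheaf $\hat{\sK}^M_{r,X}$ is distinguished from $\sK^M_{r,X}$ in the R{\"u}lling--Saito subsection, and why \remref{remk:Kerz-Kato-Saito} is needed at all --- so your key step fails there.

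The paper's proof is engineered precisely to avoid this input: it never uses exactness of the Milnor Gersten complex. It proves the square ~\eqref{eqn:CCM-*-1} for all $(q,n)$ by induction on the codimension $q$, via the cube ~\eqref{eqn:CCM-*-2}. The crucial trick is that the Milnor boundary map $\partial^M_x$ is \emph{surjective} by \cite[Theorem~1]{Kato}, so commutativity of the desired right face follows from commutativity of the other five faces; those are handled by the induction hypothesis, naturality of supports, the boundary-compatibility built into Kato's \emph{definition} of $\rho_x$, the Milnor-to-Quillen boundary compatibility, and exactness of the \emph{Quillen} Gersten resolution only --- all of which are available over any perfect field. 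This also disposes of the normalization issue you flag at the end: one never needs to identify Kato's $\rho_x$ with a resolution-induced isomorphism, because its inductive characterization by residues is what gets used directly. If you restrict to infinite perfect fields, your argument can be completed (the identification of Kato's map with the resolution isomorphism is the assertion around ~\eqref{eqn:Kerz-iso}), but as written it does not prove the lemma in the generality in which the paper states and uses it.
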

\begin{proof}
By definitions of $\rho_X$ and  $\rho_X^{\prime}$,
it suffices to show more generally 
that for $x\in X_{\rm reg} \cap X^{(q)}$ and $n \ge 0$, there is
a commutative diagram 
  \begin{equation} \label{eqn:CCM-*-1}
 \xymatrix@C1pc{ 
 K^M_{n-q} (k(x)) \ar[r]^-{\rho_{x}}  \ar[d]   & H^q_x(X, \sK^M_{n, X}) \ar[d] \\
  K_{n-q} (k(x)) \ar[r]^-{\rho^{\prime}_{x}} &H^q_x(X, \sK_{n, X}),}
 \end{equation}
where first row is the isomorphism of ~\eqref{eqn:Kato-map} and the
 bottom row is an isomorphism by Gersten resolution  for the Zariski
sheaf $\sK_{n,X_{\rm reg}}$ by Quillen \cite{Quillen}.

We prove that $(\ref{eqn:CCM-*-1})$ commutes by induction on $q$. 
If $q=0$ and we let $\eta_x = \Spec(k(x))$, then the terms on the right are 
$H^0(\eta_x, \sK^M_{n, \eta_x})$ and $H^0(\eta_x, \sK_{n, \eta_x})$.
Moreover, $\rho_x$ and $\rho'_x$ are defined (in \cite{Kato} and \cite{Quillen})
to be the canonical isomorphisms
$K^M_n(k(x)) \xrightarrow{\cong} H^0(\eta_x, \sK^M_{n, \eta_x})$ and
$K_n(k(x)) \xrightarrow{\cong} H^0(\eta_x, \sK_{n, \eta_x})$.
The diagram then clearly commutes.

We now assume $q \ge 1$. We let $T = (X_x)^{(q-1)}$
and consider the following diagram
  \begin{equation} \label{eqn:CCM-*-2}
 \xymatrix@C.6pc{ 
 \underset{y\in T}{\amalg}  H^{q-1}_y(X , \sK^M_{n,X})  
 \ar[rr]^{\coprod_y \partial^S_{y,x}= \partial^S_x}  
  \ar[dr] &   & H^{q}_x(X, \sK^M_{n, X}) \ar[dr]\\
 &  \underset{y\in T}{\amalg}  H^{q-1}_y(X , \sK_{n,X}) 
\ar[rr]^>>>>>>>>>{\partial^S_{x}}  &
   &   H^{q}_x(X, \sK_{n, X}) \\
  \underset{y\in T}{\amalg} K^M_{n-q+1} (k(y)) \ar[uu]^-{(\rho_y)_y} 
\ar[rd] 
\ar[rr]^>>>>>>>>>>{\partial^M_{x}} &  & K^M_{n-q}(k(x)) \ar[rd] 
\ar[uu]_>>>>>>>>>>>>>>>>{\rho_x}\\
  & \underset{y\in T}{\amalg} K_{n-q+1} (k(y)) \ar[uu]^-{(\rho^{\prime}_y)_y}
   \ar[rr]^-{\partial^Q_{x}} &  & K_{n-q}(k(x)) \ar[uu]_-{\rho^{\prime}_x}.}
\end{equation}
 
We want to show that the right face of the above cube commutes.
Since the map
$$ \partial^M_x:\underset{y\in T}{\amalg} K^M_{n-q+1} (k(y))\to K^M_{n-q}(k(x))$$
is surjective by \cite[Theorem 1]{Kato}, it suffices to show that all other 
faces of ~\eqref{eqn:CCM-*-2} commute. 
The left face commutes by induction hypothesis.
The top face commutes by the naturality of the theory of supports. 
The commutativity of the back face is part of Kato's definition
of $\rho_x$ (see \cite[\S~4]{Kato}).
The bottom face commutes because of the well known fact that 
the canonical map from the Milnor $K$-theory to the Quillen $K$-theory 
commutes with the boundary maps in the Gersten complexes on 
the regular scheme $X_x$.
Finally, the exactness of the Gersten complex for 
$\sK_{n, X_x}$ allows us to use this resolution to compute the 
boundary map in the long exact sequence for support cohomology.
It follows that under the isomorphism $\rho^{\prime}_x$, given by the 
resolution, the front face commutes.  
\end{proof}

\subsection{Compatibility with the Bloch-Quillen map  to 
$K$-theory}\label{sec:Compatibility}
Before we prove that $\rho_X$ kills the 0-cycles which are rationally
equivalent to zero, we explain how it is compatible with the cycle
class map  $\lambda_X: \sZ_0(X) \to K_0(X)$.
Recall that any regular closed point 
$x \in X$ has the property that the inclusion map $\Spec(k(x)) \inj X$
is a regular embedding. In particular, there is a push-forward map
$i_{x, *}: K_0(k(x)) \to K_0(X)$ and $\lambda_X([x])$ is the image of
$1 \in K_0(k(x))$ under this map. It is shown in \cite[Proposition~2.1]{LW}
that $\lambda_X$ factors through the rational equivalence to give a cycle
class map $\lambda_X: \CH^{LW}_0(X) \to K_0(X)$.
It is further shown in \cite[Lemma~3.13]{BK} that it factors through
the modified Chow group. Hence we have the maps

\begin{equation}\label{eqn:CCM-*-3}
\lambda_X: \CH^{LW}_0(X) \surj \CH_0(X) \to K_0(X).
\end{equation}

The Nisnevich descent spectral sequence of Thomason and 
Trobaugh \cite{TT} gives rise to a natural map
$\kappa_X: H^d_{\nis}(X, \sK^M_{d,X}) \to H^d_{\nis}(X, \sK_{d,X}) \to K_0(X)$.

\begin{lem}\label{lem:CCM-K*}
There is a commutative diagram
\begin{equation}\label{eqn:CCM-K*-0}
\xymatrix@C1pc{
\sZ_0(X) \ar[rr]^-{\rho_X} \ar[dr]_{\lambda_X} & & 
H^d_{\nis}(X, \sK^M_{d,X}) \ar[dl]^{\kappa_X} \\
& K_0(X). &}
\end{equation}
\end{lem}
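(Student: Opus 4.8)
The plan is to reduce the claim to the corresponding compatibility for Quillen $K$-theory, and then to the classical identification of the cycle class on the regular locus. Since $\sZ_0(X)$ is freely generated by the regular closed points of $X$, it suffices to check that $\kappa_X(\rho_X([x])) = \lambda_X([x])$ for a single closed point $x \in X_{\rm reg} \cap X^{(d)}$; recall that $\lambda_X([x]) = i_{x,*}(1)$ with $i_x\colon \Spec(k(x)) \inj X$ the (regular) closed immersion. Because $x$ is a regular point it admits a regular open neighbourhood, and by excision every group and map with support at $x$ that appears below may be computed on such a neighbourhood; I will use regularity near $x$ freely.

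The first step is to eliminate the Milnor $K$-theory. By construction $\kappa_X$ factors as $H^d_{\nis}(X, \sK^M_{d,X}) \to H^d_{\nis}(X, \sK_{d,X}) \xrightarrow{\psi_X} K_0(X)$, where the first map is induced by Milnor-to-Quillen and $\psi_X$ is the edge homomorphism of the Thomason--Trobaugh descent spectral sequence $E^{p,q}_2 = H^p_{\nis}(X, \sK_{-q,X}) \Rightarrow K_{-p-q}(X)$. By \lemref{lem:CCM-*}, composing $\rho_X$ with the Milnor-to-Quillen map yields exactly $\rho'_X$. Hence $\kappa_X \circ \rho_X = \psi_X \circ \rho'_X$, and the problem becomes the purely Quillen-theoretic identity $\psi_X(\rho'_X([x])) = i_{x,*}(1)$.

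To prove this I would pass through $K$-theory with support. Write $K^{\{x\}}_0(X)$ for $K_0$ of the perfect complexes on $X$ that are acyclic away from $\{x\}$, with forget-support map $g_x\colon K^{\{x\}}_0(X) \to K_0(X)$; d\'evissage (Quillen) together with regularity near $x$ gives an isomorphism $K_0(k(x)) \xrightarrow{\cong} K^{\{x\}}_0(X)$ under which $g_x$ becomes $i_{x,*}$. Running the descent spectral sequence with support on $\{x\}$ produces an edge map $\theta_x\colon H^d_x(X, \sK_{d,X}) \to K^{\{x\}}_0(X)$, and the naturality of this spectral sequence for the forget-support morphism yields a commutative square
\[
\xymatrix@C2pc{
H^d_x(X, \sK_{d,X}) \ar[r]^-{\theta_x} \ar[d]_{f_{S,x}} & K^{\{x\}}_0(X) \ar[d]^{g_x} \\
H^d_{\nis}(X, \sK_{d,X}) \ar[r]^-{\psi_X} & K_0(X).}
\]
Since $\rho'_X([x]) = f_{S,x}(\rho'_x(1))$ by definition of $\rho'_X$, chasing the square shows it suffices to prove the triangle identity $\theta_x \circ \rho'_x = \text{(d\'evissage isomorphism)}$: granting this, $\psi_X(\rho'_X([x])) = g_x(\theta_x(\rho'_x(1))) = g_x(\text{d\'evissage}(1)) = i_{x,*}(1)$.

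The remaining triangle identity is where the real content lies. After restricting to a regular open neighbourhood $U$ of $x$ (permissible by excision), $\rho'_x$ is the isomorphism coming from Quillen's Gersten resolution of $\sK_{d,U}$, while $\theta_x$ and the d\'evissage isomorphism come from the descent spectral sequence and localization in $K$-theory. On the regular scheme $U$ these two packages coincide: the Brown--Gersten--Quillen coniveau spectral sequence agrees with the Thomason--Trobaugh descent spectral sequence \cite{Quillen, TT}, and under this agreement the generator of $H^d_x(U, \sK_{d,U})$ singled out by the Gersten complex is carried by the edge map precisely to the forget-support image of the d\'evissage generator. I expect this comparison of the two spectral sequences, together with verifying the naturality square above for the forget-support maps, to be the main technical obstacle; once it is in hand the rest is formal.
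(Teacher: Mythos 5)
Your proposal is correct and follows essentially the same route as the paper's own proof: reduce to Quillen $K$-theory via \lemref{lem:CCM-*}, pass to cohomology and $K$-theory with support at the regular point $x$ (using excision to work on a regular neighbourhood), and deduce the claim from naturality of the descent spectral sequence for forget-support maps together with a comparison of the coniveau (Gersten) and descent packages. The one step you flag as the ``main technical obstacle'' is precisely what the paper settles by quoting known results: the agreement of the Brown--Gersten hypercohomology spectral sequence with the Quillen coniveau spectral sequence from the $E_2$-page onward \cite[Corollary~74]{Gillet}, compatibility with push-forward \cite[\S~2.5.4, Theorem~65]{Gillet}, and the excision statements of \cite[Corollaries~10.5, 10.10]{TT}.
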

\begin{proof}
By Lemma $\ref{lem:CCM-*}$, it suffices to show that ~\eqref{eqn:CCM-K*-0} 
commutes if we replace
$H^d_{\nis}(X, \sK^M_{d,X})$ by $H^d_{\nis}(X, \sK_{d,X})$. Furthermore,
we have a diagram
\begin{equation}\label{eqn:CCM-K*-1}
\xymatrix@C.8pc{
\sZ_0(X) \ar[r]^-{\rho_X^{\prime}}  \ar[dr]_{\lambda_X} & H^d_{\zar}(X, \sK_{d,X})
\ar[r] \ar[d] & H^d_{\nis}(X, \sK_{d,X}) \ar[dl] \\
& K_0(X), &}
\end{equation}
in which the triangle on the right commutes. We can therefore work with
the Zariski cohomology. Note here that the map 
$H^d_{\{x\}}(X_{\zar}, \sK^M_{d,X}) \to H^d_{\{x\}}(X_{\nis}, \sK^M_{d,X})$
is an isomorphism for $x \in X_{\rm reg}$.

We fix a closed point $x \in X_{\rm reg}$ and let $S = \Spec(k(x))$. 
We consider the diagram
\begin{equation}\label{eqn:CCM-K*-2}
 \xymatrix@C.8pc{ 
\Z \langle x \rangle  \ar[dr]_-{\cong} 
\ar[r]^-{\cong} & H^0(S, \sK_{0, S}) \ar[d]^-{\cong} \ar[r]^-{\rho'_x} 
 &  H^d_{x}(X_{\rm reg},  \sK_{d,X_{\rm reg}} )\ar[d]^-{\cong} & 
 H^d_{x}(X_{\zar},  \sK_{d,X} )\ar[d] \ar[l]_-{\cong} \ar[r] &
 H^d_{\zar}(X, \sK_{d,X}) \ar[d]\\
& K_0(S)   \ar[r]^-{i_*} &  K_0^{\{x\}}(X_{\rm reg}) & 
\ar[l]_-{\cong} K_0^{\{x\}}(X) \ar[r] & K_0(X).}
\end{equation}

By the definition of 
$\rho^{\prime}_X$, the image of $x$ in $ \Z \langle x \rangle$
maps under the composition of the top row of the diagram 
to $\rho^{\prime}_X(x) \in H^d_{\zar}(X, \sK_{d,X})$.
The composition of the bottom row sends $x$ to the element 
$\lambda_X([x]) \in K_0(X)$.
It suffices therefore to show that all squares in $(\ref{eqn:CCM-K*-2})$ 
commute. 
The middle square commutes by
the naturality of the Zariski descent spectral sequence of Thomason-Trobaugh 
for pull 
back along open immersion while the right square in $(\ref{eqn:CCM-K*-2})$ 
commutes 
by \cite[Corollaries~10.5, 10.10]{TT}. 
We are left to show that the left square in the 
diagram commutes. But this is a direct consequence of the 
comparison between the Thomason-Trobaugh and Quillen spectral
sequences for the $K$-theory of the regular scheme $X_{\rm reg}$ with support. 

Indeed, the vertical arrows in the left square in
~\eqref{eqn:CCM-K*-2} are the edge maps of the
Thomason-Trobaugh spectral sequences for $K_0(S)$ and $K^S_0(X_{\rm reg})$.
Equivalently, these are the edge maps of the
Brown-Gersten hypercohomology 
spectral sequences for $K_0(S)$ and $K^S_0(X_{\rm reg})$
(see, for example, \cite[Proof of Theorem~10.3]{TT}). 
On the other hand, it follows from \cite[Corollary 74]{Gillet} that the
Brown-Gersten hypercohomology 
spectral sequences for $K_0(S)$ and $K^S_0(X_{\rm reg})$
coincide with the corresponding Quillen spectral sequences  
from $E_2$-page  onward. So we can identify the two vertical arrows
of the left square in ~\eqref{eqn:CCM-K*-2} with the edge maps of
the Quillen spectral sequences for $K$-theory with support.
We are now done because the top horizontal arrow in this square
is induced by the push-forward map on
the Quillen spectral sequences (see \cite[\S~2.5.4, Theorem~65]{Gillet})
and the bottom horizontal arrow is the push-forward map 
on the limits of these spectral sequences.
\end{proof}

\subsection{The boundary maps in Gersten and Cousin 
complexes}\label{sec:Boundary-comp}
We shall now prove a general result which will be the
key step in the proof of the factorization of the Bloch-Quillen map 
through the rational equivalence. We begin with the following elementary but 
useful observation from commutative algebra.

\begin{lem}\label{lem:No-embedded}
If $A$ is a reduced ring, then all its associated primes are minimal.
\end{lem}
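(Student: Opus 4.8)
The plan is to argue directly from the definition of an associated prime, using reducedness only through the fact that the intersection of the minimal primes of $A$ is zero. Let $\fp \in \operatorname{Ass}(A)$; by definition there is an element $x \in A$, necessarily nonzero, with $\fp = \operatorname{Ann}(x)$. I will produce a minimal prime $\fq$ with $\fp \subseteq \fq$; since a prime contained in a minimal prime must coincide with it, this forces $\fp = \fq$, so that $\fp$ is minimal.

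First I would record the consequence of reducedness. The nilradical of $A$ equals the intersection of all prime ideals, and since every prime ideal contains a minimal one, this intersection equals $\bigcap_{\fq \text{ minimal}} \fq$. Because $A$ is reduced, the nilradical vanishes, so $\bigcap_{\fq \text{ minimal}} \fq = (0)$. In particular, as $x \neq 0$, the element $x$ cannot lie in every minimal prime, and I may choose a minimal prime $\fq$ with $x \notin \fq$.

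It remains to check $\fp \subseteq \fq$ and conclude. For any $a \in \fp = \operatorname{Ann}(x)$ one has $ax = 0 \in \fq$; since $\fq$ is prime and $x \notin \fq$, this gives $a \in \fq$, whence $\fp \subseteq \fq$. Minimality of $\fq$ then yields $\fp = \fq$, so $\fp$ is a minimal prime. The argument is entirely elementary and presents no real obstacle; the only place the hypothesis enters is the passage from reducedness to $\bigcap_{\fq} \fq = (0)$, and the one choice requiring a moment's care is that of a minimal prime avoiding the annihilator witness $x$.
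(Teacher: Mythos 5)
Your proof is correct and complete, but it runs along a different track than the paper's. The paper argues by contradiction from a strict inclusion of \emph{associated} primes $\fp \subsetneq \fq$: writing $\fq = \operatorname{ann}(a)$, reducedness gives $a^2 \neq 0$, hence $a \notin \fq$ and so $\ov{a} \neq 0$ in the domain $A/\fp$; picking $b \in \fq \setminus \fp$ then yields $\ov{a}\,\ov{b} = 0$ in $A/\fp$, a contradiction. Read literally, that argument shows there are no strict inclusions among associated primes, and to conclude that every associated prime is a minimal prime of $A$ it implicitly uses the standard Noetherian fact that every minimal prime of $A$ is associated (harmless here, since the paper's standing convention makes all rings Noetherian). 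You instead use reducedness in a different place: the nilradical is the intersection of the minimal primes, so it vanishes, which lets you choose a minimal prime $\fq$ avoiding the witness $x$ with $\fp = \operatorname{Ann}(x)$; primality of $\fq$ applied to $ax = 0$ then forces $\fp \subseteq \fq$, hence $\fp = \fq$. Your route is direct rather than by contradiction, needs no comparison of two associated primes and no appeal to $\operatorname{Min}(A) \subseteq \operatorname{Ass}(A)$, and in fact works for an arbitrary commutative ring; the paper's version is marginally shorter once those background facts are granted. Both ultimately rest on the same primality trick applied to a vanishing product.
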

\begin{proof}
Suppose that there is a strict inclusion of associated primes
$\fp \subsetneq \fq$. Let $\ov{A} = {A}/{\fp}$
and let $\ov{\fq}$ be the image of $\fq$ in $\ov{A}$. We can write
$\fq = {\rm ann}(a)$ for some $a \in A$. Since $A$ is reduced,
it follows that $a \notin \fq$ and in particular, $\ov{a} \neq 0$.
On the other hand, $\fp \subsetneq \fq$ implies that there exists
$0 \neq \ov{b} \in \ov{\fq}$. Since $\ov{a} \ov{b} = 0$, 
we reach a contradiction as $\ov{A}$ is an integral domain.
\end{proof} 

Let $k$ be any field. Let $X$ be a
reduced quasi-projective scheme of pure dimension $d \ge 2$
over $k$. Let $n \ge 0$ be an integer.
For a Zariski sheaf $\sF$ on $X$, let 
\begin{equation}\label{eqn:Factor-0}
\partial^S: {\underset{x \in  X^{(q)}}\amalg} (i_x)_* H^{n}_{x}(X, \sF) \to
{\underset{y \in  X^{(q+1)}}\amalg} (i_y)_* H^{n+1}_{y}(X, \sF)
\end{equation}
be the boundary map of the Cousin complex ~\eqref{eqn:Cousin-res}. 

Since $\partial^S_{x,y} = 0$ if $y \notin \ov{\{x\}}$, as follows from the
construction of ~\eqref{eqn:Cousin-res}, we have a commutative diagram
(where $y \in z$ means $y \in \ov{\{z\}}$ and $\partial^S_y =
{\underset{y \in z \in  X^{(q)}}\sum} \partial^S_{z,y}$)
\begin{equation}\label{eqn:Factor-1}
\xymatrix@C.8pc{
{\underset{z \in  X^{(q)}}\amalg} H^{n}_{z}(X, \sF) \ar[r]^-{\partial^S} 
\ar@{->>}[d] & {\underset{w \in  X^{(q+1)}}\amalg} H^{n+1}_{w}(X, \sF) 
\ar@{->>}[d] 
\\
{\underset{y \in z \in  X^{(q)}}\amalg} H^{n}_{z}(X, \sF) 
\ar[r]^-{\partial^S_y} & H^{n+1}_{y}(X, \sF).}
\end{equation}

We now restrict to the case where $\sF$ is a Milnor $K$-theory sheaf.
Let $Y \subset X$ be a reduced closed subscheme and let $y \in Y^{(1)}$.
For a generic point $x$ of $Y$, let $\phi_{x,y}: K^M_n(\sO_{Y,y}) \to
K^M_n(k(x))$ be zero if $y \notin \ov{\{x\}}$ and otherwise, we let it be the
composition $K^M_n(\sO_{Y,y}) \to {\underset{y \in z \in Y^{(0)}}\amalg}
K^M_n(k(z))) \surj K^M_n(k(x)))$
along the composition $\sO_{Y,y} \inj 
{\underset{y \in z \in Y^{(0)}}\prod} k(z) \surj k(x)$.

We set
\begin{equation}\label{eqn:Factor-2}
\Phi_{Y,y} = {\underset{x \in Y^{(0)}}\amalg} \phi_{x,y}:
K^M_n(\sO_{Y,y}) \to {\underset{x \in Y^{(0)}}\amalg}  K^M_n(k(x)).
\end{equation}

Let $k$ be an infinite field. 
In this case, Kerz \cite{Kerz09} has shown that the Milnor $K$-theory sheaf on 
$X_{\rm reg}$ has a Gersten resolution. In fact, it is easy to verify that
the Gersten complex of Kerz coincides with the one defined earlier by
Kato \cite{Kato}. This implies in
particular that for a point $x \in X_{\rm reg}$, there is
a canonical isomorphism 
\begin{equation}\label{eqn:Kerz-iso}
\psi_x:K^M_{n-m}(k(x)) \xrightarrow{\cong} H^m_x(X, \sK^M_{n,X}).
\end{equation}
Moreover, the isomorphism $\psi_x$ is same as the map
$\rho_x$ in ~\eqref{eqn:Kato-map}.
We shall use this identification throughout this text.
The key step in the proof of the factorization of the Bloch-Quillen map 
through the rational equivalence is provided by the following.

\begin{prop}\label{prop:Key}
Let $X$ be a reduced quasi-projective scheme of pure dimension 
$d \ge 2$ over an infinite field and let $n \ge 0$ be an integer. Let 
$Y_{d-1} \subset Y_{d-2} \subset \cdots \subset Y_1 \subset Y_0 = X$ be a
sequence of reduced closed subschemes such that the following hold.
\begin{enumerate}
\item
$Y_i$ has pure codimension one in $Y_{i-1}$.
\item
For each $1 \le i \le d-1$, there exists a line bundle $\sL_i$ on $Y_{i-1}$ 
with a section $s_i \in \Gamma(Y_{i-1}, \sL_i)$ such that $Y_i$ is the zero-locus
of $s_i$.
\item
For each $1 \le i \le d-1$, the subset $Y_i \cap X_{\rm sing}$ is nowhere
dense in $Y_i$.
\end{enumerate}

Then for each $0 \le i \le d-1$ and $y \in Y^{(1)}_i$, the composition map
\begin{equation}\label{eqn:Key-0}
K^M_{n-i}(\sO_{Y_i, y}) \xrightarrow{\Phi_{Y_i,y}} 
{\underset{x \in Y^{(0)}_i}\amalg} K^M_{n-i}(k(x)) \xrightarrow{\cong}
{\underset{x \in Y^{(0)}_i}\amalg} H^i_{x}(X, \sK^M_{n,X}) 
\xrightarrow{\partial^S_y} H^{i+1}_{y}(X, \sK^M_{n,X})
\end{equation}
is zero.
\end{prop}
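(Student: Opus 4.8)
The plan is to reduce the statement to the vanishing of a Milnor $K$-theory residue and then to verify that vanishing by a purely local computation on the normalizations of the components of $Y_i$. First I would dispose of the bookkeeping. Condition (1) forces $Y_i$ to have pure codimension $i$ in $X$ (by induction on $i$ starting from $Y_0=X$), so that every generic point $x \in Y_i^{(0)}$ lies in $X^{(i)}$ and every $y \in Y_i^{(1)}$ lies in $X^{(i+1)}$; condition (3) guarantees moreover that such $x$ avoid $X_{\rm sing}$ and hence lie in $X_{\rm reg}$, so the identifications $\rho_x=\psi_x$ of \eqref{eqn:Kato-map}--\eqref{eqn:Kerz-iso} and the isomorphism $\rho_y$ of \eqref{eqn:Kato-map} are all available with the correct indices. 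This is exactly what makes the three arrows in \eqref{eqn:Key-0} well defined, and it reduces matters to a statement about the points $x \in Y_i^{(0)}$ with $y \in \ov{\{x\}}$, since $\phi_{x,y}=0$ for the remaining generic points.

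The key reduction is to replace the Cousin boundary $\partial^S_y$ by the Gersten residue map. By Kato's construction of the isomorphisms $\rho_\bullet$ --- precisely the commutativity of the back face of the cube \eqref{eqn:CCM-*-2}, attributed there to \cite[\S~4]{Kato} --- one has, for each component with generic point $x$, the identity $\partial^S_{x,y}\circ\rho_x = \rho_y\circ\partial^M_{x,y}$, where $\partial^M_{x,y}$ is the residue map of \eqref{eqn:B-map-M}. Summing over the finitely many $x \in Y_i^{(0)}$ with $y\in\ov{\{x\}}$ and using that $\rho_y$ is an isomorphism, the composite \eqref{eqn:Key-0} vanishes if and only if $\partial^M_y \circ \Phi_{Y_i,y}=0$, where $\partial^M_y = \sum_{x}\partial^M_{x,y}$. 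I would spend the bulk of the proof on this last, purely algebraic, assertion.

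For the computation, fix $\alpha \in K^M_{n-i}(\sO_{Y_i,y})$ and a component $Z=\ov{\{x\}}$ of $Y_i$ through $y$. The map $\sO_{Y_i,y}\to k(x)$ defining $\phi_{x,y}$ factors through the one-dimensional local domain $\sO_{Z,y}$, and for each of the finitely many closed points $z$ of $Z^N$ lying over $y$ the localized normalization map $\sO_{Z,y}\to \sO_{Z^N,z}$ is a local homomorphism into the DVR $\sO_{Z^N,z}$. Since $\alpha$ is a sum of Milnor symbols in the units of $\sO_{Y_i,y}$ and local homomorphisms carry units to units, the image $\phi_{x,y}(\alpha)$ lies in the image of $K^M_{n-i}(\sO_{Z^N,z})$ inside $K^M_{n-i}(k(x))$. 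As the classical boundary map of a DVR annihilates symbols of units, $\partial_z(\phi_{x,y}(\alpha))=0$ for every such $z$; hence $\partial^M_{x,y}(\phi_{x,y}(\alpha)) = \sum_z N_{k(z)/k(y)}(\partial_z(\phi_{x,y}(\alpha))) = 0$, and summing over the components through $y$ gives $\partial^M_y(\Phi_{Y_i,y}(\alpha))=0$.

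I expect the residue computation itself to be routine; the genuine point of care, and the main obstacle, is the clean identification of the Cousin boundary $\partial^S_{x,y}$ with the residue $\partial^M_{x,y}$ in the present generality --- in particular when $y$ is a \emph{singular} point of $X$, since condition (3) controls only the generic points of $Y_i$. The way around this is that, although $\rho_x$ for regular $x$ arises from the Gersten resolution on $X_{\rm reg}$, the target residue is never computed on $X_{\rm reg}$ itself but through the normalizations $Z^N$ exactly as in \eqref{eqn:B-map-M}; it is precisely this passage to $Z^N$ that renders the argument insensitive to the singularities of $X$ at $y$ and reduces everything to the units-kill-residues fact on a DVR. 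The remaining items --- tracking the norm maps $N_{k(z)/k(y)}$ and the finiteness of the sets of relevant components $x$ and of points $z$ over $y$ --- present no difficulty.
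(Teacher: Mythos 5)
Your residue computation at the end (symbols of units of $\sO_{Y_i,y}$ restrict to symbols of units of each DVR $\sO_{Z^N,z}$, hence have vanishing residue, hence $\partial^M_y\circ\Phi_{Y_i,y}=0$) is correct, but it is the easy, purely algebraic part. The fatal gap is the reduction that precedes it: to replace $\partial^S_y$ by $\partial^M_y$ you need a map $\rho_y\colon K^M_{n-i-1}(k(y))\to H^{i+1}_{y}(X,\sK^M_{n,X})$ together with the commutation $\partial^S_{x,y}\circ\rho_x=\rho_y\circ\partial^M_{x,y}$, and this is simply not available when $y\in X_{\rm sing}$ --- which is exactly the case the proposition is needed for (in the application, \thmref{thm:Killing}, the relevant $y$ are the points of $C\cap X_{\rm sing}$). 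The commutativity you cite from the cube ~\eqref{eqn:CCM-*-2} in \lemref{lem:CCM-*} is established only when the \emph{target} point of the boundary lies in $X_{\rm reg}$ (there the target is $x\in X_{\rm reg}$ and all source points are generizations of $x$, hence also regular); it says nothing about boundaries landing at singular points. Worse, at a singular point the identification $K^M_{n-q}(k(y))\cong H^q_y(X,\sK^M_{n,X})$ genuinely fails: already for a nodal curve $X$ with node $y$ one has $H^1_y(X,\sK^M_{1,X})\cong \bigl(k(\eta_1)^\times\times k(\eta_2)^\times\bigr)/\sO_{X,y}^\times$, which is not $K^M_0(k(y))=\Z$. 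Your proposed ``way around'' --- that $\partial^M_{x,y}$ is computed on the normalization $Z^N$ --- concerns only the \emph{definition} of the Milnor residue ~\eqref{eqn:B-map-M}; it does nothing to relate the sheaf-theoretic coniveau boundary $\partial^S_{x,y}$ to that residue, which is the whole issue.

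A telling symptom is that your argument never uses the flag $Y_{d-1}\subset\cdots\subset Y_0=X$ nor the sections $s_i$ of hypothesis (2): if the reduction were legitimate, the proposition would hold for a single reduced subscheme $Y_i$ with regular generic points, with no auxiliary data. The paper's proof is built precisely to avoid the unavailable identification at singular $y$: it argues by induction on $i$, where the base case $i=0$ uses only the exactness of the local cohomology sequence of the pair $(X_y,X^o_y)$ (no residues at all), and the inductive step lifts $\alpha$ to $K^M_{n-i}(\sO_{Y_{i-1},y})$, multiplies by the local equation $a_i$ of $Y_i$ in $Y_{i-1}$ (this is where hypothesis (2) enters) to produce an element $\beta$ supported in codimension $i-1$, verifies $\partial^S_x(\beta)=0$ at points $x\notin Y_i^{(0)}$ by the induction hypothesis and $\partial^S_x(\beta)=\phi_{x,y}(\alpha)$ at $x\in Y_i^{(0)}$ using the Gersten compatibility ~\eqref{eqn:Key-5} --- which is invoked only at \emph{regular} points $x$, as guaranteed by hypothesis (3) --- and then concludes from the fact that the Cousin complex is a complex, $\partial^S_y\circ\partial^S=0$. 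In short: the paper trades the missing residue comparison at singular points for the identity $\partial^S\circ\partial^S=0$, at the cost of the inductive construction of $\beta$; your proposal assumes the missing comparison, and that assumption is both unproved and, at the level of the underlying groups, false.
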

\begin{proof}
We shall prove the proposition by induction on $i$. Before we do this,
let us note that the isomorphism in the middle of ~\eqref{eqn:Key-0} is 
by ~\eqref{eqn:Kerz-iso} and our assumption (3). 
We let $\wt{\Phi}_{Y_i,y}$ denote the composition
of the middle isomorphism in ~\eqref{eqn:Key-0} with ${\Phi}_{Y_i,y}$.

{\bf STEP~1.} We let $i =0$ and fix a point $y \in X^{(1)}$. 
The long exact sequence for the cohomology with
support gives us an exact sequence (where $j_y: X^o_y \inj X_y$)
\[
H^0(X_y, \sK^M_{n, X_y}) \xrightarrow{j^*_y} H^0(X^o_y, \sK^M_{n, X^o_y})
\xrightarrow{\partial^S_y} H^1_{y}(X_y, \sK^M_{n,X_y}).
\]

We consider the diagram
\begin{equation}\label{eqn:Key-1}
\xymatrix@C.8pc{
H^0(X_y, \sK^M_{n, X_y}) \ar[r]^-{j^*_y} \ar@{=}[d] & H^0(X^o_y, \sK^M_{n, X^o_y})
\ar[r]^-{\partial^S_y} \ar@{=}[d] & H^1_{y}(X_y, \sK^M_{n,X_y}) \ar@{=}[d] \\
K^M_n(\sO_{X,y}) \ar[r]^-{(\phi_{x,y})_{y \in x}} \ar@{=}[d] & 
{\underset{y \in x \in X^{(0)}}\amalg}  H^0(\eta_x, \sK^M_{n, \eta_x})
\ar[r]^-{\partial^S_y} & H^1_{y}(X_y, \sK^M_{n,X_y}) \\
K^M_n(\sO_{X,y}) \ar[r]^-{\wt{\Phi}_{X,y}} \ar[dr]_{\Phi_{X,y}} & 
{\underset{x \in X^{(0)}}\amalg} H^0(\eta_x, \sK^M_{n, \eta_x})
\ar[r]^-{\partial^S_y} \ar@{->>}[u] &
H^1_{y}(X, \sK^M_{n,X})\ar[u]_{\cong} \\
& {\underset{x \in X^{(0)}}\amalg} K^M_n(k(x)). \ar[u]_{\cong} &}
\end{equation} 

We need to show that the composite arrow on the bottom row is zero.
But this follows because the top composite arrow is zero and 
all the squares evidently commute. We just have to observe that
$\phi_{x,y}$ is simply the restriction of $j^*_y$ to $\eta_x$, by
definition. This proves the base case $i = 0$.

{\bf Step~2.} Before we prove the proposition for $i > 0$, we 
claim that for every $i > 0$, the following hold.
\begin{enumerate}
\item
The closed subscheme $Y_i \subset Y_{i-1}$ is a Cartier divisor.
\item
$\sO_{Y_{i-1},x}$ is a discrete valuation ring for every $x \in Y^{(0)}_i
\subset Y^{(1)}_{i-1}$.
\item
Every irreducible component of $Y_i$ is contained in exactly one
irreducible component of $Y_{i-1}$.
\end{enumerate}

We let $w \in Y_i$ be a closed point and let $a_i$ be the image of 
$s_i \in \Gamma(Y_{i-1}, \sL_i)$ 
under the restriction map $\Gamma(Y_{i-1}, \sL_i) \to
\sO_{Y_{i-1}, w} \cong \Gamma(\sO_{Y_{i-1},w}, \sL_i|_{\sO_{Y_{i-1},w}})$. 
Since $Y_{i-1}$ is reduced, it follows from the assumption (1) of the 
proposition and \lemref{lem:No-embedded} that $a_i$ is a non-zero divisor in
$\sO_{Y_{i-1}, w}$ and $\sO_{Y_i, w} = {\sO_{Y_{i-1}, w}}/{(a_i)}$. 
This proves (1). If we let $w \in \ov{\{x\}}$ for any $x \in Y^{(0)}_i
\subset Y^{(1)}_{i-1}$ and let $\fp$ be the minimal prime of
$(a_i)$ defining $x$, then the assumption that $Y_i$ is reduced
implies that $\sO_{Y_{i-1},\fp}$ is an 1-dimensional local ring
whose maximal ideal $\fp\sO_{Y_{i-1},\fp}$ is generated by the image of 
$a_i$ under the localization $\sO_{Y_{i-1},w} \to \sO_{Y_{i-1}, \fp}$. 
We conclude from \cite[Theorem~11.2]{Mats}
that $\sO_{Y_{i-1},\fp}$ is a discrete valuation ring. This proves (2) and
(3) is immediate from (2) as any intersection of two or more components of
$Y_{i-1}$ is part of its singular locus. This proves the claim.

{\bf STEP~3.} We now assume $i > 0$. We fix a point $y \in Y^{(1)}_i$ and an
element $\alpha \in K^M_{n-i}(\sO_{Y_i,y})$. Since the map
$K^M_{n-i}(\sO_{Y_{i-1},y}) \to K^M_{n-i}(\sO_{Y_{i},y})$ is surjective, we can
choose a lift $\wt{\alpha}$ of $\alpha$ in $K^M_{n-i}(\sO_{Y_{i-1},y})$.  
Let $a_i$ be the image of $s_i \in \Gamma(Y_{i-1}, \sL_i)$ in 
$\sO_{Y_{i-1}, y}$ under the restriction map so that
$\sO_{Y_i, y} = {\sO_{Y_{i-1}, y}}/{(a_i)}$.  
 
For any $x \in Y_{i-1}$ such that $y \in \ov{\{x\}}$, we 
let $\wt{\alpha}_x$ be the image of $\wt{\alpha}$ under the restriction
map $K^M_{n-i}(\sO_{Y_{i-1},y}) \to K^M_{n-i}(\sO_{Y_{i-1},x})$.
We let $\ov{\alpha}_x$ be the image of $\wt{\alpha}$ under the composition
$K^M_{n-i}(\sO_{Y_{i-1},y}) \to K^M_{n-i}(\sO_{Y_{i-1},x}) \surj
K^M_{n-i}(k(x))$.

Let $\{\fp_1, \cdots, \fp_m\}$ be the set of minimal primes of
$(a_i)$ in $\sO_{Y_{i-1}, y}$. These are the generic points of $Y_i$
containing $y$. If $\fq \subset \sO_{Y_{i-1}, y}$
is a height one prime ideal such that $\fq \notin \{\fp_1, \cdots, \fp_m\}$,
then we must have $a_i \notin \fq$. It follows that 
any $x \in Y^{(1)}_{i-1} \setminus Y^{(0)}_{i}$ such that $y \in \ov{\{x\}}$, 
we have $a_i \in \sO^{\times}_{Y_{i-1},x}$. In particular, there is an element
$\wt{\beta}_{x} = a_i \cdot \wt{\alpha}_x \in K^M_{n-i+1}(\sO_{Y_{i-1},x})$.

For $y \in \ov{\{z\}}$ with $z \in Y^{(0)}_{i-1}$,  
let $a_{i, z}$ be the non-zero (as $a_i$ is
a non-zero divisor by STEP~2) image of $a_i$ in $k(z)$. 
Let $\beta_{z} = a_{i, z} \cdot \wt{\alpha}_z$ if $y \in \ov{\{z\}}$ and
zero otherwise (note that $\wt{\alpha}_z = \ov{\alpha}_z$). Set
\begin{equation}\label{eqn:Key-2}
\beta = (\beta_z)_{z \in Y^{(0)}_{i-1}} \in {\underset{z \in Y^{(0)}_{i-1}}\amalg}
K^M_{n-i+1}(k(z)).
\end{equation}

{\bf STEP~4.}
We claim that for any $x \in Y^{(1)}_{i-1}\setminus Y^{(0)}_{i}$
such that $y \in \ov{\{x\}}$, one has
$\partial^S_x(\beta) = 0$ under the map
${\underset{z \in Y^{(0)}_{i-1}}\amalg} K^M_{n-i+1}(k(z)) 
\xrightarrow{\cong} {\underset{z \in Y^{(0)}_{i-1}}\amalg} 
H^{i-1}_{z}(X, \sK^M_{n,X}) 
\xrightarrow{\partial^S_x} H^{i}_{x}(X, \sK^M_{n,X})$.

We know by the induction hypothesis that the composition
\begin{equation}\label{eqn:Key-3}
K^M_{n-i+1}(\sO_{Y_{i-1},x}) \xrightarrow{\Phi_{Y_{i-1},x}}
{\underset{z \in Y^{(0)}_{i-1}}\amalg} K^M_{n-i+1}(k(z)) 
\xrightarrow{\cong} {\underset{z \in Y^{(0)}_{i-1}}\amalg} 
H^{i-1}_{z}(X, \sK^M_{n,X}) 
\xrightarrow{\partial^S_x} H^{i}_{x}(X, \sK^M_{n,X})
\end{equation}
is zero. 

In particular, we get $\partial^S_x(\Phi_{Y_{i-1},x}(\wt{\beta}_x)) = 0$.
It suffices therefore to show that $\partial^S_x(\beta') = 0$ if
we write $\beta' = \beta - \Phi_{Y_{i-1},x}(\wt{\beta}_x)$.
Using ~\eqref{eqn:Factor-1}, we only need to show that 
$\beta'_z = 0$ if $x \in \ov{\{z\}}$. 
To prove this, we note that if $z \in Y^{(0)}_{i-1}$ is such that 
$x \in \ov{\{z\}}$, then there is a factorization
$\sO_{Y_{i-1},y} \to \sO_{Y_{i-1},x} \to \sO_{Y_{i-1},z} = k(z)$.
It follows from the above construction in this case that
$\beta_z = \phi_{z, x}(\wt{\beta}_x)$. Equivalently, $\beta'_z = 0$.
This proves the claim.

{\bf STEP~5.}
In this step, we shall study what happens to
$\partial^S_x(\beta)$  when $x \in Y^{(0)}_i \subset Y^{(1)}_{i-1}$
and $y \in \ov{\{x\}}$.
We now recall from STEP~2  that if $x \in Y^{(0)}_i \subset Y^{(1)}_{i-1}$ is 
such that $y \in \ov{\{x\}}$, then $\sO_{Y_{i-1},x}$ is a discrete valuation 
ring. It follows (see \cite{BT}) that for any $z \in Y^{(0)}_{i-1}$ with 
$x \in \ov{\{z\}}$ and $y \in \ov{\{x\}}$, 
one has $\partial^M_{x,y}(\beta_z) = \partial^M_{x,y}(a_{i,z} \cdot
\wt{\alpha}) = \ov{\alpha}_x$ under the boundary map
$\partial^M_{x,y}: K^M_{n-i+1}(k(z)) \to K^M_{n-i}(k(x))$.
Using the commutative diagram
\begin{equation}\label{eqn:Key-4}
\xymatrix@C.8pc{
K^M_{n-i}(\sO_{Y_{i-1},y}) \ar@{->>}[r] \ar[d] & K^M_{n-i}(\sO_{Y_{i},y}) 
\ar[d]^{\phi_{x,y}} \\ 
K^M_{n-i}(\sO_{Y_{i-1},x}) \ar@{->>}[r] & K^M_{n-i}(k(x)),}
\end{equation}
we get $\partial^M_{x,y}(\beta_z) = \phi_{x,y}(\alpha)$.

Let $z \in Y^{(0)}_{i-1}$ be the unique point  
such that $x \in \ov{\{z\}}$ by STEP~2. Since $k$ is infinite and
$x \in X_{\rm reg}$ by assumption (3) of the proposition, it follows from 
the Gersten resolution of $\sK^M_{n,X_{\rm reg}}$ by Kerz \cite{Kerz09} that
there is a commutative diagram
\begin{equation}\label{eqn:Key-5}
\xymatrix@C.8pc{
K^M_{n-i+1}(k(z)) \ar[r]^{\partial^M_{z,x}} \ar[d]_{\cong} &
K^M_{n-i}(k(x)) \ar[d]^{\cong} \\
H^{i-1}_{z}(X, \sK^M_{n,X}) \ar[r]^-{\partial^S_{z,x}} &
H^{i}_{x}(X, \sK^M_{n,X}).}
\end{equation}

If we identify the top and the bottom rows of ~\eqref{eqn:Key-5} 
(see ~\eqref{eqn:Kerz-iso}),
and combine this with ~\eqref{eqn:Key-4}, we see that for any
$x \in Y^{(0)}_i$ such that $y \in \ov{\{x\}}$, one has
\begin{equation}\label{eqn:Key-6}
\partial^{S}_{x}(\beta) = 
\partial^{S}_{z,x}(\beta_z) = \phi_{x,y}(\alpha) \in H^{i}_{x}(X, \sK^M_{n,X}).
\end{equation}
We note here that the first equality uses the uniqueness of $z \in Y^{(0)}_{i-1}$
such that $x \in \ov{\{z\}}$.

{\bf STEP~6.}
In the final step, we consider the commutative diagram
\begin{equation}\label{eqn:Key-7}
\xymatrix@C.8pc{
\beta \in {\underset{z \in Y^{(0)}_{i-1}}\amalg} H^{i-1}_{z}(X, \sK^M_{n,X}) 
\ar[r]^-{\partial^S} &  
{\underset{x \in Y^{(1)}_{i-1}}\amalg} H^{i}_{x}(X, \sK^M_{n,X}) 
\ar[r]^-{\partial^S_y} & H^{i+1}_{y}(X, \sK^M_{n,X}) 
\ar@{=}[d] \\
&   
\wt{\Phi}_{Y_i, y} (\alpha)
\in {\underset{x \in Y^{(0)}_{i}}\amalg} H^{i}_{x}(X, \sK^M_{n,X}) 
\ar[r]^-{\partial^S_y} \ar@{^{(}->}[u]^-{\iota}  & H^{i+1}_{y}(X, \sK^M_{n,X}).}
\end{equation}

The top row of ~\eqref{eqn:Key-7} is a complex, as one can immediately see
from the Cousin complex ~\eqref{eqn:Cousin-res}. We need to show that
$\partial^S_y(\wt{\Phi}_{Y_i, y}(\alpha)) = 0$.
Equivalently, we need to show that $\partial^S_y \circ \iota 
(\wt{\Phi}_{Y_i, y}(\alpha)) =0$. 

To show this last statement, let us write $\alpha' = 
\iota(\wt{\Phi}_{Y_i, y}(\alpha))$. It suffices to show that 
$\alpha'_x = \partial^S(\beta)_x$ for every $x \in Y^{(1)}_{i-1}$ such that
$y \in \ov{\{x\}}$.
Suppose first that $x \in Y^{(1)}_{i-1} \setminus Y^{(0)}_i$. 
In this case, $\alpha'_x$ is anyway zero and $\partial^S(\beta)_x =
\partial^S_x(\beta) = 0$ by STEP~4. If $x \in Y^{(0)}_i$, then 
$\alpha'_x = \partial^S_x(\beta)$ by ~\eqref{eqn:Key-6} in STEP~5.
This completes the proof. 
\end{proof}

\begin{remk}\label{remk:Kerz-Kato-Saito}
If we take $n = \dim(X)$, then \propref{prop:Key} and its proof remain
valid over finite fields as well in view of \cite[Theorem~2]{Kato} and
\cite[2.7.1]{Kato-Saito}.
\end{remk}   

\section{Proof of \thmref{thm:Thm-1}}\label{sec:Thm-1-prf}
We shall prove \thmref{thm:Thm-1} in this section. We begin by
showing that $\rho^{\zar}$ factors through the rational equivalence
classes.

\subsection{Factorization of $\rho^{\zar}_X$ through rational 
equivalence}\label{sec:Factor-0}
Let $k$ be an infinite perfect field and let $X$ be a reduced 
quasi-projective scheme of pure dimension $d \ge 0$ over $k$.
Recall from \S~\ref{sec:Def-ccm} that the Bloch-Quillen map 
$\rho^{\zar}_X: \sZ_0(X) \to H^d_{\zar}(X, \sK^M_{d,X})$ takes a regular
closed point $x \in X_{\rm reg}$ 
to the image of $[x] \in K_0(k(x))$ under the forget 
support map $K_0(k(x)) \cong H^d_x(X_{\zar}, \sK^M_{d,X}) \to
H^d_{\zar}(X, \sK^M_{d,X})$.

\begin{thm}\label{thm:Killing}
The Zariski Bloch-Quillen map induces a homomorphism
\begin{equation}\label{eqn:Killing-0}
\rho^{\zar}_X: \CH^{LW}_0(X) \to H^d_{\zar}(X, \sK^M_{d,X}).
\end{equation}

Moreover, the composite map $\rho_X: \CH^{LW}_0(X) \to
H^d_{\zar}(X, \sK^M_{d,X}) \to H^d_{\nis}(X, \sK^M_{d,X})$ is surjective.
\end{thm}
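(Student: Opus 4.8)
The plan is to reduce the first assertion to the vanishing of $\rho^{\zar}_X$ on divisors of rational functions along Cartier curves, and then to read this vanishing off the Cousin complex via \propref{prop:Key}. By \lemref{lem:Cousin-ex} we have an exact sequence
\[
\coprod_{x \in X^{(d-1)}} H^{d-1}_{x}(X, \sK^M_{d,X}) \xrightarrow{\partial^S} \coprod_{x \in X^{(d)}} H^{d}_{x}(X, \sK^M_{d,X}) \xrightarrow{f_S} H^d_{\zar}(X, \sK^M_{d,X}) \to 0,
\]
and $\rho^{\zar}_X([x]) = f_S(\rho_x(1))$ for a regular closed point $x$. Since $\sR^{LW}_0(X)$ is generated by the $\nu_*\divf(f)$ for Cartier curves, it suffices, for each closed immersion $\nu : C \inj X$ with closed $Z \subsetneq C$ as in \defref{defn:0-cycle-S-1} and each $f \in \sO^{\times}_{C,Z}$, to produce a class $\widetilde f \in \coprod_{x \in X^{(d-1)}} H^{d-1}_{x}(X, \sK^M_{d,X})$ with $f_S(\partial^S(\widetilde f)) = \rho^{\zar}_X(\nu_*\divf(f))$; since $f_S \circ \partial^S = 0$, this forces $\rho^{\zar}_X(\nu_*\divf(f)) = 0$. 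The cases $d \le 1$ are classical (for $d = 0$ there is no rational equivalence, and for $d = 1$ one argues as in STEP~1 of \propref{prop:Key}), so I assume $d \ge 2$.

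The natural candidate for $\widetilde f$ is built from $f$ itself. Each generic point $\eta_i$ of $C$ lies in $X^{(d-1)} \cap X_{\rm reg}$ (no component of $C$ meets $X_{\rm sing}$ densely), so \eqref{eqn:Kerz-iso} identifies $K^M_1(k(\eta_i))$ with $H^{d-1}_{\eta_i}(X, \sK^M_{d,X})$; letting $f_i \in k(C_i)^{\times} = K^M_1(k(\eta_i))$ be the factor of $f$, set $\widetilde f = (f_i)_i$. The $y$-component $\partial^S_y(\widetilde f)$ is nonzero only at closed points $y \in C$, and there are two regimes. If $y \in X_{\rm reg}$ and $f$ has a zero or pole at $y$, then the Gersten resolution of $\sK^M_{d, X_{\rm reg}}$ of Kerz identifies $\partial^S_y$ with the Milnor boundary exactly as in diagram \eqref{eqn:Key-5}, whence $\partial^S_y(\widetilde f) = {\rm ord}_y(f)\cdot \rho_y(1)$. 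If instead $f$ is a unit at $y$ — which happens at every $y \in Z$, in particular at every $y$ over $X_{\rm sing}$, since $f \in \sO^{\times}_{C,Z}$ — then $f$ lifts to $\alpha \in K^M_1(\sO_{C,y})$ with $\widetilde\Phi_{C,y}(\alpha)$ equal to $\widetilde f$ localized at $y$, and \propref{prop:Key} (with $n=d$, $i=d-1$, applied to the flag described below) gives $\partial^S_y(\widetilde f) = \partial^S_y(\widetilde\Phi_{C,y}(\alpha)) = 0$. Granting this, $\partial^S(\widetilde f) = \sum_{y \in C \cap X_{\rm reg}} {\rm ord}_y(f)\, \rho_y(1)$ maps under $f_S$ precisely to $\rho^{\zar}_X(\nu_*\divf(f))$, which therefore vanishes.

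The main obstacle is to arrange the hypotheses of \propref{prop:Key}, i.e. to realize $C$ as the bottom $Y_{d-1}$ of a flag $C = Y_{d-1} \subset \cdots \subset Y_1 \subset Y_0 = X$ of reduced closed subschemes with each $Y_i$ of pure codimension one in $Y_{i-1}$, cut out by a section of a line bundle on $Y_{i-1}$, and with $Y_i \cap X_{\rm sing}$ nowhere dense in $Y_i$. Here I would use that $C$ is a Cartier curve that is a local complete intersection along $\nu^{-1}(X_{\rm sing})$ (\defref{defn:0-cycle-S-1}(3)): the finitely many points of $C$ over $X_{\rm sing}$ admit a regular sequence of length $d-1$ cutting out $C$, which I would spread out, after embedding $X \inj \P^N_k$, to global sections of a sufficiently ample line bundle. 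A Bertini-type argument over the infinite field $k$ — choosing the successive sections generically while keeping $C$ in the base locus — then produces the intermediate $Y_i$ as reduced Cartier divisors whose intersections with $X_{\rm sing}$ stay nowhere dense, with $\lemref{lem:MLS}$ used first to move the supporting cycle off a codimension-two locus so that this positioning is possible. Carrying this out, so that $\sO_{Y_{d-1},y} = \sO_{C,y}$ at every point where $f$ is a unit, is the one genuinely technical point; once the flag exists, \propref{prop:Key} supplies the vanishing used above.

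Finally, for the surjectivity of $\rho_X$ onto $H^d_{\nis}(X, \sK^M_{d,X})$, I would use the Nisnevich analogue of the Cousin presentation of \lemref{lem:Cousin-ex}, expressing the target as the cokernel of $\partial^S$ on $\coprod_{x \in X^{(d)}} H^d_x(X, \sK^M_{d,X})$. The summands at regular closed points are $\cong \Z$ and map onto $\rho_X(\sZ_0(X))$, so the only point to check is that classes supported at \emph{singular} closed points already lie in $\rho_X(\sZ_0(X))$ modulo the image of $\partial^S$. This is where the results of Kato and Saito enter: their presentation of the top Milnor $K$-cohomology of a $d$-dimensional scheme lets one push every such local class, modulo boundaries, onto regular points, giving surjectivity of the map from $\sZ_0(X)$. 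By \remref{remk:Kerz-Kato-Saito}, this step — and hence the whole argument for $n=d$ — also applies over finite fields.
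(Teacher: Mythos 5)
Your skeleton is the same as the paper's: present $H^d_{\zar}(X,\sK^M_{d,X})$ by the Cousin complex via \lemref{lem:Cousin-ex}, feed $f$ in at the generic points of $C$, identify $\partial^S_y$ with the divisor map at regular points via the Gersten resolution of Kerz, kill the contribution at points over $X_{\rm sing}$ by \propref{prop:Key}, and get surjectivity from \cite[Theorem~2.5]{Kato-Saito}. But the step you yourself call ``the one genuinely technical point'' is a genuine gap, and your proposed repair does not work. You want to realize a \emph{given} Cartier curve $C$ as the last member $Y_{d-1}$ of a flag satisfying the hypotheses of \propref{prop:Key}, by a Bertini argument ``keeping $C$ in the base locus.'' A generic section of a sufficiently ample $\sL$ on $Y_{d-2}$ that is constrained to vanish on $C$ cuts out a divisor in the class of $\sL$, whose degree is strictly larger than that of $C$; its zero locus is therefore $C$ together with a nonempty residual curve, and (worse) $C$ need not even be a Cartier divisor on any such surface $Y_{d-2}$. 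So one cannot arrange $Y_{d-1}=C$ on the nose, which is exactly what your application of \propref{prop:Key} at the points $y$ over $X_{\rm sing}$ requires (it uses $\sO_{Y_{d-1},y}=\sO_{C,y}$). Your appeal to \lemref{lem:MLS} does not rescue this either: that lemma needs a closed subset $Y\supseteq X_{\rm sing}$ of codimension at least two, which need not exist in \thmref{thm:Killing} — the singular locus of a reduced quasi-projective scheme can be a divisor.

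The paper avoids this problem rather than solving it: it never builds a flag through an arbitrary Cartier curve. Instead it cites \cite[Lemmas~1.3, 1.4]{Levine-2}, which say that the subgroup $\sR^{LW}_0(X)$ is \emph{generated} by classes $\divf(f)$ for pairs $(C,f)$ that already come equipped with a flag $C=Y_{d-1}\subset\cdots\subset Y_0=X$ of the required type. This is a statement about a generating set of relations, not about every Cartier curve; Levine's moving argument absorbs precisely the residual components you would run into, by rewriting a given relation as a sum of relations on curves arising as successive divisorial intersections, with suitably chosen functions on them. The fix for your write-up is therefore to replace your flag construction by this reduction (or to reprove Levine's lemmas, which is substantial extra work); once the generators are assumed to carry flags, the rest of your argument — the two-regime computation of $\partial^S_y(\widetilde f)$ and the Kato--Saito surjectivity step — goes through essentially as in the paper.
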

\begin{proof}
For $d = 1$, the theorem follows from \cite[Proposition~1.4]{LW}.
We can therefore assume that $d \ge 2$. We need to show that the
map $\rho^{\zar}_X: \sZ_0(X) \to H^d_{\zar}(X, \sK^M_{d,X})$ kills $\sR^{LW}_0(X)$.
It follows from \cite[Lemmas~1.3, 1.4]{Levine-2} that $\sR^{LW}_0(X)$
is generated by $\divf(f)$, where $C \subset X$ is a Cartier curve and
$f \in \sO^{\times}_{C, C \cap X_{\rm sing}}$ such that the following hold.
\begin{enumerate}
\item
There is a sequence of reduced closed subschemes
$C = Y_{d-1} \subset Y_{d-2} \subset \cdots \subset Y_1 \subset Y_0 = X$.
\item
For each $1 \le i \le d-1$, there is a line bundle $\sL_i$ on $Y_{i-1}$ with a 
section $s_i \in \Gamma(Y_{i-1}, \sL_i)$ such that $Y_i$ is the zero-locus
of $s_i$.
\item
$Y_i$ has pure codimension one in $Y_{i-1}$.
\item
For each $1 \le i \le d-1$, the subset $Y_i \cap X_{\rm sing}$ is nowhere
dense in $Y_i$.
\end{enumerate}

By \lemref{lem:Cousin-ex}, it suffices to show the commutativity of the
diagram
\begin{equation}\label{eqn:Killing-1}
\xymatrix@C.8pc{
{\underset{x \in X^{(d-1)}}\amalg} H^{d-1}_x(X, \sK^M_{d,X}) 
\ar[rr]^-{\partial^S} &  &
{\underset{x \in X^{(d)}}\amalg} H^{d}_x(X, \sK^M_{d,X}) 
\ar[r]^-{f_S} & H^{d}(X, \sK^M_{d,X}) \\
{\underset{x \in C^{(0)}}\amalg} H^{d-1}_x(X, \sK^M_{d,X}) \ar@{^{(}->}[u] & \\
{\underset{x \in C^{(0)}}\amalg} K^M_1(k(x)) \ar[u]_-{\cong} & 
{\underset{x \in X^{(d)}_{\rm reg}}\amalg} K^M_0(k(x)) \ar[r]^-{\cong} & 
{\underset{x \in X^{(d)}_{\rm reg}}\amalg} H^{d}_x(X, \sK^M_{d,X}) 
\ar@{^{(}->}[uu] & \\
\sO^{\times}_{C, C \cap X_{\rm sing}} \ar@{^{(}->}[u] \ar[r]^-{\divf} &
\sZ_0(X) \ar[u]_-{\cong}. & &}
\end{equation}

We let $\theta_C$ denote the composite of all vertical arrows
on the left in ~\eqref{eqn:Killing-1}. We fix a point $y \in X^{(d)}$.
It is clear that $(\partial^S \circ \theta_c)_y = 0 = (\divf)_y$
whenever $y \notin C$. So we can assume that $y \in C^{(1)}$.

Let us first assume that $y \in C \cap X_{\rm sing}$. 
We then have a commutative diagram
\begin{equation}\label{eqn:Killing-2}
\xymatrix@C.8pc{ 
\sO^{\times}_{C, C \cap X_{\rm sing}} \ar@{^{(}->}[r] \ar@{^{(}->}[d] &
{\underset{x \in C^{(0)}}\amalg} K^M_1(k(x)) \ar[r]^-{\cong} \ar@{->>}[d] & 
{\underset{x \in C^{(0)}}\amalg} H^{d-1}_x(X, \sK^M_{d,X}) \ar[r]^-{\partial^S} 
\ar@{->>}[d] & {\underset{x \in X^{(d)}}\amalg} H^{d}_x(X, \sK^M_{d,X}) 
\ar@{->>}[d] \\
K^M_1(\sO_{C,y}) \ar[r]^-{\Phi_{C,y}} & 
{\underset{y \in x \in C^{(0)}}\amalg} K^M_1(k(x)) \ar[r]^-{\cong} &
{\underset{y \in x \in C^{(0)}}\amalg} H^{d-1}_x(X, \sK^M_{d,X}) 
\ar[r]^-{\partial^S_y} & H^{d}_y(X, \sK^M_{d,X}).}
\end{equation} 

On the other hand, \propref{prop:Key} says that 
the composite of all bottom horizontal arrows in ~\eqref{eqn:Killing-2}
is zero (note that $\sO^{\times}_{C,y} \cong K^M_1(\sO_{C,y})$).
It follows that $\partial^S_y \circ \theta_C = (\partial^S \circ \theta_C)_y
= 0$. Since the map $\divf$ has support only on $X_{\rm reg}$, we also
have $(\divf)_y = 0$ so that we get $(\partial^S \circ \theta_C)_y = (\divf)_y$.

Suppose now that $y \in C^{(1)} \cap X_{\rm reg}$.
In this case, we have a diagram
\begin{equation}\label{eqn:Killing-3}
\xymatrix@C.8pc{
\sO^{\times}_{C, C \cap X_{\rm sing}} \ar@{^{(}->}[r] \ar[d]_-{\divf} & 
{\underset{x \in C^{(0)}}\amalg} K^M_1(k(x)) \ar[d]^-{\partial^M} 
\ar[r]^-{\cong} 
& {\underset{x \in C^{(0)}}\amalg} H^{d-1}_x(X, \sK^M_{d,X}) \ar@{^{(}->}[r] &
{\underset{x \in X^{(d-1)}}\amalg} H^{d-1}_x(X, \sK^M_{d,X}) 
\ar[d]^-{\partial^S_y} \\
\sZ_0(X) \ar[r]^-{\cong} & {\underset{x \in X^{(d)}_{\rm reg}}\amalg} K^M_0(k(x)) 
\ar[r]^-{\cong} & {\underset{x \in X^{(d)}_{\rm reg}}\amalg} H^{d}_x(X, \sK^M_{d,X})
\ar@{->>}[r] & H^{d}_y(X, \sK^M_{d,X}).}
\end{equation}

Since $X_{\rm reg}$ is regular, it is well known that $\partial^M$ coincides
with the divisor map. In particular, the left square commutes. The right
square commutes by the Gersten resolution of $\sK^M_{d,X}$ on $X_{\rm reg}$.
But this implies that $(\partial^S \circ \theta_C)_y = (\divf)_y$.
We have thus shown that ~\eqref{eqn:Killing-1} commutes. This shows that 
$\rho^{\zar}_X$ kills $\sR^{LW}_0(X)$.

To show that $\rho_X: \CH^{LW}_0(X) \to H^d_{\nis}(X, \sK^M_{d,X})$ is surjective,
it suffices to show that the map $\rho_X: \sZ_0(X) =
{\underset{x \in X^{(d)}_{\rm reg}}\amalg} K^M_0(k(x)) \to 
H^d_{\nis}(X, \sK^M_{d,X})$ is surjective. But this follows from
\cite[Theorem~2.5]{Kato-Saito} since $k$ is perfect and hence $U := X_{\rm reg}$
is nice in the sense of \cite[Definition~2.2]{Kato-Saito}. Moreover,
$U$ is dense in $X$. The proof of the theorem is now complete.
\end{proof}

\subsection{\thmref{thm:Thm-1} for affine schemes}\label{sec:aff**}
As a consequence of \thmref{thm:Killing}, we can now prove 
\thmref{thm:Thm-1} for affine schemes as follows.

\begin{thm}\label{thm:Main-aff}
Let $k$ be an algebraically closed field and let $X$ be a reduced 
affine scheme of pure dimension $d \ge 0$ over $k$. Then the map
\[
\rho_X: \CH^{LW}_0(X) \to H^d_{\nis}(X, \sK^M_{d,X})
\]
is an isomorphism.
\end{thm}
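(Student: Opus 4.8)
The plan is to prove that $\rho_X$ is an isomorphism by proving surjectivity and injectivity separately. Surjectivity is already in hand: by \thmref{thm:Killing}, the map $\rho_X \colon \CH^{LW}_0(X) \to H^d_{\nis}(X, \sK^M_{d,X})$ is surjective for any infinite perfect field, and an algebraically closed field is certainly infinite and perfect. So the entire content of the theorem for affine $X$ is the \emph{injectivity} of $\rho_X$ under the hypothesis $k = \ov{k}$.

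To prove injectivity, I would exploit the compatibility of $\rho_X$ with the cycle class map to $K$-theory established in \lemref{lem:CCM-K*}. That lemma gives a commutative triangle with $\lambda_X \colon \sZ_0(X) \to K_0(X)$ factoring as $\kappa_X \circ \rho_X$, where $\kappa_X \colon H^d_{\nis}(X, \sK^M_{d,X}) \to K_0(X)$ comes from the Nisnevich descent spectral sequence and $\lambda_X$ descends to $\CH^{LW}_0(X)$ by \eqref{eqn:CCM-*-3}. The strategy is then to show that, for $X$ affine over an algebraically closed field, the cycle class map $\lambda_X \colon \CH^{LW}_0(X) \to F^d K_0(X)$ is an isomorphism onto its image $F^d K_0(X)$. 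This is precisely the kind of statement the introduction attributes to Krishna (\cite[Theorem~1.3]{Krishna-2}), so I would invoke it directly. Given such an isomorphism, a diagram chase shows that $\rho_X$ must be injective: if $\rho_X(z) = 0$ then $\lambda_X(z) = \kappa_X(\rho_X(z)) = 0$, hence $z = 0$ in $\CH^{LW}_0(X)$ by injectivity of $\lambda_X$, and since $\rho_X$ is surjective we conclude it is an isomorphism.

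The main obstacle—and the step where the hypothesis $k = \ov{k}$ is genuinely used—is supplying the input that $\lambda_X$ is injective on $\CH^{LW}_0(X)$ for affine schemes. This is not a formal consequence of the earlier sections; it rests on a Roitman-type torsion theorem. The classical Roitman theorem controls the torsion in the kernel of the cycle class map in terms of the torsion of an albanese-type invariant, and over an algebraically closed field one can arrange that this kernel vanishes entirely for affine schemes. I would therefore structure this step as a citation to \cite{Krishna-2}, whose Roitman torsion theorem for singular affine schemes over $\ov{k}$ yields the injectivity of $\lambda_X$ (equivalently, the isomorphism $\CH^{LW}_0(X) \xrightarrow{\cong} F^d K_0(X)$). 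The algebraic closedness is what makes the torsion obstruction disappear; over a general field only the bounded-exponent statement of \thmref{thm:Thm-6} survives.

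In summary, the proof reduces to three assembled pieces: surjectivity of $\rho_X$ from \thmref{thm:Killing}; the factorization $\lambda_X = \kappa_X \circ \rho_X$ from \lemref{lem:CCM-K*}; and the injectivity of $\lambda_X$ on $\CH^{LW}_0(X)$ over an algebraically closed field, imported from the Roitman torsion results of \cite{Krishna-2}. Combining these, $\rho_X$ is a surjection whose composite with $\kappa_X$ is injective, forcing $\rho_X$ itself to be injective and hence an isomorphism. I expect the write-up to be short, with essentially all the analytic difficulty hidden inside the cited torsion theorem and the verification that its hypotheses (reduced, affine, pure dimension $d$, over $\ov{k}$) match the present setting.
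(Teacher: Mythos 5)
Your proposal is correct and follows essentially the same route as the paper: surjectivity from \thmref{thm:Killing}, the factorization $\lambda_X = \kappa_X \circ \rho_X$ from \lemref{lem:CCM-K*}, and then injectivity of $\lambda_X \colon \CH^{LW}_0(X) \to K_0(X)$ imported from Krishna's Roitman-torsion/Murthy-conjecture results over $\ov{k}$ (the paper cites \cite[Corollary~7.6]{Krishna-2}, which is the same input as your \cite[Theorem~1.3]{Krishna-2}). The only cosmetic difference is that the paper disposes of the cases $d \le 1$ separately via the classical result \cite[Theorem~2.3]{LW}, since the cited injectivity theorem of \cite{Krishna-2} is stated for $d \ge 2$.
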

\begin{proof}
In view of \thmref{thm:Killing}, we only need to show that $\rho_X$ is
injective. Using \lemref{lem:CCM-K*}, it suffices to show that the
Bloch-Quillen map $\lambda_X:  \CH^{LW}_0(X) \to K_0(X)$ is injective.
For $d \le 1$, this follows from \cite[Theorem~2.3]{LW}.
For $d \ge 2$, this is \cite[Corollary~7.6]{Krishna-2}.
\end{proof}

\subsection{\thmref{thm:Thm-1} for projective schemes}
\label{sec:proj**}
We shall now prove \thmref{thm:Thm-1} for projective
schemes over an algebraically closed field which are regular in codimension
one. We fix an algebraically closed field $k$ and a reduced projective scheme
$X$ of dimension $d \ge 1$ over $k$ which is regular in codimension one.
Note that if $d = 1$, this means that $X$ is regular.
Let $\pi: X^N \to X$ denote the normalization morphism and let 
$Y = \pi^{-1}(X_{\rm sing})$.
This clearly induces the pull-back map $\pi^*: \sZ_0(X) \to 
\sZ_0(X^N, Y) \subset \sZ_0(X^N)$. 
We begin with the following reduction.

\begin{lem}\label{lem:normalization}
The map $\pi^*: \sZ_0(X) \to \sZ_0(X^N)$ induces an isomorphism
$\pi^*: \CH^{LW}_0(X) \xrightarrow{\cong} \CH^{LW}_0(X^N)$.
\end{lem}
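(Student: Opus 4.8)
The plan is to push every rational equivalence, on both $X$ and $X^N$, onto curves that avoid the singular locus, where $\pi$ is an isomorphism. The moving lemma $\lemref{lem:MLS}$ is exactly the device that legitimizes this reduction; it lets us completely avoid comparing Cartier curves through singular points with their (possibly wild) transforms on the normalization.

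First I would record the geometry. Since $X$ is regular in codimension one, $X_{\rm sing}$ is nowhere dense of codimension $\ge 2$, and as $X^N$ is normal, $\pi$ restricts to an isomorphism $X^N \setminus Y \xrightarrow{\cong} X_{\rm reg}$, where $Y = \pi^{-1}(X_{\rm sing})$. Because $\pi$ is finite, $Y$ has codimension $\ge 2$ in the pure $d$-dimensional reduced (indeed normal, projective) scheme $X^N$, is nowhere dense, and contains $(X^N)_{\rm sing}$, since any point of $X^N$ lying over $X_{\rm reg}$ is itself regular. The isomorphism $X^N \setminus Y \cong X_{\rm reg}$ also shows immediately that $\pi^*\colon \sZ_0(X) \to \sZ_0(X^N, Y) \subset \sZ_0(X^N)$ is an isomorphism of the free abelian groups on closed points.

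Next I would invoke $\lemref{lem:MLS}$ twice: once for $X$ with the choice $Y = X_{\rm sing}$ (legitimate since $\codim_X(X_{\rm sing}) \ge 2$), and once for $X^N$ with $Y = \pi^{-1}(X_{\rm sing})$. This produces canonical isomorphisms
\[
\CH^{LW}_0(X) \cong \frac{\sZ_0(X)}{R_0^{LW}(X, X_{\rm sing})}, \qquad
\CH^{LW}_0(X^N) \cong \frac{\sZ_0(X^N, Y)}{R_0^{LW}(X^N, Y)},
\]
where the subgroups on the right are generated by $\divf_C(f)$ for integral curves $C$ disjoint from the respective singular/exceptional loci and $f \in k(C)^{\times}$. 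To conclude I would match the two families of relations: a closed integral curve $C \subset X$ with $C \cap X_{\rm sing} = \emptyset$ lies in $X_{\rm reg}$, so $\widetilde{C} = \pi^{-1}(C)$ is a closed integral curve in $X^N$ disjoint from $Y$ with $\pi|_{\widetilde{C}}\colon \widetilde{C} \xrightarrow{\cong} C$, and conversely $\pi$ sends every such curve on $X^N$ to one on $X$. Under the identification $k(C) = k(\widetilde{C})$ one has $\pi^*(\divf_C(f)) = \divf_{\widetilde{C}}(f)$, so $\pi^*$ carries $R_0^{LW}(X, X_{\rm sing})$ bijectively onto $R_0^{LW}(X^N, Y)$ and hence descends to an isomorphism of the two quotients displayed above. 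Combined with the two applications of $\lemref{lem:MLS}$, this yields $\pi^*\colon \CH^{LW}_0(X) \xrightarrow{\cong} \CH^{LW}_0(X^N)$.

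I expect the main obstacle to be conceptual rather than computational. A direct comparison of Cartier curves on $X$ with curves on $X^N$ is unpleasant, because normalization can change a curve drastically near a singular point (strict transforms, multiple branches, and the local complete intersection condition all intervene). The crux is the observation that $\lemref{lem:MLS}$ allows one to discard all such curves and work solely over $X_{\rm reg} \cong X^N \setminus Y$, after which the matching of relations is routine. The one point to verify carefully is that the hypotheses of $\lemref{lem:MLS}$ hold in both applications, in particular the codimension $\ge 2$ bound: this is precisely where the assumption that $X$ is regular in codimension one enters, and it is also what forces $(X^N)_{\rm sing} \subseteq Y$ with the correct codimension on the normalization.
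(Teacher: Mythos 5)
Your proof is correct and takes essentially the same route as the paper's: the paper likewise invokes \lemref{lem:MLS} (once explicitly for $X^N$ with $Y=\pi^{-1}(X_{\rm sing})$, and implicitly for $X$ itself when it asserts that every relation in $\sR^{LW}_0(X)$ comes from a curve inside $X_{\rm reg}$) and then matches curves and divisors of functions across the isomorphism $X_{\rm reg}\cong X^N\setminus Y$, exactly as you do. The only cosmetic difference is that the paper disposes of the case $d=1$ separately (where $X$ is already regular and $\pi$ is an isomorphism), while your argument covers it implicitly since then $X_{\rm sing}=\emptyset$.
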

\begin{proof}
There is nothing to prove when $d =1$ so we assume $d \ge 2$. 
By \lemref{lem:MLS}, it suffices to show that the map
$\pi^*: \sZ_0(X) \to 
\sZ_0(X^N, Y) \subset \sZ_0(X^N)$ induces an isomorphism $ \pi^*: \CH^{LW}_0(X) \to \CH^{LW}_0(X^N, Y) :=
{\sZ_0(X^N, Y)}/{\sR^{LW}_0(X^N,Y)}$.

The map $\pi^*: \sZ_0(X) \to \sZ_0(X^N,Y)$ is just the identity map. 
Furthermore, any element of $\sR^{LW}_0(X)$ is of the form $\divf(f)$, where
$C \subset X_{\rm reg}$ is an integral curve and $f \in k(C)^{\times}$.
But this uniquely defines an element of $\sR^{LW}_0(X^N,Y)$.
Conversely, any element of $\sR^{LW}_0(X^N,Y)$ is of the form
$\divf(f)$, where $C \subset X^N \setminus Y$ is an integral curve and 
$f \in k(C)^{\times}$. But $\pi(C)$ and $\pi_*(f)$ then uniquely define
an element of $\sR^{LW}_0(X)$. This proves the desired bijection
$\pi^*: \CH^{LW}_0(X) \xrightarrow{\cong} \CH^{LW}_0(X^N, Y)$.
\end{proof}  

The key step for proving \thmref{thm:Thm-1} for projective
schemes is the following result of independent interest. This result 
was proven by Levine (see \cite[Theorem~3.2]{Levine-2})
modulo $p$-torsion if ${\rm char}(k) = p > 0$. We shall follow 
Levine's outline in making his result unconditional.
The key new ingredient which makes this improvement possible and which
was not available when Levine wrote \cite{Levine-2}
is the Roitman torsion theorem for normal projective schemes in positive characteristic
\cite{KS}.

\begin{thm}\label{thm:K-inj-proj}
Let $X$ be as above.
Then the cycle class map $\lambda_X: \CH^{LW}_0(X) \to K_0(X)$ is injective.
\end{thm}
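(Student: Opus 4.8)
The plan is to follow the outline of Levine \cite[Theorem~3.2]{Levine-2}, replacing the ``modulo $p$-torsion'' input that he was forced to use in characteristic $p$ by the unconditional Roitman torsion theorem for normal projective schemes of \cite{KS}. The first move is to reduce to the case where $X$ is normal. By \lemref{lem:normalization} the pull-back $\pi^*\colon \CH^{LW}_0(X) \to \CH^{LW}_0(X^N)$ is an isomorphism; since $\pi$ is an isomorphism over $X_{\rm reg}$, where all of our $0$-cycles are supported, one has $\pi^*[\sO_x] = [\sO_{x'}]$ for a regular closed point $x$ with preimage $x'$, so the square $\pi^*\circ \lambda_X = \lambda_{X^N}\circ \pi^*$ commutes. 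As the left vertical $\pi^*$ is an isomorphism on Chow groups, injectivity of $\lambda_{X^N}$ forces injectivity of $\lambda_X$. Hence I may assume $X$ is normal, and (working one connected component at a time) integral, so that its Albanese variety $\Alb(X)$ is an abelian variety over $k$.

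The argument then splits according to degree and into two independent parts. Since $\lambda_X$ is compatible with the rank/degree homomorphism on $K_0$, the kernel $\ker(\lambda_X)$ lies in $A_0(X) := \ker\bigl(\deg\colon \CH^{LW}_0(X) \to \Z\bigr)$. The first part is to show that $\ker(\lambda_X)$ is a \emph{torsion} group. This is the standard Riemann--Roch/Chern-class computation of Grothendieck \cite[\S~4.3]{Grothendieck}: using the top Chern class on the smooth open $X_{\rm reg}$, where $c_d([\sO_x]) = (-1)^{d-1}(d-1)!\,[x]$, one finds that $c_d\circ\lambda_X$ is multiplication by $\pm(d-1)!$ on the cycle classes. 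Consequently $\ker(\lambda_X)$ is annihilated by $(d-1)!$, so in particular $\ker(\lambda_X) \subseteq A_0(X)_{{\rm tors}}$.

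The second and main part is to show that $\lambda_X$ is injective on torsion, and here Levine's technique relating the $K$-theory of a normal projective scheme to its Albanese enters. The goal is to construct a homomorphism $\theta\colon \im(\lambda_X) \to \Alb(X)(k)$, built from the $K$-theoretic data via the universal property of the Albanese, such that $\theta\circ\lambda_X = {\rm alb}_X$ on $A_0(X)$, where ${\rm alb}_X\colon A_0(X) \to \Alb(X)(k)$ is the Albanese map. Granting this factorization, $\ker(\lambda_X) \subseteq \ker({\rm alb}_X)$. By the Roitman torsion theorem for normal projective varieties---classical in characteristic zero and, crucially, now available unconditionally in positive characteristic by \cite{KS}---the map ${\rm alb}_X$ is injective on torsion, i.e.\ $\ker({\rm alb}_X)$ is torsion-free. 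Combining this with the previous paragraph, $\ker(\lambda_X)$ is at once torsion and contained in the torsion-free group $\ker({\rm alb}_X)$, whence $\ker(\lambda_X) = 0$ and $\lambda_X$ is injective.

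I expect the main obstacle to be precisely the torsion-injectivity step in positive characteristic: in characteristic $p$ Levine could only conclude modulo $p$-torsion because the Roitman theorem for normal projective schemes was not then available in full generality, and the new ingredient \cite{KS} is exactly what removes this restriction. A secondary, more technical hurdle is the careful construction of the factorization $\theta$ through $\Alb(X)$ and the verification of its compatibility with both $\lambda_X$ and ${\rm alb}_X$; this is where Levine's comparison of $K_0$ with the Albanese, via the regular locus and Picard/Albanese duality, does the real work.
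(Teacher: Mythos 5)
Your overall architecture matches the paper's: reduce to $X$ normal via \lemref{lem:normalization}, then combine ``$\ker(\lambda_X)$ is torsion'' with ``$\lambda_X$ is injective on torsion,'' the latter through a factorization of the albanese map through $K_0$ and the Roitman theorem of \cite{KS}. The genuine gap is in your first part. Grothendieck's Chern class argument does not apply here: after normalization $X$ is still singular in codimension $\ge 2$, and there is no top Chern class $c_d\colon K_0(X) \to \CH^{LW}_0(X)$ available for such $X$. What your argument actually produces is the composite $\CH^{LW}_0(X) \xrightarrow{\lambda_X} K_0(X) \to K_0(X_{\rm reg}) \xrightarrow{c_d} \CH_0(X_{\rm reg})$, which equals $\pm(d-1)!$ times the canonical surjection $\CH^{LW}_0(X) \surj \CH_0(X_{\rm reg})$ onto the Fulton Chow group of the regular locus. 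So from $\lambda_X(\alpha)=0$ you may only conclude that $(d-1)!\,\alpha$ dies in $\CH_0(X_{\rm reg})$, not in $\CH^{LW}_0(X)$. The kernel of $\CH^{LW}_0(X) \surj \CH_0(X_{\rm reg})$ is in general enormous and non-torsion: rational equivalence on $X_{\rm reg}$ uses curves whose closures pass through $X_{\rm sing}$ with no invertibility constraint there (think of the rulings of a cone), and retaining exactly this lost information is the whole point of the Levine--Weibel group. Indeed, the statement that $\ker(\lambda_X)$ is torsion for singular schemes is itself a deep theorem: the paper imports it from Levine's work, namely the injectivity of $\CH^{LW}_0(X)_{\Q} \to K_0(X)_{\Q}$ (\cite[Corollary~5.4]{Levine-5}, \cite[Corollary~2.7]{Levine-2}), and in the affine case the analogous exponent bound (\thmref{thm:Thm-6}) requires the entire Euler class group machinery of \S~\ref{sec:ECG}. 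It cannot be obtained for free from the smooth-case identity $c_d([\sO_x]) = (-1)^{d-1}(d-1)!\,[x]$.

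Your second part is exactly the paper's route, but it is only a plan, not a proof: the homomorphism $\theta$ you postulate is constructed in the paper as $\smile\sP \circ u_!$, where $u_! = q_*\circ p^*$ is the transfer along the closure $X^* \subset X \times A$ of the graph of the albanese map ($A = \Alb(X)$, $p$ and $q$ the two projections), and $\smile\sP$ is the map $F_0K_0(A) \to \Pic^0(\hat{A}) \cong A$ given by the Poincar\'e bundle on $A \times \hat{A}$; one then checks $u_!\circ\lambda_X = \lambda_A\circ u_*$ on $0$-cycles and invokes \cite[Theorem~1.6]{KS}. Since you explicitly flagged this construction as the remaining hurdle rather than claiming it, the decisive defect of your proposal is the torsion-kernel step above; as written, the proof does not go through.
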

\begin{proof}
We consider the commutative diagram
\begin{equation}\label{eqn:K-inj-proj-0} 
\xymatrix@C.8pc{
{\rm CH}^{LW}_0(X) \ar[r]^-{\lambda_X} \ar[d]_{\pi^*} & K_0(X) \ar[d]^{\pi^*} \\
{\rm CH}^{LW}_0(X^N) \ar[r]^-{\lambda_{X^N}} & K_0(X^N).}
\end{equation}

It follows from \lemref{lem:normalization} that the left vertical arrow is
an isomorphism. This shows that we can assume that $X$ is normal. 
In particular, we can assume that $X$ is integral.
Levine has shown that the map $\CH^{LW}_0(X)_{\Q} \to K_0(X)_{\Q}$ is injective
(see \cite[Corollary~5.4]{Levine-5} and \cite[Corollary~2.7]{Levine-2}).
So the heart of the proof is to show that the map
$\lambda_X:\CH^{LW}_0(X)_{\rm tor} \to K_0(X)$ is injective.
Let $\CH^{LW}_0(X)_0$ denote the kernel of the degree map
${\rm deg} : \CH^{LW}_0(X) \surj \Z$. It is  clear that
$\CH^{LW}_0(X)_{\rm tor} \subset \CH^{LW}_0(X)_0$.

Recall from \cite{KS} that the normal projective variety $X$ admits
an albanese variety $A := \Alb(X)$ in the sense of \cite[Chap.~II, \S~3]{Lang} 
and an albanese rational map $u: X \dashrightarrow A$ which is regular
on $X_{\rm reg}$. If we fix a closed point $P \in X_{\rm reg}$, then
$u$ defines a surjective group homomorphism
$\tau_X: \CH^{LW}_0(X)_0 \surj A(k)$ such that 
$u(x) = \tau_X([x] -[P])$.
We shall make no distinction between $A$ and $A(k)$ in the
rest of the proof as long as the context makes it clear whether we are
talking about the variety $A$ or the group $A(k)$.

Let $X^* \subset X \times A$ be the closure of the graph of $u$ 
with projections $p : X^* \to X$ and $q: X^* \to A$.
Since $A$ is regular and $q$ is projective (because $X$ is projective),
there is a push-forward map $q_*: K_0(X^*) \to K_0(A)$
(see \cite[3.16.5]{TT}). We let $u_!: K_0(X) \to K_0(A)$ denote the
composite map $q_* \circ p^*$. Note that $u_!$ is defined on higher $K$-groups
as well, but we do not need this general version.

The morphism $p$ is an isomorphism over $X_{\rm reg}$ and $q$ agrees with 
$u$ under this isomorphism. If $x \in X_{\rm reg}$ is a closed point and
$y = p^{-1}(x)$, then we have in $K_0(A)$:
\begin{equation}\label{eqn:K-inj-proj-1} 
\begin{array}{lll}
u_! \circ \lambda_X([x]) &=& q_* \circ p^* \circ \lambda_X([x]) \\
& =& q_* \circ \lambda_{X^*}([y]) \\
&=& \sum_i [R^iq_*(k(y))] \\
&=& [q_* (k(y))] \\
& =& [k(u(x))] \\
& = & \lambda_A([u(x)]).
\end{array} 
\end{equation} 

If we identify $\sZ_0(X)$ with $\sZ_0(X^*, p^{-1}(X_{\rm sing})) \subset
\sZ_0(X^*)$, then it follows from ~\eqref{eqn:K-inj-proj-1} that for a 0-cycle 
$\alpha \in \sZ_0(X)$, one has
\begin{equation}\label{eqn:K-inj-proj-2}  
u_! \circ \lambda_X(\alpha) = \lambda_A \circ u_*(\alpha) \in K_0(A).
\end{equation}

Let $\sP$ be the Poincar\'e  line bundle on $A \times \hat{A}$, 
where $\hat{A} = \text{Pic}^0(A)$ is the dual abelian variety to $A$. 
Then $\sP$ defines a map $\smile \sP: K_0(A) \to K_0(\hat{A})$ such that 
$\smile P (\beta)  = (p_{\hat{A}})_* ( p_A^* (\beta) \otimes [\sP])$ 
for $\beta \in K_0(A)$.
If $x \in A$ is a closed point, then $\smile \sP (\lambda_A([x]))
= (p_{\hat{A}})_* (\sO_{\{x\} \times \hat{A}} \otimes [\sP])$.
Since the map $\{x\} \times \hat{A} \to \hat{A}$ is an isomorphism under
$p_{\hat{A}}$, we see that under the determinant map
${\rm det}: {\rm Ker}(K_0(\hat{A}) \xrightarrow{{\rm rk}} \Z) \to \Pic(\hat{A})$,
$\smile \sP$ restricts to 
$\smile \sP: F_0K_0(A) \to \Pic^0(\hat{A})$. Recall here that
$F_0K_0(A)$ is the subgroup of $K_0(A)$ generated by the classes of closed
points. As $(p_{\hat{A}})_* (\sO_{\{x\} \times \hat{A}} \otimes [\sP])$ is 
identified with the pull-back of $\sP$ under the embedding 
$\hat{A} \to A \times \hat{A}$, given by $y \mapsto (x,y)$,
we see furthermore that $\smile \sP (\lambda_A([x])) = x$ under 
the isomorphism $\Pic^0(\hat{A}) \cong A$ via $\sP$. We thus get a homomorphism
\begin{equation}\label{eqn:K-inj-proj-3}
\smile \sP: F_0K_0(A) \to A 
\end{equation}
such that $\smile \sP \circ \lambda_A(\alpha) = \alpha$. 

Combining the construction of
~\eqref{eqn:K-inj-proj-2} with ~\eqref{eqn:K-inj-proj-3}, one gets
a commutative diagram
\begin{equation}\label{eqn:K-inj-proj-4}
\xymatrix@C.8pc{
X_{\rm reg} \ar[r] \ar@/^2pc/[rr]^{u} 
& \CH^{LW}_0(X)_0 \ar[r]^-{\tau_X} \ar[d]_{\lambda_X} &
A \\
& F_0K_0(X) \ar[r]^-{u_!} & F_0K_0(A) \ar[u]_-{\smile \sP}. &}
\end{equation}

Since the map $\tau_X: \CH^{LW}_0(X)_{\rm tor} \to A_{\rm tor}$ is injective
by \cite[Theorem~1.6]{KS} as $X$ is normal and projective over $k$,
it follows from ~\eqref{eqn:K-inj-proj-4} that the map
$\lambda_X: \CH^{LW}_0(X)_{\rm tor} \to F_0K_0(X)_{\rm tor} \inj K_0(X)_{\rm tor}$
is also injective.
\end{proof} 

A combination of Theorems~\ref{thm:Killing}, ~\ref{thm:K-inj-proj}
and \lemref{lem:CCM-K*} yields the following result and brings us to
the end of the proof of \thmref{thm:Thm-1}.

\begin{cor}\label{cor:Proj-thm-1}
Let $X$ be a reduced projective scheme of pure dimension $d \ge 1$ over
an algebraically closed field. Assume that $X$ is regular in codimension
one. Then the Bloch-Quillen map
\[
\rho_X: \CH^{LW}_0(X) \to H^d_{\nis}(X, \sK^M_{d,X})
\]
is an isomorphism.
\end{cor}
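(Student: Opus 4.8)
The plan is to deduce this corollary formally from the three ingredients named just above its statement, introducing no new geometric input. The key point is that \thmref{thm:Killing} already settles half the work: it shows that the Zariski Bloch-Quillen map descends to a well-defined homomorphism on $\CH^{LW}_0(X)$, and that the induced Nisnevich map $\rho_X: \CH^{LW}_0(X) \to H^d_{\nis}(X, \sK^M_{d,X})$ is surjective. Note that the hypotheses here (reduced, projective, pure dimension $d \ge 1$, over an algebraically closed $k$, regular in codimension one) are exactly those under which \thmref{thm:Killing} applies, since $k$ algebraically closed is in particular infinite and perfect. Thus the only remaining task is to establish injectivity of $\rho_X$.

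For injectivity I would exploit the factorization of the cycle class map through $\rho_X$. By \eqref{eqn:CCM-*-3}, the cycle class map $\lambda_X$ descends to a homomorphism $\CH^{LW}_0(X) \to K_0(X)$, and \lemref{lem:CCM-K*} (the commutative triangle \eqref{eqn:CCM-K*-0}) gives the identity $\lambda_X = \kappa_X \circ \rho_X$ as maps out of $\sZ_0(X)$. Since both $\lambda_X$ and $\rho_X$ factor through rational equivalence, this identity descends to one of maps out of $\CH^{LW}_0(X)$, namely $\lambda_X = \kappa_X \circ \rho_X : \CH^{LW}_0(X) \to K_0(X)$. Now \thmref{thm:K-inj-proj} asserts precisely that, for $X$ as in the corollary, $\lambda_X$ is injective. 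A composite $\kappa_X \circ \rho_X$ being injective forces its first factor $\rho_X$ to be injective. Combining this with the surjectivity from \thmref{thm:Killing} yields that $\rho_X$ is an isomorphism, completing the argument.

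In this final step there is no genuine obstacle: the corollary is a purely formal consequence of the diagram in \lemref{lem:CCM-K*} once the two nontrivial facts are granted. The real weight of the proof sits entirely in \thmref{thm:K-inj-proj}, the injectivity of $\lambda_X$, which is where I would expect all the difficulty to concentrate. That theorem is the genuinely hard input: it reduces to the normal case via \lemref{lem:normalization}, invokes Levine's rational-coefficient injectivity, and then handles the torsion using the albanese map together with the Roitman torsion theorem for normal projective schemes in arbitrary characteristic. So my proposal is to treat the corollary as a one-line diagram chase layered on top of \thmref{thm:Killing} and \thmref{thm:K-inj-proj}, rather than attempting any independent route to the isomorphism.
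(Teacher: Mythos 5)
Your proposal is correct and is exactly the paper's own argument: the authors prove the corollary by combining \thmref{thm:Killing} (surjectivity and factorization through rational equivalence), \thmref{thm:K-inj-proj} (injectivity of $\lambda_X$), and \lemref{lem:CCM-K*} (the factorization $\lambda_X = \kappa_X \circ \rho_X$), so that injectivity of the composite forces injectivity of $\rho_X$. Your observation that the descent of the identity $\lambda_X = \kappa_X \circ \rho_X$ to $\CH^{LW}_0(X)$ follows from surjectivity of $\sZ_0(X) \to \CH^{LW}_0(X)$ is the same (implicit) diagram chase the paper relies on.
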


\section{Bloch's formula for 0-cycles with modulus}
\label{sec:BF-mod}
Our goal in this section is to prove \thmref{thm:Thm-3} which provides
Bloch's formula for the Chow group of 0-cycles with modulus. We shall
do this using the double construction of \S~\ref{sec:double}
and \thmref{thm:BS-main}. We fix an algebraically closed field $k$
and a smooth quasi-projective scheme $X$ of dimension $d \ge 1$ over $k$.
We fix an effective Cartier divisor $D \subset X$. Recall that the double
of $X$ along $D$ is the scheme $S_X = X \amalg_D X$. There is a fold
map $\nabla: S_X \to X$ and inclusions as irreducible components
$\iota_\pm: X \inj S_X$ such that $\nabla \circ \iota_\pm$ is identity.

\subsection{Bloch's formula for $S_X$}\label{sec:BFD}
Our aim is to derive \thmref{thm:Thm-3} from Bloch's formula for the
singular scheme $S_X$. If $X$ is affine, this already follows from
\thmref{thm:Thm-1}. However, this is not the case when $X$ is projective.
The reason is that $S_X$ is not regular in codimension one. We shall 
now extend \thmref{thm:Thm-1} to the case of projective schemes of the type
$S_X$ under some condition on $D$.

\begin{thm}\label{thm:BQD}
Let $D \subset X$ be an inclusion of a divisor as above.
Assume that $X$ is projective and $D$ is integral. Then the Bloch-Quillen map
\[
\rho_{S_X}: \CH^{LW}_0(S_X) \to H^d_{\nis}(X, \sK^M_{d,S_X})
\]
is an isomorphism.
\end{thm}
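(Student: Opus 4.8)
The plan is to follow the strategy of \thmref{thm:K-inj-proj}, replacing the role played there by normalization together with the Roitman theorem of \cite{KS} for normal projective schemes by the corresponding input for the double $S_X$. First, since $S_X$ is a reduced projective scheme of pure dimension $d$ over the algebraically closed (hence infinite perfect) field $k$, \thmref{thm:Killing} applies verbatim and shows that $\rho_{S_X}\colon \CH^{LW}_0(S_X) \to H^d_{\nis}(S_X, \sK^M_{d,S_X})$ is surjective; here I identify $H^d_{\nis}(X, \sK^M_{d,S_X})$ with $H^d_{\nis}(S_X, \sK^M_{d,S_X})$ via the finite map $\nabla\colon S_X \to X$. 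It therefore remains to prove injectivity. By \lemref{lem:CCM-K*} the cycle class map factors as $\lambda_{S_X} = \kappa_{S_X} \circ \rho_{S_X}$, so it suffices to show that $\lambda_{S_X}\colon \CH^{LW}_0(S_X) \to K_0(S_X)$ is injective, and I would split this according to the rational and torsion parts of $\CH^{LW}_0(S_X)$.

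For the rational part, $S_X$ is a reduced projective variety over $k$, so the injectivity of $\CH^{LW}_0(S_X)_\Q \to K_0(S_X)_\Q$ should follow directly from Levine's results for singular projective varieties \cite[Corollary~2.7]{Levine-2} (see also \cite[Corollary~5.4]{Levine-5}), exactly as in the normal case. Rational injectivity means that the kernel of $\lambda_{S_X}$ is a torsion subgroup, so the whole problem reduces to showing that $\lambda_{S_X}$ is injective on $\CH^{LW}_0(S_X)_{\rm tor}$.

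The torsion statement is the heart of the argument. The scheme $S_X$ carries a generalized albanese variety $G = \Alb(S_X)$ and an albanese map $u\colon S_X \dashrightarrow G$ regular on $(S_X)_{\rm reg} = S_X \setminus D_{\rm red}$; unlike the normal case, $G$ is in general only a semi-abelian variety, the torus part arising from the gluing of the two copies of $X$ along the integral divisor $D$. Fixing a regular base point, $u$ induces a surjection $\tau_{S_X}\colon \CH^{LW}_0(S_X)_0 \to G(k)$ on the degree-zero subgroup, and the Roitman torsion theorem of \cite{Krishna-3} asserts that $\tau_{S_X}$ is injective on torsion. To pass from this to $\lambda_{S_X}$, I would reproduce the commutative diagram of \thmref{thm:K-inj-proj}: form the closure $S_X^*$ of the graph of $u$, define a transfer $u_!\colon K_0(S_X) \to K_0(\ov{G})$ by proper pushforward to a smooth projective compactification $\ov{G}$ of $G$, and construct a retraction $r\colon F_0 K_0(\ov{G}) \to G(k)$ refining the Poincar\'e-bundle map $\smile \sP$, so that $r \circ u_! \circ \lambda_{S_X}$ recovers $\tau_{S_X}$ on degree-zero cycles. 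A chase of the resulting diagram then shows that $\tau_{S_X}|_{\rm tor}$ factors through $\lambda_{S_X}|_{\rm tor}$, whence the injectivity of the former forces that of the latter.

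The main obstacle I anticipate is precisely this last construction in the semi-abelian setting: because $G$ is not proper, the clean Poincar\'e-bundle retraction of the abelian-variety case is unavailable, and one must work on the compactification $\ov{G}$ and use the duality of the associated $1$-motive (a Poincar\'e biextension) to produce a retraction onto the torsion of $G(k)$, checking its compatibility with $u_!$ and with $\lambda_G$. The other delicate point, playing here the role that the unconditional theorem of \cite{KS} played in \thmref{thm:K-inj-proj}, is to arrange that the Roitman input of \cite{Krishna-3} genuinely applies to $S_X$ — that is, to identify the universal regular quotient of $\CH^{LW}_0(S_X)_0$ with $G$ and to control its $p$-torsion when $\Char(k) = p > 0$.
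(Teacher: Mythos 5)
Your overall architecture (reduce to injectivity of $\lambda_{S_X}$ via \thmref{thm:Killing} and \lemref{lem:CCM-K*}, handle the rational part by Levine's results, and invoke \cite[Theorem~6.5]{Krishna-3} for the semi-abelian albanese of $S_X$) agrees with the paper, but the step you yourself flag as the main obstacle is a genuine gap, and it is not one that 1-motive duality can repair. A retraction $r\colon F_0K_0(\ov{G}) \to G(k)$ satisfying $r(\lambda_{\ov{G}}([g])) = g$ for $g \in G(k)$ cannot exist once the torus part $T \subset G$ is nontrivial. Indeed, compose a nontrivial cocharacter $\G_m \to T \subset G$ with $G \subset \ov{G}$ and extend it to $f\colon \P^1 \to \ov{G}$ by properness of $\ov{G}$; for $g \in \G_m(k)$ the cycle $[g]-[1] = \divf\bigl((t-g)/(t-1)\bigr)$ is rationally equivalent to zero on $\P^1$, so $f_*([g]-[1])$ dies in $\CH_0(\ov{G})$ and hence in $F_0K_0(\ov{G})$, while the corresponding sum $f(g)f(1)^{-1}$ in $G(k)$ is a nontrivial point of $T$. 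In other words, ordinary rational equivalence (equivalently, ordinary $K_0$) on \emph{any} smooth compactification of $G$ destroys exactly the torus-part information that $\tau_{S_X}$ remembers; this is precisely why Roitman-type statements for semi-abelian varieties require modulus or relative $K$-theory, and why your diagram cannot be made to close.

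The idea you are missing --- and it is exactly where the hypothesis that $D$ is \emph{integral} (not merely reduced) enters, a hypothesis your proposal never actually exploits --- is that in the present situation the torus part is trivial, so the semi-abelian difficulty evaporates. Going back to the construction of $A^d(S_X)$ in \cite[\S~2.4]{Krishna-3}, the character lattice of $T$ is $\Lambda(S_X) = {\rm Ker}\bigl(\Z[D]\oplus\Z[D] \to (\NS(X)\oplus\NS(X)) \oplus \Div(S_X)\bigr)$: any element has the form $(a,-a)$ with $a[D]$ algebraically, hence numerically, trivial on $X$, and the Bertini theorem of Altman--Kleiman \cite{AK} produces a smooth connected curve $C \subset X$ through a point of $D$ and a point of $X \setminus D$, so $(D\cdot C) > 0$ and $a = 0$. (If $D$ were reduced with several components this fails: differences of numerically equivalent components give nonzero elements of $\Lambda(S_X)$.) Hence $T = \{1\}$ and $A^d(S_X) \xrightarrow{\cong} A^d(S^N_X)$. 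With this in hand the paper avoids your $K$-theoretic construction on $\ov{G}$ entirely: since $S^N_X = X_+ \amalg X_-$ is smooth and projective, $\lambda_{S^N_X}$ is injective by \cite[Theorem~3.2]{Levine-2}, so it suffices to prove that $\pi^*\colon \CH^{LW}_0(S_X)_{\rm tor} \to \CH^{LW}_0(S^N_X)$ is injective, and this follows by comparing $\tau_{S_X}$ and $\tau_{S^N_X}$, both of which are isomorphisms on torsion by \cite[Theorem~6.5]{Krishna-3} and the classical Roitman theorem. You should replace the last paragraph of your proposal with this argument.
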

\begin{proof}
By \thmref{thm:Killing} and \lemref{lem:CCM-K*}, we only have to show that
the cycle class map $\lambda_{S_X}: \CH^{LW}_0(S_X) \to K_0(S_X)$ is injective.
Since $\lambda_{S_X}$ is injective with $\Q$-coefficients
(see \cite[Corollary~5.4]{Levine-5} and \cite[Corollary~2.7]{Levine-2}),
the theorem is reduced to showing that the map
$\CH^{LW}_0(S_X)_{\rm tor} \to K_0(S_X)$ is injective.
We can assume $d \ge 2$ by \cite[Proposition~1.4]{LW}.
We can also assume that $X$ is connected.

We have a commutative diagram
\begin{equation}\label{eqn:BQD-0} 
\xymatrix@C.8pc{
{\rm CH}^{LW}_0(S_X) \ar[r]^-{\lambda_{S_X}} \ar[d]_{\pi^*} & 
K_0(S_X) \ar[d]^{\pi^*} \\
{\rm CH}^{LW}_0(S_X^N) \ar[r]^-{\lambda_{S_X^N}} & K_0(S_X^N).}
\end{equation}

Since $S^N_X = X_+ \amalg X_-$ is smooth and projective, the map 
$\lambda_{S^N_X}$ is injective by \cite[Theorem~3.2]{Levine-2}.
It suffices therefore to show that the map
$\pi^*: \CH^{LW}_0(S_X)_{\rm tor} \to {\rm CH}^{LW}_0(S_X^N)$ is injective.

Let $A^d(S_X)$ denote the albanese variety of $S_X$ 
and let $\tau_{S_X}: \CH^{LW}_0(S_X)_0 \to A^d(S_X)$ denote the universal
regular homomorphism (see \cite[Theorem~1]{ESV}). In general,
$A^d(S_X)$ is a connected commutative algebraic group whose
abelian variety quotient is the albanese variety of $S_X^N$ as 
in \cite{Lang}. However, under our assumption that $D$ is reduced,
it is shown in \cite[Theorem~6.5]{Krishna-3} that $A^d(S_X)$ is a 
semi-abelian variety and there exists a commutative diagram
\begin{equation}\label{eqn:BQD-1} 
\xymatrix@C.8pc{  
& & \CH^{LW}_0(S_X)_0 \ar[r]^-{\pi^*} \ar@{->>}[d]_{\tau_{S_X}} &
\CH^{LW}_0(S^N_X)_0 \ar@{->>}[d]^{\tau_{S_X}} & \\
1 \ar[r] & T \ar[r] & A^d(S_X) \ar[r]^-{\pi^*} & A^d(S^N_X) \ar[r] & 1,}
\end{equation}
where the bottom sequence is exact and $T \cong \G^r_m$ is a torus over $k$. 
Furthermore, the vertical arrows in ~\eqref{eqn:BQD-1} are isomorphisms
on the torsion subgroups by \cite[Theorem~6.5]{Krishna-3} and the
classical Roitman torsion theorem for smooth schemes.
We have therefore reduced the theorem to showing that the map
$\pi^*: A^d(S_X)_{\rm tor} \to A^d(S^N_X)_{\rm tor}$ is injective.
We shall in fact show that $T = \{1\}$ so that the map $A^d(S_X) \to A^d(S^N_X)$
is an isomorphism under the assumption that $D$ is integral.

In order to show that $T = \{1\}$, we go back to the construction of
the semi-abelian variety $A^d(S_X)$ given in \cite[\S~2.4]{Krishna-3}. 
Let ${\rm Div}(S_X)$ (resp. ${\rm Div}(S^N_X)$) denote the free abelian group 
on the set of integral closed subschemes of $S_X$ (resp. $S^N_X$) of 
codimension one. Let $\Lambda_1(S_X)$ denote the subgroup of
${\rm Div}(S^N_X)$ generated by the Weil divisors which are supported
on $D \amalg D = \pi^{-1}((S_X)_{\rm sing}) = \pi^{-1}(D)$. 
Since $\pi: D \amalg D \to D$ is the fold map of $D$, it follows that
$\Lambda_1(S_X) \cong \Z[D] \oplus \Z[D]$ and the push-forward map
$\pi_*: \Lambda_1(S_X) \to {\rm Div}(S_X)$ is the map
$\Z[D] \oplus \Z[D] \to \Z[D]$ given by $(a,b) = a + b$.

Let $NS(S^N_X) = {\Pic(S^N_X)}/{\Pic^0(S^N_X)}$ denote the
Neron-Severi group of $S^N_X$.
We have the canonical maps $\Lambda_1(S_X) \inj  {\rm Div}(S^N_X) \surj
\Pic(S^N_X) \surj NS(S^N_X)$. We let $\phi_{S^N_X}$ denote the composite map
and let $\Lambda(S_X) = {\rm Ker}(\Lambda_1(S_X) 
\xrightarrow{(\phi_{S^N_X}, \pi_*)} NS(S^N_X) \oplus {\rm Div}(S_X))$.
Then $\Lambda(S_X)$ is a lattice and $T = (\Lambda(S_X))^* \cong \G^r_m$. 
It suffices therefore to show that $\Lambda(S_X) = 0$.

We have the commutative diagram
\begin{equation}\label{eqn:BQD-2} 
\xymatrix@C.8pc{ 
\Lambda_1(S_X) \ar[r] \ar[d]_{\cong} &  \Pic(S^N_X) \ar[d]^{\cong} \ar[r] &
NS(S^N_X) \ar[d]^{\cong} \\ 
\Z[D] \oplus \Z[D] \ar[r] & \Pic(X_+) \oplus \Pic(X_-) \ar[r] &
NS(X_+) \oplus NS(X_-),}
\end{equation}
where all arrows on the bottom row are the direct sums of the
component-wise maps. We shall therefore be done if we show that the
composite map $\delta_D: \Z[D] \to \Pic(X) \surj NS(X)$ is injective.  

Let ${NS(X)}/{NS^{\tau}(X)}$ denote the group of Weil divisors on $X$ modulo 
numerical equivalence. Then one knows that there are surjections
$\Pic(X) \surj NS(X) \surj {NS(X)}/{NS^{\tau}(X)}$ and the latter group is 
free of finite rank. We have thus finally reduced the proof of the
theorem to showing that
the divisor $D$ in not numerically equivalent to zero on $X$.

We now 
choose a closed point $x \in D$ and another closed point $y \in X \setminus
D$ and let $S = \{x,y\}$. Since $X$ is connected and
smooth of dimension $d \ge 2$, 
the Bertini theorem of Altman and Kleiman \cite[Theorem~7]{AK}
implies that we can find a smooth connected curve $C \subset X$
which contains $S$. It is then clear that $C \not\subset D$ and the
intersection number $(D \cdot C)$ is positive. In particular, 
$D$ is not numerically equivalent to zero on $X$. This completes the
proof of the theorem.
\end{proof}

\subsection{Bloch-Quillen map with modulus}\label{sec:BQM-M}

Let $(X,D)$ be as before. Then for $x\in X^{(d)} \setminus D$, we have $\Z \cong  K^M_0(k(x))  \cong  H^d_{x}(V_{\nis}, \sK^M_{d,V}) \cong H^d_{x}(X_{\nis}, \sK^M_{d, (X, D)})$, where $V = X \setminus D$. 
Therefore, we have a group homomorphism 
\begin{equation} \label{eqn:BQM-M*-1}
\rho_{X|D}: z_0(X|D) \to H^d_{\nis}(X, \sK^M_{d, (X, D)})
\end{equation}
such that for $x\in X^{(d)} \setminus D$, the element $\rho_{X|D} (x)$ is the image of $1$ in  $H^d_{\nis}(X, \sK^M_{d, (X, D)})$ 
under the forget support map. 
The aim of this section is to list the pairs $(X,D)$ for which this homomorphism factors through the Chow group with modulus $\CH_0(X|D)$.  The essential idea is to use \thmref{thm:BS-main} and to use it we need to know that when is the natural map $\can_X: \CH^{LW}_0(S_X) \surj \CH_0(S_X)$ an isomorphism.
The following lemma provides all known cases in which the map $\can_X$ is indeed an isomorphism.

\begin{lem}\label{lem:LW=BK}
Let $(k, X, D)$ be as before. Then the canonical map $\can_X: \CH^{LW}_0(S_X) \surj \CH_0(S_X)$ 
is an isomorphism in the following cases.
\begin{enumerate}
\item $d\leq 2$. 
\item $X$ is affine. 
\item ${\rm char}(k) = 0 $ and $X$ is projective. 
\item ${\rm char}(k) > 0 $, $X$ is projective and $D$ is reduced.
\end{enumerate}
\end{lem}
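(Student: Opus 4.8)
The map $\can_X$ is surjective by construction, so the plan is to prove injectivity, separating a rational part from a torsion part. First I would use that the cycle class map $\lambda_{S_X}\colon\CH^{LW}_0(S_X)\to K_0(S_X)$ factors through $\can_X$ by \eqref{eqn:CCM-*-3} (i.e.\ \cite[Lemma~3.13]{BK}). Since $\lambda_{S_X}$ is injective with $\Q$-coefficients by \cite[Corollary~5.4]{Levine-5} and \cite[Corollary~2.7]{Levine-2}, the map $\can_X\otimes\Q$ is injective; being also surjective it is an isomorphism, whence $\ker(\can_X)\subseteq\CH^{LW}_0(S_X)_{\rm tor}$. It therefore remains to show that $\can_X$ is injective on torsion.

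Two of the four cases are then immediate. In the affine case (2), $S_X$ is a reduced affine scheme of pure dimension $d$, so $\lambda_{S_X}$ is injective outright --- by \cite[Theorem~2.3]{LW} if $d\le 1$ and by \cite[Corollary~7.6]{Krishna-2} if $d\ge 2$ --- and the factorization above forces $\can_X$ to be injective. For $d\le 2$ (case (1)), the double $S_X$ is a curve or a surface, and the identification $\CH^{LW}_0(S_X)=\CH_0(S_X)$ for such schemes follows from the results of \cite{BK}.

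The projective cases (3) and (4) carry the real content: here $\lambda_{S_X}$ itself need not be injective, as the albanese of $S_X$ may contain a torus, so one cannot argue through $K_0(S_X)$. Instead I would use the universal regular homomorphism $\tau_{S_X}\colon\CH^{LW}_0(S_X)_0\to A:=A^d(S_X)$ of \cite[Theorem~1]{ESV}. The relations that cut $\CH_0(S_X)$ out of $\CH^{LW}_0(S_X)$ are push-forwards of divisors of functions on good (finitely mapped) curves, that is, differences of fibres of maps to $\P^1$; since a commutative algebraic group admits no non-constant morphism from $\P^1$, each such relation dies under $\tau_{S_X}$, so $\tau_{S_X}$ descends through $\can_X$. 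Hence if $x\in\CH^{LW}_0(S_X)_{\rm tor}$ and $\can_X(x)=0$, then $\tau_{S_X}(x)=0$, and it suffices that $\tau_{S_X}$ be injective on torsion. This is precisely the Roitman torsion theorem for $\CH^{LW}_0(S_X)$: in characteristic zero it holds for arbitrary $D$ by \cite{ESV}, settling (3), while in positive characteristic it holds once $A^d(S_X)$ is semi-abelian, which by \cite[Theorem~6.5]{Krishna-3} happens when $D$ is reduced, settling (4).

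The main obstacle is case (4). The generalized albanese $A^d(S_X)$ of the singular scheme $S_X$ may a priori contain a unipotent part, and in characteristic $p$ the additive group $\G_a$ carries non-trivial $p$-torsion; this torsion is not matched by the comparison above and would obstruct injectivity of $\can_X$ on $p$-torsion. The hypothesis that $D$ be reduced is exactly what kills the unipotent part and makes $A^d(S_X)$ semi-abelian, via \cite[Theorem~6.5]{Krishna-3}, so that the Roitman torsion theorem applies. In characteristic zero the difficulty is vacuous, since $\G_a$ is torsion-free, which is why case (3) needs no condition on $D$. The decisive input is thus the semi-abelian Roitman theorem of \cite{Krishna-3}, and the remaining work lies in verifying its hypotheses and feeding it into the torsion comparison.
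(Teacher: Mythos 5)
Your reduction of $\ker(\can_X)$ to a torsion group is sound: $\lambda_{S_X}$ factors as $\CH^{LW}_0(S_X)\xrightarrow{\can_X}\CH_0(S_X)\to K_0(S_X)$ by \cite[Lemma~3.13]{BK}, and Levine's rational injectivity then forces $\ker(\can_X)$ to be torsion. Case (2) is also correct as you argue it: injectivity of the composite $\lambda_{S_X}$ (\cite[Theorem~2.3]{LW} for $d\le 1$, \cite[Corollary~7.6]{Krishna-2} for $d\ge 2$) forces injectivity of $\can_X$, which is a genuine, self-contained alternative to the paper's treatment. (For the record, the paper's own proof of the whole lemma is by citation: \cite[Theorem~3.17]{BK} for (1)--(3) and \cite[Theorem~6.6]{Krishna-3} for (4); your case (1) is likewise just a citation to \cite{BK}, so nothing is gained or lost there.)

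The gap is in cases (3) and (4), at the step where you claim that $\tau_{S_X}$ descends through $\can_X$ because ``a commutative algebraic group admits no non-constant morphism from $\P^1$.'' The relations cutting $\CH_0(S_X)$ out of $\CH^{LW}_0(S_X)$ are cycles $\nu_*(\divf(f))$ for a good curve $\nu\colon C\to S_X$ and $f\in\sO^{\times}_{C,Z}$, and the resulting family of fibre cycles $t\mapsto \nu_*(f^{-1}(t))$ is \emph{not} parametrized by all of $\P^1$: for $t\in f(Z)\subset\G_m$ the fibre $f^{-1}(t)$ meets $Z\supseteq \nu^{-1}((S_X)_{\rm sing})$, so $\nu_*(f^{-1}(t))$ is not an admissible $0$-cycle. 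Regularity of $\tau_{S_X}$ therefore produces a morphism only from an open subset $U\subsetneq\P^1$ (containing $0$ and $\infty$) to $A^d(S_X)$, and morphisms from such a $U$ to a group with non-trivial affine part need not be constant (think of $\G_m\subset\P^1\to\G_m$, or $\A^1\to\G_a$). This is fatal precisely in the cases you need it: in (3) the divisor $D$ may be non-reduced and $A^d(S_X)$ can have a unipotent part, and in (4) with $D$ reduced $A^d(S_X)$ is semi-abelian with, in general, a non-trivial torus $T$ --- this is exactly why the paper's \thmref{thm:BQD} must assume $D$ \emph{integral} and then prove $T=\{1\}$, and why merely reduced $D$ requires the finer argument of \cite[Theorem~6.6]{Krishna-3}. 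The extension-of-rational-maps argument rescues you only when the target is an abelian variety; projecting to the abelian quotient $A^d(S^N_X)$ shows merely that $\tau_{S_X}(\nu_*(\divf(f)))$ lies in the affine part, not that it vanishes. So the descent of $\tau_{S_X}$ --- which is essentially the whole content of the comparison between $\CH^{LW}_0$ and $\CH_0$ --- has been assumed rather than proven. (A secondary point: the char-$0$ Roitman theorem for singular projective varieties is due to Levine and Biswas--Srinivas building on the construction of \cite{ESV}, not to \cite{ESV} itself; but fixing the attribution would not close the gap above.)
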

\begin{proof}
The assertions (1)-(3) follow from \cite[Theorem~3.17]{BK} while \cite[Theorem~6.6]{Krishna-3}
yields (4). 
\end{proof}

\begin{prop} \label{prop:BQM-M}
Let $(k, X, D)$ be as in \lemref{lem:LW=BK}. Then the map in \eqref{eqn:BQM-M*-1} induces a  surjective group homomorphism 
$\rho_{X|D} : \CH_0(X|D) \surj H^d_{\nis}(X, \sK^M_{d, (X,D)})$.
\end{prop}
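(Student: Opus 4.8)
The plan is to deduce the proposition from Bloch's formula for the double $S_X$, combined with the two decomposition results recorded above. By \lemref{lem:LW=BK}, under each of the listed hypotheses the canonical map $\can_X \colon \CH^{LW}_0(S_X) \surj \CH_0(S_X)$ is an isomorphism, while \thmref{thm:Killing} shows that the Bloch-Quillen map $\rho_{S_X} \colon \CH^{LW}_0(S_X) \to H^d_{\nis}(S_X, \sK^M_{d, S_X})$ is well defined and surjective. Composing, I obtain a surjection $\bar\rho_{S_X} \colon \CH_0(S_X) \surj H^d_{\nis}(S_X, \sK^M_{d, S_X})$. The strategy is to split this surjection compatibly with the short exact sequence of \thmref{thm:BS-main} and then read off the summand attached to the pair $(X, D)$.

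First I produce a cohomological decomposition mirroring \thmref{thm:BS-main}. Applying \eqref{eqn:MS*-0} to the closed immersion $\iota_- \colon X \cong X_- \inj S_X$ gives a short exact sequence of Nisnevich sheaves $0 \to \sK^M_{d, (S_X, X_-)} \to \sK^M_{d, S_X} \to (\iota_-)_* \sK^M_{d, X} \to 0$. Since $\dim S_X = d$, the top of the associated long exact sequence reads $H^d_{\nis}(S_X, \sK^M_{d,(S_X,X_-)}) \to H^d_{\nis}(S_X, \sK^M_{d,S_X}) \xrightarrow{\iota_-^*} H^d_{\nis}(X, \sK^M_{d,X}) \to 0$, and the relation $\iota_-^* \circ \nabla^* = \id$ splits $\iota_-^*$. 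Hence the middle term is the direct sum of $H^d_{\nis}(S_X, \sK^M_{d,(S_X,X_-)})$ and $H^d_{\nis}(X, \sK^M_{d,X})$, the latter being the image of the section $\nabla^*$ and the complement to the kernel of $\iota_-^*$.

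Next I identify $H^d_{\nis}(S_X, \sK^M_{d,(S_X,X_-)})$ with $H^d_{\nis}(X, \sK^M_{d,(X,D)})$. Restricting along $\iota_+$ and using \lemref{lem:Milnor-local} to present the relative groups, the map $\iota_+^*$ carries $\sK^M_{d,(S_X,X_-)}$ into $\sK^M_{d,(X,D)}$: a generating symbol with an entry in $1 + \ker(\iota_-^*)$ is sent to one with an entry in $1 + I_D$. After localizing at a point of $D$, the square \eqref{eqn:rel-et-2} is a Cartesian square of local rings, so \lemref{lem:Milnor-fiber} shows $\iota_+^*$ is surjective on stalks over $D$; away from $D$ it is manifestly an isomorphism. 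Therefore the kernel and cokernel of $\iota_+^* \colon \sK^M_{d,(S_X,X_-)} \to \sK^M_{d,(X,D)}$ are supported on $D$, and since $\dim D \le d-1$, Grothendieck vanishing yields an isomorphism $H^d_{\nis}(S_X, \sK^M_{d,(S_X,X_-)}) \xrightarrow{\cong} H^d_{\nis}(X, \sK^M_{d,(X,D)})$. Combining with the previous paragraph gives
\[
H^d_{\nis}(S_X, \sK^M_{d,S_X}) \cong H^d_{\nis}(X, \sK^M_{d,(X,D)}) \oplus H^d_{\nis}(X, \sK^M_{d,X}),
\]
with second projection induced by $\iota_-^*$.

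Finally I match the two splittings and read off the map. The square relating $\bar\rho_{S_X}$ and $\rho_X$ along the two restriction maps $\iota_-^*$ commutes, as one checks on cycle generators: a regular closed point $w$ of $S_X$ lies in $X_- \setminus D$, where $\iota_-^*[w]=[w]$ and its forget-support class restricts to $\rho_X([w])$, or in $X_+ \setminus D$, where $\iota_-^*[w]=0$ and the forget-support class restricts to $0$ since $\{w\}$ misses $X_-$. Thus $\bar\rho_{S_X}$ respects the splittings, and restricting to the kernels of $\iota_-^*$ — namely $\CH_0(X|D)$ via $(p_+)_*$ by \thmref{thm:BS-main}, and $H^d_{\nis}(X, \sK^M_{d,(X,D)})$ — produces a surjection $\CH_0(X|D) \surj H^d_{\nis}(X, \sK^M_{d,(X,D)})$. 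On a generator $x \in X \setminus D$, the class $\bar\rho_{S_X}((p_+)_* x)$ is the forget-support image of $1 \in K^M_0(k(x))$ at the regular point $x \in X_+$, which lies in the first summand and, under the local identification at $x \notin D$, equals $\rho_{X|D}(x)$; hence the surjection just constructed is exactly the map induced by \eqref{eqn:BQM-M*-1}. This simultaneously shows that \eqref{eqn:BQM-M*-1} factors through $\CH_0(X|D)$ and that the induced $\rho_{X|D}$ is surjective. The step I expect to be the crux is the sheaf-level analysis of $\iota_+^*$ in the third paragraph, where \lemref{lem:Milnor-local} and \lemref{lem:Milnor-fiber} are used to confine its kernel and cokernel to $D$; the compatibility of the splittings and the generator computation are then routine.
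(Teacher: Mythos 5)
Your construction of the factorization is essentially the paper's own proof: the paper also passes to the double $S_X$, uses \lemref{lem:LW=BK} together with \thmref{thm:Killing} to get a surjection out of $\CH^{LW}_0(S_X)\cong\CH_0(S_X)$, compares the split sequence of \thmref{thm:BS-main} with the cohomology sequence coming from ~\eqref{eqn:MS*-0} for the pair $(S_X,X_-)$, and identifies $H^d_{\nis}(S_X,\sK^M_{d,(S_X,X_-)})$ with $H^d_{\nis}(X,\sK^M_{d,(X,D)})$ using \lemref{lem:Milnor-fiber} and the Nisnevich cohomological dimension bound, checking agreement with ~\eqref{eqn:BQM-M*-1} on point generators. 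One small imprecision in your third paragraph: ``kernel and cokernel supported on $D$ plus Grothendieck vanishing'' is not by itself enough for an isomorphism on $H^d$, because $H^{d-1}$ of a nonzero cokernel supported on $D$ could still map nontrivially into $H^d$ of the image; what you actually need, and what your stalkwise application of \lemref{lem:Milnor-fiber} does give, is that the cokernel is zero (the paper phrases it exactly this way: the sheaf map is surjective with kernel supported on $D$).

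The genuine gap is in your surjectivity step. The commuting square with $\iota^*_-$ gives only that $\ov{\rho}_{S_X}$ carries $\ker(\iota^*_-)$ into $\ker(\iota^*_-)$; this is strictly weaker than ``respects the splittings,'' and it does not imply that the restriction to the kernels is onto. In general, a surjection of split short exact sequences that carries the sub into the sub need not restrict to a surjection on the subs: for instance $f:\Z\oplus\Z^2\to\Z\oplus\Z$, $f(a,(b,c))=(2a+b,c)$, is surjective and maps the first summand into the first summand, but restricts there to multiplication by $2$. Moreover, in the situation at hand the map is genuinely not diagonal for the natural sections: taking $p_{-*}$ as the section of $\iota^*_-$ on the Chow side and $\nabla^*$ on the cohomology side, one computes for $x\in X\setminus D$ that $\iota^*_-\rho_{S_X}([x_-])=\rho_X([x])$ while $\nabla^*\rho_X([x])=\rho_{S_X}([x_+])+\rho_{S_X}([x_-])$, so $\ov{\rho}_{S_X}(p_{-*}[x])=\rho_{S_X}([x_-])$ has the generally nonzero component $-\rho_{S_X}([x_+])$ in $\ker(\iota^*_-)$. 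What rescues your argument is the injectivity of the right-hand vertical map: since $X$ is smooth, $\rho_X:\CH_0(X)\to H^d_{\nis}(X,\sK^M_{d,X})$ is an isomorphism by the classical formula ~\eqref{eqn:sm}, and then the snake lemma applied to the commutative diagram of exact sequences shows that the induced map on kernels is surjective because the middle map is. The paper sidesteps this entirely: it uses the diagram only to produce the factorization, and proves surjectivity directly by noting that already $z_0(X|D)={\underset{x\in V^{(d)}}\amalg}\, K^M_0(k(x))\to H^d_{\nis}(X,\sK^M_{d,(X,D)})$ is onto by \cite[Theorem~2.5]{Kato-Saito}, $V=X\setminus D$ being nice and dense in $X$. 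One of these two repairs must be added; as written, the surjectivity assertion does not follow from what you proved.
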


\begin{proof}
By \lemref{lem:LW=BK}, the canonical map
$\CH^{LW}_0(S_X) \to \CH_0(S_X)$ is an isomorphism. Combining this with
\thmref{thm:BS-main} and noting that the composite map
$X \xrightarrow{\iota_-} S_X \xrightarrow{\nabla} X$ is identity, 
we get a commutative diagram of split exact sequences
\begin{equation}\label{eqn:mod-*-0}
\xymatrix@C.8pc{
0 \ar[r] & \CH_0(X|D) \ar[r]^{p_{+ *}} \ar@{-->}[d] &
\CH^{LW}_0(S_X) \ar[r]^{\iota^*_-} \ar[d]^-{\rho_{S_X}} &
\CH_0(X) \ar[d]^-{\rho_X} \ar[r] & 0 \\
0 \ar[r] &  H^d_{\nis}(S_X, \sK^M_{d, (S_X, X_-)}) \ar[r]^-{p_{+ *}} &
H^d_{\nis}(S_X, \sK^M_{d, S_X}) \ar[r]^-{\iota^*_-} & 
H^d_{\nis}(X, \sK^M_{d, X}) \ar[r] & 0.}
\end{equation}  

This yields 
a canonical homomorphism 
$\tilde{\rho}_{X|D}: \CH_0(X|D) \to H^d_{\nis}(S_X, \sK^M_{d, (S_X, X_-)})$.
Moreover, it follows from \lemref{lem:Milnor-fiber} that the map of 
Nisnevich sheaves $\iota^*_+: \sK^M_{d, (S_X, X_-)} \to
\iota_{+ *}(\sK^M_{d, (X, D)})$ is surjective. Furthermore, its kernel
is clearly supported on $D$. It then follows from the bound on the Nisnevich
cohomological dimension that the map
$\iota^*_+: H^d_{\nis}(S_X, \sK^M_{d, (S_X, X_-)}) \to 
H^d_{\nis}(S_X, \iota_{+ *}(\sK^M_{d, (X, D)}))$ is an isomorphism.
The factorization of the map $\rho_{X|D}$ now follows because 
$H^d_{\nis}(S_X, \iota_{+ *}(\sK^M_{d, (X, D)})) \xrightarrow{\cong}
H^d_{\nis}(X, \sK^M_{d, (X, D)})$ and the  composite map 
$z_0(X|D) \to \CH_0(X|D) \xrightarrow{\tilde{\rho}_{X|D}} H^d_{\nis}(S_X, \sK^M_{d, (S_X, X_-)}) \xrightarrow{\iota^*_+} H^d_{\nis}(X, \sK^M_{d, (X,D)})$ agrees with the map 
in \eqref{eqn:BQM-M*-1}. 

To show that $\rho_{X|D}: \CH_0(X|D) \to H^d_{\nis}(X, \sK^M_{d,(X,D)})$ is surjective,
it suffices to show that the map $\rho_{X|D}: z_0(X|D) =
{\underset{x \in V^{(d)}}\amalg} K^M_0(k(x)) \to 
H^d_{\nis}(X, \sK^M_{d,(X,D)})$ is surjective. But this follows from
\cite[Theorem~2.5]{Kato-Saito} since $k$ is perfect and hence $V = X\setminus D$
is nice in the sense of \cite[Definition~2.2]{Kato-Saito}. The proof of the proposition is now complete.
\end{proof}

\subsection{Proof of \thmref{thm:Thm-3}}\label{sec:Thm-3-prf}

Now \thmref{thm:Thm-3} follows from \propref{prop:BQM-M}, 
\thmref{thm:Thm-1} and \thmref{thm:BQD}. Indeed, 
the two solid arrows in ~\eqref{eqn:mod-*-0} are isomorphisms
by \thmref{thm:Thm-1} (if $X$ is affine) or by \thmref{thm:BQD}
(if $X$ is projective and $D$ is integral). Therefore, if $(X,D)$ is 
as in  \thmref{thm:Thm-3}, then the natural map 
$\rho_{X|D} : \CH_0(X|D) \to H^d_{\nis}(X, \sK^M_{d, (X, D)})$ is an isomorphism.
This completes the proof. 
$\hfill\square$

\subsection{Proofs of Theorems~\ref{thm:CM-MCM-1} and ~\ref{thm:CM-MCM-2}}\label{sec:CM-MCM}
 Let $k$ be a field and let $X$ be a smooth quasi-projective scheme of pure dimension $d$ over $k$. Let $D$ be an effective Cartier divisor on $X$. In this section, we shall assume that 
 $D_{\red}$ is a simple normal crossing divisor, i.e., if $D_1, \dots, D_s$ are the irreducible components of $D$, then the intersections $\cap_{i \in I} D_i$ 
 are smooth over $k$ and have codimension $r$, where $I \subset \{1, \dots, s\}$ and $r$ is the cardinality of $I$. 

%

%

As an application of \thmref{thm:Thm-1}, \thmref{thm:Thm-3}, \propref{prop:BQM-M} and \cite[Theorem~3.8]{RS}, we shall now prove 
Theorems~\ref{thm:CM-MCM-1} and ~\ref{thm:CM-MCM-2}. 

Let $(X,D)$ be as in \thmref{thm:CM-MCM-1}. Then 
by \propref{prop:BQM-M}, the Bloch-Quillen map in \eqref{eqn:BQM-M*-1} induces a surjective homomorphism $\rho_{X|D} : \CH^d(X|n D) \surj
H^d(X_{\nis}, \sK^M_{d, (X, D)})$. Consider the diagram:

\begin{equation} \label{eqn:CM-MCM*-0}
\xymatrix@C3pc{
\CH^d(X|D) \ar@{->>}[r]^-{\rho_{X|D}} \ar[d]^{\can_{\sM}}&     H^d_{\nis}(X, \sK^M_{d, (X, D)})  \ar@{->>}[d]\\
H^{2d}_{\sM}(X|D, \Z(d)) \ar[r]^-{\rho_{\sM, X|D}}_{\cong} & H^d_{\nis}(X, \sK^M_{d, X| D}),}
\end{equation}
where the right vertical arrow is induced by the inclusion $ \sK^M_{d, (X, D)} \inj  \sK^M_{d, X| D}$
(see \lemref{lem:RS-RMKthy}). 
We first assume that the diagram \eqref{eqn:CM-MCM*-0} commutes and complete the proofs of Theorems~\ref{thm:CM-MCM-1} and ~\ref{thm:CM-MCM-2}.

Since the cokernel of the inclusion $\sK^M_{d,(X,D)} \inj \sK^M_{d,X|D}$
is supported on $D$, the right vertical arrow in \eqref{eqn:CM-MCM*-0} 
is surjective. By  \cite[Theorem~3.8]{RS}, the 
bottom horizontal arrow in \eqref{eqn:CM-MCM*-0}  is an isomorphism.
The surjectivity of the map $\can_{\sM}$ then follows from the commutative diagram \eqref{eqn:CM-MCM*-0}. Now assume that $D$ is connected and smooth. It then follows from ~\thmref{thm:Thm-3} that the top horizontal arrow in  \eqref{eqn:CM-MCM*-0} 
is an isomorphism.   
 Therefore, it suffices to show that the right vertical arrow in  \eqref{eqn:CM-MCM*-0} is an isomorphism. But this follows from  \lemref{lem:RS-RMKthy}, where we proved that the inclusion $\sK^M_{d, (X,D)} \inj \sK^M_{d, X|D}$ is an isomorphism. This completes the proof of \thmref{thm:CM-MCM-1} (assuming the diagram \eqref{eqn:CM-MCM*-0}
 commutes).

Now, assume that  $X$ is either affine or a quasi-projective surface. Let $D$ be an effective Cartier divisor on $X$ such that $D_{\red}$ is a simple normal crossing divisor.  Then by \thmref{thm:Thm-1} (if $X$ is affine) or 
by \cite[Theorem~1.8]{BK} (if $X$ is a quasi-projective surface), the top  horizontal arrow in ~\eqref{eqn:CM-MCM*-0} 
is an isomorphism. As before, by  \cite[Theorem~3.8]{RS}, the 
bottom horizontal arrow in ~\eqref{eqn:CM-MCM*-0}  is an isomorphism. Therefore, it suffices to show that the right vertical arrow in  \eqref{eqn:CM-MCM*-0} induces an isomorphism of pro-abelian groups. But this follows from \lemref{lem:RS-RMKthy}. This completes the proof of \thmref{thm:CM-MCM-2} (assuming 
\eqref{eqn:CM-MCM*-0} commutes).

Now, we prove that the diagram ~\eqref{eqn:CM-MCM*-0} commutes. To see this,
let $z^r(X|D, 2r-\bullet)_{c}$ denote the complex of constant Nisnevich sheaves on $X$ defined by the complex $z^r(X|D, 2r-\bullet)$.  Recall that the map $\can_{\sM}$ is induced by the morphism  of complexes of Nisnevich sheaves $z^r(X|D, 2r-\bullet)_c \to z^r( - |D, 2r-\bullet)$.
Given $x\in X\setminus D$, consider the following diagram. 


\begin{equation} \label{eqn:CM-MCM*-2}
\xymatrix@C.8pc{
\CH^d(X|D) \ar@{->>}[rrr] \ar[ddd]&  & &    H^d_{\nis}(X, \sK^M_{d, (X, D)})  \ar@{->>}[ddd] \\
& \Z\< x \> \ar[r] \ar[d]  \ar[lu] &  H^d_{x}(X_{\nis}, \sK^M_{d, (X, D)}) \ar[d] \ar[ru] \\
 & \H^{2d}_{x}(X_{\nis}, \Z(d)_{X|D}) \ar[r]  \ar[ld] &   H^d_{x}(X_{\nis}, \sK^M_{d, X| D})\ar[rd] \\
\H^{2d}_{\nis} (X, \Z(d)_{X|D}) \ar[rrr]^{\cong} & & & H^d_{\nis}(X, \sK^M_{d, X| D}).}
\end{equation}

We have to show that the front face in the diagram \eqref{eqn:CM-MCM*-2} commutes.
Since $\CH^d(X|D)$ is generated by the classes of the closed points $x\in X^{(d)}\setminus D$, it suffices to show that all the other faces in  \eqref{eqn:CM-MCM*-2} commute. The top face commutes by the definition of the Bloch-Quillen map $\lambda_{X|D}$ (see 
\eqref{eqn:BQM-M*-1}). Note here that $ \H^{2d}_{x}(X_{\nis}, z^d(X|D, 2d-\bullet)_{\nis}) = \Z\< x \>$.
The right face, the left face and the bottom face of 
\eqref{eqn:CM-MCM*-2} commute by the naturality of the `forget support'
map on the cohomology groups. Therefore, it suffices to show that the back face of 
\eqref{eqn:CM-MCM*-2} commutes. Since $x \notin D$,
we can now assume by excision that $X$ is smooth and $D = \emptyset$. In this case, the right vertical arrow is
actually the identity map (see ~\eqref{eqn:def-RKMKThy} and the proof of \lemref{lem:RS-RMKthy}). Moreover,
the commutativity of the back face is well known and follows at once from the construction of the bottom horizontal arrow
for smooth schemes (see, for instance, \cite[\S~3.1]{RS}).
This completes the proof of the Theorems~\ref{thm:CM-MCM-1} and ~\ref{thm:CM-MCM-2}.
$\hfill \square$

\section{The question of Kerz-Saito}\label{sec:BSC-prf}
We now prove \thmref{thm:Thm-5} as an application of \thmref{thm:Thm-1}.
We shall then use \thmref{thm:Thm-5} and its proof to
give a proof of \thmref{thm:Thm-7}.
Let $X$ be a reduced affine or projective scheme of pure dimension $d \ge 1$ 
over an algebraically closed field $k$ and let $\pi: \wt{X} \to X$ be
a resolution of singularities. Let $E_0 \subset \wt{X}$ be the reduced
exceptional divisor. Assume that $X$ is regular in codimension one
and let $U = X_{\rm reg} = \wt{X} \setminus E_0$. We let $S = X_{\rm sing}$
with the reduced induced closed subscheme structure and we 
let $E$ denote the scheme theoretic inverse image  $\pi^{-1}(S)$. 
Note that $E$ is supported on $E_0$ and there exists $m \geq 1$
 such that we have 
\begin{equation} \label{eqn:BSC**-(-1)}
E_0  \leq E \leq m E_0.
\end{equation}

Let $D$ be a divisor on $X$ supported on $E_0$. Given a closed point 
$x \in U$, the composite map of $K$-theory spectra
$K(k(x)) \to K(\wt{X}) \to K(D)$ is null-homotopic. 
This yields a map $u_x:K(k(x)) \to K(\wt{X}, D)$. Letting
$\lambda_{\wt{X}|D}([x]) = u_x(1) \in K_0(\wt{X},D)$, we get a cycle
class map $\lambda_{\wt{X}|D}:
\sZ_0(\wt{X}, D) \to K_0(\wt{X}, D)$. 
By the same reason, we also have a map 
$\lambda_{X|nS}: \sZ_0(X) \to K_0(X, nS)$.

It follows from
\cite[Theorem~12.4]{BK} that $\lambda_{\wt{X}|D}$ factors through 
$\lambda_{\wt{X}|D}: \CH_0(\wt{X}|D) \to K_0(\wt{X}, D)$.
We let $F_0K_0(\wt{X}, D)$ be the image of this cycle class map.
Using \lemref{lem:MLS} and Definition~\ref{defn:CGM-df}, 
it also follows easily that the map
$\pi^*:\sZ_0(X) \to \sZ_0(\wt{X}, D)$ factors through the rational
equivalence classes. We also need the following refinement of the
cycle class map $\lambda_X: \CH^{LW}_0(X) \to K_0(X)$.

\begin{lem}\label{lem:Reg-1}
The map $\lambda_{X|nS}: \sZ_0(X) \to K_0(X, nS)$ factors through the rational
equivalence classes.
\end{lem}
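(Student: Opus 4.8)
The plan is to reduce, via the moving lemma \lemref{lem:MLS}, to divisors of rational functions on integral curves that avoid the singular locus, and then to run the classical localization-sequence computation of \cite[Proposition~2.1]{LW} inside the relative $K$-theory $K(X,nS)$. Since $X$ is regular in codimension one, the closed set $Y=X_{\rm sing}=S$ is nowhere dense of codimension at least two, and since $k$ is algebraically closed it is infinite and perfect; hence \lemref{lem:MLS} applies with $Y=S$ and identifies $\CH^{LW}_0(X)$ with $\sZ_0(X,S)/R_0^{LW}(X,S)=\sZ_0(X)/R_0^{LW}(X,S)$. So it suffices to show that $\lambda_{X|nS}$ annihilates each generator $\divf_C(f)$ of $R^{LW}_0(X,S)$, where $C$ is an integral curve with $C\cap S=\emptyset$ and $f\in k(C)^{\times}$. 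The whole point of this reduction is that such a $C$ lies in $X_{\rm reg}$ and is disjoint from the thickening $nS$ as well, because $|nS|=S$.

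First I would record a clean description of the classes involved. For a regular closed point $x\in X_{\rm reg}$ the support $\{x\}$ is disjoint from $S$, so the composite $K(X\ \textrm{on}\ \{x\})\to K(X)\to K(nS)$ is the zero map equipped with its preferred null-homotopy, and $u_x(1)$ is the image of $[\sO_{\{x\}}]$ under the resulting relative forget-support map $K_0(X\ \textrm{on}\ \{x\})\to K_0(X,nS)$. Extending linearly, for a $0$-cycle supported on a finite set $\Sigma\subset X_{\rm reg}$ the value of $\lambda_{X|nS}$ is the image of its support class under $\rho\colon K_0(X\ \textrm{on}\ \Sigma)\to K_0(X,nS)$. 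By excision $K(X\ \textrm{on}\ \Sigma)\simeq K(X\setminus S\ \textrm{on}\ \Sigma)$ is insensitive to the nonreduced structure along $S$, and the relative localization fibre sequence reads
\[
K(X\ \textrm{on}\ \Sigma)\to K(X,nS)\to K(X\setminus\Sigma,\,nS),
\]
the common base $K(nS)$ cancelling between the two relative terms. Consequently $\lambda_{X|nS}(\divf_C(f))=\rho([\divf_C(f)])=0$ as soon as the support class $[\divf_C(f)]\in K_0(X\ \textrm{on}\ \Sigma)$ lies in the image of the boundary $\partial\colon K_1(X\setminus\Sigma,\,nS)\to K_0(X\ \textrm{on}\ \Sigma)$.

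The second step is to produce such a relative $K_1$-class. Working inside the regular scheme $X_{\rm reg}$ that contains $C$, the localization (Gersten) computation underlying \cite[Proposition~2.1]{LW} yields an element $\xi\in K_1(X_{\rm reg}\setminus\Sigma)=K_1(X\setminus\Sigma)$, obtained from $f\in k(C)^{\times}$ by proper push-forward along the normalized curve $C^N\to X_{\rm reg}$, whose absolute boundary equals $[\divf_C(f)]$. The key new point is that $\xi$ is supported on $C\setminus\Sigma$, which is disjoint from $S$, so its restriction to $nS$ is the zero class in $K_1(nS)$. Thus $\xi\in\ker\!\big(K_1(X\setminus\Sigma)\to K_1(nS)\big)$, which by the relative sequence is exactly the image of $K_1(X\setminus\Sigma,nS)\to K_1(X\setminus\Sigma)$; lifting $\xi$ to some $\widetilde{\xi}$ there and using the compatibility of the relative and absolute boundaries, we obtain $[\divf_C(f)]=\partial(\widetilde{\xi})\in\im(\partial)$, whence $\lambda_{X|nS}(\divf_C(f))=0$. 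This kills $R^{LW}_0(X,S)$ and produces the desired factorization $\lambda_{X|nS}\colon\CH^{LW}_0(X)\to K_0(X,nS)$.

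The step I expect to be the main (indeed the only substantive) obstacle is the reduction in the first paragraph. Without it, for a Cartier curve actually meeting $S$ the pushed-forward class $\xi$ would restrict nontrivially to $nS$ — its restriction being the push-forward of the unit $f$ along $C\cap nS$ — and this contribution need not vanish for a fixed $n$, so the relative lift would fail. It is precisely the hypothesis that $X$ is regular in codimension one, fed into \lemref{lem:MLS}, that lets me discard the singular locus entirely and make the relative lift automatic. Once the representing curves are disjoint from $S$, the argument is a routine relative enhancement of the classical computation, and the same reasoning will simultaneously give the compatibility of $\lambda_{X|nS}$ with the forgetful map to $K_0(X)$ (recovering $\lambda_X$) and with pull-back along $\pi$, which is what the subsequent application to Theorems~\ref{thm:Thm-5} and ~\ref{thm:Thm-7} requires.
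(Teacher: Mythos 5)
Your proposal is correct in substance and shares its skeleton with the paper's proof: both begin with the identical reduction via \lemref{lem:MLS} (using regularity in codimension one and the fact that an algebraically closed field is infinite and perfect) to generators $\divf_C(f)$ with $C$ integral, $C \cap S = \emptyset$, and both exploit the disjointness of $C$ from $S$ to produce a relative lift. Where you differ is in the mechanism for the final vanishing. The paper factors the push-forward itself: since $C \cap S = \emptyset$, the finite map $\nu: C^N \to X$ has finite tor-dimension and the composite $K(C^N) \to K(X) \to K(nS)$ is null-homotopic, so $\nu_*$ factors through $K(X,nS)$; the commutative square $\lambda_{X|nS} \circ \nu_* = \nu_* \circ \lambda_{C^N}$ on cycles then reduces everything to the classical vanishing $\lambda_{C^N}(\divf(f)) = 0$ on the smooth curve. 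You instead unwind that classical vanishing inside the relative localization sequence $K(X\ \text{on}\ \Sigma) \to K(X,nS) \to K(X\setminus\Sigma, nS)$, exhibiting the support class of $\divf_C(f)$ as a relative boundary $\partial(\widetilde{\xi})$. The two arguments are essentially equivalent --- the paper's step ``$\lambda_{C^N}(\divf(f)) = 0$'' is itself the boundary computation $\partial[f] = \divf(f)$ on $C^N$, transported by the compatibility of push-forward with localization --- but the paper's packaging is shorter and avoids manipulating boundary maps of pairs, while yours makes explicit where the class dies and simultaneously records the compatibilities you will want later.

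One step in your write-up needs repair: the identification $K_1(X_{\rm reg}\setminus\Sigma) = K_1(X\setminus\Sigma)$ is false ($X\setminus\Sigma$ still contains the singular locus $S$, and restriction goes the wrong way), and as written your class $\xi$ lives in the wrong group --- its restriction to $nS$ is not even defined, since $nS \cap X_{\rm reg} = \emptyset$. The fix is to push $f$ forward directly along the finite map $C^N \setminus \nu^{-1}(\Sigma) \to X \setminus \Sigma$; this push-forward exists on $K$-theory spectra precisely because the map has finite tor-dimension, which holds because $C \subset X_{\rm reg}$, so the stalks of $\nu_*\sO_{C^N}$ are modules of finite tor-dimension over the regular local rings $\sO_{X,x}$. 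This finite-tor-dimension observation is exactly the one genuinely $K$-theoretic ingredient, and the paper states it explicitly; once it is in place, your boundary argument (vanishing of $\xi$ on $nS$, lifting to $K_1(X\setminus\Sigma,nS)$, and compatibility of relative and absolute boundaries) goes through as you describe.
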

\begin{proof}
We let $C \subset X$ be an integral curve such that $C \cap S = \emptyset$
and let $f \in k(C)^{\times}$. By \lemref{lem:MLS}, it suffices to show that
$\lambda_{X|nS}(\divf(f)) = 0$. Let $C^N \to C$ be the normalization map
and let $\nu: C^N \to C \inj X$ denote the composite map.
It is then clear that $\divf(f) = \nu_*(\divf(f))$, where
$f \in k(C)^{\times} = k(C^N)^{\times}$.

Since $C \cap S = \emptyset$,
the finite map $\nu: C^N \to X$ has finite tor-dimension and the
resulting push-forward map $\nu_*: K(C^N) \to K(X)$ factors through
$K(C^N) \to K(X, nS) \to K(X)$ just as above.
We thus have a commutative diagram
\begin{equation}\label{eqn:BSC**-0}
\xymatrix@C.8pc{
\sZ_0(C^N) \ar[r]^-{\lambda_{C^N}} \ar[d]_-{\nu_*} & K_0(C^N) \ar[d]^-{\nu_*} \\
\sZ_0(X) \ar[r]^-{\lambda_{X|nS}} & K_0(X,nS).}
\end{equation}
We are now done since $\lambda_{C^N}(\divf(f)) = 0$.
\end{proof}

\subsection{Proof of \thmref{thm:Thm-5}}\label{sec:BS-prf}
We shall now prove \thmref{thm:Thm-5}. Using \lemref{lem:Reg-1} and the construction of
various other maps before it, we obtain a commutative diagram for every 
$n \ge 1$:

\begin{equation}\label{eqn:BSC**-1}
\xymatrix@C.8pc{
& \CH^{LW}_0(X) \ar@{->>}[r]^-{\pi^*} \ar@{->>}[d]^-{\lambda_{X|nS}} 
\ar@{->>}[dl]_-{\lambda_X}^{\cong} &
\CH_0(\wt{X}|nE) \ar@{->>}[d]^-{\lambda_{\wt{X}|nE}} \ar@{->>}[r] & 
\CH_0(\wt{X}|nE_0) \ar@{->>}[r] \ar@{->>}[d]^-{\lambda_{\wt{X}|nE_0}}  & 
\CH_0(\wt{X}) \ar@{->>}[d]^-{\lambda_{\wt{X}}} \\
F_0K_0(X) & F_0K_0(X, nS) \ar@{->>}[r]^-{\pi^*} \ar@{->>}[l] & 
F_0K_0(\wt{X}, nE) \ar@{->>}[r] & 
F_0K_0(\wt{X}, nE_0)  \ar@{->>}[r] & F_0K_0(\wt{X}).}
\end{equation}

The map $\lambda_X$ on the left is an isomorphism by
\cite[Corollary~7.6]{Krishna-2} (if $X$ is affine) and 
\thmref{thm:K-inj-proj} (if $X$ is projective).
It follows that all arrows in the triangle on the left are isomorphisms.
By \cite[Theorem A]{KST}, the canonical homomorphism of pro-abelian groups 
$\prolim K_0(X, nS) \to \prolim K_0(\wt{X}, nE)$ is an isomorphism. 
In particular, its restriction  $\prolim F_0K_0(X, nS) \to \prolim F_0K_0(\wt{X}, nE)$
is an isomorphism too. By ~\eqref{eqn:BSC**-(-1)}, it follows that the map of 
pro-abelian groups 
$\prolim F_0K_0(\wt{X}, nE) \to \prolim F_0K_0(\wt{X}, nE_0)$
is an isomorphism.
As $\lambda_{X|nS}$ is an isomorphism for all $n \ge 1$,
we get an isomorphism of pro-abelian groups 
$\CH^{LW}_0(X) \xrightarrow{\cong} \prolim F_0K_0(\wt{X}, nE_0)$.
Since $\CH^{LW}_0(X)$ is a constant  pro-abelian group, an elementary
calculation shows that we must have
$\CH^{LW}_0(X) \xrightarrow{\cong} F_0K_0(\wt{X}, nE_0)$ for all $n \gg 1$.
It follows that all arrows in the left square and in the middle square  of  
~\eqref{eqn:BSC**-1} are isomorphisms for all $n \gg 1$. 
This proves the standard version of the Bloch-Srinivas conjecture
(part (1) of \thmref{thm:Thm-5}).

We assume now that ${\rm char}(k) = p > 0$ and prove the strong version,
namely, that $\CH^{LW}_0(X) \xrightarrow{\cong} \CH_0(\wt{X}|E_0)
\xrightarrow{\cong} F_0K_0(\wt{X}, E_0)$.

Using the homotopy fiber sequence of spectra
\[
K(\wt{X}, nE_0) \to K(\wt{X}, E_0) \to K(nE_0, E_0)
\]
and \cite[Corollary~5.4]{Weibel}, it follows that the kernel of the map
$F_0K_0(\wt{X}, nE_0) \surj F_0K_0(\wt{X}, E_0)$ is a $p$-primary torsion group.
We remark here that the cited reference is enough only for affine schemes. But one can then use
a Mayer-Vietoris argument to prove it in our case too. The reader can also see 
\cite[Lemma~3.4]{Krishna-2}, which shows a stronger result that this kernel 
is in fact a $p$-primary torsion group of bounded exponent. 

We choose $n \gg 1$ such that 
$\CH^{LW}_0(X) \xrightarrow{\cong} \CH_0(\wt{X}|nE_0) \cong
F_0K_0(\wt{X}, nE_0)$. It follows then that the kernel of the 
composite map $\CH^{LW}_0(X) \surj \CH_0(\wt{X}|E_0) \surj F_0K_0(\wt{X}, E_0)$ 
is a $p$-primary torsion group of bounded exponent.

If $X$ is affine, this kernel must be zero by \cite[Theorem~1.1]{Krishna-2}.
If $X$ is projective, we have a commutative diagram
\begin{equation}\label{eqn:BSC**-2}
\xymatrix@C.8pc{
& \CH^{LW}_0(X)_{\rm tor} \ar[r] \ar[d]^-{\cong} \ar[ddl]_{\cong}
& \CH_0(\wt{X}|E_0)_{\rm tor} \ar[r] \ar[d] & \CH_0(\wt{X})_{\rm tor} 
\ar[d]^-{\cong} \\
& F_0K_0(X)_{\rm tor} \ar[r] \ar[d]^-{\cong}  & 
F_0K_0(\wt{X}, E_0)_{\rm tor} \ar[r] & F_0K_0(\wt{X})_{\rm tor}
\ar[d]_-{\tau_{\wt{X}}}^-{\cong} \\
\CH^{LW}_0(X^N)_ {\rm tor} \ar[r]^-{\cong} & F_0K_0(X^N)_ {\rm tor}
\ar[r]_-{\tau_{X^N}}^-{\cong} & 
A^d(X^N)_{\rm tor} \ar[r]^-{\cong} & A^d(\wt{X})_{\rm tor}.}
\end{equation}

All arrows in the left triangle are isomorphisms by
\lemref{lem:normalization} and \thmref{thm:K-inj-proj}.
By the same reason, the vertical arrow on the top right is an
isomorphism.
It follows from this diagram and \cite[Theorem~1.6]{KS} that the composite map
$\CH^{LW}_0(X)_{\rm tor} \to   F_0K_0(\wt{X}, E_0)_{\rm tor} \to
F_0K_0(\wt{X})_{\rm tor} \to A^d(\wt{X})_{\rm tor}$ is an 
isomorphism.
In particular, the map $\CH^{LW}_0(X)_{\rm tor} \surj 
F_0K_0(\wt{X}, E_0)_{\rm tor}$ is injective. It follows that 
${\rm Ker}(\CH^{LW}_0(X) \surj F_0K_0(\wt{X}, E_0))$  must be zero.
The proof of \thmref{thm:Thm-5} is now complete.
$\hfill \square$

\subsection{Proof of \thmref{thm:Thm-7}}\label{sec:KSQ-prf}
We now prove \thmref{thm:Thm-7}. We shall follow the notations of
the statement of \thmref{thm:Thm-7} in its proof. 
Recall from \thmref{thm:Thm-7} that
$Y$ is a reduced projective scheme of pure dimension $d$
over an algebraically
closed field $k$ of positive characteristic. Our assumption is that
$Y$ is regular in codimension one and $\pi: X \to Y$
is a resolution of singularities with the reduced exceptional divisor 
$E_0 \subset X$. Let $S \subset Y$ be the singular locus with the
reduced subscheme structure. Moreover, let $E$ denote the scheme
 theoretic inverse image  $\pi^{-1}(S)$. Note that these notations 
 are a little different from the ones in
\thmref{thm:Thm-5}.

We fix an integer $n \ge 1$ and consider the commutative diagram
\begin{equation}\label{eqn:KES-0}
\xymatrix@C.8pc{
\CH^{LW}_0(Y) \ar@/_1pc/[ddr]_-{\rho_Y} \ar@{..>}[dr] 
\ar@{->>}[drr]^-{\lambda_{Y|nS}} & &  \\
&
H^d_{\nis}(Y, \sK^M_{d, (Y, nS)}) \ar[d]_-{\cong} \ar@{->>}[r] & F_0K_0(Y, nS) 
\ar[d]^-{\cong} \\
& H^d_{\nis}(Y, \sK^M_{d, Y}) \ar[r]^-{\cong} & F_0K_0(Y).}
\end{equation}

The left vertical arrow on the bottom square is an isomorphism as
$\dim(S) \le d-2$. We have shown in the proof of
\thmref{thm:Thm-5} that all solid arrows in ~\eqref{eqn:KES-0} are
isomorphisms. It follows that the map $\rho_{Y|nS}:\sZ_0(Y) \to 
H^d_{\nis}(Y, \sK^M_{d, (Y, nS)})$ factors through the Chow group 
$\CH^{LW}_0(Y)$ so that ~\eqref{eqn:KES-0} is commutative and all maps
are isomorphisms.

We next consider the commutative diagram
\begin{equation}\label{eqn:KES-1}
\xymatrix@C.8pc{
\sZ_0(Y) \ar@{->>}[r] \ar[d]_-{\cong} & \CH^{LW}_0(Y) 
\ar[r]^-{\rho_{Y|nS}}_-{\cong} \ar[d]^-{\cong}_-{\pi^*} & 
H^d_{\nis}(Y, \sK^M_{d, (Y, nS)}) \ar[r]_-{\cong}^-{\theta_{Y|nS}} \ar[d]^-{\pi^*} 
& F_0K_0(Y, nS) \ar[d]^-{\pi^*}_-{\cong} \ar[r]^-{\cong} 
& F_0K_0(Y,S) \ar[d]^-{\pi^*}_-{\cong} \\
\sZ_0(X|nE) \ar@{->>}[r] & \CH_0(X|nE) \ar@{..>}[r]^-{\rho_{X|nE}} &
H^d_{\nis}(X, \sK^M_{d, (X, nE)}) \ar@{->>}[r]^-{\theta_{X|nE}} & F_0K_0(X,nE)
\ar[r]^-{\cong} & F_0(X, E),}
\end{equation}
where $\theta_{X|nE}$ is the composition of the edge map in the 
Thomason-Trobaugh spectral sequence with natural map
$H^d_{\nis}(X, \sK^M_{d, (X, nE)}) \to H^d_{\nis}(X, {\sK}_{d, (X, nE)})$
of \S~\ref{sec:KTry}.

We have shown in the proof of \thmref{thm:Thm-5} that all solid arrows
in ~\eqref{eqn:KES-1} (except possibly the middle vertical arrow)
are isomorphisms. A simple diagram chase shows that 
the dotted arrow $\rho_{X|nE}$ is in fact a solid arrow.
We now consider the commutative diagram
\begin{equation}\label{eqn:KES-1.5}
\xymatrix@C2pc{
\CH^{LW}_0(Y) \ar@{->>}[r]^{\pi^*} \ar@{->>}[rd]_{\pi^*} & 
\CH_0(X|nE) \ar[r]^-{\rho_{X|nE}}  \ar@{->>}[d]&
H^d_{\nis}(X, \sK^M_{d, (X, nE)}) \ar@{->>}[r]^-{\theta_{X|nE}} 
\ar[d] & F_0K_0(X,nE)\ar[d]\\
&  \CH_0(X|nE_0) \ar@{..>}[r]^-{\rho_{X|nE_0}} &
H^d_{\nis}(X, \sK^M_{d, (X, nE_0)}) 
\ar@{->>}[r]^-{\theta_{X|nE_0}} & F_0K_0(X,nE_0)
,}
\end{equation}
where vertical arrows exist as $E_0 \subset E$.  
By \thmref{thm:Thm-5}, it follows that the left 
vertical arrow in ~\eqref{eqn:KES-1.5} is an isomorphism. 
A simple diagram chase shows that 
the dotted arrow $\rho_{X|nE_0}$ is in fact a solid arrow. Since we 
proved in \thmref{thm:Thm-5}
that the composite map $\CH^{LW}_0(Y) \to  F_0K_0(X,E_0)$
is an isomorphism, it follows the map $\rho_{X|nE_0}$ is injective.
 On the other hand, the composite map $\sZ_0(X|nE_0) \to \CH_0(X|nE_0) 
\xrightarrow{\rho_{X|nE_0}} H^d_{\nis}(X, \sK^M_{d, (X, nE_0)})$
is surjective by \cite[Theorem~2.5]{Kato-Saito}.
We conclude that  all arrows in ~\eqref{eqn:KES-1.5}
are isomorphisms. In particular,  $\rho_{X|nE_0}$
  is an isomorphism for every $n \ge 1$. 

To finish the proof of \thmref{thm:Thm-7}, we let $D \subset X$ be
any effective Cartier divisor with support $E_0$.
We can then find two inclusions $E_0 \subset D \subset nE_0$ for some
$n \gg 0$.
This gives rise to a commutative diagram
\begin{equation}\label{eqn:KES-2}
\xymatrix@C.8pc{
\CH^{LW}_0(Y) \ar[r]^-{\cong} \ar[d]_-{\cong} & 
\CH_0(X|nE_0) \ar[r]^-{\cong} \ar[d]^-{\cong} &
\CH_0(X|D) \ar[r]^-{\cong} \ar@{..>}[d] & \CH_0(X|E_0) \ar[d]^-{\cong} \\
H^d_{\nis}(Y, \sK^M_{d,Y}) \ar[r] & H^d_{\nis}(X, \sK^M_{d, (X, nE_0)}) \ar@{->>}[r] &
H^d_{\nis}(X, \sK^M_{d, (X, D)}) \ar@{->>}[r] & H^d_{\nis}(X, \sK^M_{d, (X, E_0)}).}
\end{equation}

A diagram chase shows that all solid arrows in ~\eqref{eqn:KES-2} are
isomorphisms. This implies that the vertical dotted arrow is in fact 
a solid arrow and is an isomorphism. In other words, the Bloch-Quillen map
$\sZ_0(X|D) \to H^d_{\nis}(Y, \sK^M_{d,Y})$ induces an
isomorphism $\rho_{X|D}: \CH_0(X|D) \xrightarrow{\cong} 
H^d_{\nis}(X, \sK^M_{d,(X,D)})$. We have thus proven \thmref{thm:Thm-7}.  
$\hfill \square$

\section{Euler class groups of affine algebras}\label{sec:ECG}
In order to prove Theorems~\ref{thm:Thm-2}, ~\ref{thm:Thm-4} and 
~\ref{thm:Thm-6}, we shall use the theory of Euler class groups of 
affine algebras. The Euler class group of a $k$-algebra $A$ 
has an advantage that any class in this group is the class of a nice
enough ideal $J \subset A$ which has a class $[A/J]$ in $K_0(A)$ as well.
If this class dies in $K_0(A)$, then there are some commutative algebra
results which allow us to conclude that the class of $J$ is zero in the
Euler class group as well. So the key to proving a result like 
Theorem~\ref{thm:Thm-6} is to connect the Levine-Weibel Chow group
with these Euler class groups.

Unfortunately, the Euler class group has cycles which are supported on the
singular locus of $\Spec(A)$, and hence, it is very hard to directly connect
this group with the Chow group. To circumvent this
problem, we introduce a new version of the Euler class group. 
This new version is closely related to the Chow group.
The key result of this section is that this new version is canonically
isomorphic to the original one. This will be used in the next 
section to finish the proofs of Theorems~\ref{thm:Thm-2}, ~\ref{thm:Thm-4} and 
~\ref{thm:Thm-6}. 

Throughout this section, we fix a 
 field $k$ and all rings we consider will be geometrically reduced 
equi-dimensional affine algebras over $k$.  

\subsection{The Euler class groups}\label{sec:ECG*}
We recall the definitions of the Euler class groups from \cite{BS-3}.
Let $A$ be an affine $k$-algebra of dimension $d \ge 2$. 
Let $G(A)$ be the free abelian group on the pairs $(\fn, \omega_{\fn})$, where
$\fn \subset A$ is an $\fm$-primary ideal for a maximal ideal $\fm \subset A$
of height $d$ and $\omega_{\fn}: (A/{\fn})^{d} \surj {\fn}/{\fn^2}$ is 
an $A$-linear surjection.

Given an ideal $J \subset A$ of height $d$ with 
the irredundant primary decomposition $J = \fn_1 \cap \cdots \cap \fn_r$
and a surjection $\omega_J: (A/{J})^d \surj {J}/{J^2}$, the Chinese remainder 
theorem yields surjections $\omega_{\fn_i}: (A/{\fn_i})^d \surj 
{\fn_i}/{\fn^2_i}$.
In particular, the ideal $J$ and the map $\omega_J$
together define a unique class $(J, \omega_J) = 
\stackrel{r}{\underset{i =1}\sum} (\fn_i, \omega_{\fn_i}) \in G(A)$.
Let $H(A) \subset G(A)$ be the subgroup generated by the classes $(J, \omega_J)$
as above such that there exists an $A$-linear surjective homomorphism $\wt{\omega_J} : A^d \surj J$
and a commutative diagram of $A$-modules:
\begin{equation}\label{eqn:ECG-0}
\xymatrix@C1pc{
A^d \ar@{->>}[r]^{\wt{\omega}_J} \ar[d] & J \ar[d] \\
(A/J)^d \ar@{->>}[r]_{\omega_J} & J/{J^2}.}
\end{equation}
The Euler class group of $A$ is defined to be the group $E(A) = {G(A)}/{H(A)}$.

The weak version of the Euler class group is defined as follows. 
Let $G_0(A)$ denote the free abelian group on the set of ideals $\fn \subset A$
such that $\fn$ is an $\fm$-primary ideal for some maximal ideal of
height $d$ in $A$ and there is a surjective $A$-linear map
$\omega_{\fn}: (A/{\fn})^d \surj {\fn}/{\fn^2}$. 

Given an ideal $J \subset A$ of height $d$ with 
the irredundant primary decomposition $J = \fn_1 \cap \cdots \cap \fn_r$
and a surjection $\omega_J: (A/{J})^d \surj {J}/{J^2}$, the Chinese remainder 
theorem yields surjections $\omega_{\fn_i}: (A/{\fn_i})^d \surj 
{\fn_i}/{\fn^2_i}$.
In particular, the ideal $J$ defines a unique class $(J) = 
\stackrel{r}{\underset{i =1}\sum} \fn_i \in G_0(A)$.
Let $H_0(A) \subset G_0(A)$ be the subgroup generated by the classes $(J)$ 
such that $J$ is generated by $d$ elements. The weak Euler class group is 
defined as $E_0(A) = {G_0(A)}/{H_0(A)}$.

It is clear that there is a canonical {\sl forget orientation} 
map $\psi_A: E(A) \surj E_0(A)$.

\subsection{The Segre exact sequence}\label{sec:Segre}
Given a positive integer $n$, let $Um_n(A)$ denote
the set of unimodular rows of length $n$ over $A$. Recall here that
a row $\underline{a}:= [a_1, \ldots, a_n] \in M_{1,n}(A)$ is called 
unimodular, if the ideal $(a_1, \ldots, a_n)$ is $A$.
If $B \in M_{n,1}$ is such that $\underline{a}B = 1$, then
we have $1 = \underline{a}B =\underline{a}M M^{-1}B$ for any $M \in GL_n(A)$.
Setting $B' = M^{-1}B$, we get $(\underline{a}M)B' = 1$. Using this, one
can easily show  that $GL_n(A)$ acts on $Um_n(A)$. We let
$WS_n(A) = {Um_n(A)}/{E_n(A)}$ be the quotient for the action of 
the elementary matrices $E_n(A)$ on $Um_n(A)$.
It was shown by van der Kallen \cite{vdK-1} that
$WS_n(A)$ is an abelian group. 
For any $\underline{a} \in Um_n(A)$, let $[\underline{a}]$ denote its
equivalence class in $WS_n(A)$.
We now quote the following independent results of Das-Zinna \cite{DZ} and
van der Kallen \cite{vdK-2}. When $d \ge 2$ is even and $\Q \subset A$, 
this was earlier
proven by Bhatwadekar-R. Sridharan \cite[Theorem~7.6]{BS-3}.

\begin{thm}\label{thm:Segre}
There is an exact sequence
\begin{equation}\label{eqn:Segre-0}
WS_{d+1}(A) \xrightarrow{\phi_A} E(A) \xrightarrow{\psi_A} E_0(A) \to 0.
\end{equation}
\end{thm}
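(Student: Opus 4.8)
The plan is to establish exactness separately at the two nontrivial spots of the sequence, following the strategy of Bhatwadekar--R. Sridharan \cite{BS-3} but organizing the argument so that it avoids their restriction to even $d$ and to $\Q$-algebras. The two deep inputs that make this possible are the theorems of Das--Zinna \cite{DZ} and van der Kallen \cite{vdK-2} on the orbit group $WS_{d+1}(A)$ of \cite{vdK-1}. Throughout, $A$ is a geometrically reduced equi-dimensional affine $k$-algebra of dimension $d\ge 2$, so that every height-$d$ maximal ideal is regular enough for the local orientations to make sense, and $A/J$ is Artinian (hence semilocal) for any height-$d$ ideal $J$.

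First I would dispose of exactness at $E_0(A)$, that is, surjectivity of $\psi_A$. This is essentially formal. The forgetful map $G(A)\surj G_0(A)$ is onto because every ideal $\fn$ that occurs as a generator of $G_0(A)$ carries, by definition, at least one orientation $\omega_{\fn}$. Moreover $H(A)$ maps onto $H_0(A)$: an ideal $J$ generated by $d$ elements comes with a surjection $A^d\surj J$, which induces a canonical orientation $\omega_J$, so each generator $(J)\in H_0(A)$ lifts to $(J,\omega_J)\in H(A)$, while conversely any $(J,\omega_J)\in H(A)$ has $J$ generated by $d$ elements by definition of $H(A)$. Hence $\psi_A$ is a well-defined surjection.

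Next I would construct $\phi_A$ and verify $\im\phi_A\subseteq\ker\psi_A$. Given a class $[\underline a]\in WS_{d+1}(A)$ with $\underline a=(a_0,a_1,\dots,a_d)$, the Bertini theorem of Murthy and Swan (used elsewhere in this paper) lets me replace $\underline a$, inside its $E_{d+1}(A)$-orbit, by a row for which $J:=(a_1,\dots,a_d)$ has height $d$; unimodularity of $\underline a$ then gives $(a_0)+J=A$, so $a_0\in(A/J)^{\times}$. I would set $\phi_A([\underline a])=(J,\omega)$, where $\omega\colon (A/J)^d\surj J/J^2$ is the orientation defined by the generators $a_1,\dots,a_d$ twisted by the unit $a_0$ in one coordinate, i.e.\ $\omega=\omega_0\circ\mathrm{diag}(a_0,1,\dots,1)$ with $\omega_0(e_i)=\bar a_i$. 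Since $J$ is $d$-generated, $\psi_A(\phi_A([\underline a]))=(J)\in H_0(A)$ is zero in $E_0(A)$, which yields $\im\phi_A\subseteq\ker\psi_A$. The nontrivial point here is \emph{well-definedness}: that this class is independent of the representative of $[\underline a]$ and of the general-position modification, and that the assignment descends to a homomorphism out of the group $WS_{d+1}(A)$. This is precisely where I expect to invoke \cite{vdK-1, vdK-2, DZ}.

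Finally, the crux is the reverse inclusion $\ker\psi_A\subseteq\im\phi_A$. Using the standard addition lemma for $E(A)$, I would represent a given class in $\ker\psi_A$ by a single pair $(J,\omega_J)$ with $(J)\in H_0(A)$; an addition-principle argument as in \cite{BS-3} then upgrades $(J)\in H_0(A)$ to an actual surjection $\theta\colon A^d\surj J$, so that $J$ is $d$-generated. The prescribed orientation $\omega_J$ differs from the one induced by $\theta$ by an automorphism $\sigma\in\GL_d(A/J)$. Because $A/J$ is Artinian we have $\SL_d(A/J)=E_d(A/J)$, so modifying $\theta$ by the $\SL_d$-part of $\sigma$ changes neither $\theta$ up to elementary equivalence nor the Euler class, and I may assume $\sigma=\mathrm{diag}(u,1,\dots,1)$ for a unit $u\in(A/J)^{\times}$. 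Lifting $u$ to $a_0\in A$ that is a unit modulo $J$ (possible since $A/J$ is semilocal) and setting $a_i=\theta(e_i)$ produces a unimodular row $\underline a=(a_0,a_1,\dots,a_d)$, since $(a_0)+J=A$, with $\phi_A([\underline a])=(J,\omega_J)$ by construction. The genuinely hard steps are the two inputs just flagged: that $(J)\in H_0(A)$ already produces a $d$-generating surjection, and that $\SL_d$-equivalent local orientations define the same element of $E(A)$. It is exactly in removing the parity and $\Q$-coefficient hypotheses from these two points that the results of Das--Zinna and van der Kallen are indispensable, and I would treat that as the main obstacle.
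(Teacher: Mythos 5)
First, a point of comparison that matters for this review: the paper does not prove Theorem~\ref{thm:Segre} at all. It is quoted as a known result of Das--Zinna \cite{DZ} and van der Kallen \cite{vdK-2}, with the case $d$ even and $\Q\subset A$ credited to \cite[Theorem~7.6]{BS-3}. So there is no proof in the paper to match yours against; the only question is whether your sketch amounts to an independent proof, and it does not. Your formal steps are fine: surjectivity of $\psi_A$ is indeed immediate from the definitions, and the construction of $\phi_A$ with $\im\phi_A\subseteq\ker\psi_A$ is the standard one (the paper itself uses the explicit formula for $\phi_A$, citing \cite[\S~7, p.~214]{BS-3} for $d=2$ and \cite[Remark~3.7]{DZ} for $d\ge 3$, in the proof of \propref{prop:Sm-weak-ECG}).

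The genuine gap is the step you describe as ``an addition-principle argument as in \cite{BS-3} then upgrades $(J)\in H_0(A)$ to an actual surjection $\theta\colon A^d\surj J$.'' That implication is not an addition-principle argument; it is essentially the theorem itself. Knowing $(J)\in H_0(A)$ means $(J)$ is an integer combination of classes of $d$-generated ideals in $G_0(A)$, and converting this into information about $J$ requires the \emph{weak} (orientation-free) subtraction principle, which is precisely what fails to be elementary: what the available addition/subtraction principles and Murthy's construction (\cite[Theorem~2.2]{Murthy}, used in the proof of \thmref{thm:RR-ECG}) actually produce is a surjection onto $J$ from a \emph{stably free} $A$-module $P$ of rank $d$, i.e.\ from the kernel of a unimodular row of length $d+1$ --- not from $A^d$. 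Concluding that $P$ is free would need a cancellation theorem for rank-$d$ projectives over a $d$-dimensional ring, which is false in this generality (this is exactly the regime of stably free non-free modules; the paper's own \thmref{thm:RR-ECG} has to assume $k=\ov{k}$ or $(d-1)!\in k^{\times}$ to invoke Suslin or Bass cancellation). The whole content of \cite{DZ} and \cite{vdK-2}, and the reason \cite{BS-3} needed $d$ even and $\Q\subset A$, lives in relating the class of $(J,\omega_J)$ to $\phi_A$ of that unimodular row \emph{without} ever knowing $\mu(J)=d$. Since you also defer the well-definedness of $\phi_A$ to \cite{vdK-1}, \cite{vdK-2}, \cite{DZ}, your proposal in the end reduces the statement to the very results it is meant to prove, with the key reduction mischaracterized as elementary; as a blind proof it therefore has a genuine gap.
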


\subsection{The modified Euler class groups}\label{sec:MECG}
We now introduce the modified Euler class groups. 
We shall say that an ideal
$J \subset A$ is {\sl regular}, if it is reduced (i.e., $J = \sqrt{J}$)
and the localization $A_{\fp}$ is a regular local ring for every minimal
prime $\fp$ of $J$. For any finitely generated $A$-module $M$,
let $\mu(M)$ denote the smallest positive integer $m$ such that $M$
is generated by $m$ elements.

(1) Let $G^s(A)$ denote the free abelian group on the set of pairs
$(\fm, \omega_{\fm})$, where $\fm \subset A$ is a regular 
maximal ideal of height $d$ and $\omega_{\fm}: (A/{\fm})^d \to {\fm}/{\fm^2}$
is an isomorphism. 

Given a regular ideal $J \subset A$ of height $d$ with 
the primary decomposition $J = \fm_1 \cap \cdots \cap \fm_r$
and an isomorphism $\omega_J: (A/{J})^d \xrightarrow{\simeq} {J}/{J^2}$, 
the Chinese remainder theorem yields isomorphisms
$\omega_{\fm_i}: (A/{\fm_i})^d \xrightarrow{\simeq} {\fm_i}/{\fm^2_i}$.
In particular, the ideal $J$ and the map $\omega_J$ together
define a unique class $(J, \omega_J) = 
\stackrel{r}{\underset{i =1}\sum} (\fm_i, \omega_{\fm_i}) \in G^s(A)$.
Let $H^s(A) \subset G^s(A)$ be the subgroup generated by the classes 
$(J, \omega_J)$
as above such that there is a commutative diagram of $A$-modules:
\begin{equation}\label{eqn:ECG-0*}
\xymatrix@C1pc{
A^d \ar@{->>}[r]^{\wt{\omega}_J} \ar[d] & J \ar[d] \\
(A/J)^d \ar[r]^{\simeq}_{\omega_J} & J/{J^2}.}
\end{equation}
We let $E^s(A) = {G^s(A)}/{H^s(A)}$. 

(2) Let $G^s_0(A)$ denote the free abelian group on the set of regular
maximal ideals $\fm \subset A$ of height $d$.
Given a regular ideal $J \subset A$ of height $d$ with 
the primary decomposition $J = \fm_1 \cap \cdots \cap \fm_r$,
we let $(J) = \stackrel{r}{\underset{i =1}\sum} \fm_i \in G^s_0(A)$.
Let $H^s_0(A) \subset G^s_0(A)$ be the subgroup generated by the classes 
$(J)$ as above such that $\mu(J) = d$. We let
$E^s_0(A) = {G^s_0(A)}/{H^s_0(A)}$.

We now consider the following commutative diagram of short exact sequences.
\begin{equation}\label{eqn:ECG-diagram}
\xymatrix@C1pc{
& 0 \ar[d] & 0 \ar[d] & 0 \ar[d] & \\
0 \ar[r] & F^s(A) \ar[r] \ar[d] & H^s(A) \ar[r] \ar[d] & H^s_0(A) \ar[r] 
\ar[d] & 0 \\ 
0 \ar[r] & T^s(A) \ar[r] \ar[d] & G^s(A) \ar[d] \ar[r] & G^s_0(A) \ar[r] 
\ar[d] & 0 \\
0 \ar[r] & L^s(A) \ar[r] \ar[d] & E^s(A) \ar[r]_{\psi^s_A} \ar[d] & E^s_0(A) 
\ar[d] \ar[r] & 0 \\
& 0 & 0 & 0 &}
\end{equation}
The only thing one needs to observe to get this diagram is that
the map $H^s(A) \to H^s_0(A)$ is surjective (by above definitions). 
It is easy to see that $T^s(A)$ is generated by
classes $(\fm, \omega_{\fm}) - (\fm, \omega'_{\fm})$, where
$\omega_{\fm}: (A/{\fm})^d \xrightarrow{\simeq} {\fm}/{\fm^2}$ and
$\omega'_{\fm}: (A/{\fm})^d \xrightarrow{\simeq} {\fm}/{\fm^2}$
are two isomorphisms.
It follows from ~\eqref{eqn:ECG-diagram} that the same holds for $L^s(A)$ as 
well. A similar commutative diagram
exists if we remove the superscript `$s$' everywhere.

\begin{lem}\label{lem:Ker-psi}
Let $k$ be an infinite field and let $A$ be a geometrically reduced affine $k$-algebra.  Then
$L^s(A) \subset E^s(A)$ is generated by elements of the type $(J, \omega_J)$,
where $J$ is a regular ideal of height $d$ with $\mu(J) = d$.
\end{lem}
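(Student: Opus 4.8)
The plan is to start from the presentation of $L^s(A)$ already extracted from the commutative diagram \eqref{eqn:ECG-diagram}: since $L^s(A)$ is the image of $T^s(A)$ in $E^s(A)$ and $T^s(A)$ is generated by the elements $(\fm, \omega_{\fm}) - (\fm, \omega'_{\fm})$, the group $L^s(A)$ is generated by the images of these elements in $E^s(A)$. Thus it suffices to show that, for a regular maximal ideal $\fm$ of height $d$ and two isomorphisms $\omega_{\fm}, \omega'_{\fm}\colon (A/\fm)^d \xrightarrow{\simeq} \fm/\fm^2$, the class $(\fm, \omega_{\fm}) - (\fm, \omega'_{\fm})$ in $E^s(A)$ can be rewritten as a single generator $(J, \omega_J)$ with $J$ a regular ideal of height $d$ satisfying $\mu(J) = d$.

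The heart of the argument is a moving step that represents $-(\fm, \omega'_{\fm})$ by a positive (regular) generator. Concretely, I would lift $\omega'_{\fm}$ to an $A$-linear map $\theta\colon A^d \to \fm$ whose reduction modulo $\fm^2$ recovers $\omega'_{\fm}$; writing $J := \theta(A^d) \subseteq \fm$, the congruence $J + \fm^2 = \fm$ shows that $\fm$ is a minimal prime of $J$ with the correct local structure, so $J = \fm \cap \fn$ for an ideal $\fn$ comaximal with $\fm$ which collects the remaining primary components of $J$. Since $A$ is geometrically reduced and $k$ is infinite, I would invoke the Bertini theorem of Murthy and Swan to choose the lift $\theta$ generically so that $\fn$ is a regular ideal of height $d$. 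The resulting surjection $A^d \surj \fm \cap \fn$ respects the orientation $\bar\omega$ that restricts to $\omega'_{\fm}$ on $\fm$ and to some induced $\omega_{\fn}$ on $\fn$, so $(\fm \cap \fn, \bar\omega) \in H^s(A)$ by the very definition of $H^s(A)$; equivalently $(\fm, \omega'_{\fm}) + (\fn, \omega_{\fn}) = 0$ in $E^s(A)$, that is, $-(\fm, \omega'_{\fm}) = (\fn, \omega_{\fn})$. Producing a residual ideal $\fn$ that is \emph{simultaneously} comaximal with $\fm$, of height $d$, and regular is exactly where I expect the real difficulty to lie, and it is the point that forces the hypotheses ($k$ infinite, $A$ geometrically reduced) and the use of the Murthy--Swan Bertini theorem.

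Granting this, I would assemble the conclusion by adding back the first orientation. Using $-(\fm, \omega'_{\fm}) = (\fn, \omega_{\fn})$ together with the comaximality of $\fm$ and $\fn$, the Chinese remainder theorem gives
\[
(\fm, \omega_{\fm}) - (\fm, \omega'_{\fm}) = (\fm, \omega_{\fm}) + (\fn, \omega_{\fn}) = (\fm \cap \fn, \omega'') \quad \text{in } E^s(A),
\]
where $\omega''$ is the orientation on $J := \fm \cap \fn$ induced by $\omega_{\fm}$ on $\fm$ and $\omega_{\fn}$ on $\fn$. Here $J$ is regular, being the intersection of the regular comaximal ideals $\fm$ and $\fn$, of height $d$, and $\mu(J) = d$: indeed $J$ is the image of the surjection $A^d \surj J$ constructed above, so $\mu(J) \le d$, while $\mu(J) \ge d$ because $J$ has height $d$. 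Hence every generator of $L^s(A)$ is of the required form $(J, \omega_J)$ with $J$ regular of height $d$ and $\mu(J) = d$, which proves the lemma.
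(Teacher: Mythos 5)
Your proof is correct and takes essentially the same route as the paper: both reduce $L^s(A)$ to differences $(\fm, \omega_{\fm}) - (\fm, \omega'_{\fm})$, invoke the Murthy--Swan Bertini theorem to produce a regular residual ideal $\fn$ of height $d$, comaximal with $\fm$, with a surjection $A^d \surj \fm \cap \fn$ lifting $\omega'_{\fm}$ (so that $-(\fm, \omega'_{\fm}) = (\fn, \omega_{\fn})$ in $E^s(A)$), and then assemble $(\fm, \omega_{\fm}) + (\fn, \omega_{\fn}) = (\fm \cap \fn, \omega'')$ by the Chinese remainder theorem. The only cosmetic difference is that the paper treats the degenerate case $(\fm, \omega'_{\fm}) = 0$ separately via \cite[Theorem~4.2]{BS-3}, whereas your argument absorbs it, since there the residual ideal is simply the unit ideal.
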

\begin{proof}
Let $\wt{L}^s(A)$ denote the subgroup of $E^s(A)$ generated by
elements $(J, \omega_J)$, where $J$ is a regular ideal of height $d$ with 
$\mu(J) = d$. It is clear that $\wt{L}^s(A) \subseteq L^s(A)$. 
To prove the reverse inclusion, it suffices to show using the above
description of $L^s(A)$ that an element of the type 
$(\fm, \omega_{\fm}) - (\fm, \omega'_{\fm})$ lies in $\wt{L}^s(A)$.
The proof of this is a direct translation of \cite[Lemma~3.3]{BS-4}
and goes as follows.

If $(\fm, \omega'_{\fm}) = 0$ in $E^s(A)$, then it follows from 
\cite[Theorem~4.2]{BS-3} that $(\fm, \omega_{\fm}) \in \wt{L}^s(A)$.
So we can assume that $(\fm, \omega'_{\fm}) \neq 0$ in $E^s(A)$.
In this case, we can apply the Murthy-Swan Bertini theorem
(see the proof of \lemref{lem:Sm-ECG} below)
to find a regular ideal $I$ of height $d$ which is co-maximal with $\fm$
such that there is a surjection $\tau: A^d \surj J = \fm \cap I$ and
$\omega'_{\fm} = \tau|_{A/{\fm}}$.
If we let $\omega_{I} = \tau|_{A/{I}}$, then we get
$(\fm, \omega'_{\fm}) + (I, \omega_I) = (J, \tau|_{A/{J}}) = 0$ in
$E^s(A)$. 

On the other hand, since $J = \fm I$ and $\fm + I = A$, it follows that
$\omega_{\fm}$ and
$\omega_I$ induce a surjection $\omega_J: (A/{J})^d \surj {J}/{J^2}$
and hence $(\fm, \omega_{\fm}) + (I, \omega_I) = (J, \omega_J)$ in $E^s(A)$.
We conclude that $(\fm, \omega_{\fm}) - (\fm, \omega'_{\fm}) =
(J, \omega_J) - (J, \tau|_{A/{J}}) = (J, \omega_J) \in \wt{L}^s(A)$.
This proves the lemma.
\end{proof}

\subsection{Connection between the classical and modified Euler 
class groups}\label{sec:Connect}
We shall assume in this subsection that $k$ is an infinite field.
There is an obvious commutative diagram of the Euler class groups
\begin{equation}\label{eqn:Sm-weak-ECG-0} 
\xymatrix@C1pc{
E^s(A) \ar@{->>}[r]^-{\psi^s_A} \ar[d]_{\gamma_A} & E^s_0(A) \ar[d]^{\gamma^0_A} \\
E(A) \ar@{->>}[r]_-{\psi_A} & E_0(A).}
\end{equation}

The goal of this section is to prove that the vertical arrows are 
isomorphisms. We begin with the easy part of this goal.

\begin{lem}\label{lem:Sm-ECG}
Let $A$ be a geometrically reduced affine algebra of dimension $d \ge 2$ over $k$. 
Then there is a canonical isomorphism
\[
\gamma_A: E^s(A) \xrightarrow{\simeq} E(A).
\]
\end{lem}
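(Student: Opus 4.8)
The plan is to show separately that the homomorphism $\gamma_A \colon E^s(A) \to E(A)$, obtained by regarding a regular maximal ideal equipped with an orientation as an $\fm$-primary ideal equipped with a surjective orientation, is surjective and injective. First I would check that $\gamma_A$ is well defined at all: on generators it sends $G^s(A)$ into $G(A)$, and a global surjection $A^d \surj J$ witnessing membership in $H^s(A)$ is simultaneously a witness for membership in $H(A)$, so $H^s(A)$ is carried into $H(A)$. I would then record the elementary remark that if $J$ is a regular ideal of height $d$, then $A/J$ is a finite product of fields and $J/J^2$ is free of rank $d$ over $A/J$; consequently any surjection $\omega_J \colon (A/J)^d \surj J/J^2$ is automatically an isomorphism. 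This lets one pass freely between the ``surjection'' data defining $E(A)$ and the ``isomorphism'' data defining $E^s(A)$ as soon as the underlying ideal is regular.

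For surjectivity I would start from a single generator $(\fn, \omega_{\fn})$ of $E(A)$, where $\fn$ is $\fm$-primary of height $d$ and $\omega_{\fn}\colon (A/\fn)^d \surj \fn/\fn^2$ is a surjection, noting that $\fm$ may well be a singular point. Lifting $\omega_{\fn}$ and invoking the Bhatwadekar--Sridharan moving technique together with the Bertini theorem of Murthy and Swan, I expect to produce an ideal $I$ of height $d$, comaximal with $\fn$ and supported on the regular locus $X_{\mathrm{reg}}$ (hence a regular ideal), together with a surjection $\theta\colon A^d \surj \fn \cap I$ inducing $\omega_{\fn}$ on the $\fn$-component. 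The existence of this global surjection gives $(\fn\cap I, \theta|_{A/(\fn\cap I)}) \in H(A)$, whence $(\fn, \omega_{\fn}) = -(I, \omega_I)$ in $E(A)$, where $\omega_I = \theta|_{A/I}$. Since $I$ is regular, $\omega_I$ is an isomorphism by the remark above, so $(I, \omega_I)\in G^s(A)$ and $(\fn, \omega_{\fn}) = \gamma_A\big(-(I, \omega_I)\big)$ lies in the image.

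For injectivity I would first reduce an arbitrary element of $E^s(A)$, a priori a $\Z$-combination of classes $(\fm_i, \omega_{\fm_i})$ of regular maximal ideals, to a single class $(J, \omega_J)$ with $J$ a regular ideal of height $d$ and $\omega_J$ an isomorphism; this uses the same comaximality-via-Bertini argument, applied now within the class of regular ideals, to realize sums and additive inverses through the addition and subtraction principles. Suppose then $\gamma_A(J, \omega_J) = 0$ in $E(A)$. By the criterion \cite[Theorem~4.2]{BS-3}, the orientation $\omega_J$ lifts to a surjection $A^d \surj J$. Because $J$ is regular and $\omega_J$ is an isomorphism, this lift is precisely a diagram of the form \eqref{eqn:ECG-0*}, so $(J, \omega_J) \in H^s(A)$ and hence $(J,\omega_J) = 0$ in $E^s(A)$. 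This establishes injectivity.

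The step I expect to be the main obstacle is the moving argument: one must guarantee that the comaximal complementary ideal $I$ obtained when lifting an orientation can be taken reduced and regular, i.e. supported on the smooth locus, with full control of its orientation, both to push cycles off the singular locus (for surjectivity) and to attain comaximality among regular representatives (for injectivity). This is exactly where the hypotheses that $k$ is infinite, that $d \ge 2$, and that $A$ is geometrically reduced enter, through the Murthy--Swan Bertini theorem and the Bhatwadekar--Sridharan principles; the remaining manipulations are formal within the Euler class groups.
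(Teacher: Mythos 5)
Your proposal is correct and follows essentially the same route as the paper: surjectivity via the Murthy--Swan Bertini theorem producing a regular residual ideal comaximal with the given one, and injectivity by first reducing (again via Bertini) to a single regular class $(J,\omega_J)$ and then invoking \cite[Theorem~4.2]{BS-3} to lift $\omega_J$ to a surjection $A^d \surj J$, which places $(J,\omega_J)$ in $H^s(A)$. The only cosmetic difference is that you make explicit the observation that a surjection $(A/J)^d \surj J/J^2$ onto a regular height-$d$ ideal is automatically an isomorphism, which the paper leaves implicit.
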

\begin{proof}
We prove the surjectivity of $\gamma_A$ using the Bertini theorems of Murthy 
\cite[Theorem~2.3]{Murthy} and Swan \cite[Theorem~1.3]{Swan}.
Let $J$ be an ideal of $A$ of height $d$ with a surjection
$\omega_J: (A/J)^d \surj {J}/{J^2}$.
Let $\{\fm_1, \ldots, \fm_r\}$ be a set of smooth maximal ideals of $A$.
A special case of the Murthy-Swan Bertini theorem says that there exists an 
ideal $I \subset A$ (called a residual of $J$)
such that the following hold (see \cite[Corollary~2.6 and
Remarks~2.8, 3.2]{Murthy}). 
\begin{enumerate}
\item
There exists a surjection $\alpha: A^d \surj IJ$. 
\item
$I + J = I + \fm_i = A$ for $1 \le i \le r$.
\item
$I$ is a regular ideal of $A$ of height $d$.
\item
$\alpha|_{{A}/J} = \omega_J$.
\end{enumerate}

It follows from (1), (2) and (4) that $(I, \alpha|_{{A}/I}) + (J, \omega_J) =
(IJ, \alpha|_{A/{IJ}}) = 0$ in $E(A)$ and (3) says that 
$(I, \alpha|_{{A}/I}) \in E^s(A)$. This shows that $\gamma_A$ is surjective.

To show that $\gamma_A$ is injective, let $\alpha \in E^s(A)$ be such that
$\gamma_A(\alpha) = 0$. 
By repeatedly applying the above Bertini theorem again,
we can write $\alpha = (J, \omega_J)$, where $J$ is a regular ideal of height $d$ in $A$.
Indeed, for a general element $\alpha \in E^s(A)$, we choose a lift
$\wt{\alpha} = \sum a_{(\fm, \omega_{\fm})} (\fm, \omega_{\fm})$ in $G^s(A)$. We
let $m_{\wt{\alpha}}$ denote the number of $(\fm, \omega_{\fm})$ in the expression of $\wt{\alpha}$ such that
$a_{(\fm, \omega_{\fm})}$ is not equal to $\pm 1$. Using the above Bertini theorem and an induction on
$m_{\wt{\alpha}}$, we can assume that each $a_{(\fm, \omega_{\fm})} = \pm 1$. We can now apply the
Bertini theorem to those $(\fm, \omega_{\fm})$ such that $a_{(\fm, \omega_{\fm})}= -1$ and make all these
coefficients $1$. We can thus write $\wt{\alpha} = \sum (\fm, \omega_{\fm})$.
We now take $J$ to be the product of maximal ideals in this expression of $\wt{\alpha}$ and $\omega_J$ to be
the homomorphism induced from $\omega_{\fm}$'s.

We now apply \cite[Theorem~4.2]{BS-3} to conclude that
$\omega_J$ lifts to a surjection $\wt{\omega}_J: A^d \surj J$.
In particular, $\alpha = (J, \omega_J) = 0$ in $E^s(A)$. 
This shows that $\gamma_A$ is injective.
\end{proof}

Using \thmref{thm:Segre} and \lemref{lem:Sm-ECG}, we can now prove
our main comparison result.

\begin{prop}\label{prop:Sm-weak-ECG}
Let $A$ be a geometrically reduced affine algebra of dimension $d \ge 2$ over $k$. 
Then there is a canonical isomorphism
\[
\gamma^0_A: E^s_0(A) \xrightarrow{\simeq} E_0(A).
\]
\end{prop}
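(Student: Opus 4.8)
The plan is to deduce the statement for the weak groups from the already-established isomorphism $\gamma_A\colon E^s(A)\xrightarrow{\simeq}E(A)$ of \lemref{lem:Sm-ECG} by a diagram chase, the whole problem being concentrated in comparing the kernels of the two ``forget orientation'' maps. Writing $L^s(A)=\ker(\psi^s_A)$ and $L(A)=\ker(\psi_A)$, the diagram \eqref{eqn:ECG-diagram} and its unsuperscripted analogue give two short exact sequences
\[
0\to L^s(A)\to E^s(A)\xrightarrow{\psi^s_A} E^s_0(A)\to 0,\qquad
0\to L(A)\to E(A)\xrightarrow{\psi_A} E_0(A)\to 0,
\]
related by $\gamma_A$, $\gamma^0_A$ and the restriction $\gamma_A|_{L^s(A)}$, the relevant squares commuting by \eqref{eqn:Sm-weak-ECG-0}. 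Applying the snake lemma and using that $\gamma_A$ is an isomorphism, one gets $\ker(\gamma^0_A)\cong\cok(\gamma_A|_{L^s(A)})$ and $\cok(\gamma^0_A)=0$. Thus $\gamma^0_A$ is automatically surjective, and it is an isomorphism if and only if $\gamma_A|_{L^s(A)}$ is surjective, i.e.\ if and only if $\gamma_A(L^s(A))=L(A)$. Since $\gamma_A$ is injective and $\psi_A(\gamma_A(x))=\gamma^0_A(\psi^s_A(x))=0$ for $x\in L^s(A)$, the inclusion $\gamma_A(L^s(A))\subseteq L(A)$ is clear, so the entire proof reduces to the reverse inclusion $L(A)\subseteq\gamma_A(L^s(A))$.

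To produce generators of $L(A)$ I would invoke the Segre exact sequence of \thmref{thm:Segre}: exactness at $E(A)$ gives $L(A)=\ker(\psi_A)=\im(\phi_A)$, so $L(A)$ is generated by the Segre classes $\phi_A([\underline a])$ with $\underline a\in Um_{d+1}(A)$. On the other side, \lemref{lem:Ker-psi} tells us that $L^s(A)$ is generated by the classes $(J,\omega_J)$ with $J$ a regular ideal of height $d$ satisfying $\mu(J)=d$. The strategy is therefore to match these two descriptions: for each unimodular row I would show that $\phi_A([\underline a])$ is the $\gamma_A$-image of such a class $(J,\omega_J)\in L^s(A)$.

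The main work — and the main obstacle — lies in this matching. Concretely, $\phi_A([\underline a])$ is the Euler class of the rank-$d$ stably free module $P_{\underline a}=\ker(A^{d+1}\xrightarrow{\underline a}A)$, computed from a generic section. Here I would run the Murthy--Swan Bertini theorem exactly as in the proof of \lemref{lem:Sm-ECG} (legitimate since $k$ is infinite and $A$ is geometrically reduced) to arrange that the section cuts out a \emph{regular} ideal $J$ of height $d$; because $A/J$ is then a finite product of fields, $J/J^2$ is free of rank $d$ and the induced $\omega_J$ is an isomorphism, so $(J,\omega_J)$ is a legitimate class in $E^s(A)$ with $\gamma_A(J,\omega_J)=\phi_A([\underline a])$. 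The delicate point is to also guarantee $\mu(J)=d$ (equivalently $(J)\in H^s_0(A)$, so that $\psi^s_A(J,\omega_J)=0$ and $(J,\omega_J)\in L^s(A)$): this is where the stably free structure of $P_{\underline a}$ is essential, since for a general rank-$d$ module the Forster--Swan bound only gives $\mu(J)\le d+1$, and it is precisely the unimodular-row origin of the class that forces the efficient $d$-element generation. Granting this, every generator $\phi_A([\underline a])$ of $L(A)$ lies in $\gamma_A(L^s(A))$, giving $L(A)\subseteq\gamma_A(L^s(A))$ and hence $\gamma_A(L^s(A))=L(A)$; by the first paragraph, $\gamma^0_A$ is an isomorphism. (Equivalently, one may package the matching as a modified Segre map $\phi^s_A\colon WS_{d+1}(A)\to E^s(A)$ with $\gamma_A\circ\phi^s_A=\phi_A$ and, via \lemref{lem:Ker-psi}, $\im(\phi^s_A)=L^s(A)$, whence $\gamma_A(L^s(A))=\im(\phi_A)=L(A)$.)
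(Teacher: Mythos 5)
Your reduction is exactly the one the paper uses: the snake lemma applied to the two short exact sequences coming from \eqref{eqn:ECG-diagram} and its unsuperscripted analogue, together with \lemref{lem:Sm-ECG}, reduces the proposition to the inclusion $L(A)\subseteq\gamma_A(L^s(A))$, and \thmref{thm:Segre} reduces this further to showing that $\phi_A([\underline{a}])\in\gamma_A(L^s(A))$ for every unimodular row $\underline{a}=[a_1,\dots,a_{d+1}]$. The problem is that this remaining step is the entire content of the proposition, and your proposal does not prove it: you write ``Granting this'' at precisely the point where the work happens. Moreover, the mechanism you suggest --- taking a generic section of the stably free module $P_{\underline{a}}=\ker(\underline{a})$ --- cannot by itself deliver $\mu(J)=d$. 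A surjection from a rank-$d$ module generated by $d+1$ elements onto an ideal $J$ only gives $\mu(J)\le d+1$, and the question of whether an ideal of height $d$ whose conormal module $J/J^2$ is $d$-generated is itself $d$-generated is exactly the obstruction that the Euler class group was designed to detect (cf.\ \cite[Theorem~4.2]{BS-3}); stable freeness of $P_{\underline{a}}$ does not ``force the efficient $d$-element generation'' without further argument, so your proposed route is close to circular at its key point.

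The paper's proof sidesteps this by never taking a section of $P_{\underline{a}}$: it applies Swan's Bertini theorem (the proof of \cite[Theorem~1.4]{Swan}, plus a second application to $X_{\rm sing}\inj \A^n_k$, using that $k$ is infinite and $\dim(X_{\rm sing})\le d-1$ because $A$ is geometrically reduced) to the row itself. For general $b_1,\dots,b_d\in A$ the ideal $J=(a_1+b_1a_{d+1},\dots,a_d+b_da_{d+1})$ is either the unit ideal or a regular ideal of height $d$, and $\mu(J)=d$ holds \emph{by construction}, since $J$ is literally generated by the $d$ transformed entries $a'_i=a_i+b_ia_{d+1}$; furthermore the passage from $\underline{a}$ to $\underline{a'}=[a'_1,\dots,a'_d,a_{d+1}]$ is an elementary transformation, so the class in $WS_{d+1}(A)$ is unchanged. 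The final ingredient, which your proposal also leaves unaddressed, is the identification $\phi_A([\underline{a}])=(J,\ov{a_{d+1}}\,\omega_J)$ with $\omega_J(e_i \bmod J)=a'_i \bmod J^2$; this is not formal (note the twist by $\ov{a_{d+1}}$) and is quoted from \cite[\S~7, p.~214]{BS-3} for $d=2$ and \cite[Remark~3.7]{DZ} for $d\ge3$. With that formula, $(J,\ov{a_{d+1}}\,\omega_J)$ lies in $G^s(A)$, maps to zero in $E^s_0(A)$ because $\mu(J)=d$, hence lies in $L^s(A)$ by \lemref{lem:Ker-psi}, and the argument closes as you intended. So your skeleton is correct, but the crux --- producing a representative of the Segre class that is simultaneously regular, of height $d$, and $d$-generated --- must be done by moving the row, not by sectioning the module.
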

\begin{proof}
We have a commutative diagram of short exact sequences
\begin{equation}\label{eqn:Sm-weak-ECG-0-0} 
\xymatrix@C.8pc{
0 \ar[r] & L^s(A) \ar[r] \ar[d] & E^s(A) \ar[r] \ar[d]^{\simeq} & 
E^s_0(A) \ar[d] \ar[r]
& 0 \\
0 \ar[r] & L(A) \ar[r] & E(A) \ar[r] & E_0(A) \ar[r] & 0.}
\end{equation}
Using \lemref{lem:Sm-ECG}, it suffices to show that the left vertical
arrow in this diagram is surjective.
Using \thmref{thm:Segre}, it suffices to show that if 
$\underline{a} = [a_1, \ldots, a_{d+1}]$ is a unimodular row, then
$\phi_A([\underline{a}]) \in L^s(A)$. Note that we can identify
$L^s(A)$ with its image in $L(A)$.

Let $\{e_1, \ldots, e_d\}$ be the standard basis of the free $A$-module $A^d$.
Let $\alpha: A^d \to A$ be given by $\alpha(e_i) = a_i$ for $1 \le i \le d$.
For a projective $A$-module $P$ and an $A$-linear map $f: P^* \to A$,
let $Z(f)$ denote the closed subscheme of $\Spec(A)$ where 
the induced map $f^*: \Spec(A) \to \Spec({\rm Sym}(P^*))$ vanishes.

We fix a surjective $k$-algebra homomorphism $u:k[X_1, \ldots, X_n] \surj A$
and let $x_i = u(X_i)$ for $1 \le i \le n$.
Let $X = \Spec(A)$ and let $I \subset A$ be the reduced ideal such that 
$\Spec(A/I) = X_{\rm sing}$. Let $\ov{u}: k[X_1, \ldots, X_n] \surj A
\surj A/I$ be the composite map. For any $A$-module $M$ and an element
$m \in M$, let $\ov{m}$ denote its image under the map $M \surj M/IM$.
For $1 \le i \le n$ and $1 \le j \le d$, we let $t_{ij} = x_ie_j \in A^d$.

In this case, Swan's Bertini theorem (see the proof of 
\cite[Theorem~1.4]{Swan}) says that there exists a dense open subset
$U_1 \subset \A^{n(1+d)}_k$ such that for every $(\{\lambda_i\}, \{\gamma_{ij}\})
\in U_1(k)$, the following hold.
\begin{enumerate}
\item
$Z(\alpha + a_{d+1}(\sum_i \lambda_i e_i + \sum_{i,j} \gamma_{ij} t_{ij}))$
is a reduced closed subscheme of $X$ of pure codimension $\geq d$.
\item
$Z(\alpha + a_{d+1}(\sum_i \lambda_i e_i + \sum_{i,j} \gamma_{ij} t_{ij}))
\cap X_{\rm reg}$ is regular.
\end{enumerate}

Similarly, by applying the Bertini theorem to the composite
embedding $X_{\rm sing} \inj X \inj \A^n_k$, we get a dense open subset
$U_2 \subset \A^{n(1+d)}_k$ such that for every $(\{\lambda_i\}, \{\gamma_{ij}\})
\in U_2(k)$, the following holds. \\
\hspace*{.5cm} (3)
$Z(\ov{\alpha} + \ov{a_{d+1}}(\sum_i \lambda_i \ov{e_i} + \sum_{i,j} \gamma_{ij} 
\ov{t_{ij}}))$
is a  closed subscheme of $X_{\rm sing}$ of pure codimension\\
\hspace*{1.1cm} $d$ (or is empty).

Since $A$ is geometrically reduced and hence $\dim(X_{\rm sing}) \le d-1$, it follows from
(1), (2) and (3) that for a general set of elements
$\{b_1, \ldots, b_d\}$ in $A$, the ideal $J = (a_1 + b_1a_{d+1}, 
\ldots, a_d + b_d a_{d+1})$ has the property that either $J = A$ 
or it is a regular ideal of height $d$ in $A$. 

If $J =A$, we have $\phi_A([\underline{a}]) = 0$.
If $J$ is a regular ideal of height $d$, then 
it is easy to check that the  equivalence class of the unimodular row
$\underline{a'}:= [a'_1, \ldots, a'_d, a_{d+1}]$ in $WS_{d+1}(A)$
is same as that of $\underline{a}$, where $a'_i = a_i + b_ia_{d+1}$. Moreover, it follows from
\cite[\S~7, p.~214]{BS-3} when $d =2$ and from \cite[Remark~3.7]{DZ} when
$d \ge 3$ that $\phi_A([\underline{a}]) = (J, \ov{a_{d+1}}\omega_{J})$, where $\omega_J: (A/J)^d \to {J}/{J^2}$
is given by $\omega_J(e_i \ {\rm mod} \ J) = a'_i \ {\rm mod} \ J^2$.
Since $(J, \ov{a_{d+1}}\omega_{J}) \in G^s(A)$ and since $\mu(J) = d$,
it follows from Lemma~\ref{lem:Ker-psi} that $(J, \ov{a_{d+1}}\omega_{J}) 
\in L^s(A)$. This finishes the proof of the proposition.
\end{proof}

\subsection{Euler class group and $K$-theory}\label{sec:ECG-K}
Let $k$ be a field and let $A$ be an equi-dimensional geometrically
reduced affine algebra of dimension $d \ge 2$ over $k$.
One can check from the definition that a generator of $E_0(A)$ may
not be a local complete intersection ideal in $A$ in general.
So there is no evident cycle class map $E_0(A) \to K_0(A)$.
One immediate advantage of $E^s_0(A)$ is that each of its generator
is a local complete intersection ideal. Using this idea and  
\propref{prop:Sm-weak-ECG}, one can construct a cycle class map
$\cyc_A: E_0(A) \to K_0(A)$ as follows.

If $\fm \subset A$ is a regular maximal ideal, then $A/{\fm}$ admits
a class $[A/{\fm}] \in K_0(A)$. Extending it linearly, one gets
a map $G^s_0(A) \to K_0(A)$. If $J \subset A$ is a regular ideal of
height $d$ such that $\mu(J) = d$, then one knows that it must be
a complete intersection ideal (see \cite[Theorems~135, 125]{Kaplansky}).
Using the Koszul resolution of $A/J$, it easily follows that
$cyc_A((J)) = [A/J] = 0$ in $K_0(A)$. We therefore get a map
\begin{equation}\label{eqn:CCM-ECG}
cyc_A: E^s_0(A) \to K_0(A).
\end{equation}

Using \propref{prop:Sm-weak-ECG},
we can now prove our main result of this section:

\begin{thm}\label{thm:RR-ECG}
Let $k$ be an infinite field and 
let $A$ be a geometrically
reduced affine algebra of dimension $d \ge 2$ over $k$.
Assume that one of the following holds.
\begin{enumerate}
\item
$k$ is algebraically closed.
\item
$(d-1)! \in k^{\times}$.
\end{enumerate}

Then ${\rm Ker}(cyc_A)$ is a torsion group of exponent $(d-1)!$.
\end{thm}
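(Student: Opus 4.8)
The plan is to invert $cyc_A$ up to the factor $(d-1)!$ by means of a top Chern class homomorphism. Precisely, I would construct a homomorphism $\Theta \colon F^dK_0(A) \to E^s_0(A)$ and establish the identity $\Theta \circ cyc_A = (-1)^{d-1}(d-1)!\cdot \id_{E^s_0(A)}$. Granting this, the theorem is purely formal: since $cyc_A$ takes values in $F^dK_0(A)$ (it sends a regular maximal ideal $\fm$ of height $d$ to the class $[A/\fm]$ of a smooth closed point), any $\xi \in {\rm Ker}(cyc_A)$ satisfies $(-1)^{d-1}(d-1)!\,\xi = \Theta(cyc_A(\xi)) = \Theta(0) = 0$, so $(d-1)!$ annihilates ${\rm Ker}(cyc_A)$ and the kernel is torsion of exponent dividing $(d-1)!$.

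The construction of $\Theta$ is where I would invoke \propref{prop:Sm-weak-ECG} to identify $E^s_0(A)$ with the weak Euler class group $E_0(A)$, the natural recipient of a Chern class, and the role of $\Theta$ is played by the top Chern class $c_d$. Because $A$ is singular there is no Gersten resolution with which to define $c_d$ directly, so I would build it from the Euler classes of rank-$d$ projective modules in the sense of Bhatwadekar and R.~Sridharan, factoring through $E(A) \cong E^s(A)$ (\lemref{lem:Sm-ECG}) and the surjection $\psi_A$ of \thmref{thm:Segre}. Two points make $\Theta$ a well-defined homomorphism: since $\dim A = d$ one has $F^{d+1}K_0(A)=0$, so $F^dK_0(A)$ is the top graded piece of the $\gamma$-filtration, on which the a priori non-additive top Chern class becomes additive; and the decisive local value is supplied by Riemann--Roch without denominators, namely $c_d([A/\fm]) = (-1)^{d-1}(d-1)!\,[\fm]$, valid because $A/\fm$ is supported at a smooth point and carries a length-$d$ Koszul resolution. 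This is exactly the source of the coefficient $(-1)^{d-1}(d-1)!$ on the generators of $E^s_0(A)$, and additivity then promotes it to the required identity.

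To carry this out rigorously I would first reduce, via the Murthy--Swan Bertini theorems exactly as in the proofs of \propref{prop:Sm-weak-ECG} and \lemref{lem:Ker-psi}, every class of $E^s_0(A)$ to the form $(J)$ for a single regular height-$d$ ideal $J$ with $\mu(J)=d$. On such complete-intersection ideals the Koszul resolution controls $[A/J]$ in $K_0(A)$ and renders the comparison between the Euler class obstruction and the $K$-theoretic class transparent. The exactness of this comparison with precisely the factor $(d-1)!$, rather than a larger multiple, is where Suslin's cancellation theorem and the Das--Zinna description underlying \thmref{thm:Segre} enter; the hypothesis that $k=\ov{k}$ or $(d-1)! \in k^{\times}$ is used here to guarantee that the comparison holds integrally and introduces no further denominators.

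The main obstacle I anticipate is the construction of the top Chern class map $\Theta$ itself: for a singular algebra the standard Chow-theoretic Chern class machinery is unavailable, so $\Theta$ must be defined intrinsically from the Euler class formalism and verified to be an additive homomorphism, and its value on $[A/\fm]$ must be pinned down over $\Z$ rather than merely after tensoring with $\Q$. Once $\Theta$ is in hand and the identity $\Theta \circ cyc_A = (-1)^{d-1}(d-1)!\cdot\id$ has been checked on the complete-intersection generators furnished by the Bertini argument, the exponent bound for ${\rm Ker}(cyc_A)$ follows immediately.
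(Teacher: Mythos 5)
Your reduction of the theorem to the existence of a homomorphism $\Theta \colon F^dK_0(A) \to E^s_0(A)$ with $\Theta \circ cyc_A = (-1)^{d-1}(d-1)!\cdot \id$ is formally valid, but the construction of $\Theta$ --- which you yourself flag as the main obstacle --- is not a deferrable technicality: as stated, it is circular. A map on $F^dK_0(A)$ sending $cyc_A(\xi) \mapsto (-1)^{d-1}(d-1)!\,\xi$ is well defined if and only if $cyc_A(\xi) = cyc_A(\xi')$ implies $(d-1)!\,(\xi - \xi') = 0$, i.e.\ if and only if ${\rm Ker}(cyc_A)$ has exponent $(d-1)!$ --- which is exactly the theorem to be proved. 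In the smooth case this circle is broken because $c_d$ is defined on \emph{all} of $K_0(A)$ (splitting principle, $\gamma$-operations) with values in the Chow group, and Riemann--Roch without denominators then computes it on $[A/\fm]$; restricting a globally defined map to $F^dK_0(A)$ costs nothing. For singular $A$ no such machinery exists with values in $E_0(A)$ or $\CH^{LW}_0(A)$: there is no splitting principle, the weak Euler class $e_0(P)$ is a priori an invariant of the module $P$ and not of its class $[P]\in K_0(A)$ (its invariance under $[P]=[Q]$ is itself a cancellation-type statement, not a formality), and the vanishing $F^{d+1}_{\gamma}K_0(A)=0$ that you invoke to force additivity is a consequence of smooth Riemann--Roch and is not available for singular algebras. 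This is precisely why the paper's Introduction contrasts Grothendieck's Chern-class argument with the singular situation.

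The paper avoids any global Chern class map and argues element by element. After the same Bertini reduction you propose, one assumes the kernel class is $(I)$ with $I$ regular of height $d$ and $[A/I]=0$ in $K_0(A)$. The factor $(d-1)!$ is then produced by hand: choosing an $A$-regular sequence with $I=(f_1,\dots,f_d)+I^2$, Das--Mandal's lemma shows that $J=(f_1,\dots,f_{d-1})+I^{(d-1)!}$ satisfies $(J)=(d-1)!\,(I)$ in $E_0(A)$ --- this is the ``Riemann--Roch without denominators'' of your sketch, realized as an identity between explicit ideals rather than as a property of a homomorphism. Murthy's theorem then yields a rank-$d$ projective $P\surj J$ with $[P]-[A^d]=-[A/I]=0$, so $P$ is stably free, and the two hypotheses enter exactly here: Suslin's cancellation theorem (case $k=\ov{k}$) makes $P$ free, whence $\mu(J)=d$ and $(J)=0$ in $E_0(A)$; when $(d-1)!\in k^{\times}$, Bass cancellation plus Bhatwadekar--Sridharan's theory gives $(J)=e_0(P)=(J')$ with $\mu(J')=d$, so again $(J)=0$. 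One then transports the conclusion back to $E^s_0(A)$ by \propref{prop:Sm-weak-ECG}. To salvage your approach you would have to prove that $e_0$ descends to an additive map on rank-zero classes in $K_0(A)$; both the descent and the Whitney-type additivity are cancellation theorems of the same depth as the ones the paper applies pointwise, so nothing would be gained even if they were carried out.
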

\begin{proof}
Let $\alpha \in E^s_0(A)$ be such that $cyc_A(\alpha) = 0$. By
repeatedly applying the Murthy-Swan Bertini theorem, as in the
proof of \lemref{lem:Sm-ECG}, we can assume that $\alpha = (I)$, 
where $I \subset A$ is a regular ideal of height $d$. 
Our assumption then says that $[A/I] = 0$ in $K_0(A)$.
Since $I$ is supported on the Cohen-Macaulay locus of $A$, the proof
of \cite[Lemma~1.2]{Mandal} shows that there exists an 
$A$-regular sequence $(f_1, \dots, f_d)$ such that 
$I = (f_1, \dots, f_d) + I^2$. Let $J =(f_1, \dots, f_{d-1}) + I^{(d-1)!}$. 

It follows from \cite[Lemma~4.1]{Das-Mandal} that 
$(J) = (d-1)! (I)$ in $E_0(A)$. 
If we can show that $(J) = 0$ in $E_0(A)$, then it will follow that
$(d-1)!(I) = 0$ in $E_0(A)$. We can then conclude from 
\propref{prop:Sm-weak-ECG} that $(d-1)!(I) = 0$ in $E^s_0(A)$.
We have therefore reduced the problem to showing that
$(J) = 0 \in E_0(A)$.

Now, it follows from \cite[Theorem~2.2]{Murthy} that there exists a
projective $A$-module $P$ of rank $d$ and a surjection $P \surj J$ such that
$[P] - [A^d] = - [A/I]$ in $K_0(A)$. It follows from our hypothesis
on $I$ that $[P] = [A^d] \in K_0(A)$ so that $P$ is stably free.
If $k$ is algebraically closed, it follows from \cite[Theorem~6]{Suslin}
that $P$ is free. But this implies that $\mu(J) = d$ so that $(J) = 0$
in $E_0(A)$. 

Suppose now that $(d-1)! \in k^{\times}$. At any rate, it follows from
the cancellation theorem of Bass (see \cite[Theorem~1]{Suslin}) that
$P \oplus A \cong A^{d+1}$ so that $P$ is the kernel of a surjection
$A^{d+1} \surj A$. In this case, it is shown on \cite[Page~214]{BS-3}
that there exists an ideal $J'$ of height $d$ 
with $\mu(J') = d$ and a surjection $P \surj J'$.
Furthermore, under the assumption that $(d-1)! \in k^{\times}$,
it is shown in \cite[\S~4]{BS-3} that the weak Euler class $e_0(P)$ of $P$ 
is well defined in $E_0(A)$ and $(J) = e_0(P) = (J')$.
We are now done because $(J') = 0$ in $E_0(A)$.
This finishes the proof.
\end{proof}

Combining \propref{prop:Sm-weak-ECG} and \thmref{thm:RR-ECG}, we obtain
the following. When ${\rm char}(k) = 0$ and $A$ is Cohen-Macaulay, this
was earlier proven independently by Bhatwadekar (unpublished)
and Das-Mandal \cite[Corollary~4.2]{Das-Mandal}. 

\begin{cor}\label{cor:RR-ECG**}
Let $A$ be a geometrically
 reduced affine algebra of dimension $d \ge 2$ over 
an infinite field $k$.
Then there is a cycle class map $E_0(A) \to K_0(A)$ whose kernel
is torsion of exponent $(d-1)!$ if either $k = \ov{k}$ or
$(d-1)! \in k^{\times}$.
\end{cor}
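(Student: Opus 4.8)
The plan is to obtain the asserted cycle class map on $E_0(A)$ by transporting the map $cyc_A$ of \eqref{eqn:CCM-ECG} across the isomorphism $\gamma^0_A$ of \propref{prop:Sm-weak-ECG}, and then to read off the kernel statement directly from \thmref{thm:RR-ECG}. All of the real content is carried by these two already-established results, so the argument is essentially a matter of assembling them and checking that the two hypotheses on $k$ line up.

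First I would define the cycle class map on $E_0(A)$ as the composite
\[
E_0(A) \xrightarrow{(\gamma^0_A)^{-1}} E^s_0(A) \xrightarrow{cyc_A} K_0(A).
\]
This makes sense precisely because \propref{prop:Sm-weak-ECG} guarantees that $\gamma^0_A$ is an isomorphism (this being the whole reason for introducing the modified group $E^s_0(A)$): a generator of $E^s_0(A)$ is a regular, hence local complete intersection, ideal and therefore carries an honest class $[A/J] \in K_0(A)$, whereas a generator of $E_0(A)$ need not be a complete intersection and so admits no evident class map. Having fixed this composite, say $cyc_A: E_0(A) \to K_0(A)$, the isomorphism $(\gamma^0_A)^{-1}$ restricts to an isomorphism of kernels $\operatorname{Ker}(cyc_A \text{ on } E_0(A)) \xrightarrow{\cong} \operatorname{Ker}(cyc_A \text{ on } E^s_0(A))$. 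Hence the kernel of the map $E_0(A) \to K_0(A)$ is isomorphic to $\operatorname{Ker}(cyc_A)$ of \thmref{thm:RR-ECG}, which is a torsion group of exponent $(d-1)!$ under either hypothesis $k = \ov{k}$ or $(d-1)! \in k^{\times}$. This completes the argument.

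The only point requiring any care, rather than a genuine obstacle, is the matching of hypotheses. \propref{prop:Sm-weak-ECG} requires $k$ to be infinite, which is assumed in the corollary; and \thmref{thm:RR-ECG} requires exactly one of the two conditions $k = \ov{k}$ or $(d-1)! \in k^{\times}$, which are the two cases in the statement. Since an algebraically closed field is automatically infinite, and in the second case infiniteness is hypothesized directly, both cases are covered, and no further work is needed beyond this bookkeeping.
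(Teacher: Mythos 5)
Your proposal is correct and matches the paper's argument exactly: the paper also defines the map on $E_0(A)$ by transporting $cyc_A$ of ~\eqref{eqn:CCM-ECG} across the isomorphism $\gamma^0_A$ of \propref{prop:Sm-weak-ECG}, and then reads off the kernel bound from \thmref{thm:RR-ECG}. Your hypothesis bookkeeping (infiniteness of $k$ for \propref{prop:Sm-weak-ECG}, and the two alternatives for \thmref{thm:RR-ECG}) is also the same as what the paper implicitly relies on.
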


\section{The kernel of the cycle class map}\label{sec:gen-field}
We shall now prove Theorems~\ref{thm:Thm-2}, ~\ref{thm:Thm-4}, 
~\ref{thm:Thm-6} and  ~\ref{thm:ECG-*} 
with the help of the results of \S~\ref{sec:ECG}.
In order to do so, we need to recall the cycle class map
for 0-cycles in ~\eqref{eqn:CCM-*-3} in the modulus setting.

\subsection{The cycle class map with modulus}\label{sec:CCM-M}
Let $X$ be a smooth quasi-projective scheme of dimension $d\geq 1$ over a perfect field $k$
and let $D \subset X$ be an effective Cartier divisor. Recall from
\cite[Theorem~12.4]{BK} that there is a cycle class map with modulus 
\begin{equation}\label{eqn:CCM-BK}
\lambda_{X|D} : \CH_0(X| D) \to K_0(X,D).
\end{equation}

This was constructed
as the composition of the left arrows in the following commutative diagram of 
short exact
 sequences.
\begin{equation}\label{eqn:CCM-BK-1}
\xymatrix@C1pc{
0 \ar[r]&  \CH_0(X| D) \ar[r]^-{{p_{+}}_*} \ar[d]_-{\tilde{\lambda}_{X|D}} & 
\CH_0(S_X) \ar[d]^{\lambda_{S_X}} \ar[r]^{\iota^*_-} & \CH_0(X) \ar[r] 
\ar[d]^{\lambda_X}& 0   \\
0 \ar[r]&  K_0(S_X, X_-) \ar[r]^-{{p_{+}}_*}  \ar[d]_-{\phi_0}& K_0(S_X) 
\ar[r]^-{\iota^*_-} \ar[d]^{\iota^*_+}& K_0(X) \ar[d]^{\iota^*} \ar[r]& 0   \\
 &  K_0(X, D) \ar[r] & K_0(X) \ar[r]^-{\iota^*} & K_0(D),&   \\
}
\end{equation}
where $\lambda_{S_X}$ and $\lambda_X$ are as in ~\eqref{eqn:CCM-*-3}.

Observe that, given $x\in z_0(X|D)$, the composition of the maps of spectra 
$K(k(x)) \to K(X) \to K(D)$ 
is null homotopic and hence it defines a map $K_0(k(x)) \to K_0(X, D)$. 
Extending linearly, we have a homomorphism $z_0(X|D) \to K_0(X, D)$. 
Moreover, this homomorphism factors through $\phi_0: K_0(S_X, X_+) \to 
K_0(X, D)$ and,
on cycles, it is the same as $\lambda_{X|D}$. To see this, let 
$p_{+,x} : \Spec(k(x)) \hookrightarrow S_{X}$ be the 
composition of the inclusions $\iota_x: \Spec(k(x)) \hookrightarrow X$ and 
$\iota_{+} : X \hookrightarrow S_X$. 
We then have the commutative diagram
\begin{equation}\label{eqn:CCM-BK-2}
\xymatrix@C1pc{
K(k(x)) \ar[r]  \ar@/^1.5pc/[rr]^-{{p_{+,x}}_*} \ar@{=}[d] & 
K^{\{x\}}(X_+ \setminus D) \ar[d]^{\iota^*_+}_{\cong}&K^{\{x\}}(S_X) \ar[r]  
\ar[l]_-{\cong} \ar[d]^{\iota^*_+} & K(S_X, X_-)\ar[d]^{\phi_0} \\
K(k(x)) \ar[r]  \ar@/_1.5pc/[rr]_-{{\iota_{x}}_*} & K^{\{x\}}(X \setminus D) &
K^{\{x\}}(X) \ar[r]  \ar[l]_-{\cong}& K(X, D)
 }
 \end{equation}
such that the composition of the arrows in the top and bottom rows give the 
maps $\tilde{\lambda}_{X|D}$ and $\lambda_{X|D}$, respectively.

\subsection{Proof of  Theorem~\ref{thm:Thm-6}}\label{sec:kernel-cycle}
We fix a field $k$. We also fix a (equi-dimensional) geometrically
reduced affine algebra $A$ of dimension $d \ge 2$ over $k$ and let 
$X = \Spec(A)$. We shall interchangeably use the notations $\CH^{LW}_0(X)$
and $\CH^{LW}_0(A)$. We begin with the following connection between the
Euler class group and the Chow group of $A$.

It is clear from the definition of $E^s_0(A)$ in \S~\ref{sec:MECG} that  
there are canonical maps $G^s_0(A) \xrightarrow{\cong} \sZ_0(A)
\surj \CH^{LW}_0(A)$
which sends a regular maximal ideal $\fm$ to the cycle class 
$[x] \in \sZ_0(A)$, where $x = \Spec({A}/{\fm}) \in X_{\rm reg}$.
In order to show that the composite map factors through $E^s_0(A)$, 
we need to show that if $J \subset A$ is a regular ideal of height $d$ such 
that $\mu(J) = d$, then the image of $(J)$ in $\sZ_0(A)$ lies in
$\sR^{LW}_0(A)$. By \cite[Lemma~2.2]{LW}, it suffices to show that
$J$ is a complete intersection ideal. But this follows directly from
\cite[Theorems~125, 135]{Kaplansky} because a regular ideal is always
a local complete intersection. We have therefore constructed
a canonical surjective map
\begin{equation}\label{eqn:Euler-Chow}
\wt{cyc}_A: E^s_0(A) \surj \CH^{LW}_0(A)
\end{equation}
and it is immediate from ~\eqref{eqn:CCM-ECG} that there is a 
commutative diagram
\begin{equation}\label{eqn:Euler-Chow-0}
\xymatrix@C.8pc{
E_0(A) & E^s_0(A) \ar@{->>}[r]^-{\wt{cyc}_A} \ar@/_2pc/[rrr]_{cyc_A} 
\ar[l]_{\gamma^0_A}
&  \CH^{LW}_0(A) \ar@{->>}[r] \ar@/^2pc/[rr]^{\lambda_A} & \CH_0(A) \ar[r] & 
K_0(A).}
\end{equation}

As a combination of \thmref{thm:RR-ECG} and ~\eqref{eqn:Euler-Chow-0}, we
immediately get the following result about the cycle class map for the
0-cycles. When $k = \ov{k}$, this gives an independent proof of
an old unpublished result of Levine (see \cite[Corollary~5.4]{Levine-5}). 
When $k$ is not algebraically closed, this result is completely new.

\begin{thm}\label{thm:RR-0-cycle}
Let $A$ be geometrically
reduced affine algebra of dimension $d \ge 2$ over an 
field $k$. Assume that either $k$ is algebraically
closed or $(d-1)! \in k^{\times}$. Let $X = \Spec(A)$ and let $D \subset X$ be an
effective Cartier divisor. Then the following hold.  
\begin{enumerate}
\item If $k$ is an infinite field, then the kernel of the cycle class
map $\lambda_A:\CH^{LW}_0(A) \to K_0(A)$ is a torsion group of exponent
$(d-1)!$.
\item 
For arbitrary fields, the kernel of the cycle class
map $\lambda_A:\CH_0(A) \to K_0(A)$ is a torsion group of exponent
$(d-1)!$.
\item If $k$ is perfect and $X$ is smooth, then the kernel of the  cycle class map 
$\lambda_{X|D}: \CH_0(X|D) \to K_0(X,D)$ as in ~\eqref{eqn:CCM-BK}
is a torsion group of exponent
$(d-1)!$.
\end{enumerate}
\end{thm}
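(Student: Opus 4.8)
The plan is to deduce all three parts from \thmref{thm:RR-ECG} together with the commutative diagram \eqref{eqn:Euler-Chow-0}, reducing the finite-field and the modulus cases to formal manipulations. First I would prove (1). Since the map $\wt{cyc}_A \colon E^s_0(A) \surj \CH^{LW}_0(A)$ of \eqref{eqn:Euler-Chow} is surjective and $cyc_A = \lambda_A \circ \wt{cyc}_A$ by \eqref{eqn:Euler-Chow-0}, any $\alpha \in \ker(\lambda_A \colon \CH^{LW}_0(A) \to K_0(A))$ lifts to some $\beta \in E^s_0(A)$ with $cyc_A(\beta) = \lambda_A(\alpha) = 0$; thus $\beta \in \ker(cyc_A)$, so $(d-1)!\,\beta = 0$ by \thmref{thm:RR-ECG}, whence $(d-1)!\,\alpha = \wt{cyc}_A((d-1)!\,\beta) = 0$. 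For the infinite-field case of (2) I would use the canonical surjection $\CH^{LW}_0(A) \surj \CH_0(A)$, which is compatible with the cycle class maps to $K_0(A)$: lifting a class in $\ker(\lambda_A \colon \CH_0(A) \to K_0(A))$ to $\CH^{LW}_0(A)$ and applying (1) shows that this kernel also has exponent $(d-1)!$.

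The essential new point is the finite-field case of (2), where the Euler class machinery (which requires $k$ infinite) is unavailable and one must instead exploit the good functoriality of the \emph{modified} Chow group. For a finite field $k$ I would fix two distinct primes $\ell_1 \neq \ell_2$ and, for each $i$, form the pro-$\ell_i$ extension $k \subset k^{(i)}_{\infty} = \bigcup_j k^{(i)}_j$ inside $\ov{k}$, with $[k^{(i)}_j : k] = \ell_i^{\,j}$. Base change preserves geometric reducedness, dimension, and the hypothesis $(d-1)! \in k^{\times}$, so the infinite case just proved applies over each $k^{(i)}_{\infty}$. Given $\alpha \in \ker(\lambda_A \colon \CH_0(A) \to K_0(A))$, compatibility of $\lambda$ with flat base change gives $\lambda(\pi^{*}\alpha) = \pi^{*}\lambda_A(\alpha) = 0$, so $\pi^{*}\alpha$ lies in the kernel of the cycle class map over $k^{(i)}_{\infty}$; by the infinite case this kernel has exponent $(d-1)!$, whence $(d-1)!\,\pi^{*}\alpha = 0$ in $\CH_0(A \otimes_k k^{(i)}_{\infty}) = \colim_j \CH_0(A \otimes_k k^{(i)}_j)$. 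This relation already holds at some finite level $k^{(i)}_j$, and applying the transfer map, which satisfies $\pi_*\pi^{*} = [k^{(i)}_j : k] = \ell_i^{\,j}$ on $\CH_0$, gives $(d-1)!\,\ell_i^{\,j}\,\alpha = 0$. Since the two exponents obtained from $\ell_1 \neq \ell_2$ are coprime, it follows that $(d-1)!\,\alpha = 0$.

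For (3) I would reduce to $S_X$ via the double construction. As $X$ is smooth affine and $D$ is Cartier, $S_X = X \amalg_D X$ is a geometrically reduced affine scheme of dimension $d \ge 2$, so part (2) applies and $\ker(\lambda_{S_X} \colon \CH_0(S_X) \to K_0(S_X))$ has exponent $(d-1)!$. In the left square of \eqref{eqn:CCM-BK-1} the horizontal maps $p_{+*}$ are injective (the rows are split exact by \thmref{thm:BS-main}), so for $\alpha \in \ker(\tilde{\lambda}_{X|D})$ one gets $\lambda_{S_X}(p_{+*}\alpha) = p_{+*}(\tilde{\lambda}_{X|D}\alpha) = 0$, i.e. $p_{+*}\alpha \in \ker(\lambda_{S_X})$, and injectivity of $p_{+*}$ forces $(d-1)!\,\alpha = 0$; thus $\ker(\tilde{\lambda}_{X|D})$ has exponent $(d-1)!$. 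Since $\lambda_{X|D} = \phi_0 \circ \tilde{\lambda}_{X|D}$, it remains to check that $\phi_0 \colon K_0(S_X, X_-) \to K_0(X, D)$ is injective. Identifying $K_0(S_X,X_-) \cong \ker(\iota^*_-)$ via the splitting, any $\beta \in \ker(\phi_0)$ satisfies $\iota^*_+\beta = 0$ and $\iota^*_-\beta = 0$, so $\beta \in \ker(\pi^*) = \im(\partial)$, where $\partial \colon K_1(D) \to K_0(S_X)$ is the Mayer--Vietoris boundary of the Milnor square defining $S_X$. Comparing $\partial$ with the relative boundary $K_1(D) \to K_0(X,D)$ of the pair $(X,D)$ shows $\phi_0$ restricts to an isomorphism on $\im(\partial)$, so $\ker(\phi_0) = 0$ (this is implicit in \cite{BK}). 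Consequently $\ker(\lambda_{X|D}) = \ker(\tilde{\lambda}_{X|D})$ has exponent $(d-1)!$.

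The hard part is the finite-field case of (2): everything else is a diagram chase once \thmref{thm:RR-ECG} is in hand, whereas over a finite field one genuinely has to leave the Euler-class world and extract the bound from the transfer structure of the modified Chow group through the two-prime pro-$\ell$ descent. A secondary technical point is the injectivity of $\phi_0$ in (3), which is where the precise shape of the double construction enters.
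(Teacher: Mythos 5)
Your proposal is correct and follows essentially the same route as the paper: parts (1) and (2) for infinite fields via \thmref{thm:RR-ECG} and the diagram ~\eqref{eqn:Euler-Chow-0}, the finite-field case of (2) by the same two-prime pro-$\ell$ descent using continuity and transfers for the modified Chow group (which the paper packages as three applications of \cite[Proposition~6.1]{BK}), and part (3) by the identical reduction through the double $S_X$. The only cosmetic difference is at the end of (3), where you re-derive injectivity of $\phi_0$ by a Mayer--Vietoris argument for the Milnor square, whereas the paper simply cites \cite[Lemma~4.1]{Milnor}, which gives at once that $\phi_0: K_0(S_X, X_-) \to K_0(X,D)$ is an isomorphism.
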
 

\begin{proof}
When $k$ is infinite, \thmref{thm:RR-ECG} and ~\eqref{eqn:Euler-Chow-0} together prove (1) and (2). 
We now assume $k$ is finite and prove (2).
We denote the map $\CH_0(A) \to K_0(A)$ by $\wt{\lambda}_A$.
Let $\alpha \in \CH_0(A)$ be such that $\wt{\lambda}_A(\alpha) = 0$. Let $\beta =(d-1)! \alpha \in \CH_0(A)$.
We choose two distinct primes $\ell_1$ and $\ell_2$ different from 
${\rm char}(k)$ and let $k_i$ denote the pro-$\ell_i$ extension of $k$ for 
$i = 1,2$.

It follows from the case of infinite fields, the compatibility of
the cycle class map with respect to field extensions and
\cite[Proposition~6.1]{BK} that $\beta_{k_i} = 0$ for $i =1,2$.
Note that each $k_i$ is a limit of finite separable extensions of the perfect
field $k$ and hence the hypotheses of \cite[Proposition~6.1]{BK} are satisfied.
Another application of \cite[Proposition~6.1]{BK} shows that
we can find two finite extensions $k_1'$ and $k_2'$ of $k$ of relatively prime 
degrees such that $\beta_{k'_i\D5} = 0$ for $i =1,2$. 
We conclude by applying \cite[Proposition~6.1]{BK} once again 
that $(d-1)! \alpha =  \beta = 0$.

Now 
assume that $k$ is perfect and 
$X=\Spec(A)$ is smooth. 
Then by (2) it follows that the kernel of the cycle 
class map 
$\lambda_{S_X} : \CH_0(S_X) \to K_0(S_X)$ is of exponent $(d-1)!$. It 
follows from ~\eqref{eqn:CCM-*-3} and ~\eqref{eqn:CCM-BK-1} that the
same is true of the kernel of the map $\tilde{\lambda}_{X|D} : \CH_0(X|D) \to 
K_0(S_X, X_+)$. 
The assertion (3) then follows from \cite[Lemma ~4.1]{Milnor} which yields 
that the natural 
map $\phi_0: K_0(S_X, X_-) \to K_0(X, D)$ is an isomorphism.
\end{proof}

\subsection{Bloch's formula for affine surfaces over arbitrary field}
\label{sec:BQ-arb}
As a corollary, we can prove Theorems~\ref{thm:Thm-2}, ~\ref{thm:Thm-4}
and ~\ref{thm:ECG-*} as follows. The part (2) of the theorem below was
conjectured by Bhatwadekar and R. Sridharan 
(see \cite[Remark~3.13]{BS-4}). 
Assuming that $A$ is regular, this conjecture was proven 
by Bhatwadekar (unpublished) in dimension two and,
by Asok and Fasel \cite{AF} in general. 

\begin{thm}\label{thm:Final-surface}
Let $k$ be a field and let $A$ be a geometrically
 reduced affine algebra of
dimension two over $k$. Let $X = \Spec(A)$ and let $D \subset X$ be an
effective Cartier divisor. Then the following hold.
\begin{enumerate}
\item
The map $\CH_0(X) \to K_0(X)$ is injective.
\item
If $k$ is infinite, then there are isomorphisms
$E_0(A) \xrightarrow{\cong} \CH^{LW}_0(A) \xrightarrow{\cong} \CH_0(A)$.
\item If $k$ is a perfect field, then 
the map $\rho_X: \sZ_0(X) \to H^2_{\nis}(X, \sK^M_{2,X})$ induces an
isomorphism $\rho_X: \CH_0(X) \xrightarrow{\cong} H^2_{\nis}(X, \sK^M_{2,X})$.
\item
If $k$ is a perfect field and $X$ is smooth, there is an isomorphism 
$\rho_{X|D}: \CH_0(X|D) \xrightarrow{\cong} H^2_{\nis}(X, \sK^M_{2,(X,D)})$.
\end{enumerate}
\end{thm}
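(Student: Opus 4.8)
The plan is to deduce all four parts from the dimension-two instances of the Euler-class-group results of Section~\ref{sec:ECG} together with the Bloch--Quillen machinery of Sections~\ref{sec:CCM}--\ref{sec:BF-mod}, exploiting the numerical coincidence that $(d-1)! = 1$ when $d = 2$. First I would dispose of parts (1) and (2). Part (1) is immediate: since $d = 2$ forces $(d-1)! = 1$, \thmref{thm:RR-0-cycle}(2) (whose hypothesis $(d-1)! \in k^{\times}$ holds over every field) gives that $\ker(\lambda_A \colon \CH_0(A) \to K_0(A))$ is torsion of exponent $1$, hence trivial. For part (2) I would run the commutative diagram ~\eqref{eqn:Euler-Chow-0} with $d = 2$: by \thmref{thm:RR-ECG} the map $cyc_A \colon E^s_0(A) \to K_0(A)$ has kernel of exponent $(d-1)! = 1$, so it is injective. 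Factoring $cyc_A$ through the canonical surjection $\CH^{LW}_0(A) \surj \CH_0(A)$ and through $\wt{cyc}_A$ (which is surjective by ~\eqref{eqn:Euler-Chow}), injectivity of the composite forces both of these surjections to be isomorphisms. Combined with the isomorphism $\gamma^0_A$ of \propref{prop:Sm-weak-ECG}, this yields $E_0(A) \cong \CH^{LW}_0(A) \cong \CH_0(A)$, which is both part (2) and \thmref{thm:ECG-*}; in particular $\sR^{LW}_0(X) = \sR_0(X)$ as subgroups of $\sZ_0(X)$ when $k$ is infinite.

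For part (3) the three ingredients are surjectivity of $\rho_X$, its factorization through rational equivalence, and injectivity. Surjectivity of $\rho_X \colon \sZ_0(X) \to H^2_{\nis}(X, \sK^M_{2,X})$ holds over any perfect field by \cite[Theorem~2.5]{Kato-Saito}, since $X_{\rm reg}$ is dense and nice. Injectivity I would obtain formally: \lemref{lem:CCM-K*} gives $\kappa_X \circ \rho_X = \lambda_X$, so once $\rho_X$ factors through $\CH_0(X)$, injectivity of $\lambda_X$ on $\CH_0(X)$ (part (1)) forces $\rho_X$ to be injective on $\CH_0(X)$. The crux is therefore the factorization $\rho_X(\sR_0(X)) = 0$. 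Over an infinite perfect field this is supplied by \thmref{thm:Killing} (which kills $\sR^{LW}_0(X)$) together with the identity $\sR^{LW}_0(X) = \sR_0(X)$ from part (2); surjectivity and injectivity then give the isomorphism $\CH_0(X) \cong H^2_{\nis}(X, \sK^M_{2,X})$, which is \thmref{thm:Thm-2}.

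The main obstacle is the factorization over a finite field, where part (2) is unavailable and one must annihilate directly the relations coming from arbitrary good (not merely Cartier) curves in the modified group $\CH_0(X)$. Here I would reduce to the infinite-field case by a norm argument parallel to the proof of \thmref{thm:RR-0-cycle}(2): for a generator $\nu_*(\divf(f)) \in \sR_0(X)$ I would pass to finite extensions $k'_1, k'_2$ of coprime degree lying in pro-$\ell_1$ and pro-$\ell_2$ towers with $\ell_i \neq \Char(k)$, use the compatibility of $\rho_X$ with flat base change and transfer to deduce $[k'_i : k] \cdot \rho_X(\nu_*\divf(f)) = 0$, and conclude $\rho_X(\nu_*\divf(f)) = 0$ from $\gcd([k'_1:k],[k'_2:k]) = 1$; the vanishing over each $k'_i$ uses the infinite-field case together with \cite[Proposition~6.1]{BK}. (Since $n = d = 2$, \propref{prop:Key} itself remains valid over finite fields by \remref{remk:Kerz-Kato-Saito}, so the method of \thmref{thm:Killing} already handles the Cartier-curve relations, leaving only the passage from $\CH^{LW}_0$ to $\CH_0$.) Establishing the base-change and transfer compatibility of $\rho_X$ on the top Milnor cohomology is the delicate technical point.

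Finally, part (4) I would derive from part (3) via the double, exactly as in Section~\ref{sec:Thm-3-prf} but over a general perfect field. For $X$ a smooth affine surface and $D$ an effective Cartier divisor, $S_X = X \amalg_D X$ is a reduced --- hence, over a perfect field, geometrically reduced --- affine surface singular along $D$. By \lemref{lem:LW=BK}(1) the canonical map $\CH^{LW}_0(S_X) \xrightarrow{\cong} \CH_0(S_X)$ is an isomorphism, so \propref{prop:BQM-M} furnishes the surjection $\rho_{X|D}\colon \CH_0(X|D) \surj H^2_{\nis}(X, \sK^M_{2,(X,D)})$ together with the ladder of split short exact sequences ~\eqref{eqn:mod-*-0}. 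Applying part (3) to both $S_X$ and $X$ (and using $\CH^{LW}_0(S_X) \cong \CH_0(S_X)$ for the middle column) shows that $\rho_{S_X}$ and $\rho_X$ are isomorphisms, whence the five lemma forces $\rho_{X|D}$ to be an isomorphism. This proves part (4) and \thmref{thm:Thm-4}.
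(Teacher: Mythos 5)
Parts (1), (2) and (4) of your proposal follow the paper's proof essentially verbatim: (1) is \thmref{thm:RR-0-cycle}(2) with $(d-1)!=1$, (2) is the diagram chase in ~\eqref{eqn:Euler-Chow-0} using \thmref{thm:RR-ECG} and \propref{prop:Sm-weak-ECG}, and (4) is the double/five-lemma argument of \S~\ref{sec:Thm-3-prf}. Your part (3) is also valid over \emph{infinite} perfect fields: \thmref{thm:Killing} kills $\sR^{LW}_0(X)$, and part (2) identifies $\sR^{LW}_0(X)=\sR_0(X)$, after which surjectivity (Kato--Saito) and injectivity (via \lemref{lem:CCM-K*} and part (1)) go through.

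The genuine gap is your treatment of finite fields in part (3). Your norm argument requires three things the paper never constructs and which are not routine: a transfer map $\pi_*\colon H^2_{\nis}(X_{k'},\sK^M_{2,X_{k'}})\to H^2_{\nis}(X,\sK^M_{2,X})$ along a finite extension $k'/k$ when $X$ is \emph{singular}, the identity $\pi_*\pi^*=[k':k]$ on this cohomology, and the compatibility of $\pi_*$ with $\rho_{X_{k'}}$. This is precisely the obstruction the paper flags in \S~\ref{sec:outline}: there is no known push-forward functoriality on the top cohomology of the Milnor $K$-sheaf of a singular scheme (and over a finite field even the sheaf $\sK^M_{2,X}$ itself is delicate, which is why the improved sheaf $\hat{\sK}^M$ of \cite{kerz10} appears elsewhere in the paper). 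You acknowledge this as ``the delicate technical point,'' but it is the entire difficulty, not a technicality, so the finite-field case remains unproven in your proposal; the gap then propagates to your part (4), which quotes (3) for the double $S_X$. The paper's own proof of (3) sidesteps transfers altogether and works uniformly over every perfect field: by \cite[Proposition~14]{kerz10} the map of sheaves $\sK^M_{2,X}\to\sK_{2,X}$ is surjective, and since it is an isomorphism at the generic points its kernel is supported in dimension $\le 1$, so $H^2_{\nis}(X,\sK^M_{2,X})\to H^2_{\nis}(X,\sK_{2,X})$ is an isomorphism; by \cite[Lemma~2.1]{Krishna-1} the map $\kappa_X\colon H^2_{\nis}(X,\sK_{2,X})\to K_0(X)$ is injective for an affine surface. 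The factorization of $\rho_X$ through $\CH_0(X)$ is then automatic, with no transfer needed: $\kappa_X\circ\rho_X=\lambda_X$ kills $\sR_0(X)$ by \cite[Lemma~3.13]{BK}, and $\kappa_X$ is injective. You should replace your finite-field sketch by this argument (or else actually construct the required transfer structure, which would be a substantial piece of work in its own right).
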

\begin{proof}

When $k$ is infinite, it follows from \propref{prop:Sm-weak-ECG},
\thmref{thm:RR-ECG} and ~\eqref{eqn:Euler-Chow-0} that  the maps
\[
E_0(A) \xleftarrow{\gamma^0_A} 
E^s_0(A) \xrightarrow{\wt{cyc}_A} \CH^{LW}_0(A) \surj \CH_0(A)
\]
are all isomorphisms and the map $\CH_0(A) \to K_0(A)$ is injective.
This proves (2) while the assertion (1) follows from \thmref{thm:RR-0-cycle}.

We now prove (3). By \cite[Proposition 14]{kerz10}, the natural map of sheaves 
$\sK^M_{2,X} \to \sK_{2,X}$ is surjective. Since they are generically same, it 
follows that $H^2_{\nis}(X, \sK^M_{2,X}) \to H^2_{\nis}(X, \sK_{2,X})$ is an 
isomorphism. 
The existence of $\rho_X: \CH_0(X) \to 
H^2_{\nis}(X, \sK^M_{2,X})$ follows immediately from \lemref{lem:CCM-K*}
and \cite[Lemma~2.1]{Krishna-1}, which shows that the map
$\kappa_X: H^2_{\zar}(X, \sK_{2,X}) \xrightarrow{\cong}
H^2_{\nis}(X, \sK_{2,X}) \to K_0(X)$ is injective.
It follows from (1) and \lemref{lem:CCM-K*}
that $\rho_X$ is injective and its surjectivity 
follows from \cite[Theorem~2.5]{Kato-Saito} (see the proof of
\thmref{thm:Killing}). 
This proves (3). The assertion (4) follows from (3) and 
\thmref{thm:BS-main} exactly as we proved  \thmref{thm:Thm-3} 
in \S~\ref{sec:Thm-3-prf}. 
\end{proof}

\begin{remk}\label{remk:Fin-proj}
We warn the reader that the affineness of $X$ 
is an essential condition in \thmref{thm:Final-surface} (3)-(4).
One should not expect 
Bloch's formula $\CH_0(X|D) \xrightarrow{\cong} H^2_{\nis}(X, \sK^M_{2,(X,D)})$
when $X$ is a smooth projective surface over a finite field. 
The reason for this is that a result of Kerz and Saito \cite{KeS}
says that there is an isomorphism 
$\CH_0(X|D)_0 \xrightarrow{\cong} \pi_1(X,D)_0$, where
$\pi_1(X,D)$ is a quotient of the {\'e}tale fundamental group of 
$\pi_1(X \setminus D)$ which characterizes abelian {\'e}tale covers
of $X \setminus D$ which have ramification along $|D|$ bounded by the
divisor $D$.

On the other hand, a result of Kato and Saito \cite{Kato-Saito} implies that
there is a surjection $\pi_1(X,D) \surj  H^2_{\nis}(X, \sK^M_{2,(X,D)})$
which is not expected to be an isomorphism in general.
This suggests that the relative Milnor $K$-theory needs to be suitably
re-defined in order to solve this anomaly.
\end{remk}

\vskip .4cm

\noindent\emph{Acknowledgments.} 
The authors would like to thank the referees for carefully reading
the paper and providing many valuable suggestions which considerably improved its presentation.

\end{document}